\documentclass[a4paper,12pt]{article}

\usepackage{mystyle}

\title{Congruence frames of frames and $\kappa$-frames}
\author{Graham Manuell}
\date{August 2015}

\begin{document}

\pagenumbering{Alph}
\begin{titlepage}
  \begin{center}
    \vspace*{1cm}
    
    \Huge
    \textbf{\MyTitle}
    
    \vspace{1.0cm}
    
    \Large
    \MyAuthor
    
    \vfill
    
    \large
    A thesis presented for the degree of Master of Science
    in the Department of Mathematics and Applied Mathematics, University of Cape Town
    
    \vspace{0.2cm}
    
    Supervisors: Dr J.\ Frith and Professor C.R.A.\ Gilmour
    
    \vspace{0.3cm}
    
    \MyDate
  \end{center}
\end{titlepage}

\subsection*{Acknowledgements}
\pagenumbering{roman}

I would like to thank my supervisors Dr J.\ Frith and Professor C.R.A.\ Gilmour for their help, encouragement and patience.
I am grateful for the freedom to pursue a topic that I found interesting.

I acknowledge financial support from the National Research Foundation and thank the department of Mathematics
and Applied Mathematics for the use of their facilities.

I am also grateful to my friends for their companionship.

Finally, I would like to thank my parents for their support. Without their assistance my studies would not have been possible.

\tableofcontents

\setcounter{section}{-1}

\section{Introduction}\label{section:introduction}
\pagenumbering{arabic}

A notable feature of frame theory is that the lattice of congruences on a frame is a frame itself \cite{QuotientFrames}.
This frame of congruences is usefully employed in studying the topological aspects of frames and reappears in many pointfree constructions.
The result also holds in the more general setting of congruences on $\kappa$-frames \cite{Madden}.

The congruence frame is studied in different guises and is called different names. Those who prefer to work with locales call it the \emph{dissolution locale},
but generally prefer to work with its dual, the lattice of sublocales. Locale theorists often make heavy use of this lattice and place it on equal footing
with the locale's frame of opens. The earlier sections of this thesis use the congruence frame in a similar way, reformulating the locale-theoretic notions
frame-theoretically so that they are more generally applicable.

Frame theorists with a more order-theoretic viewpoint often work with nuclei and call the frame of nuclei the \emph{assembly}
of a frame. We prefer the more algebraic approach using congruences. Congruences are a natural tool for dealing with the
quotients of an algebraic structure such as frames, so it is somewhat surprising that they are not used more. Importantly for us,
they also allow for generalisation to $\kappa$-frames with little modification. The first thorough description of the assembly in
terms of congruences instead of nuclei was by Frith in \cite{FrithCong}. The proofs of the basic results are not only more general,
but arguably cleaner with this approach.

\subsection*{Outline}

\subsubsection*{Section 1}

\Cref{section:frames} provides the necessary background on frames, $\kappa$-frames and biframes. No new results appear in this section.

\subsubsection*{Section 2}

\Cref{section:congruences} describes the notion of a congruence and how it applies to frames. We recount the
relationships between congruences on frames, nuclei and subspaces before moving on to open and closed congruences in \cref{subsec:closed_open_congs} where
we closely follow the presentation in \cite{FrithCong}, noting that the proofs hold for general $\kappa$-frames.
Here we also introduce for the first time the notion of a \emph{generalised closed} congruence on a $\kappa$-frame as well as that of the generalised
closure and describe how closed congruences and closure behave in quotient frames. We make heavy use of the generalised closure throughout this section.
\Cref{lem:closed_quotient_of_quotient} is the analogue of how closed sets restrict to subspaces in topology and is no doubt known for frames,
but I have not found a proof of it in the literature.

In \cref{subsec:dense_congruences} we discuss the notion of density and the largest dense congruence on a frame. These concepts are well-understood
for frames and Madden discusses the situation for $\kappa$-frames in \cite{Madden}. However, Madden makes no mention of the largest congruence dense in
a given congruence and these important congruences have consequently escaped naming until now. In the case of frames, they might be called Boolean
congruences since their corresponding quotients are Boolean frames, but this is not the case for $\kappa$-frames. We call them \emph{clear} congruences,
since we might imagine the smallest dense sublocale to be so rarefied as to appear translucent. Exploring the concept of clear congruences leads to a
number of original results. \Cref{lem:meet_of_clear} states that every congruence is a meet of clear congruences. This was known for frames, but the
result for $\kappa$-frames and our approach to the proof are new. What Madden calls \emph{d-reduced} $\kappa$-frames in \cite{Madden}, we call \emph{clear}
$\kappa$-frames and we show in \cref{lem:quotient_by_clear} that the clear quotients of a $\kappa$-frame are precisely its quotients by clear congruences.
Madden claims what amounts to one direction of this result, but the converse can only be articulated once the notion of a clear congruence is established.
\Cref{cor:clear_frame_Boolean}, that (in our terminology) the clear frames and the Boolean frames coincide, is well-known, but it is included for completeness
and used to give a new characterisation of clear $\kappa$-frames in \cref{thm:clear_kappa_frames}. The characterisation of Boolean $\kappa$-frames as
hereditarily clear is also new.

In \cref{subsec:congruence_kappa_frames} we recount the proof that the congruence lattices of $\kappa$-frames are frames. We follow \cite{Madden} in
constructing the congruence frame of a $\kappa$-frame by freely adjoining complements to the $\kappa$-frame and then considering $\kappa$-ideals on the result.
In this subsection, \cref{prop:congruence_kappa_frame_summary} appears to contain a new description of the congruence $\kappa$-frame of a quotient frame.
\Cref{lem:kernel_on_congruence_frame} gives the form of the kernel of any map from a congruence $\kappa$-frame. I haven't seen this proved anywhere else,
so it might also be new, though I would be surprised if its more specific corollary, at least, were not already known.

\Cref{subsec:congruence_tower} is a brief discussion of known results about the congruence tower. The results in \cref{subsec:clear_congruences_on_frames}
concern clear congruences on frames and are also known, but we believe the proofs to be new. In \cite{PleweRareSublocales} Plewe introduces rare sublocales
and uses them to prove results about when congruence frames are Boolean. \Cref{subsec:boolean_congruence_frames} translates the most basic of these in
terms of congruences. Many of the proofs are different to those seen before, but the only new results are the description of rare congruences in
\cref{lem:rare_congruence} and its application to a classification of complete chains with Boolean congruence frame in \cref{lem:no_dense_elements_Boolean},
the one direction of which was known to Beazer and Macnab.

\subsubsection*{Section 3}

\Cref{section:strictly_zero_dimensional_biframes} introduces the \emph{category} of strictly zero-dimensional biframes. Strictly zero-dimensional biframes
were first defined in \cite{StrictlyZeroDimensional}, but the category hasn't been considered before. The biframe structure on the congruence frame was
first described by Frith in \cite{FrithCong}. For this section, we restrict our attention to congruence frames of frames, but many of the results should
easily generalise to congruence $\kappa$-frames. Almost all of the results are new. In \cref{prop:congruence_free_str0dbifrm}, the congruence functor is
shown to be left adjoint to the functor which takes the first part of a strictly zero-dimensional biframe. While this result is simply a rephrasing of
the usual universal property of the congruence frame, we believe the new description as an adjoint to be rather elegant.

We propose that strictly zero-dimensional biframes be viewed as generalised frames in a similar way to how the locale theorists view locales.
The first part of a strictly zero-dimensional biframe gives what we may think of as the frame of opens, while the total part gives what we may think of
as the designated frame of congruences. This view is justified in \cref{subsec:str0d_biframes_of_congruences}.

\Cref{thm:dense_quotients_of_congruence_frame} in \cref{subsec:characterisations_of_str0d_biframes} gives a new characterisation of strictly zero-dimensional
biframes as dense quotients of congruence biframes. The rest of the subsection explores some of the consequences of this result.

We discuss compact strictly zero-dimensional biframes in \cref{subsec:cpt_str0d_biframes}. All such biframes are congruence biframes and this gives
an equivalence between the category of compact strictly zero-dimensional biframes and the category of Noetherian frames. In contrast,
\cref{ex:lindelof_str0d_biframe} describes a Lindelöf strictly zero-dimensional biframe which is not a congruence biframe. We also take a short
digression to correct a mistake in \cite{BiframeThesis} by showing in \cref{thm:compact_Boolean_biframe_finite} that a compact Boolean biframe is finite.
I do not believe this has been proven in the literature despite following from the known result that a distributive lattice is finite if it satisfies both
the ascending and descending chain conditions \cite[Theorem~4.28]{LatticesAndOrderedSets}. We are careful to only assume dependent choice in
our proof.

We discuss Skula biframes in \cref{subsec:skula_biframe}. These were also discussed in the original paper on strictly zero-dimensional
biframes \cite{StrictlyZeroDimensional}, but we consider them in the framework developed in the previous sections. The Skula functor
gives a dual embedding of the category of $T_0$ spaces into the category of strictly zero-dimensional biframes so that even some results about
non-sober spaces can be given a pointfree interpretation. The main result in this subsection is \cref{lem:skula_spatial_reflection_of_congruence}, which
claims that the Skula biframe on the spectrum of a frame $L$ is naturally isomorphic to the spatial reflection of the congruence frame of $L$.
This isomorphism is well-known, though our proof is new. It would be surprising if the naturality of this isomorphism were not known, but I could not find a
proof of it in the literature. All the ingredients needed to show \cref{lem:skula_versus_smallest_str0d_biframe} are known and unremarkable, but this is
the first time they have been expressed in this way.

\Cref{subsec:which_morphisms_lift} examines the question of which morphisms of frames are given by the first part of
a morphism between discrete strictly zero-dimensional biframes. Preliminary results are established, but we hope that the classification
in the injective case can be improved.

Finally, in \cref{subsec:clear_elements} we introduce the notion of a \emph{clear element} of a strictly zero-dimensional biframe and show how these provide
a satisfactory generalisation of the clear congruences of a congruence frame. We then use them in
\cref{lem:str0d_biframe_congruential_iff_no_missing_clear_elements} to provide a characterisation of congruence biframes as strictly zero-dimensional
biframes with no `missing' clear elements.

\subsubsection*{Section 4}

\Cref{section:reflections_of_congruence_frames} describes the interaction of congruence frames with the spatial reflection, universal biframe compactification
and the construction of the free frame on a $\kappa$-frame. The main results are new.

The results in \cref{subsec:spatial_reflection} about prime congruences and the spatial reflection of the congruence frame are known in the case of frames,
but our approach is new. To generalise the results to $\kappa$-frames, we introduce the notion of a $\kappa$-space, which is a natural generalisation of the
$\sigma$-spaces of \cite{SigmaSpaces}.

In \cref{subsec:congruence_frame_compactification} we discuss the relationship between the frame congruences on a frame and the lattice
of $\kappa$-frame congruences on the same frame. The main result is that the frame of lattice congruences is the universal biframe compactification
of the congruence frame. The question of whether this result generalises to the frame of $\sigma$-frame congruences on a frame is addressed, but
ultimately left unsolved. The characterisation of hereditarily $\kappa$-Lindelöf frames is surely well-known, but all the other results are original.

Most of the results in \cref{subsec:lindelof_congruences} appear to be original. Motivated by the equivalence between the categories of $\kappa$-frames
and $\kappa$-coherent frames, we establish a link between the congruence frame of a $\kappa$-frame and the congruence frame of the free frame generated
by the $\kappa$-frame in \cref{lem:congruences_on_frames_of_ideals}. The $\kappa = \aleph_0$ was dealt with in \cite{CoherentCongruences}, but our
approach is different. We then apply this result to describe the Lindelöfication of certain quotients of a completely regular Lindelöf frame.
This provides a pointfree analogue of a result from \cite{ExtensionsOfZeroSets} that the realcompactification of a z-embedded subspace of a realcompact
space $X$ is given by its $G_\delta$-closure in $X$.

\section{Background}\label{section:frames}

We briefly give the definitions and results we will use later on. For further background on frames and $\kappa$-frames see
\cite{PicadoPultr} and \cite{Madden}. Biframes were introduced in \cite{Biframes}.

\subsection{Frames and \texorpdfstring{$\kappa$}{κ}-frames}

A frame is a complete lattice satisfying the frame distributivity condition
\[x \wedge \bigvee_{\alpha \in I} x_\alpha = \bigvee_{\alpha \in I} (x \wedge x_\alpha)\]
for arbitrary families $(x_\alpha)_{\alpha \in I}$. We denote the smallest element of a frame
by $0$ and the largest element by $1$. A frame homomorphism is a function which preserves finite
meets and arbitrary joins (and thus in particular $0$ and $1$).

An infinite cardinal number $\kappa$ is called \emph{regular} if any cardinal sum $\sum_{\alpha \in I} \lambda_\alpha < \kappa$ whenever
$\lambda_\alpha < \kappa$ for all $\alpha \in I$ and $|I| < \kappa$. In this thesis $\kappa$ will always denote a regular cardinal without comment.
A $\kappa$-set is then a set of cardinality strictly less than $\kappa$.
A $\kappa$-frame is a bounded distributive lattice which has joins of $\kappa$-sets and satisfies the frame distributivity law
when $|I| < \kappa$. Morphisms of $\kappa$-frames are functions which preserve finite meets and $\kappa$-joins (i.e.\ joins of $\kappa$-sets).

The category of frames is denoted by $\Frm$ and the category of $\kappa$-frames by $\kappa\Frm$.
\emph{Throughout this thesis when we prove results for $\kappa$-frames, they will also hold for frames
(interpreting $\kappa$-joins as arbitrary joins where necessary) unless we explicitly say otherwise.}

In $\mathsf{ZF}$ set theory the only provably regular cardinal is $\aleph_0$ and so the only examples of $\kappa$-frames are bounded distributive lattices.
The regularity of $\aleph_1$ follows from countable choice and all infinite successor cardinals are regular in $\mathsf{ZFC}$.
The case of $\aleph_1$-frames is well studied and these are better known as $\sigma$-frames \cite{Charalambous}.
When dealing with $\kappa$-frames we will silently assume as much choice as necessary. In particular, results concerning $\sigma$-frames will
often assume countable choice. When working with frames we will be more careful.

\subsection{Regular and completely regular \texorpdfstring{$\kappa$}{κ}-frames}

If $L$ is a $\kappa$-frame (or frame) and $a, b \in L$, we write $a \prec b$ and say that $a$ is \emph{rather below} $b$ when there
is a \emph{separating element} $z \in L$ satisfying $a \wedge z = 0$ and $b \vee z = 1$.
A $\kappa$-frame $L$ is \emph{regular} if every element is expressible as a $\kappa$-join of elements rather below it.

Similarly, we write $a \prec\prec b$ and say that $a$ is \emph{completely below} $b$ in $L$ if there is a family $\{z_q \in L \mid q \in [0,1] \cap \Q\}$
such that $z_0 = a$, $z_1 = b$ and $z_r \prec z_s$ whenever $r < s$. We say that $\prec\prec$ interpolates since whenever $a \prec\prec b$
there exists a $c \in L$ such that $a \prec\prec c \prec\prec b$. Assuming dependent choice, the relation $\prec\prec$ is the largest
interpolating relation contained in $\prec$. We say that $L$ is \emph{completely regular} if every element is expressible as a $\kappa$-join
of elements completely below it.

An element $a$ in a $\kappa$-frame $L$ is said to be \emph{complemented} if there is a $b \in L$ such that $a \wedge b = 0$ and $a \vee b = 1$.
The element $b$ is uniquely determined and is said to be the \emph{complement} of $a$. We write the complement of $a$ as $a^c$. Note that the set of
complemented elements of a $\kappa$-frame is closed under finite meets and finite joins.
An element $a \in L$ is complemented if and only if $a \prec a$ and we call $L$ \emph{zero-dimensional} if every element is a $\kappa$-join of
complemented elements.

The monomorphisms in the category of regular $\kappa$-frames are the \emph{dense} $\kappa$-frame homomorphisms. These are the maps $f$ such that
$f(x) = 0$ only when $x = 0$. On the other hand, a $\kappa$-frame homomorphism $f$ for which $f(x) = 1$ only when $x = 1$ is called \emph{codense}.
Codense maps from regular $\kappa$-frames are always injective.

\subsection{Heyting algebras and right adjoints}

Let $P$ and $Q$ be posets. A \emph{Galois connection} between $P$ and $Q$ consists of monotone maps $f: P \to Q$ and $g: Q \to P$ such that
$f(x) \le y \iff x \le g(y)$ for all $x \in P$ and all $y \in Q$. We say that $f$ is the left adjoint of $g$ and that $g$ is the right adjoint of $f$.

A left adjoint $f: P \to Q$ is surjective if and only if its right adjoint $g$ is injective if and only if $fg = 1$ and
$f$ is injective if and only if $g$ is surjective if and only if $gf = 1$.

A monotone map $f: P \to Q$ between complete lattices has a right adjoint if and only if it preserves joins and in this case its right adjoint,
denoted by $f_*$, is uniquely defined by $f_*(y) = \bigvee\{x \in Q \mid f(x) \le y\}$. In particular, frame homomorphisms always have right adjoints.

Let $L$ be a frame. The frame distributivity law implies that the map $x \mapsto x \wedge a$ preserves arbitrary joins
and thus has a right adjoint $x \mapsto (a \rightarrow x)$ turning $L$ into a complete Heyting algebra.

If $a \in L$, the element $a \rightarrow 0$ is called the pseudocomplement of $a$ and is denoted by $a^*$.
It is the largest element $c \in L$ for which $a \wedge c = 0$. Notice that if $a$ is complemented, its complement
is given by $a^*$, and that $a \prec b$ if and only if $a^* \vee b = 1$.
The pseudocomplement satisfies $a \le a^{**}$, $a^{***} = a^*$, $(a \vee b)^* = a^* \wedge b^*$ and $(a \wedge b)^{**} = a^{**} \wedge b^{**}$.

\subsection{The adjunction between frames and spaces}

Every topological space $(X,\tau)$ has an associated frame of open sets, $\tau$.
This can be extended to a functor $\Omega: \Top\op \to \Frm$ by defining $(\Omega f)(U) = f^{-1}(U)$.

Starting with a frame, we may construct a topological space known as its \emph{spectrum}.
An element $p$ of a frame $L$ is said to be \emph{prime} if $p \ne 1$ and $a \wedge b \le p$ implies $a \le p$ or $b \le p$.
The set of prime elements of $L$ may be endowed with a topology given by open sets of the form $U_a = \{p \text{ prime} \mid a \not\le p\}$
for each $a \in L$. It can be shown that the right adjoints of frame homomorphisms preserve prime elements and we obtain a functor
$\Sigma: \Frm\op \to \Top$ with $(\Sigma f)(p) = f_*(p)$.

These give a contravariant adjunction between the category of frames and the category of topological spaces
with the functor $\Sigma\op$ left adjoint to $\Omega$. The unit $\sigma: 1_{\Frm} \to \Omega\Sigma$ and
the counit $\sob: 1_{\Top} \to \Sigma\Omega$ of this adjunction are given by
$\sigma_L(a) = U_a$ and  $\sob_X(x) = \cl(x)^c$ where $\cl(x)$ is the closure of $\{x\}$ in $X$ and $\cl(x)^c = X \setminus \cl(x)$.

The frame $\Omega\Sigma L$ together with the map $\sigma_L$ is known as the \emph{spatial reflection} of $L$.
We say that $L$ is spatial if and only if $\sigma_L$ is an isomorphism. Notice that any subframe of a spatial frame is spatial.
A frame $L$ is spatial if and only if every element of $L$ is a meet of prime elements.
In this way, spatiality can be viewed as an algebraic property akin to prime factorisation in rings.
The map $\sigma_L$ is surjective and $\Spat L$ may be thought
of as the largest spatial quotient of $L$. As in \cref{subsec:frame_congruences}, we might represent $\sigma_L$
as a nucleus $x \mapsto \bigwedge\{p \in L \mid p \ge x, p \text{ prime}\}$. We call the elements which are fixed by
this nucleus \emph{spatial elements} of $L$.

The space $\Sigma\Omega X$ together with the map $\sob_X$ is known as the \emph{sobrification} of $X$.
We say $X$ is sober if $\sob_X$ is an isomorphism. This holds if and only if every irreducible closed set in $X$
is the closure of a unique point and is implied by the Hausdorff property. The map $\sob_X$ is an injection if and only if $X$ is $T_0$.

\subsection{Frames of ideals}

A subset $D$ of a poset is called \emph{$\kappa$-directed} if every $\kappa$-set $S \subseteq D$ has an upper bound in $D$.
If $X$ is a set, the set of all subsets of $X$ of cardinality less than $\kappa$ is $\kappa$-directed.
Thus, an arbitrary join can be decomposed as a $\kappa$-directed join of $\kappa$-joins. An $\aleph_0$-directed set is said to be
\emph{directed}.

A $\kappa$-ideal on a $\kappa$-frame $L$ is a $\kappa$-directed downset on $L$ (i.e.\ a downset which is closed under $\kappa$-joins).
The set of $\kappa$-ideals on a $\kappa$-frame ordered by inclusion forms a frame. There is an obvious forgetful functor from the
category of frames to the category of $\kappa$-frames. This has a left adjoint $\h_\kappa: \kappa\Frm \to \Frm$ where $\h_\kappa L$
is the frame of $\kappa$-ideals on $L$ and $(\h_\kappa f)(I) = \,\downarrow\!\!f(I)$. The unit map sends each element of $L$ to the
principal ideal generated by it, while the counit maps a $\kappa$-ideal to its join. Note that the counit map is a dense frame homomorphism.

The frame of all ideals is given by $\h_{\aleph_0} L = \ideal L$ and the frame of $\sigma$-ideals
is given by $\h_{\aleph_1} L = \h L$. In the interest uniform notation, we will use $\h_\infty L$ to denote the frame of principal ideals on
a frame $L$. This is isomorphic to the frame $L$ itself.

Similarly, there is a forgetful functor from $\Frm$ to the category $\Slat$ of meet-semilattices, which has the downset functor
$\mathscr{D}: \Slat \to \Frm$ as its left adjoint. Here $\mathscr{D} L$ is the frame of downsets on the meet-semilattice $L$ and
$(\mathscr{D} f)(D) = \,\downarrow\!f(D)$.

It is well-known that the free meet-semilattice on a set $S$ is the meet-semilattice of finite subsets of $S$ ordered by reverse inclusion.
The free frame on a set is then just the frame of downsets on this free meet-semilattice.

\subsection{Compact frames}

An element $a$ of a frame $L$ is said to be \emph{$\kappa$-Lindelöf} if whenever $a \le \bigvee_{\alpha \in I} c_\alpha$,
then $a \le \bigvee_{\alpha \in J} c_\alpha$ for some $\kappa$-set $J \subseteq I$. If $\kappa = \aleph_0$, we say $a$ is \emph{compact}
and if $\kappa = \aleph_1$ we simply say $a$ is Lindelöf.

A frame $L$ is said to be $\kappa$-Lindelöf (resp.\ compact/Lindelöf) if its top element is.
A frame is called \emph{Noetherian} if all of its elements are compact.
Complemented elements of $\kappa$-Lindelöf frames are $\kappa$-Lindelöf elements.
If an element of a subframe is $\kappa$-Lindelöf in the parent frame, then it is $\kappa$-Lindelöf in the subframe.
In particular subframes of $\kappa$-Lindelöf frames are $\kappa$-Lindelöf.

A dense frame homomorphism from a regular frame to a compact frame is injective.
In a regular Lindelöf frame $\prec$ interpolates and so, assuming dependent choice, regular Lindelöf frames are completely regular.

The forgetful functor from the category of compact completely regular frames to the category of completely regular frames has a right adjoint,
the compact completely regular coreflection $\StoneCech$, which the Stone-Čech compactification functor.
A completely regular ideal $I$ on $L$ is an ideal for which whenever $a \in I$ there is a $b \in I$ such that $a \prec\prec b$.
The completely regular ideals on $L$ form a subframe of $\ideal L$ isomorphic to $\StoneCech L$. The action of $\StoneCech$ on
morphisms is inherited from $\ideal$. The counit of the adjunction is given by taking joins and is a dense surjection.

In general, any dense surjection from a compact regular frame to a frame $L$ is known as a \emph{compactification} of $L$.
If $k: M \to L$ is a compactification, the well below relation on $M$ induces a relation $\lhd$ on $L$ (the \emph{strongly below} relation)
which satisfies the following axioms \cite{StrongInclusions}: %
\begin{enumerate}
 \item $x \le y \lhd z \le w \implies x \lhd w$
 \item $\lhd$ is a sublattice of $L \times L$
 \item $x \lhd y \implies x \prec y$
 \item $x \lhd y$ implies there is a $z \in L$ such that $x \lhd z \lhd y$ (i.e.\ $\lhd$ interpolates)
 \item $x \lhd y \implies y^* \lhd x^*$
 \item Every element in $L$ is the a join of elements strongly below it.
\end{enumerate}
Any relation which satisfies these axioms is called a \emph{strong inclusion}. The strong inclusions on $L$ and the compactifications of $L$
are in one-to-one correspondence and the associated compactification can be recovered in the exact same way as the above construction
of the Stone-Čech compactification, but using $\lhd$ instead of $\prec\prec$ (which is a strong inclusion on completely regular frames).

A zero-dimensional frame $L$ admits a universal zero-dimensional compactification which is given by
$\ideal B L$ where $B L$ is the Boolean algebra of complemented elements of $L$ (\cite{ZeroDimensionalCompactifications}).

\subsection{Cozero elements}

An element of a frame is said to be \emph{cozero} if it is a countable join of elements completely below it (see \cite{PseudoCozeroPartFrame}).
Assuming countable choice, the cozero elements of a frame $L$ form a completely regular sub-$\sigma$-frame of $L$, which we denote by $\Coz L$.
Frame homomorphisms preserve cozero elements and so $\Coz\!$ becomes a functor.

The functor $\Coz\!$ from completely regular frames to completely regular $\sigma$-frames has the left adjoint $\h$, the $\sigma$-ideal functor.
This adjunction induces an equivalence between the categories of completely regular Lindelöf frames and completely regular $\sigma$-frames
and the map $\lambda: \h\Coz L \to L$ sending $\sigma$-ideals to their joins gives the completely regular Lindelöf coreflection of a completely
regular frame. The map $\lambda$ is a dense surjection from a regular Lindelöf frame and is called a \emph{Lindelöfication} by analogy to
compactifications.

\subsection{Coherent and \texorpdfstring{$\kappa$}{κ}-coherent frames}

The join-semilattice of $\kappa$-Lindelöf elements of a frame $L$ is written $K_\kappa(L)$.
A frame is said to be \emph{$\kappa$-coherent} if $K_\kappa(L)$ is a sub-$\kappa$-frame of $L$ which
generates $L$ under arbitrary joins. A frame homomorphism between $\kappa$-coherent frames is
called \emph{$\kappa$-proper} if it maps $\kappa$-Lindelöf elements to $\kappa$-Lindelöf elements.
When $\kappa = \aleph_0$ we speak of coherent frames and proper maps and write $K_{\aleph_0}(L)$ as $K(L)$.

The $\kappa$-coherent frames are precisely the frames of the form $\h_\kappa L$ for some $\kappa$-frame $L$. %
Compact zero-dimensional frames are coherent and completely regular $\kappa$-Lindelöf frames are $\kappa$-coherent for $\kappa \ge \aleph_1$.

\subsection{Biframes}

A \emph{biframe} $L$ is a triple $(L_0, L_1, L_2)$ where $L_0$ is a frame and $L_1$ and $L_2$ are subframes of $L_0$
such that $L_1 \cup L_2$ generates $L_0$. The frame $L_0$ is called the \emph{total part} of $L$, while $L_1$ and
$L_2$ are known as the first and second parts respectively. We often use the indices $i$ and $k$ to refer to
elements of $\{1,2\}$ so we can treat the first and second parts uniformly.

A biframe homomorphism $f: L \to M$ is a frame homomorphism $f_0: L_0 \to M_0$ such that elements in the $i^\text{th}$ part of $L$ map to elements
in the $i^\text{th}$ part of $M$. We use $f_i: L_i \to M_i$ to denote the restriction of $f_0$ to the $i^\text{th}$ part.

A bitopological space (or \emph{bispace}) is a set equipped with two topologies. A \emph{bicontinuous map} between bispaces is a function
that is pairwise continuous with respect to first topologies and the second topologies. There is a dual adjunction between the categories of
biframes and bitopological spaces (see \cite{Biframes}).

A biframe is said to be compact/$\kappa$-Lindelöf if its total part is and a biframe homomorphism is dense/injective if its total part is dense/injective.
A biframe map is surjective if its restrictions to each part are surjective.

The biframe of ideals on a biframe $L$ is given by $\ideal L = (M_0, M_1, M_2)$ where $M_0 = \ideal L_0$ and $M_i$
consists of the ideals in $M_0$ generated by elements of $L_i$. Biframes of $\kappa$-ideals can be defined similarly.

The rather below relation is replaced by two relations, $\prec_1$ and $\prec_2$ on $L_1$ and $L_2$ respectively.
If $x, y \in L_i$ then $x \prec_i y$ if and only if there is a $z \in L_k$ ($k \ne i$) such that $x \wedge z = 0$, but $y \vee z = 1$.
Let $x^\bullet = \bigvee\{y \in L_k \mid y \wedge x = 0\}$ for $x \in L_i,\, k \ne i$. Then $x \prec_i y$ if and only if $x^\bullet \vee y = 1$.
A biframe is regular if every element in each part $L_1, L_2$ is a join of elements rather below it and complete regularity is defined similarly.
A biframe is zero-dimensional if each part $L_i$ is generated by elements with complements in $L_k$ ($k \ne i$) and is Boolean if every
element in each part has a complement in the other part.

A strong inclusion of biframes is defined completely analogously to strong inclusions of frames (see \cite{BiframeCompactifications} for details).
As before, the biframe compactifications of a biframe $L$ correspond bijectively with the strong inclusions on $L$.

A $\sigma$-biframe $A$ is a triple of $\sigma$-frames $(A_0, A_1, A_2)$ such that $A_1, A_2$ are sub-$\sigma$-frames of $A_0$ which
together generate $A_0$ (\cite{BisigmaFrames}). The cozero part $\Coz L$ of a biframe $L$ is defined as $(A_0, A_1, A_2)$ where
$A_i = (\Coz L_0) \cap L_i$ and $A_0 = \langle A_1 \cup A_2\rangle$, the sub-$\sigma$-frame of $L_0$ generated by $A_1$ and $A_2$.
The biframe $\h \Coz L$ with the join map $\lambda: \h\Coz L \to L$ is the universal Lindelöfication of a completely regular biframe $L$,
as in the frame case.

\section{Frames of congruences}\label{section:congruences}

\subsection{Congruences}

For any class of (potentially infinitary) algebraic structures, one might examine the nature of the congruences on those structures.

\begin{definition}
 A \emph{congruence} on an algebraic structure $A$ is an equivalence relation on $A$ that is also a subalgebra of $A \times A$.
\end{definition}

\begin{example}
 If $A$ is a meet-semilattice then a congruence on $A$ is an equivalence relation on $A$ for which whenever $x \sim y$ and $x' \sim y'$,
 we have $x \wedge x' \sim y \wedge y'$.
\end{example}

\begin{example}
 If $A$ and $B$ are algebraic structures and $f: A \to B$ is a homomorphism of these structures then $\ker f = \{(x,y) \in A\times A \mid f(x) = f(y)\}$
 is a congruence on $A$. This congruence is known as the \emph{kernel} of $f$.
\end{example}

Congruences are important since we may endow the quotient set of $A$ by a congruence $C$ with the appropriate structure to
form the \emph{quotient object} $A/C$ so that the canonical map sending $x \in A$ to $[x] \in A/C$ is a surjective homomorphism. In this
way we may move freely between congruences on $A$, quotient algebras of $A$ and (isomorphism classes of) surjective homomorphisms from $A$.

\begin{remark}\label{rem:factor_through_quotients}
 If $f: A \to B$ is a homomorphism, $C$ is a congruence on $A$ and $f(x) = f(y)$ for all $(x,y) \in C$, then $f$ factors through
 the quotient map $q: A \to A/C$.
\end{remark}

Since quotient maps are surjections, they are immediately seen to be epimorphisms, but we can say more.
\begin{definition}
 An \emph{extremal epimorphism} is a morphism $e: A \to B$ such that for any factorisation $e = mf$, where $m$ is a monomorphism,
 $m$ is in fact an isomorphism.
\end{definition}
\begin{lemma}\label{lem:extremal_epi_surjective}
 Let $\A$ denote the category of some variety of algebras. The extremal epimorphisms in $\A$ are precisely the surjective homomorphisms.
\end{lemma}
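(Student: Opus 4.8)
The plan is to prove the two inclusions separately. First I would show that every surjective homomorphism is an extremal epimorphism. So suppose $e\colon A \to B$ is surjective and $e = mf$ with $m\colon C \to B$ a monomorphism. Since $e$ is surjective, $m$ is surjective as well (its image contains the image of $e$, which is all of $B$). It then suffices to argue that a surjective monomorphism in a variety of algebras is an isomorphism: surjectivity gives a set-theoretic two-sided inverse, and one checks that this inverse is a homomorphism by transporting each operation along $m$ and using that $m$ is a homomorphism together with injectivity of $m$ to cancel. (Alternatively, one can phrase this via the kernel: $m$ injective means $\ker m$ is the trivial congruence, so $C \to C/\ker m \cong B$ is an isomorphism.) Hence $m$ is an isomorphism and $e$ is extremal.

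For the converse, suppose $e\colon A \to B$ is an extremal epimorphism. The idea is to produce a factorisation through the image. Let $C = \ker e$, let $q\colon A \to A/C$ be the quotient map, and let $\bar e\colon A/C \to B$ be the induced map with $e = \bar e\, q$, using \cref{rem:factor_through_quotients}. The map $\bar e$ is injective because $C$ is exactly the kernel of $e$, so $\bar e$ is a monomorphism. Since $e$ is an extremal epimorphism and $q$ is a homomorphism, the factorisation $e = \bar e\, q$ forces $\bar e$ to be an isomorphism. Therefore $e = \bar e\, q$ is a composite of a quotient map (which is surjective) with an isomorphism, hence surjective.

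The main point requiring care — the ``obstacle'', such as it is — is the lemma used in the first half: that a bijective homomorphism of algebras in a variety is an isomorphism, i.e.\ that its set-theoretic inverse is again a homomorphism. This is standard universal algebra but should be stated explicitly, since it is the only place the algebraic (rather than purely categorical) structure is genuinely used; everything else is formal manipulation with kernels and quotients valid in any variety. One should also note in passing that the definition of extremal epimorphism as stated already presupposes $e$ itself need not be assumed epi for this argument — the surjectivity we extract from the extremal condition does the work, and surjective maps are automatically epi, so there is no circularity.
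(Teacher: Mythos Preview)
Your proof is correct and follows essentially the same approach as the paper: show surjections are extremal by observing that any $m$ in a factorisation $e=mf$ must be surjective, hence a bijective homomorphism, hence an isomorphism; and conversely factor an extremal $e$ through its image (you use the isomorphic $A/\ker e$ instead, which is an immaterial difference) to exhibit a monomorphism that must then be an isomorphism. One small wording slip: it is \emph{bijectivity}, not surjectivity alone, that gives a two-sided set-theoretic inverse --- you are tacitly using that monomorphisms in a variety are injective (as the paper also does without comment), and it would be cleaner to state this explicitly rather than bury it in the phrase ``injectivity of $m$'' later in the sentence.
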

\begin{proof}
 If $e$ is a surjection and $e = mf$ then $m$ is also a surjection. So if $m$ is additionally a monomorphism, it is both a surjection and
 an injection and thus a bijection. Bijective homomorphisms are isomorphisms and thus surjective homomorphisms are extremal epimorphisms.
 Any homomorphism $e: A \to B$ can be factored as $ie'$ where $e'$ is a surjection onto the image of $e$ and $i$ is the inclusion of the image into $B$.
 If $e$ is an extremal epimorphism, then the inclusion $i$ is a bijection and so $e$ is surjective.
\end{proof}

The set of all congruences on $A$, denoted here by $\C A$, may be ordered by set inclusion. Since $\C A$ is closed under arbitrary intersections,
it is a complete sub-meet-semilattice of $\mathcal{P}(A\times A)$ and thus forms a complete lattice. The top of this lattice is $1 = A\times A$
and the bottom is $0 = \{(x,x) \mid x \in A\}$. Meets in $\C A$ are simply intersections, but joins arise by
\emph{generating a congruence} from the union.

\begin{definition}
 If $S \subseteq A\times A$ then the smallest congruence in $\C A$ containing $S$ is called the \emph{congruence generated by $S$} and
 is denoted $\langle S\rangle$.
 If $S$ is a singleton $\{(x,y)\}$, we say $\langle S \rangle = \langle(x,y)\rangle$ is the \emph{principal}
 congruence generated by $(x,y)$.
\end{definition}

\begin{remark}\label{rem:lattice_of_quotients}
 The quotients of $A$ may also be naturally ordered such that $A/C \le A/D$ if the quotient map $q: A \to A/C$ factors through the quotient map $r: A \to A/D$.
 By \cref{rem:factor_through_quotients} above, this is just the inverse order to that induced by the ordering on $\C A$.
 
 Furthermore, the induced map $h: A/D \to A/C$ is a surjection and every surjective homomorphism from $A/D$ is of this form.
 Thus, the poset of quotients of $A/D$ is isomorphic to the poset of the quotients of $A$ that are less than or equal to $A/D$.
\end{remark}

It is often useful to define algebraic structures as quotients of free structures. This is known as defining the structure by
\emph{generators and relations}.
\begin{example}
 The group $D_6$ of symmetries of a regular hexagon could be described as follows. Let $F_2$ denote the free group on the set $\{r,t\}$.
 Let $C$ be the congruence on $F_2$ generated by $(r^6, 1)$, $(t^2, 1)$ and $(rt, tr^{-1})$. Then $D_6 \cong F_2/C$ and we may write
 this as $D_6 = \langle r,t \mid r^6 = 1, t^2 =1, rt = tr^{-1}\rangle$.
\end{example}
Any algebraic structure $A$ can be defined in this way by generating a free structure $F$ from all its elements and quotienting out by the
kernel of the obvious morphism from $F$ to $A$.

\subsection{Lattice congruences}

We now restrict ourselves to varieties of algebra that have binary operations, $\wedge$ and $\vee$, which satisfy the lattice axioms.
In this case, congruences can be thought of as specifying a collection of intervals such that all the elements in each interval collapse
to a single element. This is made precise in the following lemma.

\begin{lemma}\label{lem:congruence_intervals} %
 Let $L$ be a lattice, let $C$ be a congruence on $L$ and suppose that $(a,b) \in C$. Then $(a\wedge b, a\vee b) \in C$ and
 furthermore, if $a \le c \le b$, then $(a, c) \in C$ and $(c, b) \in C$.
\end{lemma}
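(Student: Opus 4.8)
The plan is to use only three properties of $C$: it is reflexive, it is an equivalence relation (so symmetric and transitive), and it is closed under the componentwise lattice operations on $L \times L$ (being a subalgebra, hence a sublattice, of $L\times L$). The first and last of these give the standard compatibility property that will do all the work: if $(a,b) \in C$ then for every $d \in L$ we have $(a,b) \wedge (d,d) = (a \wedge d,\, b \wedge d) \in C$ and $(a,b) \vee (d,d) = (a \vee d,\, b \vee d) \in C$, since $(d,d) \in C$ by reflexivity.

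For the first assertion I would apply this with $d = b$. The meet gives $(a \wedge b,\, b \wedge b) = (a \wedge b,\, b) \in C$ and the join gives $(a \vee b,\, b \vee b) = (a \vee b,\, b) \in C$. Applying symmetry to the second pair and then transitivity through $b$ yields $(a \wedge b,\, a \vee b) \in C$, as required.

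For the ``furthermore'' part, suppose $a \le c \le b$ (so in particular $a \le b$) and apply the compatibility property with $d = c$. Since $a \le c$ we get $a \wedge c = a$, and since $c \le b$ we get $b \wedge c = c$, so the meet pair is $(a, c) \in C$. Dually, $a \vee c = c$ and $b \vee c = b$, so the join pair is $(c, b) \in C$.

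I do not expect a genuine obstacle here: the argument is a routine verification and uses no distributivity, so it is valid in any lattice. The only points needing a little care are to invoke reflexivity in order to bring in the auxiliary constant pair $(d,d)$, and to use that $C$ is an equivalence relation so that the pairs produced may be chained together.
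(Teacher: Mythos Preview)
Your proof is correct and essentially the same as the paper's: both use that $C$ is reflexive and a sublattice of $L\times L$ to produce the needed pairs by meeting and joining with a constant pair, then chain them via symmetry and transitivity. The only cosmetic difference is that in the first part you take $d=b$ for both the meet and the join, whereas the paper uses $d=b$ for the meet and $d=a$ for the join before transitivity; the ``furthermore'' argument with $d=c$ is identical.
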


\begin{proof}
 Suppose $(a,b) \in C$. Then $(a,b) \wedge (b,b) = (a \wedge b, b) \in C$ and  $(a,b) \vee (a,a) = (a, a \vee b) \in C$.
 So $(a \wedge b, b), (b, a), (a, a \vee b) \in C$. Thus, by transitivity, $(a \wedge b, a \vee b) \in C$.
 
 If $a \le c \le b$, then $(a, b) \vee (c, c) = (c, b) \in C$ and $(a,b) \wedge (c, c) = (a, c) \in C$.
\end{proof}

In 1942, Funayama and Nakayama showed that the congruence lattices of lattices turn out to be (finitely) distributive \cite{Distributivity}.
In contrast, distributivity of congruence lattices of complete lattices fails spectacularly: in 1988, Grätzer showed that
\emph{every} complete lattice is isomorphic to the congruence lattice of a complete lattice \cite{CompleteCongruence}.

The distributivity result of congruence lattices \emph{does} generalise in the case of distributive lattices and this is the main
theorem presented in this section. First observe that since directed joins of congruences of finitary algebraic structures are simply unions,
finite meets distribute over directed joins of lattice congruences. This, combined with the finite distributivity of \cite{Distributivity}, implies
the frame distributivity law. So the congruence lattice of a lattice, and in particular of a distributive lattice, is a \emph{frame}.
We will see that this result extends from distributive lattices to arbitrary $\kappa$-frames and to frames themselves.

\subsection{Frame congruences}\label{subsec:frame_congruences}

In this subsection we take a closer look at congruences of frames.
If $C$ is a frame congruence on $L$ and $(a_\alpha, b_\alpha) \in C$ for all $\alpha \in I$ then
$(\bigvee_{\alpha \in I} a_\alpha, \bigvee_{\alpha \in I} b_\alpha) \in C$. In particular, every equivalence class $[a]$ in $L/C$
has a largest element: $\bigvee [a]$.

The map $\nu_C(a) = \max\ [a]$ is called a \emph{nucleus}. Given $\nu_C$ we can recover the congruence $C$ as
$\{(a,b) \in L\times L \mid \nu_C(a) = \nu_C(b)\}$ and so nuclei and congruences are in one-to-one correspondence.
Nuclei are characterised in the following lemma.
\begin{lemma}\label{lem:nuclei} %
 A function $\nu: L \to L$ on a frame $L$ is a \emph{nucleus} if and only if it satisfies the following.
 \begin{enumerate}
  \item {\makebox[5cm][l]{$\nu(x) \ge x$} ($\nu$ is inflationary)}
  \item {\makebox[5cm][l]{$\nu \circ \nu = \nu$} ($\nu$ is idempotent)}
  \item {\makebox[5cm][l]{$\nu(x \wedge y) = \nu(x) \wedge \nu(y)$} ($\nu$ preserves binary meets)} \label{property:meet_preserving}
 \end{enumerate}
\end{lemma}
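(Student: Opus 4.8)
The plan is to use the stated bijection between nuclei and congruences and prove the two implications separately. For the forward direction, suppose $\nu = \nu_C$ for a frame congruence $C$, so $\nu(a) = \max[a]$. Well-definedness was already observed: applying the join-closure of $C$ to the family $\{(b,a) \mid b \in [a]\}$ yields $(\bigvee[a], a) \in C$, so $\bigvee[a]$ is genuinely the largest element of its class. Property (1) is then immediate since $a \in [a]$, and property (2) follows because $\nu(a) \in [a]$ forces $[\nu(a)] = [a]$ and hence $\nu(\nu(a)) = \max[\nu(a)] = \max[a] = \nu(a)$. For property (3): from $(\nu(a),a)\in C$ and $(\nu(b),b)\in C$ and meet-closure of $C$ we get $(\nu(a)\wedge\nu(b),\, a\wedge b) \in C$, so $\nu(a)\wedge\nu(b) \le \nu(a\wedge b)$; conversely, joining $(\nu(a\wedge b), a\wedge b)\in C$ with $(a,a)$ gives $\nu(a\wedge b)\vee a \in [a]$, whence $\nu(a\wedge b)\le\nu(a)$, and symmetrically $\nu(a\wedge b)\le\nu(b)$, giving the reverse inequality.

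For the converse, given $\nu$ satisfying (1)--(3), I would set $C_\nu = \{(a,b)\in L\times L \mid \nu(a)=\nu(b)\}$ and show it is a frame congruence with $\nu_{C_\nu}=\nu$. As a kernel of a function it is automatically an equivalence relation, and closure under finite meets is immediate from (3). The one step requiring care is closure under arbitrary joins. For this I would first note that (3) alone gives monotonicity of $\nu$ (if $x\le y$ then $\nu(x) = \nu(x\wedge y) = \nu(x)\wedge\nu(y) \le \nu(y)$), and then establish the identity $\nu(\bigvee_\alpha a_\alpha) = \nu(\bigvee_\alpha \nu(a_\alpha))$ from monotonicity together with (1) and (2): the inequality $\le$ holds since $\bigvee a_\alpha \le \bigvee\nu(a_\alpha)$, while $\ge$ follows because $\nu(a_\beta)\le\nu(\bigvee a_\alpha)$ for each $\beta$, so $\bigvee\nu(a_\alpha)\le\nu(\bigvee a_\alpha)$ and applying $\nu$ with (2) finishes it. Given this identity, if $\nu(a_\alpha)=\nu(b_\alpha)$ for all $\alpha$ then $\nu(\bigvee a_\alpha) = \nu(\bigvee\nu(a_\alpha)) = \nu(\bigvee\nu(b_\alpha)) = \nu(\bigvee b_\alpha)$, so $C_\nu$ is join-closed and hence a frame congruence.

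It then remains to verify $\nu_{C_\nu}=\nu$. By (2), $(a,\nu(a))\in C_\nu$, so $\nu(a)\in[a]_{C_\nu}$; and by (1), any $b$ with $\nu(b)=\nu(a)$ satisfies $b\le\nu(b)=\nu(a)$, so $\nu(a)$ is also an upper bound of $[a]_{C_\nu}$. Hence $\nu(a) = \max[a]_{C_\nu} = \nu_{C_\nu}(a)$. The main obstacle is precisely the closure of $C_\nu$ under arbitrary joins: this is the only place the full frame structure (rather than just the lattice structure) enters, and it hinges on the identity $\nu(\bigvee a_\alpha) = \nu(\bigvee\nu(a_\alpha))$, so it is worth flagging that monotonicity must be extracted from the meet-preservation axiom rather than assumed separately.
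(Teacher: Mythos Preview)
Your proof is correct and follows essentially the same route as the paper: in both directions the key nontrivial step is the identity $\nu\bigl(\bigvee_\alpha a_\alpha\bigr) = \nu\bigl(\bigvee_\alpha \nu(a_\alpha)\bigr)$, derived from monotonicity, inflationarity and idempotence exactly as you do, and then used (via transitivity) to show $C_\nu$ is join-closed. Your write-up is in fact slightly more complete than the paper's, since you explicitly verify $\nu_{C_\nu} = \nu$ and spell out that monotonicity comes from meet-preservation rather than being assumed.
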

\begin{proof}
 ($\Rightarrow$) It is straightforward to check that a nucleus $\nu$ is monotone, inflationary and idempotent.
 It remains to show that $\nu(x) \wedge \nu(y) \le \nu(x \wedge y)$. In the associated congruence, $x \sim \nu(x)$ and $x \sim \nu(y)$.
 Thus, $x \wedge y \sim \nu(x) \wedge \nu(y)$ and so $\nu(x \wedge y) = \nu(\nu(x) \wedge \nu(y))$ from which the desired inequality follows.
 
 ($\Leftarrow$) Let $C = \{(a,b) \in L\times L \mid \nu_C(a) = \nu_C(b)\}$. This is obviously an equivalence relation. We show it is a congruence.
 Closure under binary meets follows from property \ref{property:meet_preserving}. Now take $(a_\alpha)_{\alpha \in I}$ in $L$.
 Since $\nu$ is inflationary and monotone, $\nu\left(\bigvee_{\alpha \in I} a_\alpha\right) \le \nu\left(\bigvee_{\alpha \in I} \nu(a_\alpha)\right)$.
 But again by monotonicity, $\bigvee_{\alpha \in I} \nu(a_\beta) \le \nu\left(\bigvee_{\alpha \in I} a_\alpha\right)$.
 Applying $\nu$ to both sides and using idempotence, we obtain
 $\nu\left(\bigvee_{\alpha \in I} \nu(a_\beta)\right) \le \nu\left(\bigvee_{\alpha \in I} a_\alpha\right)$.
 Thus, $\bigvee_{\alpha \in I}a_\alpha \sim \bigvee_{\alpha \in I} \nu(a_\alpha)$ in $C$ and so, by transitivity, $C$ is closed under joins.
\end{proof}

The usual order on congruences corresponds to the pointwise order on nuclei. The pointwise meet of a family of nuclei can be shown to
again be a nucleus and so meets can always be calculated pointwise, but joins are more complicated.

Since every equivalence class in a quotient frame has its maximum element as a canonical representative, quotient frames can be
viewed as a subposet of the original frame and one can show that this poset is closed under meets in the parent frame.

Restricting the codomain of the nucleus to this subset turns the nucleus into a surjective frame map --- the associated quotient map.
Also notice that we can recover a nucleus from a surjective frame map $h$ as $h_*h$ where $h_*$ is the right adjoint of $h$.

\subsection{Frame congruences and subspaces}\label{subsec:congruences_and_subspaces}

Since frames were originally motivated as `generalised' topological spaces, it is interesting to consider the relationship between
frame congruences and their spatial analogue.

By \cref{lem:extremal_epi_surjective}, quotient maps are the extremal epimorphisms in $\Frm$.
These are analogous to \emph{extremal monomorphisms} in $\Top$.
\begin{lemma}
 The extremal monomorphisms in $\Top$ are the subspace embeddings.
\end{lemma}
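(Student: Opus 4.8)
The plan is to prove both inclusions, using the standard facts that the monomorphisms in $\Top$ are the injective continuous maps and the epimorphisms are the surjective continuous maps, together with the evident dual of the definition given above for extremal epimorphisms: an \emph{extremal monomorphism} is a monomorphism $m$ such that in any factorisation $m = fe$ with $e$ an epimorphism, $e$ is an isomorphism.

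First I would show that every subspace embedding $m\colon A \to B$ is an extremal monomorphism. It is injective and continuous, hence a monomorphism. Given a factorisation $m = fe$ with $e\colon A \to C$ an epimorphism, $e$ is automatically a monomorphism (if $ea = eb$ then $ma = mb$, so $a = b$ by injectivity of $m$), so $e$ is a continuous bijection; it then remains only to see that $e^{-1}$ is continuous, i.e. that $e$ is an open map. If $U \subseteq A$ is open, then since $m$ is an embedding we have $U = m^{-1}(V) = e^{-1}(f^{-1}(V))$ for some open $V \subseteq B$, and because $e$ is surjective $e(U) = f^{-1}(V)$, which is open in $C$ by continuity of $f$. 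Thus $e$ is a homeomorphism.

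Conversely, I would show that every extremal monomorphism $m\colon A \to B$ is a subspace embedding. Since $m$ is a monomorphism it is injective and continuous. Form the canonical factorisation $m = fe$ in which $e\colon A \to A'$ is the identity on underlying sets, where $A'$ has the same underlying set as $A$ but is topologised by declaring a subset open precisely when it is of the form $m^{-1}(V)$ for some open $V \subseteq B$ (the initial, or subspace, topology induced by $m$); continuity of $m$ makes this topology coarser than that of $A$, so $e$ is continuous, and $f\colon A' \to B$, which has the same underlying function as $m$, is continuous by construction. As a continuous surjection, $e$ is an epimorphism, so extremality of $m$ forces $e$ to be an isomorphism; hence $A$ already carried the subspace topology induced by $m$, and $m$ is an embedding. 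This construction is the evident dual of the image-factorisation used in the proof of \cref{lem:extremal_epi_surjective}.

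There is no serious obstacle here: the only points requiring a little care are the characterisations of monomorphisms and epimorphisms in $\Top$ (or at least that a continuous surjection is epic and a monomorphism is injective), and recognising the right canonical factorisation — namely, coarsening the topology on $A$ to the subspace topology induced by $m$ — which plays the role dual to factoring through the image in the case of extremal epimorphisms.
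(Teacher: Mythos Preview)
Your proof is correct and follows essentially the same approach as the paper's. Both directions match: you show an embedding is extremal by arguing that in any factorisation $m=fe$ the epimorphism $e$ is a bijection and then a homeomorphism via the initial topology (the paper phrases this as ``$m$ initial implies $e$ initial'', which you unpack explicitly), and for the converse you factor $m$ through its induced subspace topology and use extremality to conclude. The only cosmetic difference is that the paper factors through the image $m(A)\hookrightarrow B$ while you retopologise $A$ itself; since $m$ is injective these are the same factorisation up to the obvious identification.
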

\begin{proof}
 Recall that the monomorphisms and epimorphisms in $\Top$ are the injections and surjections respectively.
 Suppose $m: X \to Y$ is an embedding and $m = fe$ where $e$ is an epimorphism. Since $m$ is an injection, $e$ is a bijection.
 But now recall that an embedding is \emph{initial} --- the topology on its domain is induced by the topology on its codomain.
 Since $m$ is initial, so is $e$. Thus $e$ is a homeomorphism and $m$ is an extremal monomorphism.
 
 Suppose $m: X \to Y$ is an extremal monomorphism. We can factorise $m$ as $im'$ where $m'$ is the restriction of $m$ to its image
 and $i$ is an embedding. Since $m'$ is an epimorphism and $m$ is extremal, $m'$ is a homeomorphism. Thus, $m$ is an embedding.
\end{proof}

We often think of quotient frames as if they were subspaces and call them \emph{sublocales}.
If $L$ is a spatial frame, its sublocales and subspaces may be compared directly.
The remaining results in this section are taken from \cite[pp.~7--8]{PicadoPultr}.

Let $X$ be a topological space and $A \subseteq X$. Let $i_A: A \hookrightarrow X$ be the inclusion map.
The corresponding frame map $\Omega(i_A): \Omega(X) \to \Omega(A)$ is given by $\Omega(i_A)(U) = U \cap A$.
Since this map is surjective, we can indeed view $\Omega(A)$ as a quotient of $\Omega(X)$. On the other hand,
in \cref{subsec:dense_congruences} %
we will see that there are often additional non-spatial sublocales that do not correspond to any subspace.

The congruence associated with $A$ is $E_A = \{(U,V) \in \Omega(X)\times\Omega(X) \mid U\cap A = V\cap A\}$.
It is easy to see that if $A \supseteq B$ then $E_A \le E_B$ and more generally that $E_{A \cup B} = E_A \wedge E_B$.
(Recall that congruences have the reverse order to sublocales.)
However, it is generally not true that $E_{A\,\cap B} = E_A \vee E_B$ (see \cref{ex:smallest_dense_sublocale_of_reals} in \cref{subsec:dense_congruences}).

The following example shows that it is possible that $E_A \le E_B$, but $B \nsubseteq A$.
\begin{example}
 Let $X$ be the sobrification of $\N$ with the cofinite topology.
 That is, $X = \N \cup \{\omega\}$ and the non-empty open sets are the cofinite sets which contain $\omega$.
 The subsets $\N$ and $X$ induce the same congruence since no two open sets differ merely in whether they contain $\omega$.
\end{example}

Fortunately, there is a large class of spaces, where the above pathology is ruled out.
In particular, this class includes every $T_1$ space.
\begin{definition}\label{def:T_D}
 A topological space $X$ is $T_D$ if for every $x \in X$ there is an open $U \ni x$ such that $U \setminus \{x\}$ is still open.
\end{definition}
\begin{lemma}
 A space $X$ is $T_D$ if and only if, for all $A, B \subseteq X$, $E_A \le E_B \implies B \subseteq A$.
\end{lemma}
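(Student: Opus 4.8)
The plan is to prove both implications directly, exploiting the explicit description $E_A = \{(U,V) \mid U \cap A = V \cap A\}$ together with the observation that ``$E_A \le E_B$'' unpacks to ``$U \cap A = V \cap A$ implies $U \cap B = V \cap B$'' (keeping in mind that congruences are ordered oppositely to sublocales).

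For the forward implication, I would suppose $X$ is $T_D$ and $E_A \le E_B$, and show $B \subseteq A$ by contradiction. Take $b \in B$ and assume $b \notin A$. Using the $T_D$ property at $b$, choose an open $U \ni b$ with $V := U \setminus \{b\}$ open. Since $b \notin A$, the open sets $U$ and $V$ have the same intersection with $A$, so $(U,V) \in E_A \le E_B$; but $b \in U \cap B$ while $b \notin V \cap B$, so $(U,V) \notin E_B$ --- a contradiction. Hence $b \in A$, and as $b$ was arbitrary, $B \subseteq A$.

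For the converse, suppose $E_A \le E_B \implies B \subseteq A$ holds for all $A, B$, and fix $x \in X$. I would instantiate the hypothesis at $A = X \setminus \{x\}$ and $B = X$. Note first that $E_X$ is the identity congruence $0$, since $U \cap X = U$. If we had $E_A \le E_X = 0$, the hypothesis would force $X = B \subseteq A = X \setminus \{x\}$, which is absurd; so $E_A \not\le 0$. This produces open sets $U \ne V$ with $U \setminus \{x\} = U \cap A = V \cap A = V \setminus \{x\}$. Since $U$ and $V$ agree off $\{x\}$ yet are distinct, exactly one contains $x$; say $x \in U$ and $x \notin V$. Then $V = V \setminus \{x\} = U \setminus \{x\}$ is open, so $U$ is an open neighbourhood of $x$ whose removal of $x$ remains open, witnessing $T_D$.

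I do not anticipate a substantive obstacle: the argument is essentially bookkeeping once the two test pairs are identified. The only points needing a little care are the order convention just mentioned, and recognising that the correct instantiation for the converse is $A = X \setminus \{x\}$, $B = X$, exploiting $E_X = 0$.
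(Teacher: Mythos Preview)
Your proposal is correct and follows essentially the same approach as the paper: the forward direction uses the $T_D$ witness at a point of $B\setminus A$ to exhibit a pair in $E_A\setminus E_B$, and the converse instantiates the hypothesis at $A=X\setminus\{x\}$, $B=X$ (using $E_X=0$) to extract the required open neighbourhood. Your write-up is in fact slightly cleaner than the paper's, which contains a minor typo in the forward direction (it writes $x\in A\setminus B$ and $(U,V)\in E_B\setminus E_A$ where the roles of $A$ and $B$ should be swapped).
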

\begin{proof}
 ($\Rightarrow$) Suppose $B \nsubseteq A$. Then there is an $x \in A \setminus B$. Let $U$ be an open set containing $x$ with
 $V = U \setminus \{x\}$ also open. Then $(U, V) \in E_B \setminus E_A$, so $E_A \not\le E_B$.
 
 ($\Leftarrow$) Take $x \in X$. Since $X \nsubseteq X\setminus\{x\}$, we know $E_{X\setminus\{x\}} \not\le E_X$.
 So there is some pair of distinct open sets $(U,V) \in E_{X\setminus\{x\}}$. That is, $U \cap (X\setminus\{x\}) = V \cap (X\setminus\{x\})$,
 but $U \ne V$. Thus, one of $U$ and $V$, say $U$, contains $x$ and then $V = U \setminus \{x\}$.
\end{proof}

\subsection{Closed and open congruences}\label{subsec:closed_open_congs}

From now on we will restrict our consideration to congruences of frames and $\kappa$-frames.
Motivated by our spatial analogy, we look for congruences which we could call open or closed.

When considering a subspace $S$ of a topological space $(X,\tau)$, we usually think of $S$ as now being `everything that exists' ---
in the subspace, any open set that contains $S$ may be identified with $S$. If $S$ is open in $X$, it is an element of $\tau$ and we can
define a congruence $\Delta_S$ on $\tau$ which identifies $S$ with the top element of the frame, $\Delta_S = \langle(S,1)\rangle$.

If $S$ is a closed subspace, we cannot refer to it directly as an element of $\tau$, but its complement $T = X \setminus S$
is open and so $T \in \tau$. Any open subset of $T$ completely misses $S$ and may be identified with
the empty set. We obtain the congruence $\nabla_T = \langle(0,T)\rangle$.

Both of the above congruences are principal congruences of particularly simple forms.
We are thus lead to define open and closed congruences more generally.

\begin{definition}
 If $L$ is a $\kappa$-frame and $a \in L$, an \emph{open congruence} on $L$ is a congruence of the form $\Delta_a = \langle(a,1)\rangle$
 and a \emph{principal closed congruence} is one of the form $\nabla_a = \langle(0,a)\rangle$.
\end{definition}

It is sometimes useful to have a more explicit description of open and principal closed congruences.
\begin{lemma}\label{lem:open_and_closed_congruences} %
 $\nabla_a = \{(x,y) \,\mid\, x \vee a = y \vee a\}$ and $\Delta_a = \{(x,y) \,\mid\, x \wedge a = y \wedge a\}$.
\end{lemma}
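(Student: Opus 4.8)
The plan is to verify the two claimed descriptions by showing in each case that the right-hand side is a congruence containing the generating pair and that it is contained in every congruence containing that pair; since $\nabla_a$ and $\Delta_a$ are \emph{defined} as the smallest such congruences, this pins them down exactly. I will treat $\nabla_a$ first and then obtain $\Delta_a$ by a dual argument.

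For $\nabla_a$, set $R = \{(x,y) \mid x\vee a = y\vee a\}$. First I would check $R$ is a $\kappa$-frame congruence: it is the kernel of the map $x\mapsto x\vee a$, and although $x\mapsto x\vee a$ is not a $\kappa$-frame homomorphism, $R$ is still readily seen to be an equivalence relation, closed under finite meets (using distributivity: $(x\wedge x')\vee a = (x\vee a)\wedge(x'\vee a)$), and closed under $\kappa$-joins (since $(\bigvee x_\alpha)\vee a = \bigvee(x_\alpha\vee a)$). Clearly $(0,a)\in R$. For minimality, suppose $C$ is any congruence with $(0,a)\in C$ and $(x,y)\in R$, so $x\vee a = y\vee a$. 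Joining $(0,a)\in C$ with $(x,x)\in C$ gives $(x, x\vee a)\in C$, and likewise $(y, y\vee a)\in C$; since $x\vee a = y\vee a$, transitivity yields $(x,y)\in C$. Hence $R\subseteq C$, so $R = \nabla_a$.

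For $\Delta_a$, set $S = \{(x,y)\mid x\wedge a = y\wedge a\}$, the kernel of $x\mapsto x\wedge a$, which \emph{is} a $\kappa$-frame congruence since $x\mapsto x\wedge a$ preserves finite meets and $\kappa$-joins by $\kappa$-frame distributivity; and $(a,1)\in S$. For minimality, let $C\ni(a,1)$ and $(x,y)\in S$. Meeting $(a,1)\in C$ with $(x,x)\in C$ gives $(x\wedge a, x)\in C$ and similarly $(y\wedge a, y)\in C$; since $x\wedge a = y\wedge a$, transitivity gives $(x,y)\in C$, so $S\subseteq C$ and $S = \Delta_a$.

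The only point requiring a little care — and the closest thing to an obstacle — is the closure of $R$ under joins in the $\nabla_a$ case, since $x\mapsto x\vee a$ is not a homomorphism and one cannot simply invoke ``kernel of a homomorphism''; but the identity $(\bigvee_\alpha x_\alpha)\vee a = \bigvee_\alpha(x_\alpha\vee a)$ handles it directly, so the argument goes through without difficulty. Everything else is a routine application of \cref{lem:congruence_intervals}-style manipulations with joins and meets of pairs in $C$.
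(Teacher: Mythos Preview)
Your proof is correct and follows essentially the same route as the paper: verify that the explicit relation is a congruence containing the generator, then show (via the same join-with-$(x,x)$ and transitivity manoeuvre) that any congruence containing the generator contains the explicit relation. The only cosmetic difference is that the paper works directly inside $\nabla_a$ rather than quantifying over an arbitrary congruence $C \ni (0,a)$, and your aside that $x \mapsto x \vee a$ is not a homomorphism is a little misleading---it \emph{is} a $\kappa$-frame map onto $\uparrow\! a$ (cf.\ \cref{lem:open_closed_lowerset_and_upperset}), so one could invoke ``kernel of a homomorphism'' after all---but your direct verification is fine.
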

\begin{proof}
 Let $A = \{(x,y) \,\mid\, x \vee a = y \vee a\}$ and $B = \{(x,y) \,\mid\, x \wedge a = y \wedge a\}$.
 Clearly these are both equivalence relations. Furthermore, $A$ and $B$ are closed under binary meets by
 the distributivity of $L$ and commutativity/idempotence respectively. Similarly, $A$ and $B$ are closed
 under $\kappa$-joins by commutativity/idempotence and distributivity respectively.
 
 Since $(0,a) \in A$, we have $\nabla_a \subseteq A$.
 
 On the other hand, suppose $x,y \in L$ are such that $x \vee a = y \vee a$.
 Now $(x,x) \in \nabla_a$ by reflexivity and so $(x \vee 0,x \vee a) = (x, x \vee a) \in \nabla_a$.
 Similarly, $(y \vee a, y) \in \nabla_a$. But $x \vee a = y \vee a$, so by transitivity $(x,y) \in \nabla_a$.
 Thus, $A \subseteq \nabla_a$ and so $\nabla_a = A$.
 
 The proof that $\Delta_a = B$ is similar.
\end{proof}
\begin{corollary} \label{cor:nabla_mono}
 The functions $\nabla_\bullet: a \mapsto \nabla_a$ and $\Delta_\bullet: a \mapsto \Delta_a$ are injective.
\end{corollary}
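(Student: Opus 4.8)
The plan is to show each of $\nabla_\bullet$ and $\Delta_\bullet$ is injective by exploiting the explicit descriptions just established in \cref{lem:open_and_closed_congruences}. The key observation is that from the congruence $\nabla_a$ we can recover $a$ itself, and similarly for $\Delta_a$.

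First I would treat $\nabla_\bullet$. Suppose $\nabla_a = \nabla_b$ for some $a, b \in L$. Since $(0,a) \in \nabla_a = \nabla_b$, the description $\nabla_b = \{(x,y) \mid x \vee b = y \vee b\}$ gives $0 \vee b = a \vee b$, i.e.\ $b = a \vee b$, so $a \le b$. By the symmetric argument $b \le a$, hence $a = b$. Even more cleanly, one can note that $a$ is the largest element with $(0,a) \in \nabla_a$: indeed $(0,x) \in \nabla_a$ iff $x \vee a = a$ iff $x \le a$, so $a = \bigvee\{x \mid (0,x) \in \nabla_a\}$ is determined by the congruence.

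Next I would do the dual argument for $\Delta_\bullet$. Suppose $\Delta_a = \Delta_b$. Since $(a,1) \in \Delta_a = \Delta_b$, the description $\Delta_b = \{(x,y) \mid x \wedge b = y \wedge b\}$ gives $a \wedge b = 1 \wedge b = b$, so $b \le a$, and symmetrically $a \le b$, whence $a = b$. Equivalently $(x,1) \in \Delta_a$ iff $x \wedge a = a$ iff $x \ge a$, so $a = \bigwedge\{x \mid (x,1) \in \Delta_a\}$ is recoverable from the congruence.

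There is no real obstacle here: everything follows immediately from \cref{lem:open_and_closed_congruences}, which has already done the substantive work of identifying $\nabla_a$ and $\Delta_a$ explicitly. The only thing to be mildly careful about is that these computations use only the lattice operations and the bounds $0, 1$, all of which are available in a $\kappa$-frame (and in a frame), so the corollary holds at the stated level of generality.
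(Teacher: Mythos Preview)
Your argument is correct and is exactly the intended one: the paper states this as an immediate corollary of \cref{lem:open_and_closed_congruences} without further proof, and your computation is precisely how one reads injectivity off from the explicit descriptions $\nabla_a = \{(x,y)\mid x\vee a = y\vee a\}$ and $\Delta_a = \{(x,y)\mid x\wedge a = y\wedge a\}$.
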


The following lemma confirms suspicions about what `identifying an element $a$ with the top or bottom element' might mean.
\begin{lemma}\label{lem:open_closed_lowerset_and_upperset}
 If $a \in L$ then $L/\Delta_a \cong \;\downarrow\! a$ and $L/\nabla_a \cong \;\uparrow\! a$.
\end{lemma}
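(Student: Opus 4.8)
The plan is to exhibit explicit frame isomorphisms in each case, using the quotient description of congruences. Recall that a quotient of a frame by a congruence $C$ can be realised as the subposet $\nu_C(L)$ of fixed points of the associated nucleus, with joins computed by applying $\nu_C$ to the join in $L$ and meets inherited from $L$. So first I would identify the nucleus $\nu_{\Delta_a}$ associated with the open congruence $\Delta_a$. Using the explicit description $\Delta_a = \{(x,y) \mid x \wedge a = y \wedge a\}$ from \cref{lem:open_and_closed_congruences}, the maximum of the class $[x]$ is the largest $y$ with $y \wedge a = x \wedge a$; this is precisely $a \rightarrow (x \wedge a) = a \rightarrow x$, the Heyting implication. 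Hence $\nu_{\Delta_a}(x) = (a \rightarrow x)$, and its set of fixed points is $\{x \mid a \rightarrow x = x\}$. I would check that this fixed-point set coincides with $\uparrow\! a$... no: $a \rightarrow x \ge x$ always, and $a \rightarrow x = x$ forces $x \ge a$ is false in general — rather one checks directly that the assignment $x \mapsto a \rightarrow x$ restricts to a bijection between $L/\Delta_a$ and $\downarrow\! a$ via $y \mapsto y \wedge a$ in the other direction. The cleaner route, which I would actually take, is to define $\varphi \colon L \to \;\downarrow\! a$ by $\varphi(x) = x \wedge a$ and observe it is a surjective frame homomorphism (it preserves finite meets since $a \wedge a = a$, and preserves arbitrary joins by frame distributivity) with kernel exactly $\Delta_a$; then $L/\Delta_a \cong \;\downarrow\! a$ follows from the correspondence between congruences and surjective homomorphisms.

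For the closed case I would do the dual thing: define $\psi \colon L \to \;\uparrow\! a$ by $\psi(x) = x \vee a$. This preserves finite meets precisely because $L$ is distributive ($(x \vee a) \wedge (y \vee a) = (x \wedge y) \vee a$), preserves arbitrary joins trivially, sends $0$ to $a$ (the bottom of $\uparrow\! a$) and $1$ to $1$, so it is a frame homomorphism onto $\uparrow\! a$; and $\psi$ is visibly surjective since $\psi(z) = z$ for $z \ge a$. Its kernel is $\{(x,y) \mid x \vee a = y \vee a\}$, which is exactly $\nabla_a$ by \cref{lem:open_and_closed_congruences}. Invoking once more the bijection between congruences and (isomorphism classes of) surjective homomorphisms, we get $L/\nabla_a \cong \;\uparrow\! a$.

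The main point requiring a little care is verifying that these maps are genuine frame homomorphisms — specifically that $\psi$ preserves binary meets, which is where distributivity of $L$ is used, and that $\varphi$ preserves arbitrary joins, which is where the frame distributivity law is used. Everything else (surjectivity, identifying the kernel with $\Delta_a$ or $\nabla_a$) is immediate from \cref{lem:open_and_closed_congruences}. I do not anticipate a real obstacle; the only thing to be slightly careful about is to phrase the conclusion via the established congruence–quotient correspondence rather than constructing an isomorphism $L/\Delta_a \to \;\downarrow\! a$ by hand, since the former is cleaner. If one did want the explicit isomorphism, it is induced by $\varphi$ (resp. $\psi$) on equivalence classes, with inverse given on $\downarrow\! a$ by $y \mapsto [a \rightarrow y]$ (resp. on $\uparrow\! a$ by the identity inclusion followed by the quotient map), and checking these are mutually inverse is a short computation with the Heyting adjunction.
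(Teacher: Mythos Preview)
Your proposal is correct and is essentially the paper's own argument: define the surjections $x \mapsto x \wedge a$ onto $\downarrow\! a$ and $x \mapsto x \vee a$ onto $\uparrow\! a$, check they are homomorphisms using distributivity, and identify their kernels as $\Delta_a$ and $\nabla_a$ via \cref{lem:open_and_closed_congruences}. The only small point to tidy is that the ambient setting is $\kappa$-frames, so speak of $\kappa$-joins and $\kappa$-frame homomorphisms rather than arbitrary joins and Heyting arrows (your nucleus digression is frame-specific and unnecessary here).
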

\begin{proof}
 First notice that $\downarrow\!a$ and $\uparrow\!a$ are indeed $\kappa$-frames with the order which they inherit from $L$.
 (They are \emph{not} sub-$\kappa$-frames of $L$, however, since they may fail to contain $1$ and $0$ respectively.)
 
 The maps $(\bullet \wedge a): L \to  \;\downarrow\! a$ and $(\bullet \vee a): L \to  \;\uparrow\! a$
 are surjective $\kappa$-frame homomorphisms with kernels $\Delta_a$ and $\nabla_a$ respectively.
\end{proof}

We can generalise \cref{lem:open_and_closed_congruences} to give an expression for the binary join of one of these special congruences
and an arbitrary congruence \cite{FrithCong}.
\begin{lemma}\label{lem:joins_with_principal}
 If $a \in L$ and $C \in \C L$ then $\nabla_a \vee C = \{(x,y) \,\mid\, (x \vee a, y \vee a) \in C\}$ and
 $\Delta_a \vee C = \{(x,y) \,\mid\, (x \wedge a, y \wedge a) \in C\}$.
\end{lemma}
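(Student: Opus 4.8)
The plan is to prove the identity for $\nabla_a$ and then deduce the one for $\Delta_a$ by the order-dual argument (interchanging $\wedge$ with $\vee$, $0$ with $1$, and $\nabla$ with $\Delta$, and using the dual half of \cref{lem:open_and_closed_congruences}). Write $D = \{(x,y) \mid (x \vee a, y \vee a) \in C\}$; the goal is to show $D = \nabla_a \vee C$.

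First I would check that $D$ is itself a congruence. That it is an equivalence relation is inherited directly from $C$. Closure under binary meets comes from the identity $(x \vee a) \wedge (x' \vee a) = (x \wedge x') \vee a$ (frame distributivity), which lets one transfer a pair of memberships in $C$ to $((x \wedge x') \vee a,\ (y \wedge y') \vee a) \in C$; closure under $\kappa$-joins comes similarly from $\bigvee_{\alpha}(x_\alpha \vee a) = \bigl(\bigvee_\alpha x_\alpha\bigr) \vee a$, where only associativity and idempotence of joins are used, so the restriction to $\kappa$-sets causes no trouble. This is the only step involving any computation, and it is routine.

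Next I would show $D$ contains both generators, so that $\nabla_a \vee C \subseteq D$. Using the explicit form of $\nabla_a$ from \cref{lem:open_and_closed_congruences}, any $(x,y) \in \nabla_a$ satisfies $x \vee a = y \vee a$, whence $(x \vee a, y \vee a) \in C$ by reflexivity; and any $(x,y) \in C$ gives $(x \vee a, y \vee a) = (x,y) \vee (a,a) \in C$. For the reverse inclusion, take $(x,y) \in D$. By \cref{lem:open_and_closed_congruences} again, $(x,\, x \vee a) \in \nabla_a$ and $(y \vee a,\, y) \in \nabla_a$ (since $x \vee a = (x \vee a) \vee a$), while $(x \vee a,\, y \vee a) \in C$ by definition of $D$; chaining these three pairs through transitivity of the congruence $\nabla_a \vee C$ yields $(x,y) \in \nabla_a \vee C$.

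There is essentially no real obstacle here: the whole argument combines the description of principal closed congruences from the previous lemma with the fact that $\nabla_a \vee C$ is the smallest congruence containing both $\nabla_a$ and $C$. The only place to be mildly careful is the verification that $D$ is closed under the $\kappa$-frame operations, and the dual verification for $D' = \{(x,y) \mid (x \wedge a, y \wedge a) \in C\}$, where — exactly as in the proof of \cref{lem:open_and_closed_congruences} — the roles of distributivity and of commutativity/idempotence are swapped between the meet-closure and the join-closure.
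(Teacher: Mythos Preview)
Your proof is correct and follows exactly the approach the paper intends: the paper's own proof simply reads ``As for \cref{lem:open_and_closed_congruences}'', and your expansion --- checking that $D$ is a congruence via distributivity and idempotence, then establishing both inclusions by the transitivity chain $(x,\,x\vee a)\in\nabla_a$, $(x\vee a,\,y\vee a)\in C$, $(y\vee a,\,y)\in\nabla_a$ --- is precisely that argument spelled out.
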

\begin{proof}
 As for \cref{lem:open_and_closed_congruences}.
\end{proof}

Every principal congruence can be expressed in terms of open and principal closed congruences. Consider $\langle(a,b)\rangle$.
By \cref{lem:congruence_intervals} we may assume $a \le b$ without loss of generality and then apply the following lemma.
\begin{lemma} \label{lem:principal_congruence}
 Suppose $a,b \in L$ and $a \le b$. Then $\langle(a,b)\rangle = \nabla_b \wedge \Delta_a$.
\end{lemma}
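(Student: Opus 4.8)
The plan is to show the two congruences are equal by showing each is contained in the other, using the explicit descriptions already available. First I would establish the easy inclusion $\langle(a,b)\rangle \subseteq \nabla_b \wedge \Delta_a$. By \cref{lem:open_and_closed_congruences}, $(a,b) \in \nabla_b$ since $a \vee b = b = b \vee b$ (using $a \le b$), and $(a,b) \in \Delta_a$ since $a \wedge a = a = b \wedge a$. Hence the pair $(a,b)$ lies in the meet $\nabla_b \wedge \Delta_a$, which is a congruence, so the congruence it generates is contained there.

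For the reverse inclusion, suppose $(x,y) \in \nabla_b \wedge \Delta_a$, so $x \vee b = y \vee b$ and $x \wedge a = y \wedge a$. I want to deduce $(x,y) \in \langle(a,b)\rangle$. The natural route is to pass to the quotient: by \cref{rem:factor_through_quotients} it suffices to show that the quotient map $q \colon L \to L/\langle(a,b)\rangle$ identifies $x$ and $y$, or equivalently (via the associated nucleus $\nu$) that $\nu(x) = \nu(y)$. In the quotient, $a$ and $b$ become equal, and from $x \wedge a = y \wedge a$ and $x \vee b = y \vee b$ one should be able to manufacture the equality $\nu(x) = \nu(y)$ directly: since $a \sim b$, we get $x \sim x \vee (a \text{ vs } b)$-type relations. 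Concretely, $x \sim x$ and $a \sim b$ give $x \vee a \sim x \vee b$ and $x \wedge a \sim x \wedge b$; combining with the corresponding relations for $y$ and the two hypotheses, transitivity should close the gap. An alternative, perhaps cleaner, is to use the join formulas of \cref{lem:joins_with_principal}: note $\nabla_b \wedge \Delta_a$ and observe $\langle(a,b)\rangle = \nabla_b \vee \Delta_a$ would be false in general, so one really does want the meet here — but one can still argue that $(x,y) \in \nabla_b$ means $(x \vee b, y \vee b) \in 0 \subseteq \langle(a,b)\rangle$ trivially and use a decomposition of $(x,y)$.

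The cleanest decomposition I would aim for: from $(a,b) \in \langle(a,b)\rangle$ and \cref{lem:congruence_intervals}, for any $c$ with $a \le c \le b$ we have $(a,c),(c,b) \in \langle(a,b)\rangle$. The key algebraic identity to exploit is that when $x \vee b = y \vee b$ and $x \wedge a = y \wedge a$, the elements $x$ and $y$ both lie ``between'' comparable witnesses relative to $a,b$. Precisely, I would write $x \sim (x \wedge a) \vee (x \wedge b)$-style manipulations, or more directly: $(x,y) \in \Delta_a$ with the join formula gives that modulo $\Delta_a$, $x$ and $y$ agree; intersecting $\Delta_a$ with $\nabla_b$ and comparing with the principal congruence. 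I expect the main obstacle to be choosing the right intermediate elements so that repeated application of \cref{lem:congruence_intervals} (which lets us collapse sub-intervals of $[a,b]$) together with the meet/join closure of $\langle(a,b)\rangle$ yields $x \sim y$; the arithmetic is elementary distributive-lattice manipulation but needs the witnesses picked carefully. Once the right chain $x \sim \cdots \sim y$ through elements expressible via $x,y,a,b$ and the operations is found, the proof is immediate. The author likely just writes ``as for \cref{lem:open_and_closed_congruences}'' or gives the short computation; I would supply the explicit chain: $x \sim x \vee a \sim x \vee b = y \vee b \sim y \vee a \sim y$ fails at the last step unless we also use the meet condition, so instead $x = x \wedge (x \vee a) \sim x \wedge (x \vee b)$ and dually — the correct symmetric argument combines both, and that symmetric combination is the one subtle point to get right.
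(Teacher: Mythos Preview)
Your easy inclusion is fine and matches the paper. The hard inclusion, however, is not actually proved in your proposal: you sketch several possible chains, note that each fails on its own, and end by saying the ``symmetric combination is the one subtle point to get right'' without ever producing it. That missing combination is the entire content of the argument, so as written this is a genuine gap.

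Here is the specific computation the paper uses, which your attempts were circling. From $(a,b)\in\langle(a,b)\rangle$, meeting with the diagonal pairs $(x,x)$ and $(y,y)$ gives $(x\wedge a,\,x\wedge b)$ and $(y\wedge a,\,y\wedge b)$ in $\langle(a,b)\rangle$. Since $x\wedge a = y\wedge a$, transitivity yields $(x\wedge b,\,y\wedge b)\in\langle(a,b)\rangle$. Now join this pair with the diagonal $(x\wedge y,\,x\wedge y)$: the result is $\bigl((x\wedge y)\vee(x\wedge b),\,(x\wedge y)\vee(y\wedge b)\bigr)$, which by distributivity equals $\bigl(x\wedge(y\vee b),\,y\wedge(x\vee b)\bigr)$. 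Finally, $y\vee b = x\vee b$ turns this into $(x\wedge(x\vee b),\,y\wedge(y\vee b)) = (x,y)$ by absorption. Your chain $x\sim x\vee a\sim x\vee b = y\vee b\sim y\vee a\sim y$ fails precisely because there is no way to descend from $x\vee a$ back to $x$; the paper's trick is to work on the meet side first and then use a single join with the diagonal $x\wedge y$, so that distributivity and the join hypothesis finish the job simultaneously.
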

\begin{proof}
 Since $0 \le a \le b$, we get $(a,b) \in \nabla_b$ by \cref{lem:congruence_intervals}. Similarly,
 $(a,b) \in \Delta_a$. Thus, $(a,b) \in \nabla_b \wedge \Delta_a$ and so $\langle(a,b)\rangle \subseteq \nabla_b \wedge \Delta_a$.
 
 Now suppose $(x,y) \in \nabla_b \wedge \Delta_a$. Then $x \vee b = y \vee b$ and $x \wedge a = y \wedge a$.
 Notice that $(x,x) \wedge (a,b) = (x \wedge a, x \wedge b) \in \langle(a,b)\rangle$ and similarly $(y \wedge a, y \wedge b) \in \langle(a,b)\rangle$.
 But $x \wedge a = y \wedge a$ then gives $(x \wedge b, y \wedge b) \in \langle(a,b)\rangle$ by transitivity.
 
 We then have $(x\wedge y, x\wedge y) \vee (x \wedge b, y \wedge b) = ((x\wedge y)\vee (x \wedge b), (x\wedge y) \vee (y \wedge b)) \in \langle(a,b)\rangle$.
 But by distributivity of $L$,
 $((x\wedge y)\vee (x \wedge b), (x\wedge y) \vee (y \wedge b)) = (x\wedge (y\vee b), y\wedge(x\vee b))$.
 And using $y\vee b = x \vee b$ and absorption we see this is in turn equal to $(x, y)$ and so $(x,y) \in \langle(a,b)\rangle$ and
 $\nabla_b \wedge \Delta_a \subseteq \langle(a,b)\rangle$ as required.
\end{proof}

\begin{corollary} \label{cor:congruence_lattice_generators}
 Any $A \in \C L$ may be written as $A = \bigvee \{\nabla_b \wedge \Delta_a \mid (a,b) \in A,\, a \le b\}$.
\end{corollary}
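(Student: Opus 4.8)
The plan is to combine \cref{lem:principal_congruence} with the elementary fact that every congruence is the join of the principal congruences it contains, using \cref{lem:congruence_intervals} to deal with the restriction to comparable pairs $a \le b$.

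First I would record that for any $A \in \C L$ one has $A = \bigvee\{\langle(a,b)\rangle \mid (a,b) \in A\}$. The inclusion $\supseteq$ holds because each $\langle(a,b)\rangle$ with $(a,b) \in A$ is contained in $A$ (being the smallest congruence containing a pair that $A$ already contains), and $\subseteq$ holds because every pair $(a,b) \in A$ lies in $\langle(a,b)\rangle$, hence in the join. This step uses nothing special about $\kappa$-frames. Next I would reduce to comparable pairs: given $(a,b) \in A$, \cref{lem:congruence_intervals} yields $(a \wedge b, a \vee b) \in A$, and since $a \wedge b \le a, b \le a \vee b$ the same lemma (applied to $\langle(a \wedge b, a \vee b)\rangle$) shows $(a,b) \in \langle(a \wedge b, a \vee b)\rangle$; together with $(a \wedge b, a \vee b) \in \langle(a,b)\rangle$ this gives $\langle(a,b)\rangle = \langle(a \wedge b, a \vee b)\rangle$. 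By \cref{lem:principal_congruence} this common value is $\nabla_{a \vee b} \wedge \Delta_{a \wedge b}$, which is one of the terms appearing on the right-hand side of the claimed identity, since $(a \wedge b, a \vee b) \in A$ and $a \wedge b \le a \vee b$. Conversely, every term $\nabla_b \wedge \Delta_a$ with $(a,b) \in A$ and $a \le b$ equals $\langle(a,b)\rangle \subseteq A$ by \cref{lem:principal_congruence}. Combining these observations gives
\[A = \bigvee\{\langle(a,b)\rangle \mid (a,b) \in A\} = \bigvee\{\nabla_b \wedge \Delta_a \mid (a,b) \in A,\ a \le b\}.\]

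There is no serious obstacle here; the only point needing a little care is the two-directional use of \cref{lem:congruence_intervals} to see that replacing $(a,b)$ by $(a \wedge b, a \vee b)$ does not change the generated congruence, so that restricting the indexing set to comparable pairs loses nothing. Everything else is a routine unwinding of definitions.
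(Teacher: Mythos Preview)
Your proof is correct and follows exactly the route the paper intends: the corollary is stated without proof, but the sentence preceding \cref{lem:principal_congruence} already indicates that one reduces an arbitrary principal congruence $\langle(a,b)\rangle$ to the comparable case via \cref{lem:congruence_intervals} and then applies \cref{lem:principal_congruence}, after which the corollary is the obvious ``every congruence is the join of its principal subcongruences'' observation. Your write-up just makes these implicit steps explicit.
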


\begin{corollary} \label{cor:nabla_delta_complements}
 $\nabla_a$ and $\Delta_a$ are complements in $\C L$.
\end{corollary}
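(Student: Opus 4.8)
The plan is to verify directly the two defining conditions of a complemented pair in the frame $\C L$, namely that $\nabla_a \wedge \Delta_a = 0$ and $\nabla_a \vee \Delta_a = 1$; each reduces to a one-line computation using the lemmas just established.

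For the meet, I would appeal to \cref{lem:principal_congruence}: setting $b = a$ there yields $\langle(a,a)\rangle = \nabla_a \wedge \Delta_a$, and since $(a,a)$ already lies on the diagonal, $\langle(a,a)\rangle = 0$, so $\nabla_a \wedge \Delta_a = 0$. (One could equally argue from \cref{lem:open_and_closed_congruences}: a pair $(x,y) \in \nabla_a \wedge \Delta_a$ satisfies both $x \vee a = y \vee a$ and $x \wedge a = y \wedge a$, and distributivity of $L$ then forces $x = y$ via $x = x \wedge (x \vee a) = x \wedge (y \vee a) = (x \wedge y) \vee (x \wedge a) = (x \wedge y) \vee (y \wedge a) = y \wedge (x \vee a) = y$.)

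For the join, I would invoke \cref{lem:joins_with_principal} with $C = \nabla_a$, which gives $\Delta_a \vee \nabla_a = \{(x,y) \mid (x \wedge a, y \wedge a) \in \nabla_a\}$. But by \cref{lem:open_and_closed_congruences}, $(x \wedge a, y \wedge a) \in \nabla_a$ amounts to $(x \wedge a) \vee a = (y \wedge a) \vee a$, i.e.\ $a = a$, which holds for all $x, y \in L$. Hence $\Delta_a \vee \nabla_a = L \times L = 1$, and combined with the previous paragraph this shows $\nabla_a$ and $\Delta_a$ are complements.

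There is essentially no obstacle here; the corollary is immediate once \cref{lem:principal_congruence} and \cref{lem:joins_with_principal} are in hand, and the only care needed is to cite the right earlier result (and, in the alternative argument for the meet, to remember that distributivity of $L$ is exactly what makes $x \mapsto (x \wedge a,\, x \vee a)$ injective).
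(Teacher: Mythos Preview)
Your proof is correct and matches the paper's approach for the meet (both invoke \cref{lem:principal_congruence} with $b=a$). For the join the paper is even more direct: since $\nabla_a = \langle(0,a)\rangle$ and $\Delta_a = \langle(a,1)\rangle$, their join contains both $(0,a)$ and $(a,1)$ and hence $(0,1)$ by transitivity, so it equals $1$; your route via \cref{lem:joins_with_principal} works just as well but is slightly heavier machinery for the same one-line fact.
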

\begin{proof}
 It is clear that $\nabla_a \vee \Delta_a = \langle(0,a)\rangle \vee \langle(a,1)\rangle = 1$. Then the above lemma gives us
 $\nabla_a \wedge \Delta_a = \langle(a,a)\rangle = 0$.
\end{proof}

\begin{remark}
Once we show $\C L$ is a frame, we will be able to interpret the above corollaries to mean that $\C L$ is zero-dimensional.
\end{remark}

The next lemma is an analogue of the familiar spatial results involving the behaviour of open and closed subspaces under unions and intersections.
\emph{Remember that the lattice of congruences is ordered in the reverse order of the lattice of sublocales.}
Joins in the congruence lattice correspond to meets of sublocales and vice versa.

\begin{lemma} \label{lem:joins_and_meets_nabla} %
 Suppose $a,b \in L$ and $(c_\alpha)_{\alpha \in I}$ is a family of cardinality less than $\kappa$.
 Then the following identities hold:
 \begin{enumerate}[i)]
  \item $\nabla_a \wedge \nabla_b = \nabla_{a\wedge b}$
  \item $\Delta_a \vee \Delta_b = \Delta_{a\wedge b}$
  \item $\bigvee_{\alpha \in I} \nabla_{c_\alpha} = \nabla_{\bigvee_{\alpha \in I} c_\alpha}$
  \item $\bigwedge_{\alpha \in I} \Delta_{c_\alpha} = \Delta_{\bigvee_{\alpha \in I} c_\alpha}$
 \end{enumerate}
\end{lemma}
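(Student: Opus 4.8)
The plan is to prove each of the four identities by combining the explicit descriptions from \cref{lem:open_and_closed_congruences} with the complementation fact \cref{cor:nabla_delta_complements}, so that the $\Delta$-statements follow formally from the $\nabla$-statements by taking complements. First I would establish (i): using $\nabla_a = \{(x,y) \mid x \vee a = y \vee a\}$ and $\nabla_b = \{(x,y) \mid x \vee b = y \vee b\}$, one shows that a pair lies in $\nabla_a \wedge \nabla_b$ (the intersection) exactly when $x \vee a = y \vee a$ and $x \vee b = y \vee b$. The inclusion $\nabla_{a \wedge b} \subseteq \nabla_a \wedge \nabla_b$ is immediate since $(0, a\wedge b)$ lies in both $\nabla_a$ and $\nabla_b$ (by \cref{lem:congruence_intervals}, as $0 \le a\wedge b \le a$ and $\le b$). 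For the reverse inclusion I would argue directly: if $x \vee a = y \vee a$ and $x \vee b = y \vee b$, then meeting and using the frame distributivity law gives $(x \vee a) \wedge (x \vee b) = x \vee (a \wedge b)$ equal to the analogous expression in $y$, hence $(x,y) \in \nabla_{a\wedge b}$.

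Next I would prove (iii). The inclusion $\nabla_{c_\alpha} \subseteq \nabla_{\bigvee c_\alpha}$ for each $\alpha$ holds because $(0, c_\alpha) \in \nabla_{\bigvee c_\alpha}$ (again \cref{lem:congruence_intervals}), so $\bigvee_\alpha \nabla_{c_\alpha} \subseteq \nabla_{\bigvee c_\alpha}$. For the reverse, I would use the generation of joins of congruences together with the fact that the join congruence, being itself a $\kappa$-frame congruence, is closed under $\kappa$-joins of pairs: since $(0, c_\alpha) \in \bigvee_\alpha \nabla_{c_\alpha}$ for each $\alpha$, the pair $(\bigvee_\alpha 0, \bigvee_\alpha c_\alpha) = (0, \bigvee_\alpha c_\alpha)$ lies in $\bigvee_\alpha \nabla_{c_\alpha}$, whence $\nabla_{\bigvee c_\alpha} = \langle (0, \bigvee c_\alpha)\rangle \subseteq \bigvee_\alpha \nabla_{c_\alpha}$. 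This is where the cardinality hypothesis $|I| < \kappa$ is essential; I would flag that for general frames it is simply arbitrary joins.

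Finally, (ii) and (iv) I would derive from (i) and (iii) by taking complements in $\C L$ — but here I must be careful, because complementation is an \emph{anti}-isomorphism only after we know $\C L$ is a frame, which is proved later. To stay honest at this point, I would instead prove (ii) and (iv) directly in the same style: for (ii), $\Delta_a \vee \Delta_b$ is generated by $(a,1)$ and $(b,1)$, and one checks $(a \wedge b, 1)$ is derivable from these (meet $(a,1)$ with $(b,b)$-type manipulations, or observe $(a,1),(b,1)$ together force $(a \wedge b, 1)$ via $(a,1)\wedge(b,1) = (a\wedge b, 1)$), giving $\Delta_{a\wedge b} \subseteq \Delta_a \vee \Delta_b$; the reverse uses that $\Delta_a, \Delta_b \subseteq \Delta_{a\wedge b}$ since $a\wedge b \le a,b$ means $(a,1),(b,1)\in\Delta_{a\wedge b}$. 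For (iv), which is a \emph{meet} (intersection) of the $\Delta_{c_\alpha}$, I would use \cref{lem:open_and_closed_congruences}: $(x,y) \in \bigwedge_\alpha \Delta_{c_\alpha}$ iff $x \wedge c_\alpha = y \wedge c_\alpha$ for all $\alpha$, iff (joining over $\alpha$ and using distributivity of $\wedge$ over the $\kappa$-join) $x \wedge \bigvee_\alpha c_\alpha = y \wedge \bigvee_\alpha c_\alpha$, iff $(x,y) \in \Delta_{\bigvee c_\alpha}$. I expect (iv) to be the smoothest, essentially a one-line distributivity argument, and the main obstacle to be resisting the temptation to invoke frame-theoretic duality prematurely; the genuine content is just the frame distributivity law applied in each direction.
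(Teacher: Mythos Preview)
Your proposal is correct and follows essentially the same approach as the paper: both prove (i) and (iv) by the explicit descriptions from \cref{lem:open_and_closed_congruences} together with distributivity, and (ii) and (iii) by showing the relevant generating pair lies in the target congruence. Your caution about not invoking complementation in $\C L$ before the frame structure is established is well placed and matches the paper's choice to argue each identity directly.
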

\begin{proof}
 By \cref{lem:congruence_intervals}, it is clear that $\nabla_\bullet$ is order-preserving. Thus $\nabla_a \wedge \nabla_b \ge \nabla_{a\wedge b}$.
 Now let $(x,y) \in \nabla_a \wedge \nabla_b$. Then $x \vee a = y \vee a$ and $x \vee b = y \vee b$ and so
 $x \vee (a\wedge b) = (x \vee a) \wedge (x \vee b) = (y \vee a) \wedge (y \vee b) = y \vee (a\wedge b)$. Thus $(x,y) \in \nabla_{a\wedge b}$ and
 $\nabla_a \wedge \nabla_b \le \nabla_{a\wedge b}$.
 
 Again by \cref{lem:congruence_intervals}, $\Delta_\bullet$ is order-reversing and so $\Delta_a \vee \Delta_b \le \Delta_{a\wedge b}$.
 To show the other inequality, we need only show that $(a\wedge b,1) \in \Delta_a \vee \Delta_b$.
 But this is clear since $(a,1) \in \Delta_a \le \Delta_a \vee \Delta_b$ and $(b,1) \in \Delta_b \le \Delta_a \vee \Delta_b$.
 
 The non-trivial direction of (iii) follows from the observation that $(0, \bigvee_{\alpha \in I} c_\alpha) \in \bigvee_{\alpha \in I} \nabla_{c_\alpha}$
 since $(0, c_\alpha) \in \bigvee_{\alpha \in I} \nabla_{c_\alpha}$ for all $\alpha \in I$.
 
 For the non-trivial direction of (iv), suppose $(x,y) \in \Delta_{c_\alpha}$ for all $\alpha \in I$. Then $x \wedge c_\alpha = y \wedge c_\alpha$ and so
 $x \wedge \bigvee_{\alpha \in I} c_\alpha = \bigvee_{\alpha \in I} x \wedge c_\alpha = \bigvee_{\alpha \in I} y \wedge c_\alpha =
  y \wedge \bigvee_{\alpha \in I} c_\alpha$.
 Thus, $(x,y) \in \Delta_{\bigvee_{\alpha \in I} c_\alpha}$ as required.
\end{proof}

\begin{remark}
 Once we show that $\C L$ is a frame, it will follow from the above lemma that $\nabla_\bullet : L \to \C L$ is a $\kappa$-frame homomorphism.
\end{remark}

In the case of $\kappa$-frames, principal closed congruences are only closed under joins of cardinality less than $\kappa$.
However, it is often useful to consider arbitrary joins of principal closed congruences.

\begin{definition}
 A congruence on a $\kappa$-frame is called \emph{generalised closed} if it is a join of principal closed congruences.
 If $I$ is a $\kappa$-ideal on a $\kappa$-frame, we define the generalised closed congruence $\widetilde{\nabla}_I = \bigvee_{a \in I} \nabla_a$.
\end{definition}

We can use \cref{lem:joins_and_meets_nabla} to reduce any join of principal closed congruences to a $\kappa$-directed join. Also note that if a
principal closed congruence $\nabla_a$ is included in such a join, we might as well include any smaller principal closed congruence.
But a $\kappa$-directed lower set is nothing but a $\kappa$-ideal. So every generalised closed congruence is of the form
$\widetilde{\nabla}_I$ for some $\kappa$-ideal $I$ on $L$.

There are analogues of \cref{lem:open_and_closed_congruences,lem:joins_with_principal} for generalised closed congruences.

\begin{lemma}\label{lem:joins_with_generalised_closed}
 If $L$ is a $\kappa$-frame, $C \in \C L$ and $I$ is a $\kappa$-ideal on $L$, then
 $\widetilde{\nabla}_I \vee C = \{(x,y) \mid (x \vee i, y \vee i) \in C \text{ for some $i \in I$}\}$.
 In particular, $\widetilde{\nabla}_I = \{(x,y) \mid x \vee i = y \vee i \text{ for some $i \in I$}\}$.
\end{lemma}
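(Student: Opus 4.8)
The plan is to prove the more general identity $\widetilde{\nabla}_I \vee C = \{(x,y) \mid (x \vee i, y \vee i) \in C \text{ for some } i \in I\}$ directly, and then obtain the second statement by taking $C = 0$ (so that $(x\vee i, y \vee i) \in 0$ means precisely $x \vee i = y \vee i$). Write $R$ for the right-hand side. The first step is to check that $R$ is a congruence. Reflexivity and symmetry are immediate. Transitivity uses $\kappa$-directedness of $I$: if $(x\vee i, y\vee i) \in C$ and $(y\vee j, z\vee j) \in C$, pick $k \in I$ with $i,j \le k$; joining with the constant pair $(k,k)$ and using that $C$ is a congruence gives $(x\vee k, y\vee k), (y\vee k, z\vee k) \in C$, hence $(x\vee k, z\vee k) \in C$ by transitivity of $C$. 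For closure under binary meets, if $(x\vee i, y\vee i)\in C$ and $(x'\vee i', y'\vee i') \in C$, again take $k \ge i,i'$ in $I$, lift both pairs to use $k$, and meet them inside $C$: distributivity of $L$ rearranges $(x\vee k)\wedge(x'\vee k) = (x\wedge x')\vee k$, so $((x\wedge x')\vee k, (y\wedge y')\vee k) \in C$. Closure under $\kappa$-joins: given $(x_\alpha \vee i_\alpha, y_\alpha \vee i_\alpha) \in C$ for $\alpha$ in a $\kappa$-set, $\kappa$-directedness gives a single $k \in I$ above all the $i_\alpha$; lift each pair to $k$, take the $\kappa$-join inside $C$, and use $\bigvee_\alpha (x_\alpha \vee k) = (\bigvee_\alpha x_\alpha)\vee k$.

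Next I would show $R$ contains both $\widetilde{\nabla}_I$ and $C$, so that $\widetilde{\nabla}_I \vee C \subseteq R$. That $C \subseteq R$ is trivial (take $i = 0 \in I$). For $\widetilde{\nabla}_I \subseteq R$: by \cref{lem:joins_and_meets_nabla}(iii) together with the remark that a join of principal closed congruences reduces to the $\kappa$-directed join over the $\kappa$-ideal $I$, it suffices to note each $\nabla_a$ with $a \in I$ lies in $R$, and indeed by \cref{lem:open_and_closed_congruences}, $(x,y)\in\nabla_a$ means $x\vee a = y\vee a$, i.e.\ $(x\vee a, y\vee a) \in 0 \subseteq C$ with $a \in I$.

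Finally, the reverse inclusion $R \subseteq \widetilde{\nabla}_I \vee C$. Suppose $(x,y) \in R$, witnessed by $i \in I$ with $(x\vee i, y\vee i) \in C$. Then $(x, x\vee i) \in \nabla_i \subseteq \widetilde{\nabla}_I$ (and similarly for $y$) by \cref{lem:congruence_intervals}, while $(x\vee i, y\vee i) \in C$ by hypothesis; chaining $(x, x\vee i)$, $(x\vee i, y\vee i)$, $(y\vee i, y)$ through transitivity in the congruence $\widetilde{\nabla}_I \vee C$ gives $(x,y) \in \widetilde{\nabla}_I \vee C$. This completes the proof, with the special case following by setting $C = 0$.

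The only genuine subtlety — and the one place the argument differs from the frame case of \cref{lem:joins_with_principal} — is the repeated appeal to $\kappa$-directedness of $I$ to merge finitely many (or $\kappa$-many) witnesses $i_\alpha$ into a single witness $k$; this is exactly what makes $R$ closed under the operations despite $I$ not being closed under arbitrary joins. I expect verifying the congruence axioms for $R$ to be the bulk of the work, and all of it is routine distributive-lattice manipulation once the directedness trick is in hand, so I would present it tersely, very much "as for \cref{lem:open_and_closed_congruences}".
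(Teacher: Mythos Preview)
Your proposal is correct and follows exactly the approach the paper intends: the paper's proof is literally ``As for \cref{lem:open_and_closed_congruences}'', and what you have written is precisely the unpacking of that template, including the one new ingredient you correctly isolate---using $\kappa$-directedness of $I$ to merge the witnesses when checking transitivity, binary meets, and $\kappa$-joins. Your final remark that the argument would be presented tersely is on the nose, since that is all the paper gives.
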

\begin{proof}
 As for \cref{lem:open_and_closed_congruences}.
\end{proof}

\begin{lemma}\label{lem:nabla_tilde_injective}
 The map $\widetilde{\nabla}_\bullet: \h_\kappa L \to \C L$ is injective and preserves finite meets and arbitrary joins.
\end{lemma}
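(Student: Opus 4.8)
The plan is to verify the three claimed properties in turn, using the explicit description of $\widetilde{\nabla}_I$ from \cref{lem:joins_with_generalised_closed} together with the identities already established for principal closed congruences in \cref{lem:joins_and_meets_nabla}. Throughout I will repeatedly use the fact that every $\kappa$-ideal is the $\kappa$-directed union of the principal ideals it contains, so that $\widetilde{\nabla}_I = \bigvee_{a \in I} \nabla_a$ and the join is $\kappa$-directed.

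For \textbf{join-preservation}, let $(I_\alpha)_{\alpha \in A}$ be a family of $\kappa$-ideals with join $J$ in $\h_\kappa L$. Since $\widetilde{\nabla}_\bullet$ is visibly order-preserving (each $I_\alpha \subseteq J$ gives $\widetilde{\nabla}_{I_\alpha} \le \widetilde{\nabla}_J$), it suffices to show $\widetilde{\nabla}_J \le \bigvee_\alpha \widetilde{\nabla}_{I_\alpha}$. The join $J$ is generated as a $\kappa$-ideal by $\bigcup_\alpha I_\alpha$, so every element of $J$ is below a $\kappa$-join of elements drawn from the $I_\alpha$; by (iii) of \cref{lem:joins_and_meets_nabla} and \cref{lem:congruence_intervals} (monotonicity of $\nabla_\bullet$), each such $\nabla_a$ with $a \in J$ lies in $\bigvee_\alpha \widetilde{\nabla}_{I_\alpha}$, and since $\widetilde{\nabla}_J$ is the join of these, we are done. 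For \textbf{finite meets}, it is enough to treat binary meets and the nullary case; $\widetilde{\nabla}_{L} = 1$ is immediate since $1 \in L$ gives $\nabla_1 = 1$. For $\widetilde{\nabla}_I \wedge \widetilde{\nabla}_J$, expand both sides using the description from \cref{lem:joins_with_generalised_closed}: a pair $(x,y)$ lies in the left side iff $x \vee i = y \vee i$ and $x \vee j = y \vee j$ for some $i \in I$, $j \in J$, whence $x \vee (i \wedge j) = y \vee (i \wedge j)$ by the same distributive computation as in the proof of \cref{lem:joins_and_meets_nabla}(i); since $i \wedge j \in I \cap J$, this shows $(x,y) \in \widetilde{\nabla}_{I \cap J}$. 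The reverse inclusion is trivial as $I \cap J \subseteq I$ and $I \cap J \subseteq J$, and $I \cap J$ is the meet of $I$ and $J$ in $\h_\kappa L$.

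For \textbf{injectivity}, suppose $\widetilde{\nabla}_I = \widetilde{\nabla}_J$ and, without loss of generality, take $a \in I$; then $\nabla_a \le \widetilde{\nabla}_I = \widetilde{\nabla}_J = \bigvee_{b \in J} \nabla_b$. The key point is to extract from this that $a \in J$: using \cref{lem:joins_with_generalised_closed}, the pair $(0,a) \in \nabla_a$ lies in $\widetilde{\nabla}_J$, so $0 \vee b = a \vee b$, i.e.\ $a \le b$, for some $b \in J$; since $J$ is a downset, $a \in J$. By symmetry $I = J$.

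I expect the only point requiring genuine care to be the join-preservation step, specifically the passage from "$a$ lies in the $\kappa$-ideal generated by $\bigcup_\alpha I_\alpha$" to a usable bound: one must recall that such $a$ need not itself be a $\kappa$-join of elements of the $I_\alpha$ but only below one, and then invoke monotonicity of $\nabla_\bullet$ to conclude. The finite-meet and injectivity parts are routine applications of \cref{lem:joins_with_generalised_closed}. Note that injectivity also follows formally once we know $\C L$ is a frame and $\widetilde{\nabla}_\bullet$ preserves joins, since a join-preserving map out of $\h_\kappa L$ landing in a poset where the images of principal ideals are distinct and suitably separated is determined on principal ideals --- but the direct argument above is cleaner and avoids assuming the frame structure of $\C L$.
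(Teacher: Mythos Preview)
Your proposal is correct and follows essentially the same approach as the paper: the paper's proof is a terse three-line sketch (``joins from the definition, injectivity from \cref{lem:joins_with_generalised_closed}, finite meets as in \cref{lem:joins_and_meets_nabla}''), and you have simply unpacked each of those clauses in detail. Your injectivity argument via the pair $(0,a)$ is exactly what the paper's appeal to \cref{lem:joins_with_generalised_closed} amounts to, and your finite-meet computation is the ideal-level version of the distributive calculation in \cref{lem:joins_and_meets_nabla}(i).
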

\begin{proof}
 Preservation of joins comes from the definition of $\widetilde{\nabla}_\bullet$, while injectivity follows from \cref{lem:joins_with_generalised_closed}.
 Preservation of finite meets is shown in the same way as for $\nabla_\bullet$ in \cref{lem:joins_and_meets_nabla}.
\end{proof}

The composition of $\widetilde{\nabla}_\bullet$ with its right adjoint gives a map $\cl = \widetilde{\nabla}_\bullet \circ (\widetilde{\nabla}_\bullet)_*$,
which is of some interest. The map $\cl: \C L \to \C L$ assigns a congruence to the largest generalised closed congruence which it contains.
Thus, again recalling the inverted order of $\C L$, it is an analogue of the topological closure operator and
it is easy to see that it is monotone, deflationary and idempotent and that it preserves finite meets, as expected.
\begin{definition}
 If $C$ is a congruence, $\cl(C)$ is called the \emph{(generalised) closure} of $C$.
\end{definition}

We take a look at how closed congruences behave in quotient $\kappa$-frames.
\begin{lemma}\label{lem:closed_quotient_of_quotient} %
 Let $L$ be a $\kappa$-frame and $L/C$ a quotient of $L$. If $[a] \in L/C$, then $(L/C)/\nabla_{[a]} \cong L / (C \vee \nabla_a)$
 in a natural way.
\end{lemma}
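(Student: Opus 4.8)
The plan is to exhibit an explicit surjective $\kappa$-frame homomorphism from $L$ onto $(L/C)/\nabla_{[a]}$ and to identify its kernel as $C \vee \nabla_a$; then both sides are the quotient of $L$ by the same congruence, hence isomorphic, and the isomorphism will be natural because every map in sight is constructed canonically from $C$ and $a$. First I would let $q\colon L \to L/C$ be the quotient map and $r\colon L/C \to (L/C)/\nabla_{[a]}$ the quotient map by the principal closed congruence $\nabla_{[a]}$ on $L/C$, and consider the composite $rq\colon L \to (L/C)/\nabla_{[a]}$. This is a composite of surjective $\kappa$-frame homomorphisms, hence a surjective $\kappa$-frame homomorphism, so it suffices to compute $\ker(rq)$.

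The heart of the argument is the kernel computation. By definition $\ker(rq) = \{(x,y) \mid rq(x) = rq(y)\}$. Since $\ker r = \nabla_{[a]}$ and $\ker q = C$, a pair $(x,y)$ lies in $\ker(rq)$ iff $(q(x), q(y)) \in \nabla_{[a]}$, i.e.\ (using \cref{lem:open_and_closed_congruences} applied in the frame $L/C$) iff $q(x) \vee [a] = q(y) \vee [a]$ in $L/C$. Because $q$ preserves finite joins, $q(x) \vee [a] = q(x \vee a)$ and similarly for $y$, so the condition becomes $q(x \vee a) = q(y \vee a)$, that is, $(x \vee a, y \vee a) \in C$. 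But by \cref{lem:joins_with_principal} this last condition is exactly the statement that $(x,y) \in \nabla_a \vee C$. Hence $\ker(rq) = C \vee \nabla_a$, and by \cref{rem:factor_through_quotients} (or the universal property of quotients) $rq$ induces an isomorphism $(L/C)/\nabla_{[a]} \cong L/(C \vee \nabla_a)$.

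For naturality, suppose $f\colon L \to M$ is a $\kappa$-frame homomorphism with $f(C') \subseteq$ the relevant congruences so that $f$ descends to $L/C \to M/D$ (the intended statement of naturality is with respect to the evident category of pairs $(L, C, a)$ where morphisms respect $C$ and send $a$ to the chosen element downstairs); one then checks that the two induced maps $L/(C \vee \nabla_a) \to M/(D \vee \nabla_{b})$ and $(L/C)/\nabla_{[a]} \to (M/D)/\nabla_{[b]}$ agree after composing with the canonical quotient maps, which forces them to agree since those quotient maps are epic. Since every arrow was defined by a universal property, this is a routine diagram chase rather than a genuine difficulty.

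I expect the only real obstacle to be bookkeeping: making sure that the principal closed congruence $\nabla_{[a]}$ on the \emph{quotient} frame $L/C$ is handled correctly --- in particular that \cref{lem:open_and_closed_congruences,lem:joins_with_principal} are being applied in the right frame (once in $L/C$, once in $L$) --- and pinning down the precise meaning of ``natural'' so the last paragraph is more than a gesture. The algebra itself is short: it is essentially the two-line identity $q(x)\vee[a] = q(x\vee a)$ combined with the known description of $\nabla_a \vee C$.
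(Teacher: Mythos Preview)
Your proposal is correct and follows essentially the same route as the paper: both compute the kernel of the composite quotient $L \twoheadrightarrow L/C \twoheadrightarrow (L/C)/\nabla_{[a]}$ by applying \cref{lem:open_and_closed_congruences} in $L/C$ to rewrite the condition as $(x\vee a,\,y\vee a)\in C$ and then invoking \cref{lem:joins_with_principal} to identify this with $C\vee\nabla_a$. The paper simply omits the naturality paragraph you sketch.
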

\begin{proof}
 Consider the congruence $A$ on $L$ induced by $(L/C)/\nabla_{[a]}$.
 By \cref{lem:open_and_closed_congruences}, we have that $(x,y) \in A$ if and only if $[x] \vee [a] = [y] \vee [a]$.
 That is, if $(x\vee a, y \vee a) \in C$. But this just means that $(x,y) \in C \vee \nabla_a$ by \cref{lem:joins_with_principal}.
\end{proof}
\begin{corollary}\label{cor:closed_congruences_in_quotient} %
 If $a,b \in L$, then $\nabla_a \vee C = \nabla_b \vee C$ if and only if $(a,b) \in C$.
\end{corollary}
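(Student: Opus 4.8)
The plan is to deduce this from \cref{cor:nabla_mono} (injectivity of $\nabla_\bullet$), applied not to $L$ but to the quotient frame $L/C$, together with the explicit description of $\nabla_a \vee C$ supplied by \cref{lem:joins_with_principal}.

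First I would observe that, under the order-isomorphism of \cref{rem:lattice_of_quotients} between the congruences on $L/C$ and the congruences on $L$ containing $C$, the principal closed congruence $\nabla_{[a]}$ on $L/C$ is sent to $\nabla_a \vee C$. Concretely, by \cref{lem:joins_with_principal} we have $\nabla_a \vee C = \{(x,y) \mid (x\vee a, y\vee a)\in C\}$; and by \cref{lem:open_and_closed_congruences} applied in $L/C$, a pair $([x],[y])$ lies in $\nabla_{[a]}$ exactly when $[x]\vee[a] = [y]\vee[a]$, i.e.\ when $(x\vee a, y\vee a)\in C$. Thus $\nabla_a \vee C$ is precisely the preimage under the quotient map $q\colon L \to L/C$ of $\nabla_{[a]}$, which is exactly the congruence on $L$ corresponding to $\nabla_{[a]}$.

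Then $\nabla_a \vee C = \nabla_b \vee C$ holds if and only if $\nabla_{[a]} = \nabla_{[b]}$ in $\C(L/C)$, since the correspondence just described is a bijection. Applying \cref{cor:nabla_mono} to the $\kappa$-frame $L/C$, the latter equality holds if and only if $[a] = [b]$ in $L/C$, which by definition of the quotient congruence means exactly $(a,b)\in C$.

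I expect the only point requiring care — rather than a genuine obstacle — to be the identification in the first step: one must check that the congruence on $L$ induced by $\nabla_{[a]}$ really \emph{is} $\nabla_a \vee C$, and not merely comparable to it, which is why I would spell it out via \cref{lem:joins_with_principal} and \cref{lem:open_and_closed_congruences} instead of asserting it. Alternatively one can bypass the quotient altogether and argue directly from \cref{lem:joins_with_principal}: for ($\Leftarrow$), if $(a,b)\in C$ then $(x\vee a, x\vee b)\in C$ and $(y\vee a, y\vee b)\in C$ for all $x,y$, so a short transitivity chase identifies $\{(x,y)\mid (x\vee a,y\vee a)\in C\}$ with $\{(x,y)\mid (x\vee b,y\vee b)\in C\}$; for ($\Rightarrow$), $(0,a)\in\nabla_a\subseteq\nabla_b\vee C$ yields $(b, a\vee b)\in C$ and symmetrically $(a,a\vee b)\in C$, whence $(a,b)\in C$ by transitivity. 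Given the machinery already in place, the first route is the shorter one.
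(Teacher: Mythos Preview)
Your proposal is correct and follows essentially the same route as the paper: the corollary is stated immediately after \cref{lem:closed_quotient_of_quotient}, whose proof establishes precisely the identification you spell out (that $\nabla_{[a]}$ on $L/C$ corresponds to $\nabla_a \vee C$ on $L$), after which injectivity of $\nabla_\bullet$ on $L/C$ finishes the argument. You have simply re-derived the content of that preceding lemma via \cref{lem:joins_with_principal} and \cref{lem:open_and_closed_congruences} rather than citing it directly; your alternative direct argument is also fine.
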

\begin{remark}\label{rem:closed_congruences_preserved_by_quotient}
 Recall from \cref{rem:lattice_of_quotients} that there is a correspondence between quotients of $L/C$ and quotients
 of $L$ by congruences lying above $C$. The above shows that this correspondence preserves closed congruences.
\end{remark}

\begin{corollary}\label{cor:closure_in_quotient} %
 Suppose $A \ge C \in \C L$. The closure of $A$ in $\C(L/C)$ corresponds to $C \vee \cl(A)$ in $\C L$.
\end{corollary}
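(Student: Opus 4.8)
The plan is to transport everything to $\C L$ through the order isomorphism $\C(L/C) \cong \;\uparrow\! C$, where $\uparrow\! C := \{B \in \C L \mid B \ge C\}$, furnished by \cref{rem:lattice_of_quotients}. First I would recall how this isomorphism acts: a congruence $A'$ on $L/C$ corresponds to the congruence it induces on $L$, namely $\{(x,y) \mid ([x],[y]) \in A'\}$, and since it is an order isomorphism it preserves arbitrary joins; in particular a join computed in $\C(L/C)$ is sent to the corresponding join computed in $\C L$ (which automatically lies above $C$). The given $A \ge C$ is exactly the congruence corresponding to some $A' \in \C(L/C)$, and ``the closure of $A$ in $\C(L/C)$'' means the image in $\uparrow\! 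C$ of $\cl_{L/C}(A')$.

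Next I would identify where the generalised closed congruences of $L/C$ land. By \cref{lem:closed_quotient_of_quotient}, the principal closed congruence $\nabla_{[a]}$ on $L/C$ corresponds to $C \vee \nabla_a$ in $\C L$. Taking joins and using that the isomorphism preserves them, a generalised closed congruence $\bigvee_{a \in S}\nabla_{[a]}$ on $L/C$ corresponds to $\bigvee_{a \in S}(C \vee \nabla_a) = C \vee \bigvee_{a\in S}\nabla_a$, and conversely every congruence $C \vee G$ with $G$ generalised closed on $L$ arises this way. So under the isomorphism the generalised closed congruences of $L/C$ correspond precisely to the congruences of the form $C \vee G$ with $G$ generalised closed on $L$.

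It then remains to compare largest elements. The closure $\cl_{L/C}(A')$ is the largest generalised closed congruence of $L/C$ below $A'$; transported to $\uparrow\! C$ it is therefore the largest congruence $C \vee G$ (with $G$ generalised closed on $L$) satisfying $C \vee G \le A$. I claim this is $C \vee \cl(A)$: it has the required form and lies below $A$ since $C \le A$ and $\cl(A) \le A$; and if $C \vee G \le A$ with $G$ generalised closed, then $G \le A$, so $G \le \cl(A)$ by maximality of $\cl(A)$, whence $C \vee G \le C \vee \cl(A)$. Thus $C \vee \cl(A)$ is the largest such congruence, which is the assertion. The only routine point needed along the way --- that $\cl(A) = \bigvee\{\nabla_a \mid \nabla_a \le A\}$ really is the largest generalised closed congruence below $A$ --- follows directly from \cref{lem:joins_and_meets_nabla} and the definition of $\cl$, so there is no real obstacle here beyond keeping the reversed order of $\C L$ straight.
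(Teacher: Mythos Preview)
Your argument is correct and is exactly the route the paper intends: the corollary is stated without proof, as an immediate consequence of \cref{lem:closed_quotient_of_quotient} and \cref{rem:closed_congruences_preserved_by_quotient} via the order isomorphism of \cref{rem:lattice_of_quotients}, and you have simply spelled out those details.
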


\subsection{Dense congruences}\label{subsec:dense_congruences}

We are now in a position to discuss what it means for a congruence to be dense.

\begin{definition}
 Suppose $C,D \in \C L$ and $D \le C$. We say $C$ is \emph{dense in $D$} if $\cl(C) \le D$.
 We say $C$ is \emph{dense} if it is dense in $0$.
\end{definition}

This definition is quickly seen to be compatible with our notion of dense maps.
\begin{lemma}\label{lem:dense}
 A congruence $C \in \C L$ is dense if and only if the corresponding quotient map $h: L \twoheadrightarrow L/C$ is a dense $\kappa$-frame homomorphism.
\end{lemma}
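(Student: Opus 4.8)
The plan is to unwind both sides of the claimed equivalence until they reduce to the same condition on $C$, namely that $(0,a) \in C$ forces $a = 0$.

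First I would deal with the right-hand side. The quotient map $h \colon L \twoheadrightarrow L/C$ has kernel $C$, so $h(a) = h(0) = 0$ if and only if $(0,a) \in C$, which (since $\nabla_a = \langle(0,a)\rangle$) is exactly the condition $\nabla_a \le C$. Hence $h$ is a dense $\kappa$-frame homomorphism precisely when $\nabla_a \le C$ implies $a = 0$ for every $a \in L$; by the injectivity of $\nabla_\bullet$ (\cref{cor:nabla_mono}) this is the same as saying that $\nabla_a \le C$ implies $\nabla_a = 0$.

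Next I would treat the left-hand side. By definition, $C$ being dense means $\cl(C) \le 0$, i.e.\ $\cl(C) = 0$. Recall that $\cl(C) = \widetilde{\nabla}_\bullet\bigl((\widetilde{\nabla}_\bullet)_*(C)\bigr)$ is the largest generalised closed congruence contained in $C$. So if $\cl(C) = 0$, then every generalised closed congruence below $C$ — in particular every principal closed congruence $\nabla_a \le C$ — must be $0$. Conversely, since every generalised closed congruence is a join of principal closed congruences, if $\nabla_a \le C$ implies $\nabla_a = 0$ for all $a$, then any $\widetilde{\nabla}_I = \bigvee_{a \in I} \nabla_a$ lying below $C$ is a join of copies of the zero congruence and hence equals $0$; in particular $\cl(C) = 0$. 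Thus $C$ is dense if and only if $\nabla_a \le C$ implies $\nabla_a = 0$.

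Comparing the two reductions finishes the proof. There is no genuine obstacle here; the only step needing a moment's care is the passage between ``$\cl(C) = 0$'' and the condition phrased purely in terms of principal closed congruences, which rests on the fact that the generalised closed congruences are exactly the joins of principal closed ones (so that the largest one below $C$ being $0$ is equivalent to each principal closed congruence below $C$ being $0$).
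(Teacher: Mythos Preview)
Your proposal is correct and follows essentially the same approach as the paper; the paper compresses the entire argument into the single chain of equivalences $h(a)=0 \iff (0,a)\in C \iff \nabla_a \le C$, leaving implicit the passage from $\cl(C)=0$ to the condition on principal closed congruences that you spell out.
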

\begin{proof}
 Simply observe that $h(a) = 0 \iff (0,a) \in C \iff \nabla_a \le C$.
\end{proof}
\begin{remark}
 An element $a$ of a frame $L$ is called dense if $a^* = 0$. An element $a \in L$ is dense if and only if $\Delta_a$
 is a dense congruence.
\end{remark}

The following generalisation then follows from \cref{cor:closure_in_quotient}.
\begin{corollary}\label{cor:dense_quotients_congruence}
 A congruence $C \in \C L$ is dense in $D$ (where $D \le C$) if and only if the canonical map $h: L/D \to L/C$ is dense.
\end{corollary}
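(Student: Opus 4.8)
The plan is to deduce the statement from \cref{lem:dense} applied inside the quotient $\kappa$-frame $L/D$, using \cref{cor:closure_in_quotient} to transport the closure operator. Since $D \le C$, \cref{rem:lattice_of_quotients} identifies $C$ with a congruence $\overline{C} \in \C(L/D)$ whose quotient $(L/D)/\overline{C}$ is $L/C$ and whose associated quotient map is exactly $h \colon L/D \to L/C$. Hence, by \cref{lem:dense} applied to the $\kappa$-frame $L/D$, the map $h$ is dense if and only if $\overline{C}$ is a dense congruence on $L/D$, i.e.\ if and only if $\cl(\overline{C}) = 0$ in $\C(L/D)$.

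Next I would invoke \cref{cor:closure_in_quotient} with its base congruence taken to be $D$ and the congruence it calls $A$ taken to be $C$: this says that the closure of $\overline{C}$ computed in $\C(L/D)$ corresponds, under the isomorphism of \cref{rem:lattice_of_quotients} between $\C(L/D)$ and the up-set $\uparrow\! D$ in $\C L$, to $D \vee \cl(C)$. The bottom element of $\C(L/D)$ corresponds to $D$ itself under this isomorphism, so $\cl(\overline{C}) = 0$ in $\C(L/D)$ if and only if $D \vee \cl(C) = D$ in $\C L$, which holds precisely when $\cl(C) \le D$. By definition this last inequality says exactly that $C$ is dense in $D$, and chaining the equivalences completes the proof.

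I do not expect any real obstacle here: the whole argument is bookkeeping about the order-reversal between congruences and sublocales and about the identification of $\C(L/D)$ with the up-set $\uparrow\! D \subseteq \C L$, both of which are already packaged in the earlier results invoked. One could equally avoid \cref{cor:closure_in_quotient} and argue directly: $h$ is dense if and only if $(a,0) \in C$ implies $(a,0) \in D$ for every $a \in L$, which (since $(0,a)\in C \iff \nabla_a \le C$, and $\cl(C)$ is the join of the $\nabla_a$ it contains) is equivalent to $\cl(C) \le D$; but the route through \cref{cor:closure_in_quotient} is shorter and matches the remark preceding the statement.
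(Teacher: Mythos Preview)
Your proposal is correct and is precisely the intended unpacking of the paper's one-line proof, which simply says the result ``follows from \cref{cor:closure_in_quotient}.'' You have spelled out the steps---identifying $C$ with $\overline{C}\in\C(L/D)$ via \cref{rem:lattice_of_quotients}, applying \cref{lem:dense} inside $L/D$, and then using \cref{cor:closure_in_quotient} to translate $\cl(\overline{C})=0$ into $\cl(C)\le D$---exactly as the paper intends.
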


We will now present a result originally due to Isbell \cite{Isbell} and generalised in \cite{Madden} which has no analogue in classical topology.
\begin{lemma}\label{lem:largest_dense_congruence}
 Every $\kappa$-frame $L$ has a largest dense congruence, $\D_L$.
\end{lemma}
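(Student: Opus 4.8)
The plan is to exhibit the largest dense congruence explicitly rather than to form the join of all dense congruences. Define
\[\D_L = \{(x,y) \in L \times L \mid x \wedge z = 0 \iff y \wedge z = 0 \text{ for all } z \in L\}.\]
(In the frame case $x \wedge z = 0 \iff z \le x^*$, so this is just $\{(x,y) \mid x^* = y^*\}$, the congruence of the nucleus $a \mapsto a^{**}$; but in a $\kappa$-frame pseudocomplements need not exist, so it is cleaner to work with the displayed relation directly.) First I would check that $\D_L$ is a congruence. It is visibly an equivalence relation. For closure under binary meets, given $(x,y),(x',y') \in \D_L$ and $z \in L$, one chains the two hypotheses: $x \wedge x' \wedge z = 0$ iff $x \wedge (x'\wedge z) = 0$ iff $y \wedge (x'\wedge z) = 0$ (applying the first pair to $x'\wedge z$) iff $x' \wedge (y \wedge z) = 0$ iff $y' \wedge (y\wedge z) = 0$ (applying the second pair to $y \wedge z$) iff $y \wedge y' \wedge z = 0$. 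For closure under $\kappa$-joins, if $(x_\alpha,y_\alpha) \in \D_L$ for a $\kappa$-set of indices, then for any $z$ we have $\bigl(\bigvee_\alpha x_\alpha\bigr)\wedge z = \bigvee_\alpha (x_\alpha \wedge z) = 0$ iff every $x_\alpha \wedge z = 0$ iff every $y_\alpha \wedge z = 0$ iff $\bigl(\bigvee_\alpha y_\alpha\bigr)\wedge z = 0$, using that a $\kappa$-join vanishes exactly when all its terms do.

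Next I would verify that $\D_L$ is dense and that it dominates every dense congruence. By \cref{lem:dense}, density of $\D_L$ amounts to the implication $(0,a) \in \D_L \implies a = 0$; but $(0,a) \in \D_L$ with $z = a$ forces $a \wedge a = 0$, i.e.\ $a = 0$. For maximality, let $C$ be any dense congruence and take $(x,y) \in C$. If $z \in L$ satisfies $x \wedge z = 0$, then $(x \wedge z, y \wedge z) \in C$ gives $(0, y \wedge z) \in C$, i.e.\ $\nabla_{y \wedge z} \le C$, so density of $C$ yields $y \wedge z = 0$; the converse implication holds by symmetry, whence $(x,y) \in \D_L$ and $C \le \D_L$. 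Thus $\D_L$ is dense and lies above every dense congruence, so it is the largest dense congruence (and in particular equals $\bigvee\{C \in \C L \mid C \text{ dense}\}$).

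The only delicate point — the step I would be most careful about — is the verification that $\D_L$ is closed under meets: one must substitute the correct auxiliary elements ($x' \wedge z$, then $y \wedge z$) into the defining condition and in the right order, so that the two hypotheses chain together. Everything else follows immediately from the $\kappa$-frame axioms and the characterisation of density in \cref{lem:dense}. In the frame case one could instead simply note that $a \mapsto a^{**}$ is a nucleus, using $a^{***} = a^*$ and $(a\wedge b)^{**} = a^{**} \wedge b^{**}$ from the background, but the argument above has the merit of applying uniformly to all $\kappa$-frames.
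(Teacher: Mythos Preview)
Your proof is correct and follows essentially the same approach as the paper: both define $\D_L$ by the same explicit formula, verify density via $(0,a)\in\D_L \Rightarrow a\wedge a=0$, and establish maximality by the observation that $(x\wedge z,y\wedge z)\in C$ forces $y\wedge z=0$ whenever $x\wedge z=0$ and $C$ is dense. The only differences are that you spell out the congruence verification the paper calls ``routine'' and argue maximality directly rather than by contraposition, neither of which is a genuine departure.
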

\begin{proof}
 Let $\D = \{(a,b) \mid a \wedge x = 0 \iff b \wedge x = 0 \,\text{ for all $x \in L$}\}$.
 It is routine to check that this is an congruence relation on $L$. %
 
 Notice that $\D$ is dense, since if $(0, a) \in \D$ then $0 \wedge a = 0$ implies $a = a \wedge a = 0$.
 Now suppose $C \not\le \D$. We show $C$ is not dense.
 
 Take $(a,b) \in C \setminus \D$ with $a \le b$. There is some $x \in L$ such that $a \wedge x = 0$, but $b \wedge x = c > 0$.
 But then $(0, c) = (a \wedge x, b \wedge x) \in C$, so $\nabla_c \le C$ and $C$ is not dense.
\end{proof}

\begin{corollary}\label{cor:clear_cong_characterisation}
 For every $\kappa$-ideal $I$ on $L$, there is a largest congruence $\partial_I$ dense in $\widetilde{\nabla}_I$.
 Under the correspondence of \cref{rem:lattice_of_quotients}, $\partial_I$ in $\C L$ corresponds to $\D$ in $\C (L/\widetilde{\nabla}_I)$.
 More explicitly, $\partial_I = \{(a,b) \mid a \wedge x \in I \iff b \wedge x \in I \,\text{ for all $x \in L$}\}$.
\end{corollary}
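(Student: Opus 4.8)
The plan is to derive the corollary from \cref{lem:largest_dense_congruence} applied to the quotient $\kappa$-frame $L/\widetilde{\nabla}_I$, transported back to $L$ through the correspondence of \cref{rem:lattice_of_quotients}. First I would apply \cref{lem:largest_dense_congruence} to obtain the largest dense congruence $\D = \D_{L/\widetilde{\nabla}_I}$ on $L/\widetilde{\nabla}_I$, and let $\partial_I \in \C L$ be the congruence corresponding to it under the order isomorphism between $\C(L/\widetilde{\nabla}_I)$ and the up-set $\{C \in \C L \mid C \ge \widetilde{\nabla}_I\}$.

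Next I would confirm that, for $C \ge \widetilde{\nabla}_I$, being \emph{dense in $\widetilde{\nabla}_I$} matches being dense when $C$ is viewed in $\C(L/\widetilde{\nabla}_I)$. This is exactly \cref{cor:dense_quotients_congruence} together with \cref{lem:dense} (or, alternatively, \cref{cor:closure_in_quotient}, using that $\widetilde{\nabla}_I \le \cl(C)$ because $\widetilde{\nabla}_I$ is generalised closed and hence fixed by $\cl$). Since the correspondence is an order isomorphism on this up-set, the largest congruence dense in $\widetilde{\nabla}_I$ is precisely the one matching the largest dense congruence $\D$, namely $\partial_I$; this settles the first two assertions at once.

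It remains to unwind the explicit formula. Using the description of $\D$ from the proof of \cref{lem:largest_dense_congruence} and writing $q \colon L \twoheadrightarrow L/\widetilde{\nabla}_I$ for the quotient map, $(a,b) \in \partial_I$ holds iff $(q(a), q(b)) \in \D$, i.e.\ iff $q(a) \wedge q(x) = 0 \iff q(b) \wedge q(x) = 0$ for every $x \in L$ (ranging over $x \in L$ suffices since $q$ is onto). The one computation needed is that $q(a \wedge x) = q(a) \wedge q(x) = 0$ iff $(0, a\wedge x) \in \widetilde{\nabla}_I$, which by \cref{lem:joins_with_generalised_closed} means $a \wedge x \le i$ for some $i \in I$, i.e.\ $a \wedge x \in I$ since $I$ is a downset. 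Substituting yields $\partial_I = \{(a,b) \mid a \wedge x \in I \iff b \wedge x \in I \text{ for all } x \in L\}$. I do not anticipate a genuine obstacle here: the argument is bookkeeping with the quotient correspondence, and the only point needing care — that density relative to $\widetilde{\nabla}_I$ transports to density relative to $0$ in the quotient — is precisely what the cited corollaries supply; in particular the displayed relation is automatically a congruence containing $\widetilde{\nabla}_I$, being the $q$-preimage of $\D$.
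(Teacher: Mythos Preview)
Your proposal is correct and follows exactly the route the paper intends: the corollary is stated without proof immediately after \cref{lem:largest_dense_congruence}, and your argument---apply that lemma to $L/\widetilde{\nabla}_I$, transport back via \cref{rem:lattice_of_quotients} using \cref{cor:dense_quotients_congruence}, and unwind the formula via \cref{lem:joins_with_generalised_closed}---is precisely the expansion the reader is meant to supply. All the lemmas you cite appear before this corollary, so there is no circularity.
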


\begin{corollary}\label{cor:clear_cong_heyting_arrow} %
 $\partial_I = \{(a,b) \mid \;\downarrow\!a \rightarrow I \,=\, \;\downarrow\!b \rightarrow I\}$ where `$\rightarrow\!$' denotes the
 Heyting arrow in $\h_\kappa I$.
\end{corollary}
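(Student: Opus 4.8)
The plan is to read off the result directly from the explicit description $\partial_I = \{(a,b) \mid a \wedge x \in I \iff b \wedge x \in I \text{ for all } x \in L\}$ established in \cref{cor:clear_cong_characterisation}. The only real content is to identify the $\kappa$-ideal $\downarrow\!a \rightarrow I$ with the set $\{x \in L \mid a \wedge x \in I\}$; once that is done the corollary is immediate.

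So first I would record the shape of the Heyting arrow in $\h_\kappa L$: for $\kappa$-ideals $J$ and $K$ on $L$, the element $J \rightarrow K$ is the largest $\kappa$-ideal $M$ with $M \cap J \subseteq K$, and a direct check shows $J \rightarrow K = \{x \in L \mid \downarrow\!x \cap J \subseteq K\}$. That this set is a downset and that it is the largest $\kappa$-ideal with the stated property are routine; the one step worth a moment's thought is closure under $\kappa$-joins, which is precisely where one invokes the frame distributivity law for $\kappa$-sized families (together with the fact that $K$ is closed under $\kappa$-joins).

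Specialising to the principal ideal $J = \downarrow\!a$ and using that $I$ is a downset, the condition $\downarrow\!x \cap \downarrow\!a = \downarrow\!(a \wedge x) \subseteq I$ collapses to $a \wedge x \in I$, so $\downarrow\!a \rightarrow I = \{x \in L \mid a \wedge x \in I\}$ as wanted. Consequently $\downarrow\!a \rightarrow I = \downarrow\!b \rightarrow I$ holds exactly when $a \wedge x \in I \iff b \wedge x \in I$ for every $x \in L$, which by \cref{cor:clear_cong_characterisation} is exactly $(a,b) \in \partial_I$. There is no genuine obstacle: the corollary is a cosmetic rephrasing of \cref{cor:clear_cong_characterisation}, with the only mildly delicate point being the $\kappa$-join closure in the formula for the Heyting arrow.
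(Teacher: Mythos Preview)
Your proposal is correct and matches the paper's intent: the corollary is stated without proof, as an immediate rephrasing of \cref{cor:clear_cong_characterisation}, and you have simply spelled out the one-line identification $\downarrow\!a \rightarrow I = \{x \in L \mid a \wedge x \in I\}$ that makes this transparent. There is nothing to add.
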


\begin{example}\label{ex:smallest_dense_sublocale_of_reals}
 The space of real numbers $\R$ with the usual topology has both the rationals $\Q$ and the irrationals $\Q^c$ as dense subspaces,
 but these share no points in common. The locale $\Omega(\R)/\D$ is contained in both of these and thus has no points at all.
 But since $\Omega(\R)/\D$ is dense in $\R$, it is certainly not trivial.
\end{example}

The congruences of the form $\partial_I$ are another important class of congruences.
\begin{definition}
 A congruence of the form $\partial_I$ will be called a \emph{clear} congruence.
 If $I$ is a principal ideal $\downarrow\!a$ we sometimes write $\partial_I$ as $\partial_a$.
\end{definition}

The assignment $I \mapsto \partial_I$ is not as well behaved as $\widetilde{\nabla}_\bullet$ and $\Delta_\bullet$. It is injective and reflects order,
but the following example shows it is not monotone.
\begin{example}
 The chain $\mathbbold{3} = \{0, a, 1\}$ is the frame of opens of the Sierpiński space.
 The clear congruences $\partial_0$ and $\partial_a$ correspond to the open and closed points respectively,
 so $\partial_0$ and $\partial_a$ are incomparable.
\end{example}

The clear congruences generate the congruence lattice under meets.
\begin{lemma}\label{lem:meet_of_clear} %
 Every congruence is a meet of clear congruences.
\end{lemma}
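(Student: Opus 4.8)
The plan is to show that every congruence $C$ on a $\kappa$-frame $L$ can be recovered as a meet of clear congruences, each of which sits above $C$. The natural strategy is to work in the quotient $L/C$: if I can show that the bottom congruence $0$ on any $\kappa$-frame is a meet of clear congruences, then transporting this along the correspondence of \cref{rem:lattice_of_quotients} (which sends clear congruences of $L/C$ to clear congruences of $L$ lying above $C$, by essentially the argument behind \cref{cor:clear_cong_characterisation}) gives the result for $C$. So I would first reduce to the case $C = 0$.

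For $C = 0$, I want to write $0 = \bigwedge_j \partial_{I_j}$ for suitable $\kappa$-ideals $I_j$. The obvious family to try is the principal ideals: take the meet $\bigwedge_{a \in L} \partial_a$ over all $a \in L$, where $\partial_a = \partial_{\downarrow a}$. Using the explicit description $\partial_a = \{(x,y) \mid x \wedge z \le a \iff y \wedge z \le a \text{ for all } z \in L\}$ from \cref{cor:clear_cong_characterisation}, I need to check that if $(x,y)$ lies in $\partial_a$ for every $a$, then $x = y$. By symmetry it suffices to derive $x \le y$. Specializing to $a = y$ and $z = 1$: since $y \wedge 1 = y \le y$, membership in $\partial_y$ forces $x \wedge 1 = x \le y$. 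That is immediate, so in fact $0 = \bigwedge_{a \in L}\partial_a$ already. (One should double-check the edge behaviour — e.g. that each $\partial_a$ really is clear, which is by definition, and that the meet is taken in $\C L$, which is fine since meets there are intersections.)

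Combining the two steps: given an arbitrary $C \in \C L$, apply the $C=0$ result inside $\C(L/C)$ to get $0_{L/C} = \bigwedge_{[a] \in L/C} \partial_{[a]}$, then pull back along the order-isomorphism between $\C(L/C)$ and the up-set of $C$ in $\C L$. Meets are preserved by this isomorphism, and each $\partial_{[a]}$ corresponds to a clear congruence of $L$ (of the form $\partial_I$ for the $\kappa$-ideal $I$ on $L$ that is the preimage of $\downarrow[a]$ — this is exactly the content of \cref{cor:clear_cong_characterisation}, which identifies "largest congruence dense in $\widetilde\nabla_I$" computed in $L$ with $\D$ computed in $L/\widetilde\nabla_I$). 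Hence $C$ is the meet in $\C L$ of these clear congruences.

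The only real obstacle is bookkeeping around the correspondence in \cref{rem:lattice_of_quotients}: one must be sure that the clear congruences of $L/C$ correspond precisely to clear congruences of $L$ above $C$, not merely to \emph{some} congruences above $C$. This is where \cref{cor:clear_cong_characterisation} does the work — it already tells us that $\partial_I$ in $\C L$ corresponds to $\D$ in $\C(L/\widetilde\nabla_I)$, and more generally (by a further relativization, or by re-running the proof of \cref{lem:largest_dense_congruence} inside an arbitrary quotient) that the largest congruence dense in any closed congruence above $C$ is clear. Once that is in hand, the argument above goes through with no computation beyond the one-line specialization $a=y$, $z=1$.
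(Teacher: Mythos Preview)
Your argument is correct, and in content it is essentially the paper's proof viewed through the quotient $L/C$ rather than directly in $\C L$. The separating clear congruence you produce by transporting $\partial_{[a]}$ from $\C(L/C)$ back to $\C L$ is precisely the paper's $\partial_I$ with $\widetilde{\nabla}_I = \cl(\nabla_a \vee C)$: your specialisation $a = y$, $z = 1$ in the $C=0$ case unfolds to the paper's verification that $a \in I$ but $b \notin I$.

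The one place where your write-up could be tightened is the transport step. You invoke \cref{cor:clear_cong_characterisation}, but that result only handles quotients by \emph{generalised closed} congruences, not by an arbitrary $C$. The clean justification that clear congruences on $L/C$ pull back to clear congruences on $L$ goes via \cref{cor:closure_in_quotient}: if $\overline{D}$ is clear in $\C(L/C)$ with corresponding $D \ge C$ in $\C L$, set $\widetilde{\nabla}_I = \cl(D)$; then $\partial_I \ge D \ge C$ and $\cl(\partial_I) = \cl(D)$, so $\overline{\partial_I}$ and $\overline{D}$ have the same closure in $\C(L/C)$, forcing $\partial_I \le D$ and hence $D = \partial_I$. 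With that in place, your reduction-to-$C=0$ approach has the virtue of isolating the elementary identity $\bigwedge_{a \in L} \partial_a = 0$, at the cost of this extra bookkeeping; the paper's direct argument trades the clean reduction for avoiding the transport entirely.
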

\begin{proof}
 Take $C \in \C L$. Certainly, $C \le \bigwedge\{\partial_I \mid \partial_I \ge C,\, I \in \h_\kappa L\}$.
 Now suppose $(a,b) \notin C$. We may assume $a < b$ and we need a $\kappa$-ideal $I$ such that $\partial_I \ge C$ and $(a,b) \notin\partial_I$.
 
 Let $\widetilde{\nabla}_I = \cl(\nabla_a \vee C)$. Then $\nabla_a \vee C$ is dense in $\widetilde{\nabla}_I$ and so $\nabla_a \vee C$ is smaller
 than the largest congruence dense in $\widetilde{\nabla}_I$. Thus, $\partial_I \ge \nabla_a \vee C \ge C$.
 Also, $\nabla_a \le \widetilde{\nabla}_I$ and so $a \in I$.
 
 Suppose $b \in I$ as well. Then $(a,b) \in \widetilde{\nabla}_I \le \nabla_a \vee C$. So $(a,b) = (a\vee a, a\vee b) \in C$.
 This is a contradiction and so $b \notin I$. But then $(a,b) \notin \partial_I$ by \cref{cor:clear_cong_characterisation}.
\end{proof}

Quotients of a $\kappa$-frame by clear congruences have a special form.
\begin{definition}
 A $\kappa$-frame $L$ is called \emph{clear} (or \emph{d-reduced} \cite{Madden}) if $\D_L = 0$ in $\C L$.
\end{definition}
\begin{lemma}\label{lem:quotient_by_clear} %
 The quotient $L/C$ is clear if and only if the congruence $C$ is clear.
\end{lemma}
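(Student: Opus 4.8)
The plan is to prove both directions by translating between $\C L$ and $\C(L/C)$ using the correspondence of \cref{rem:lattice_of_quotients}, which identifies the interval $[C, 1]$ in $\C L$ with $\C(L/C)$, and to exploit the description of clearness via the largest dense congruence $\D$.

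First I would prove the ``only if'' direction. Suppose $L/C$ is clear, i.e. $\D_{L/C} = 0$ in $\C(L/C)$. Under the correspondence, the bottom of $\C(L/C)$ is $C$ itself, so this says the largest dense congruence of $\C(L/C)$ corresponds to $C$. But \cref{cor:clear_cong_characterisation} (with the $\kappa$-ideal $I = \{0\}$, so that $\widetilde{\nabla}_I = 0$ and $\partial_{\{0\}}$ is the largest congruence dense in $0$, i.e. dense in the ordinary sense) tells us precisely that the largest congruence dense in $0$ is of the form $\partial_I$. More to the point: the congruence in $\C L$ that corresponds to $\D$ in $\C(L/\widetilde{\nabla}_I)$ is $\partial_I$, and here I want $C$ itself to play the role of $\widetilde{\nabla}_I$. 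Since $\cl(C)$ is the largest generalised closed congruence below $C$, writing $\widetilde{\nabla}_I = \cl(C)$, we have $C$ dense in $\widetilde{\nabla}_I$, hence $C \le \partial_I$; and the hypothesis that $L/C$ is clear forces $C$ to be the \emph{largest} congruence dense in $\widetilde{\nabla}_I$ (because under the correspondence of \cref{cor:clear_cong_characterisation}, $\partial_I$ corresponds to $\D_{L/\widetilde{\nabla}_I}$, and $\D_{L/\cl(C)}$ restricted appropriately is $\D_{L/C} = 0$). Therefore $C = \partial_I$ is clear.

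For the ``if'' direction, suppose $C = \partial_I$ for some $\kappa$-ideal $I$. By \cref{cor:clear_cong_characterisation}, $\partial_I$ corresponds to $\D$ in $\C(L/\widetilde{\nabla}_I)$. I want to deduce that $L/\partial_I$ is clear, i.e. that $\D_{L/\partial_I} = 0$ in $\C(L/\partial_I)$. The key point is that $L/\partial_I = (L/\widetilde{\nabla}_I)/\D_{L/\widetilde{\nabla}_I}$, so it suffices to show that a quotient of any $\kappa$-frame $M$ by its largest dense congruence $\D_M$ is clear, i.e. that $\D_{M/\D_M} = 0$. This is the crux: a dense congruence on $M/\D_M$ pulls back (via \cref{cor:dense_quotients_congruence} and the transitivity of density, using that the composite of dense maps is dense) to a dense congruence on $M$ containing $\D_M$, which by maximality of $\D_M$ must equal $\D_M$; hence the original congruence on $M/\D_M$ is trivial. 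This shows $M/\D_M$ is clear, and applying it with $M = L/\widetilde{\nabla}_I$ gives that $L/\partial_I$ is clear.

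The main obstacle I anticipate is handling the composition of density correctly: one needs that if $D \le C$ and $C$ is dense in $D$ and $E \ge C$ is dense in $C$, then $E$ is dense in $D$ — equivalently, that dense frame homomorphisms compose — together with the ``upward'' direction, that if $E \ge C$ and the composite $L/D \to L/E$ is dense then so is $L/C \to L/E$ (since density of a composite forces density of the second factor). These are exactly what is needed to transfer ``largest dense congruence'' through the quotient $L \twoheadrightarrow L/\widetilde{\nabla}_I$, and getting the variances right (remembering that $\C L$ is ordered oppositely to the lattice of quotients) is the part requiring care; the rest is bookkeeping with \cref{rem:lattice_of_quotients}, \cref{cor:dense_quotients_congruence} and \cref{cor:clear_cong_characterisation}.
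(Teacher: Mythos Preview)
Your argument is correct. Both directions go through once the two density facts you flag at the end are in hand (dense maps compose; if a composite of surjections is dense then so is the second factor), and these are immediate from \cref{lem:dense}.

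The paper's proof is the single line ``apply \cref{cor:closure_in_quotient}''. That corollary says the closure of $A \ge C$ computed in $\C(L/C)$ corresponds to $C \vee \cl(A)$ in $\C L$; so $A$ is dense in $L/C$ iff $\cl(A) \le C$ iff (since $\cl(A)$ is generalised closed and $A \ge C$) $\cl(A) = \cl(C)$. Hence $\D_{L/C}$ corresponds to $\partial_{\cl(C)}$, and $L/C$ clear $\iff \partial_{\cl(C)} = C \iff C$ clear. Your route avoids \cref{cor:closure_in_quotient} and instead works directly with the quotient maps: for the ``if'' direction you reduce to the pleasant lemma that $M/\D_M$ is always clear, and for ``only if'' you push $\partial_{\cl(C)}$ down to $L/C$ via the dense factor $L/\cl(C) \to L/C$. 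The content is the same (\cref{cor:closure_in_quotient} is essentially the statement that density transports along the quotient $L \to L/C$), but the paper's formulation packages both directions into one observation, whereas yours unpacks the two halves separately. Your phrase ``$\D_{L/\cl(C)}$ restricted appropriately is $\D_{L/C}$'' is the only place that reads loosely; what you actually need there is precisely the second-factor density fact you identify in your final paragraph, so just say that instead.
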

\begin{proof}
 Simply apply \cref{cor:closure_in_quotient}.
\end{proof}

\begin{lemma}\label{lem:clear_distinct_pseudocomplements} %
 A $\kappa$-frame $L$ is clear if and only if no two distinct principal ideals on $L$ have the same pseudocomplement in $\h_\kappa L$.
\end{lemma}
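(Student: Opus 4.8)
The plan is to translate the condition ``$\D_L = 0$'' through the correspondences already developed in this section. By \cref{lem:largest_dense_congruence}, $\D_L$ is the largest dense congruence, i.e. the largest congruence $C$ with $\cl(C) = 0$. Unwinding the definition of the closure operator $\cl = \widetilde{\nabla}_\bullet \circ (\widetilde{\nabla}_\bullet)_*$, the equation $\cl(C) = 0$ says exactly that the only principal closed congruence below $C$ is $\nabla_0 = 0$. So $\D_L = 0$ precisely when no nonzero congruence is dense, equivalently when every congruence $C \neq 0$ satisfies $\cl(C) \neq 0$, i.e. contains some $\nabla_a$ with $a \neq 0$. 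The strategy is to specialise this to the congruences $\partial_a = \partial_{\downarrow a}$ and to use \cref{cor:clear_cong_heyting_arrow}, which describes $\partial_I$ via the Heyting arrow in $\h_\kappa L$; when $I = \,\downarrow\! a$ is principal and we compare with $I = \,\downarrow\! 0 = \{0\}$, the relevant Heyting arrows $\,\downarrow\! x \rightarrow \,\downarrow\! a$ and $\,\downarrow\! x \rightarrow \{0\}$ are exactly pseudocomplements in $\h_\kappa L$.

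More concretely, here are the steps. First I would observe, using \cref{cor:clear_cong_characterisation} with $I = \{0\}$, that $\partial_{\{0\}} = \{(a,b) \mid a \wedge x = 0 \iff b \wedge x = 0 \text{ for all } x\}$, which is precisely $\D_L$; and via the correspondence of \cref{rem:lattice_of_quotients} together with \cref{cor:closure_in_quotient}, the largest dense congruence $\D_L$ in $\C L$ is $0$ iff $L$ is clear. Next, I would note that $L$ is clear iff $\partial_a = \nabla_a$ for every $a \in L$: indeed $\partial_a$ is by definition the largest congruence dense in $\widetilde\nabla_{\downarrow a} = \nabla_a$, and ``dense in $\nabla_a$'' for congruences above $\nabla_a$ corresponds, under \cref{rem:lattice_of_quotients} and \cref{cor:dense_quotients_congruence}, to denseness of a congruence on the quotient $L/\nabla_a \cong \,\uparrow\! a$; so $\partial_a = \nabla_a$ for all $a$ iff every $\,\uparrow\! a$ (equivalently every quotient of $L$) is clear — but being clear is inherited by quotients via \cref{lem:quotient_by_clear}, so this reduces to $L$ itself being clear. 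Then the key computation: for $a, b \in L$,
\[
(a,b) \in \partial_a \iff \;\downarrow\! a \to \,\downarrow\! a = \,\downarrow\! b \to \,\downarrow\! a
\]
by \cref{cor:clear_cong_heyting_arrow}, and since $\,\downarrow\! a \to \,\downarrow\! a = \top$ in $\h_\kappa L$, this says $\,\downarrow\! b \le (\,\downarrow\! a \to \,\downarrow\! a)$ trivially — so I instead compare $\partial_a$ and $\partial_0$ directly. Using \cref{cor:clear_cong_heyting_arrow} with $I = \{0\}$ gives $\D_L = \{(a,b) \mid (\,\downarrow\! a)^* = (\,\downarrow\! b)^*\}$, where $(-)^* = (- \to \{0\})$ is the pseudocomplement in $\h_\kappa L$ and $\,\downarrow\! a \mapsto (\,\downarrow\! a)^*$ because principal ideals are the unit-image in $\h_\kappa L$ (and their pseudocomplements suffice to detect $\D_L$ since every element of $L$ sits in $\h_\kappa L$ as a principal ideal). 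So $\D_L = 0$ in $\C L$ iff $(a,b) \in \D_L \Rightarrow a = b$, i.e. iff $(\,\downarrow\! a)^* = (\,\downarrow\! b)^* \Rightarrow \,\downarrow\! a = \,\downarrow\! b$, which is the claimed statement.

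The main obstacle is bookkeeping the various correspondences correctly — in particular, verifying that it genuinely suffices to test pseudocomplements of \emph{principal} ideals rather than arbitrary $\kappa$-ideals, and that $\D_L$ is detected on $L \hookrightarrow \h_\kappa L$ via principal ideals (the point being that $(a,b) \in \D_L$ iff $a, b$ have the same ``meet-zero'' behaviour against all $x \in L$, and $a \wedge x = 0$ in $L$ translates to $\,\downarrow\! a \cap \,\downarrow\! x = \{0\}$ in $\h_\kappa L$, i.e. $\,\downarrow\! x \le (\,\downarrow\! a)^*$, so equality of pseudocomplements of the principal ideals is exactly the condition defining $\D_L$). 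Once that identification is pinned down the rest is a direct rephrasing, with no hard inequalities to establish.
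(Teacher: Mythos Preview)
Your final argument is correct and is exactly the paper's (one-line) proof: apply \cref{cor:clear_cong_heyting_arrow} with $I=\{0\}$ to get $\D_L=\{(a,b)\mid(\downarrow a)^*=(\downarrow b)^*\}$, whence $\D_L=0$ iff distinct principal ideals have distinct pseudocomplements. The worry about ``testing only principal ideals'' is a non-issue, since $\D_L\subseteq L\times L$ and the description from \cref{cor:clear_cong_heyting_arrow} already speaks only of $\downarrow a$ and $\downarrow b$.

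One correction to the abandoned tangent: you invoke \cref{lem:quotient_by_clear} to claim that clearness passes to quotients, but that lemma says no such thing (it characterises \emph{which} quotients are clear, namely those by clear congruences), and in fact \cref{ex:clear_kappa_frame} exhibits a clear $\kappa$-frame with a non-clear quotient. So the equivalence ``$L$ clear $\iff$ $\partial_a=\nabla_a$ for all $a$'' is not available by that route; fortunately you do not need it.
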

\begin{proof}
 Apply \cref{cor:clear_cong_heyting_arrow}.
\end{proof}
\begin{corollary}\label{cor:clear_frame_Boolean}%
 A frame is clear if and only if it is Boolean.
\end{corollary}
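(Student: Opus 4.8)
The plan is to reduce the statement to the characterisation of clearness in \cref{lem:clear_distinct_pseudocomplements}. For a frame we take $\kappa = \infty$, so $\h_\infty L$ is the frame of principal ideals on $L$, which is isomorphic to $L$ itself via $\,\downarrow\!a \mapsto a$. Under this isomorphism the pseudocomplement of $\,\downarrow\!a$ in $\h_\infty L$ corresponds to the pseudocomplement $a^*$ in $L$. Hence \cref{lem:clear_distinct_pseudocomplements} says exactly that $L$ is clear if and only if the pseudocomplementation map $a \mapsto a^*$ is injective on $L$. Everything then reduces to showing that $L$ is Boolean (every element complemented, equivalently $a^{**} = a$ for all $a$, with complement $a^*$) if and only if this map is injective.

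For the ``if'' direction I would argue: if $L$ is Boolean then $a = a^{**}$ for every $a$, so $a^* = b^*$ implies $a^{**} = b^{**}$, i.e.\ $a = b$; thus the pseudocomplement map is injective and $L$ is clear.

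For the ``only if'' direction I would use the identity $a^{***} = a^*$ from the background section. Given any $a \in L$, this says $(a^{**})^* = a^{***} = a^* = a^*$, so $a$ and $a^{**}$ have the same pseudocomplement. If the pseudocomplement map is injective this forces $a^{**} = a$, and since $a$ was arbitrary, $L$ is Boolean.

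There is essentially no real obstacle here; the only point requiring care is the translation of \cref{lem:clear_distinct_pseudocomplements} into a statement about elements of $L$ via $\h_\infty L \cong L$, after which the argument is a two-line manipulation of the standard pseudocomplement identities $a \le a^{**}$ and $a^{***} = a^*$.
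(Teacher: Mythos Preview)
Your proposal is correct and takes essentially the same approach as the paper: both reduce to \cref{lem:clear_distinct_pseudocomplements} via the identification $\h_\infty L \cong L$, and both exploit injectivity of pseudocomplementation together with a standard identity. The only cosmetic difference is which pair you feed into the injectivity: the paper applies it to $a\vee a^*$ and $1$ (since $(a\vee a^*)^* = a^*\wedge a^{**} = 0 = 1^*$) to conclude $a\vee a^* = 1$ directly, whereas you apply it to $a$ and $a^{**}$ (since $a^{***}=a^*$) to conclude $a^{**}=a$ and then invoke the standard fact that this forces $L$ to be Boolean --- which, if unpacked, is exactly the paper's computation.
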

\begin{proof}
 Suppose $L$ is a clear frame.
 For frames, $(\downarrow\! a)^* \in \h_\infty L$ is again principal and so $\downarrow\! a \vee (\downarrow\! a)^*$ is principal.
 But we also have $(\downarrow\! a \vee (\downarrow\! a)^*)^* = (\downarrow\! a)^* \wedge (\downarrow\! a)^{**} = 0$.
 Obviously, $(\downarrow\! 1)^* = 0$ and so $(\downarrow\! a \vee (\downarrow\! a)^*) = 1$ by \cref{lem:clear_distinct_pseudocomplements}.
 So every principal ideal, and thus every element of $L$, has a complement.
 
 Conversely, suppose $L$ is Boolean. Then every principal ideal of $L$ is complemented and so pseudocomplements are clearly unique.
\end{proof}

We are now in a position to prove the following characterisation of clear $\kappa$-frames.
\begin{theorem}\label{thm:clear_kappa_frames}%
A $\kappa$-frame $L$ is clear if and only if it is isomorphic to a generating sub-$\kappa$-frame of a Boolean frame.
\end{theorem}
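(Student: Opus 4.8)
The plan is to prove the two implications separately, using the characterisation of clear frames as Boolean frames (\cref{cor:clear_frame_Boolean}) as the engine on the ``Boolean'' side and the largest dense congruence $\D_L$ on the ``clear'' side.

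For the forward direction, suppose $L$ is a clear $\kappa$-frame. The natural candidate for a Boolean frame containing $L$ is the congruence frame $\C L$ itself --- we have not yet proved it is a frame at this point in the text, but \cref{thm:clear_kappa_frames} is stated before that fact is established, so I would instead work with a more elementary ambient structure. The cleanest choice is $\h_\kappa L$, the frame of $\kappa$-ideals, with its largest dense quotient. Concretely: the unit $L \to \h_\kappa L$ sending $a \mapsto\; \downarrow\! a$ is a $\kappa$-frame embedding whose image generates $\h_\kappa L$ under arbitrary joins (in fact under $\kappa$-directed joins). Now pass to the Boolean quotient: let $M = (\h_\kappa L)/\D_{\h_\kappa L}$, which is Boolean by \cref{cor:clear_frame_Boolean} applied to the frame $\h_\kappa L$. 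Composing, we get a $\kappa$-frame homomorphism $e: L \to M$. I claim $e$ is injective precisely because $L$ is clear: if $e(\downarrow\! a) = e(\downarrow\! b)$ then $\downarrow\! a$ and $\downarrow\! b$ are identified by the largest dense congruence on $\h_\kappa L$, which by the description in the proof of \cref{lem:largest_dense_congruence} means they have the same pseudocomplement in $\h_\kappa L$; by \cref{lem:clear_distinct_pseudocomplements} clearness of $L$ forces $\downarrow\! a =\; \downarrow\! b$, i.e.\ $a = b$. Finally, since the image of $L$ generates $\h_\kappa L$ under joins and $M$ is a quotient of $\h_\kappa L$, the image of $L$ generates $M$ under joins, so $e(L)$ is a generating sub-$\kappa$-frame of the Boolean frame $M$.

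For the converse, suppose $L$ is (isomorphic to) a generating sub-$\kappa$-frame of a Boolean frame $B$. I want to show $\D_L = 0$, i.e.\ that $(a,b) \in \D_L$ forces $a = b$. By the description of $\D_L$ in \cref{lem:largest_dense_congruence}, $(a,b) \in \D_L$ means that for all $x \in L$, $a \wedge x = 0 \iff b \wedge x = 0$. The key point is that this equivalence transfers to all of $B$: since $L$ generates $B$ under joins, for any $y \in B$ we can write $y = \bigvee_\alpha x_\alpha$ with $x_\alpha \in L$, and $a \wedge y = 0 \iff a \wedge x_\alpha = 0$ for all $\alpha \iff b \wedge x_\alpha = 0$ for all $\alpha \iff b \wedge y = 0$ (using frame distributivity and $a, b \in L \subseteq B$). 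Now exploit that $B$ is Boolean: take $y = a^c$, the complement of $a$ in $B$. Then $a \wedge a^c = 0$, so $b \wedge a^c = 0$, whence $b \le a^{cc} = a$. Symmetrically $a \le b$, so $a = b$.

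The main obstacle is a bookkeeping one rather than a conceptual one: since \cref{thm:clear_kappa_frames} precedes the proof that congruence lattices of $\kappa$-frames are frames, I must be careful not to invoke $\C L$ being a frame, and in particular must phrase the forward direction using $\h_\kappa L$ (which is manifestly a frame) together with \cref{cor:clear_frame_Boolean} and \cref{lem:clear_distinct_pseudocomplements} --- both of which are available. A secondary subtlety is checking that ``generating sub-$\kappa$-frame'' in the statement means generating under \emph{arbitrary} joins in the Boolean frame (the frame $B$ being Boolean is a frame, so arbitrary joins exist), which is what makes the distributivity argument in the converse go through; I would state this convention explicitly at the start of the proof.
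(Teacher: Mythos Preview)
Your proof is correct and follows essentially the same route as the paper: the forward direction is identical (embed $L$ into $\h_\kappa L/\D_{\h_\kappa L}$, which is Boolean by \cref{cor:clear_frame_Boolean}, and use \cref{lem:clear_distinct_pseudocomplements} for injectivity), and the converse hinges on the same idea of exploiting the complement $a^c$ in $B$ together with the fact that $L$ generates $B$ under joins. The only cosmetic difference is that the paper phrases the converse via pseudocomplements of principal ideals in $\h_\kappa L$ (showing $(\downarrow\! a)^* = \;\downarrow\! a^c \cap L$ and hence $a^c = \bigvee (\downarrow\! a)^*$), whereas you extend the annihilator condition from $L$ to all of $B$ directly; both arguments are equivalent.
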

\begin{proof}
 Suppose $L$ is clear and consider the map $g: L \to \h_\kappa L/\D_{\h_\kappa L}$ given by the
 composition $L \xrightarrow{\,\downarrow\,} \h_\kappa L \twoheadrightarrow \h_\kappa L/\D_{\h_\kappa L}$.
 The frame $\h_\kappa L/\D_{\h_\kappa L}$ is Boolean by \cref{cor:clear_frame_Boolean} and is generated by the image
 of $L$ under $g$ since $\h_\kappa L$ is generated by the image of $L$ and the quotient map is surjective.
 Finally, we show that $g$ is injective. Take $x,y \in L$. By \cref{cor:clear_cong_heyting_arrow},
 $[\downarrow\! x] = [\downarrow\! y]$ if and only if $(\downarrow\! x)^* = (\downarrow\! y)^*$. But by \cref{lem:clear_distinct_pseudocomplements},
 $(\downarrow\! x)^*$ and $(\downarrow\! y)^*$ are distinct if $x$ and $y$ are. 
 
 Now suppose $L$ is a sub-$\kappa$-frame of a complete Boolean algebra $B$ and that $L$ generates $B$ under arbitrary joins.
 Let $a \in L$ and consider $a^c \in B$. Suppose $b \in L$, $b \le a^c \in B$. Then $b \wedge a = 0$ and so $b \in (\downarrow\! a)^* \in \h_\kappa L$.
 Conversely, if $b \in (\downarrow\! a)^*$ then $b \wedge a = 0$ and so $b \le a^c$. Thus $(\downarrow\! a)^* =\; \downarrow\!a^c \cap L$.
 Since $L$ generates $B$, $a^c = \bigvee (\downarrow\! a)^* \in B$.
 So for any $a, b \in L$, $(\downarrow\! a)^* = (\downarrow\! b)^*$ only if $a^c = b^c$ only if $a = b$. Thus, $L$ is clear.
\end{proof}

\begin{example}\label{ex:clear_kappa_frame} %
 Let $M$ be the lattice of subsets of $\N$ that are either finite or equal to $\N$.
 Since $M$ is a generating sublattice of $2^\N$, it is a clear distributive lattice by \cref{thm:clear_kappa_frames}.
 
 Consider the ideal $I$ of elements of $M$ which represent sets that do not contain the natural number $2$.
 This is the pseudocomplement of the principal ideal generated by $\{2\}$.
 The quotient lattice $M / \widetilde{\nabla}_I$ consists of three elements, namely $[\emptyset]$, $[\{2\}]$ and $[\N]$, and is
 isomorphic to the 3-element frame $\mathbbold{3}$. Thus, quotients of clear $\kappa$-frames may fail to be clear.
\end{example}

\begin{lemma}\label{lem:hereditary_clear_is_Boolean} %
 Every quotient of a $\kappa$-frame $L$ is clear if and only if $L$ is Boolean.
\end{lemma}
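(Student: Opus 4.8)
The plan is to treat the two implications separately. The forward direction will be a short hereditary argument, and the converse will come down to producing one explicit quotient that fails to be clear.

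For ``$L$ Boolean $\Rightarrow$ every quotient clear'': first I would observe that a quotient $L/C$ of a Boolean $\kappa$-frame is again Boolean, since if $a^c$ is the complement of $a$ in $L$ then $[a]\wedge[a^c]=[0]$ and $[a]\vee[a^c]=[1]$ in $L/C$, and every element of $L/C$ is of the form $[a]$ because the quotient map is onto. Then I would run the argument from the proof of \cref{cor:clear_frame_Boolean}, which applies verbatim to $\kappa$-frames: in a Boolean $\kappa$-frame the pseudocomplement of a principal ideal $\downarrow\! a$ in $\h_\kappa(L/C)$ is again principal, namely $\downarrow\! a^c$, so distinct principal ideals have distinct pseudocomplements and \cref{lem:clear_distinct_pseudocomplements} yields that $L/C$ is clear.

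For the converse I would argue contrapositively. Supposing $L$ is not Boolean, I fix $a\in L$ with no complement and set $I=(\downarrow\! a)^*=\{b\in L\mid a\wedge b=0\}$, a $\kappa$-ideal on $L$; the claim is that $L/\widetilde{\nabla}_I$ is not clear. The crux is that the pair $(a,1)$ lies in $\partial_I$ but not in $\widetilde{\nabla}_I$. That $(a,1)\in\partial_I$ follows from the explicit description in \cref{cor:clear_cong_characterisation}, since for every $z\in L$ we have $a\wedge z\in I\iff a\wedge z=0\iff z\in I$. That $(a,1)\notin\widetilde{\nabla}_I$ follows from \cref{lem:joins_with_generalised_closed}: membership would give an $i\in I$ with $a\vee i=1$, and since $i\in I$ means $a\wedge i=0$, this $i$ would be a complement of $a$, contradicting our choice. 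Hence $\widetilde{\nabla}_I<\partial_I$ (recall $\widetilde{\nabla}_I\le\partial_I$, because $\partial_I$ is the largest congruence dense in $\widetilde{\nabla}_I$ while $\widetilde{\nabla}_I$, being its own generalised closure, is dense in $\widetilde{\nabla}_I$). Since \cref{cor:clear_cong_characterisation} matches $\partial_I$ in $\C L$ with $\D$ in $\C(L/\widetilde{\nabla}_I)$, this strict inequality gives $\D_{L/\widetilde{\nabla}_I}\neq 0$, so $L/\widetilde{\nabla}_I$ is not clear (alternatively one could invoke \cref{lem:quotient_by_clear}).

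I expect the only genuine difficulty to be finding the right quotient for the converse. The choice $I=(\downarrow\! a)^*$ is precisely what makes the single pair $(a,1)$ certify both that $\widetilde{\nabla}_I\neq\partial_I$, and hence the non-clearness, and that $a$ fails to be complemented; after that it is just bookkeeping with \cref{cor:clear_cong_characterisation} and \cref{lem:joins_with_generalised_closed}. As a sanity check the construction reduces to \cref{ex:clear_kappa_frame}, where $a=\{2\}$ and $L/\widetilde{\nabla}_I\cong\mathbbold{3}$.
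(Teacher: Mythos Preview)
Your proof is correct. The backward implication matches the paper's one-line argument. For the forward implication you and the paper end up using the very same ideal: the paper sets $\widetilde{\nabla}_I=\cl(\Delta_a)$, and unwinding this gives exactly $I=\{b\mid (0,b)\in\Delta_a\}=\{b\mid a\wedge b=0\}=(\downarrow\! a)^*$. The packaging differs, though. The paper argues directly: under the hypothesis, $L/\widetilde{\nabla}_I$ is clear, so $\widetilde{\nabla}_I=\partial_I$; since $\widetilde{\nabla}_I\le\Delta_a\le\partial_I$ this forces $\Delta_a=\widetilde{\nabla}_I$, and then $\nabla_a\vee\widetilde{\nabla}_I=1$ yields $(a,1)\in\widetilde{\nabla}_I$, producing the complement. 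You argue contrapositively and never mention $\Delta_a$: you verify $(a,1)\in\partial_I$ straight from the explicit description in \cref{cor:clear_cong_characterisation} and $(a,1)\notin\widetilde{\nabla}_I$ from \cref{lem:joins_with_generalised_closed}. Your route is a bit more elementary---it avoids the closure operator on $\C L$ and the intermediary $\Delta_a$---while the paper's route makes the structural reason (that $\Delta_a$ is forced to be generalised closed) more visible.
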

\begin{proof}
 $(\Leftarrow)$ A quotient of a Boolean $\kappa$-frame is Boolean and Boolean $\kappa$-frames are clear.
 
 $(\Rightarrow)$
 Let $a \in L$. We will show that $a$ has a complement. Consider $\Delta_a \in \C L$ and
 let $\widetilde{\nabla}_I = \cl(\Delta_a)$. But $\widetilde{\nabla}_I$ is clear so $\widetilde{\nabla}_I = \partial_I$
 and thus $\widetilde{\nabla}_I = \Delta_a$.
 
 Now $\nabla_a \vee \widetilde{\nabla}_I = 1$ and so $(0,1) \in \nabla_a \vee \widetilde{\nabla}_I$. So $(a,1) \in \widetilde{\nabla}_I$
 and $a \vee i = 1$ for some $i \in I$. But then $\nabla_a \vee \nabla_i = 1$ and also $\nabla_a \wedge \nabla_i = 0$ since
 $\nabla_i \le \widetilde{\nabla}_I$. Thus $i$ is the complement of $a$ in $L$ and $L$ is Boolean.
\end{proof}

\subsection{Congruence \texorpdfstring{$\kappa$}{κ}-frames}\label{subsec:congruence_kappa_frames}

By \cref{cor:congruence_lattice_generators,cor:nabla_delta_complements}, the complete lattice of congruences on a $\kappa$-frame $L$
is generated by the principal closed congruences and their complements. \Cref{lem:joins_and_meets_nabla} shows that the principal closed congruences
give a mapping of $L$ into the congruence lattice that preserves finite meets and $\kappa$-joins. Let us consider the freest possible
$\kappa$-frame which satisfies these properties.

\begin{definition} \label{def:congruence_kappa_frame}
 Let $\Cvar L$ be the $\kappa$-frame generated by the symbols $\varnabla_a$ and $\varDelta_a$ for each $a \in L$, subject to
 relations ensuring that the map $a \mapsto \varnabla_a$ is a $\kappa$-frame homomorphism and that
 $\varnabla_a \wedge \varDelta_a = 0$ and $\varnabla_a \vee \varDelta_a = 1$ for all $a \in L$.
\end{definition}

\begin{lemma}\label{lem:nabla_epi}
 The map $\varnabla_\bullet: a \mapsto \varnabla_a$ is an epimorphism.
\end{lemma}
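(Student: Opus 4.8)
The plan is to show that $\varnabla_\bullet : L \to \Cvar L$ is an epimorphism by exhibiting, for each element $\varDelta_a$, an explicit description in terms of the $\varnabla_b$'s, and then arguing that any two $\kappa$-frame homomorphisms out of $\Cvar L$ that agree on all the $\varnabla_a$ must agree on all the $\varDelta_a$ as well, hence on all of $\Cvar L$ since these generate. The key observation is that $\varnabla_a$ and $\varDelta_a$ are \emph{complements} in $\Cvar L$ by construction (the defining relations force $\varnabla_a \wedge \varDelta_a = 0$ and $\varnabla_a \vee \varDelta_a = 1$), and complements in a distributive lattice are unique.

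First I would let $f, g : \Cvar L \to M$ be $\kappa$-frame homomorphisms with $f \circ \varnabla_\bullet = g \circ \varnabla_\bullet$, i.e. $f(\varnabla_a) = g(\varnabla_a)$ for every $a \in L$. Since $\varnabla_a \wedge \varDelta_a = 0$ and $\varnabla_a \vee \varDelta_a = 1$ in $\Cvar L$, applying $f$ gives $f(\varnabla_a) \wedge f(\varDelta_a) = 0$ and $f(\varnabla_a) \vee f(\varDelta_a) = 1$, so $f(\varDelta_a)$ is a complement of $f(\varnabla_a)$ in $M$; likewise $g(\varDelta_a)$ is a complement of $g(\varnabla_a)$. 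But $f(\varnabla_a) = g(\varnabla_a)$, so $f(\varDelta_a)$ and $g(\varDelta_a)$ are both complements of the \emph{same} element of the distributive lattice $M$, and complements are unique. Hence $f(\varDelta_a) = g(\varDelta_a)$ for all $a$.

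Thus $f$ and $g$ agree on all the generators $\varnabla_a$ and $\varDelta_a$ of $\Cvar L$. Since a $\kappa$-frame homomorphism is determined by its values on a generating set (every element of $\Cvar L$ is built from the generators using finite meets and $\kappa$-joins, which $f$ and $g$ both preserve), it follows that $f = g$. Therefore $\varnabla_\bullet$ is an epimorphism in $\kappa\Frm$.

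I do not anticipate a real obstacle here; the only point requiring a moment's care is the final step, that agreement on a generating set forces agreement on the whole $\kappa$-frame — this is a standard induction on the construction of $\Cvar L$ from its generators (or, more cleanly, follows because the equalizer of $f$ and $g$ is a sub-$\kappa$-frame of $\Cvar L$ containing all the generators, hence all of $\Cvar L$). One should also note that uniqueness of complements genuinely uses distributivity of $M$, which holds since $M$ is a $\kappa$-frame.
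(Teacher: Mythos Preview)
Your proof is correct and is essentially identical to the paper's own argument: the paper also takes $f,g:\Cvar L\to M$ with $f\varnabla_\bullet=g\varnabla_\bullet$, observes $f(\varDelta_a)=f(\varnabla_a)^c=g(\varnabla_a)^c=g(\varDelta_a)$ by uniqueness of complements, and concludes that $f$ and $g$ agree on generators hence everywhere. Your version merely spells out the uniqueness-of-complements step and the generation step more explicitly.
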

\begin{proof}
 It is clearly a $\kappa$-frame homomorphism. Now suppose $f,g: \Cvar L \to M$ and that $f \varnabla_\bullet = g \varnabla_\bullet$.
 Then $f(\varnabla_a) = g(\varnabla_a)$ for all $a \in L$. Also, $f(\varDelta_a) = f(\varnabla_a)^c = g(\varnabla_a)^c = g(\varDelta_a)$.
 So $f$ and $g$ agree on the generators of $\Cvar L$ and thus they agree everywhere.
\end{proof}

The $\kappa$-frame $\Cvar L$, together with the homomorphism $\varnabla_\bullet$, is readily seen to satisfy a universal property.
\begin{lemma} \label{lem:congruence_frame_universal_prop} %
 If $L$ and $M$ are $\kappa$-frames and $f$ is a $\kappa$-frame homomorphism with image lying in $BM$,
 the Boolean algebra of complemented elements of $M$, then there is a unique $\kappa$-frame homomorphism $\overline{f}$ making the following diagram commute.
 
 \begin{center}
   \begin{tikzpicture}[node distance=3.5cm, auto]
    \node (CL) {$\Cvar L$};
    \node (M) [right of=CL] {$M$};
    \node (L) [below of=CL] {$L$};
    \draw[->] (L) to node {$\varnabla_\bullet$} (CL);
    \draw[->, dashed] (CL) to node {$\overline{f}$} (M);
    \draw[->] (L) to node [swap, yshift=-2pt, xshift=-7pt] {$f$} (M);
   \end{tikzpicture}
 \end{center}
 
 Furthermore, $\varnabla_\bullet: L \to \Cvar L$ is essentially unique amongst all morphisms $g:L \to N$ which satisfy the universal property above
 and for which the image of $g$ lies in $BN$.
\end{lemma}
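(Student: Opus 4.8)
The plan is to read the first statement as an instance of the universal property of a presentation by generators and relations, and the second as the standard argument that any object satisfying a universal property is determined up to unique isomorphism.

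\emph{Existence and uniqueness of $\overline{f}$.} Since $\Cvar L$ is presented by the generators $\varnabla_a$ and $\varDelta_a$ ($a \in L$), subject to the relations making $a \mapsto \varnabla_a$ a $\kappa$-frame homomorphism and forcing $\varnabla_a \wedge \varDelta_a = 0$ and $\varnabla_a \vee \varDelta_a = 1$, a $\kappa$-frame homomorphism out of $\Cvar L$ amounts to an assignment of values to the generators that respects these relations. I would send $\varnabla_a \mapsto f(a)$ and $\varDelta_a \mapsto f(a)^c$; the latter is defined precisely because the image of $f$ lies in $BM$. The relations to check are that $a \mapsto f(a)$ is a $\kappa$-frame homomorphism, which is just the hypothesis that $f$ is one, and that $f(a) \wedge f(a)^c = 0$, $f(a) \vee f(a)^c = 1$, which is the definition of complement. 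This yields $\overline{f}$ with $\overline{f}\varnabla_\bullet = f$. Uniqueness is immediate from \cref{lem:nabla_epi}: $\varnabla_\bullet$ is an epimorphism, and any homomorphism making the diagram commute precomposes with $\varnabla_\bullet$ to give $f$, so at most one can exist.

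\emph{Essential uniqueness of $\varnabla_\bullet$.} Suppose $g : L \to N$ has image in $BN$ and satisfies the same universal property, uniqueness clause included. First note that $\varnabla_\bullet$ itself has image in $B\Cvar L$, since $\varDelta_a$ is a complement of $\varnabla_a$. Applying the universal property of $\Cvar L$ to $g$ produces $\overline{g} : \Cvar L \to N$ with $\overline{g}\varnabla_\bullet = g$, and applying the universal property of $N$ to $\varnabla_\bullet$ produces $h : N \to \Cvar L$ with $h g = \varnabla_\bullet$. Then $h\overline{g}$ and $1_{\Cvar L}$ both yield $\varnabla_\bullet$ upon precomposition with $\varnabla_\bullet$, hence coincide by the uniqueness clause for $\Cvar L$; symmetrically, $\overline{g}h$ and $1_N$ both yield $g$ upon precomposition with $g$, hence coincide by the uniqueness clause for $N$. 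So $\overline{g}$ is an isomorphism satisfying $\overline{g}\varnabla_\bullet = g$, which is exactly the assertion of essential uniqueness.

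I expect no genuine obstacle. The only thing requiring attention is keeping straight in which direction each universal property is invoked and checking the side condition that the map being extended lands in the Boolean part of its codomain — this is built into the hypotheses for $f$ and $g$, and holds for $\varnabla_\bullet$ because $\varDelta_a$ witnesses $\varnabla_a$ as complemented.
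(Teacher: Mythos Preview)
Your proposal is correct and follows essentially the same approach as the paper: define $\overline f$ on generators by $\varnabla_a\mapsto f(a)$, $\varDelta_a\mapsto f(a)^c$ and verify the relations (the paper phrases this as defining a map on the free object and factoring through the quotient via \cref{rem:factor_through_quotients}, which amounts to the same thing), invoke \cref{lem:nabla_epi} for uniqueness, and run the standard mutual-inverse argument for essential uniqueness. Your explicit remark that $\varnabla_\bullet$ itself lands in $B\Cvar L$ is a point the paper leaves implicit but does need.
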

\begin{proof}
 We can define a map $\widetilde{f}$ from the free frame on $L$ generated by $\varnabla_a$ and $\varDelta_a$ by $\widetilde{f}(\varnabla_a) = f(a)$ and
 $\widetilde{f}(\varDelta_a) = f(a)^c$. This factors through $\Cvar L$ by \cref{rem:factor_through_quotients} to give the desired map $\overline{f}$.
 Uniqueness is guaranteed since $\varnabla_\bullet$ is an epimorphism.
 
 Now suppose $g: L \to N$ satisfies the conditions above. We apply the universal properties of $\varnabla_\bullet$ and $g$ in turn to give the diagram below.
 \begin{center}
   \begin{tikzpicture}[node distance=3.5cm, auto]
    \node (CL) {$\Cvar L$};
    \node (N) [left of=CL] {$N$};
    \node (M) [right of=CL] {$N$};
    \node (L) [below of=CL] {$L$};
    \draw[->] (L) to node {$\varnabla_\bullet$} (CL);
    \draw[->, dashed] (N) to node [swap] {$s$} (CL);
    \draw[->, dashed] (CL) to node [swap] {$r$} (M);
    \draw[->, bend left=20] (N) to node {$1_N$} (M);
    \draw[->] (L) to node [swap, yshift=-1pt, xshift=-3pt] {$g$} (M);
    \draw[->] (L) to node {$g$} (N);
   \end{tikzpicture}
 \end{center}
 Composing the induced maps $N \to \Cvar L$ and $\Cvar L \to N$ gives a map $N \to N$ making the outer triangle commute, which must therefore be the
 identity by uniqueness. Hence, $rs = 1_N$. Similarly, composing in the opposite direction gives $sr = 1_{\Cvar L}$. So $r: \Cvar L \cong N$ and
 $g = r \circ \varnabla_\bullet$, which proves the result.
\end{proof}

\begin{lemma}\label{lem:universal_map_into_complemented_is_injective}
 The map $\varnabla_\bullet$ is a monomorphism.
\end{lemma}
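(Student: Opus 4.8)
The plan is to factor a map that is already known to be injective through $\varnabla_\bullet$, so that injectivity of $\varnabla_\bullet$ (and hence the monomorphism property) comes for free. The natural candidate is the principal closed congruence map $\nabla_\bullet \colon L \to \C L$, which is injective by \cref{cor:nabla_mono}. Concretely, if I can produce a $\kappa$-frame homomorphism $\theta \colon \Cvar L \to \C L$ with $\theta \circ \varnabla_\bullet = \nabla_\bullet$, then $\varnabla_\bullet$ is injective, and an injective homomorphism of $\kappa$-frames is a monomorphism.

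To obtain $\theta$ I would invoke the universal property of \cref{lem:congruence_frame_universal_prop} with $M = \C L$ and $f = \nabla_\bullet$. Its hypotheses are all in place: the congruence lattice $\C L$ is a frame (Funayama--Nakayama distributivity together with the fact that directed joins of lattice congruences are unions, as established earlier), hence in particular a $\kappa$-frame; the map $\nabla_\bullet$ is a $\kappa$-frame homomorphism by \cref{lem:joins_and_meets_nabla}; and by \cref{cor:nabla_delta_complements} each $\nabla_a$ is complemented in $\C L$ (with complement $\Delta_a$), so the image of $\nabla_\bullet$ lies in the Boolean algebra $B(\C L)$ of complemented elements of $\C L$. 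The universal property then supplies a $\kappa$-frame homomorphism $\theta \colon \Cvar L \to \C L$ with $\theta \circ \varnabla_\bullet = \nabla_\bullet$; explicitly $\theta(\varnabla_a) = \nabla_a$ and $\theta(\varDelta_a) = \Delta_a$.

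Since $\nabla_\bullet = \theta \circ \varnabla_\bullet$ is injective, $\varnabla_\bullet$ is injective too, and therefore a monomorphism. There is essentially no obstacle: all the work was done in constructing $\C L$ and proving the universal property of $\Cvar L$. The one point worth a sentence is that the argument is not circular --- the frame $\C L$, together with \cref{cor:nabla_mono} and \cref{cor:nabla_delta_complements}, is available before $\Cvar L$ is even defined, so nothing proved in the present subsection is used to establish its own hypotheses.
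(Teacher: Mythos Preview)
Your strategy --- factor the known injection $\nabla_\bullet\colon L\to\C L$ through $\varnabla_\bullet$ via the universal property --- is elegant, but within the paper's development it is circular for $\kappa>\aleph_0$. The Funayama--Nakayama argument and the ``directed joins are unions'' observation in \S2.2 apply to congruences of \emph{finitary} algebraic structures; they show that the lattice of \emph{lattice} congruences on a distributive lattice is a frame. For $\kappa>\aleph_0$ (and for frames), $\C L$ denotes the lattice of $\kappa$-frame congruences, which is a strictly smaller complete lattice whose joins need not agree with joins of lattice congruences. The paper is explicit that the extension to $\kappa$-frames is still pending at this point (``We will see that this result extends\dots''), and indeed it is only obtained in \cref{cor:kappa_frame_congruences_form_a_frame} as a corollary of \cref{thm:quotients_are_closed_quotients_of_congruence_frame}, whose proof invokes the present lemma. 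So you cannot yet use the universal property with target $\C L$, because $\C L$ is not yet known to be a $\kappa$-frame.

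The paper's proof sidesteps this by never touching $\C L$. Given $a<b$, it produces a concrete $\kappa$-frame map $f\colon L\to B$ into a Boolean frame $B$ with $f(a)\ne f(b)$: take the quotient $L\twoheadrightarrow L/\partial_a$ (which separates $a$ from $b$ since $(0,a)\in\partial_a$ but $(0,b)\notin\partial_a$) and then embed $L/\partial_a$, which is clear, into a Boolean frame via \cref{thm:clear_kappa_frames}. Since $B$ is Boolean, the universal property applies with target $B$, yielding a factorisation through $\varnabla_\bullet$, whence $\varnabla_a\ne\varnabla_b$. Your argument is valid as written only for $\kappa=\aleph_0$, where $\C L$ really is the frame of lattice congruences; for general $\kappa$ you would need an independent proof that $\C L$ is a $\kappa$-frame, and the paper's whole point in this subsection is to obtain that fact \emph{via} $\Cvar L$.
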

\begin{proof}
 Take $a, b\in L$ with $a < b$. Then $(0,a) \in \partial_a$, but $(0,b) \notin \partial_a$ and so $q: L \twoheadrightarrow L/\partial_a$ distinguishes
 $a$ and $b$. Now by \cref{thm:clear_kappa_frames}, we can embed $L/\partial_a$ in a Boolean frame $B$.
 The composite morphism $f: L \to B$ then separates $a$ and $b$. Since $B$ is Boolean, $f$ factors through $\varnabla_\bullet: L \to \Cvar L$ by
 the universal property and so $\varnabla_\bullet$ also distinguishes $a$ and $b$. Thus, $\varnabla_\bullet$ is injective.
\end{proof}

\begin{remark}
The universal property of $\varnabla_\bullet$ turns $\Cvar$ into a functor and $\varnabla_\bullet$ into a natural transformation
$\varnabla: \mathbbold{1} \to \Cvar$. If $f: L \to M$ then we obtain $\Cvar f$ from the universal property applied to $\varnabla_\bullet f$
as shown in the diagram.
\begin{center}
   \begin{tikzpicture}[node distance=3.5cm, auto]
    \node (CL) {$\Cvar L$};
    \node (CM) [right of=CL] {$\Cvar M$};
    \node (L) [below of=CL] {$L$};
    \node (M) [right of=L] {$M$};
    \draw[->] (L) to node {$\varnabla_\bullet$} (CL);
    \draw[->] (M) to node [swap] {$\varnabla_\bullet$} (CM);
    \draw[->] (L) to node [swap] {$f$} (M);
    \draw[->] (L) to node [swap, yshift=-2pt, xshift=-7pt] {$\varnabla_\bullet f$} (CM);
    \draw[->, dashed] (CL) to node {$\overline{\varnabla_\bullet f}$} (CM);
   \end{tikzpicture}
\end{center}

Functoriality follows easily by composing two such diagrams for $f:L \to M$ and $g:M \to N$ and using commutativity together with uniqueness of the
map $\overline{\varnabla_\bullet f g}$.
\begin{center}
   \begin{tikzpicture}[node distance=3.5cm, auto]
    \node (CL) {$\Cvar L$};
    \node (CM) [right of=CL] {$\Cvar M$};
    \node (L) [below of=CL] {$L$};
    \node (M) [right of=L] {$M$};
    \node (N) [right of=M] {$N$};
    \node (CN) [right of=CM] {$\Cvar N$};
    \draw[->] (L) to node {$\varnabla_\bullet$} (CL);
    \draw[->] (M) to node {$\varnabla_\bullet$} (CM);
    \draw[->] (N) to node [swap] {$\varnabla_\bullet$} (CN);
    \draw[->] (L) to node [swap] {$f$} (M);
    \draw[->] (M) to node [swap] {$g$} (N);
    \draw[->, dashed] (CL) to node {$\overline{\varnabla_\bullet f}$} (CM);
    \draw[->, dashed] (CM) to node {$\overline{\varnabla_\bullet g}$} (CN);
   \end{tikzpicture}
\end{center}
\end{remark}

The $\kappa$-frame $\Cvar L$ is in fact strongly related to the lattice of $\kappa$-frame congruences of $L$.
\Cref{rem:lattice_of_quotients} tells us that if $C$ is a congruence on $L$, congruences on $L/C$ correspond bijectively
to those congruences on $L$ lying above $C$. Once we show $\C L$ is a frame, this will mean that congruences on $L$ correspond
to closed congruences on $\C L$ by \cref{lem:open_closed_lowerset_and_upperset}. With this idea in mind, we now relate congruences
on $L$ to generalised closed congruences on $\Cvar L$. The proof follows that of Madden in \cite{Madden}.

\begin{theorem}\label{thm:quotients_are_closed_quotients_of_congruence_frame}
 There is an order isomorphism between the lattice of congruences on $L$ and the lattice of generalised closed congruences on $\Cvar L$.
\end{theorem}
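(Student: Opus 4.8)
The plan is to exhibit mutually inverse, order-preserving maps $\Phi\colon\C L\to\{\text{generalised closed congruences on }\Cvar L\}$ and $\Psi$ in the other direction. For a congruence $C$ on $L$ set
\[\Phi(C)=\textstyle\bigvee\{\langle(\varnabla_a,\varnabla_b)\rangle\mid (a,b)\in C,\ a\le b\}.\]
Two preliminary observations keep this honest. First, for a complemented element $x$ of any $\kappa$-frame $M$ one has $\Delta_x=\nabla_{x^{c}}$ in $\C M$, since by \cref{cor:nabla_delta_complements} and \cref{lem:joins_and_meets_nabla} each of $\Delta_x$ and $\nabla_{x^{c}}$ is the (unique, by distributivity of $\C M$) complement of $\nabla_x$. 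Second, since $a\le b$ forces $\varnabla_a\le\varnabla_b$ and $\varnabla_a$ is complemented with complement $\varDelta_a$, \cref{lem:principal_congruence} together with the previous remark and \cref{lem:joins_and_meets_nabla} gives $\langle(\varnabla_a,\varnabla_b)\rangle=\nabla_{\varnabla_b}\wedge\Delta_{\varnabla_a}=\nabla_{\varnabla_b}\wedge\nabla_{\varDelta_a}=\nabla_{\varnabla_b\wedge\varDelta_a}$, a principal closed congruence; hence $\Phi(C)$ is genuinely generalised closed. In the other direction, set $\Psi(D)=\{(a,b)\in L\times L\mid(\varnabla_a,\varnabla_b)\in D\}$, the kernel of $L\xrightarrow{\varnabla_\bullet}\Cvar L\twoheadrightarrow\Cvar L/D$, which is a congruence on $L$. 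Monotonicity of both maps is immediate from the definitions.

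To prove $\Psi\Phi=\mathrm{id}_{\C L}$ I would argue both inclusions. If $(a,b)\in C$ then $(a\wedge b,a\vee b)\in C$ by \cref{lem:congruence_intervals}, so $(\varnabla_a\wedge\varnabla_b,\varnabla_a\vee\varnabla_b)=(\varnabla_{a\wedge b},\varnabla_{a\vee b})\in\Phi(C)$; since $\varnabla_a\wedge\varnabla_b\le\varnabla_a\le\varnabla_a\vee\varnabla_b$, a second application of \cref{lem:congruence_intervals} places $(\varnabla_a\wedge\varnabla_b,\varnabla_a)$ and $(\varnabla_a\wedge\varnabla_b,\varnabla_b)$ in $\Phi(C)$, whence $(\varnabla_a,\varnabla_b)\in\Phi(C)$ and $(a,b)\in\Psi\Phi(C)$. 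Conversely, consider the homomorphism $\Cvar q\colon\Cvar L\to\Cvar(L/C)$ induced by the quotient map $q\colon L\to L/C$, which sends $\varnabla_a$ to $\varnabla_{[a]}$. For $(a,b)\in C$ we have $[a]=[b]$, so each generating congruence $\langle(\varnabla_a,\varnabla_b)\rangle$ of $\Phi(C)$ lies in $\ker\Cvar q$, and therefore $\Phi(C)\le\ker\Cvar q$. If $(a,b)\notin C$ then $[a]\ne[b]$ in $L/C$, and since $\varnabla_\bullet$ on $L/C$ is injective (\cref{lem:universal_map_into_complemented_is_injective}), $\varnabla_{[a]}\ne\varnabla_{[b]}$; hence $(\varnabla_a,\varnabla_b)\notin\ker\Cvar q\supseteq\Phi(C)$, so $(a,b)\notin\Psi\Phi(C)$. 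Thus $\Psi\Phi(C)=C$.

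For $\Phi\Psi=\mathrm{id}$, let $D$ be generalised closed and put $C=\Psi(D)$. The inclusion $\Phi(C)\le D$ is immediate, since every generator $\langle(\varnabla_a,\varnabla_b)\rangle$ of $\Phi(C)$ has $(a,b)\in C=\Psi(D)$, i.e.\ $(\varnabla_a,\varnabla_b)\in D$. For the reverse inclusion I would first record the structural fact that every element of $\Cvar L$ is a $\kappa$-join of \emph{basic} elements of the form $\varnabla_a\wedge\varDelta_b$ with $b\le a$: the finite meets of the generators $\varnabla_\bullet,\varDelta_\bullet$ are exactly such elements (using that $\varnabla_\bullet$ preserves finite meets, that $\varDelta_{b_1}\wedge\varDelta_{b_2}=\varDelta_{b_1\vee b_2}$ by De Morgan for complements, and that $\varnabla_a\wedge\varDelta_b=\varnabla_a\wedge\varDelta_{a\wedge b}$), and the set of $\kappa$-joins of basic elements is closed under finite meets and $\kappa$-joins by regularity of $\kappa$, hence is all of $\Cvar L$. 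Now write $D=\widetilde{\nabla}_I$ for a $\kappa$-ideal $I$ on $\Cvar L$. Each $x\in I$ is a $\kappa$-join of basic elements, all of which lie in the downset $I$, so by \cref{lem:joins_and_meets_nabla}(iii) $\nabla_x$ is a join of congruences $\nabla_w$ with $w$ basic in $I$; consequently $D=\bigvee_{x\in I}\nabla_x=\bigvee\{\nabla_w\mid w\in I\text{ basic}\}$. For such a basic $w=\varnabla_a\wedge\varDelta_b$ ($b\le a$), \cref{lem:joins_and_meets_nabla}(i), the complementation remark, and \cref{lem:principal_congruence} give $\nabla_w=\nabla_{\varnabla_a}\wedge\nabla_{\varDelta_b}=\nabla_{\varnabla_a}\wedge\Delta_{\varnabla_b}=\langle(\varnabla_b,\varnabla_a)\rangle$, and $\nabla_w\le\widetilde{\nabla}_I=D$ forces $(\varnabla_b,\varnabla_a)\in D$, i.e.\ $(b,a)\in C$; since $b\le a$ this makes $\langle(\varnabla_b,\varnabla_a)\rangle$ one of the generating congruences of $\Phi(C)$. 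Joining over all basic $w\in I$ yields $D\le\Phi(C)$, so $\Phi\Psi(D)=D$.

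Since $\Phi$ and $\Psi$ are mutually inverse and order-preserving, they constitute the desired order isomorphism. The step I expect to require the most care is the inclusion $D\le\Phi(\Psi(D))$: it rests on the structural description of $\Cvar L$ coming from its presentation by generators and relations (so that $\kappa$-ideals are generated by their basic elements), which is exactly where the regularity of $\kappa$ is used; by comparison the $\Psi\Phi$ direction is routine, although its reverse inclusion does depend essentially on the injectivity of $\varnabla_\bullet$ for quotient $\kappa$-frames, hence ultimately on \cref{thm:clear_kappa_frames}.
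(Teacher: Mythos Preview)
Your argument is correct and follows essentially the same route as the paper: the paper also lifts $C$ to the congruence $\overline{C}=\langle(\varnabla_a,\varnabla_b)\mid(a,b)\in C\rangle$, uses $\Cvar q$ together with injectivity of $\varnabla_\bullet$ on $L/C$ to get $(\varnabla_a,\varnabla_b)\in\overline{C}\iff(a,b)\in C$, and then observes that every generalised closed congruence on $\Cvar L$ is generated by pairs of this shape. Your version is just more explicit, packaging the inverse as $\Psi$ and verifying both composites, whereas the paper argues injectivity, surjectivity, and join-preservation separately.

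One point of justification is circular as written. You deduce $\Delta_x=\nabla_{x^c}$ for complemented $x\in M$ by saying that complements in $\C M$ are unique ``by distributivity of $\C M$''. But for $M=\Cvar L$ a $\kappa$-frame, the distributivity of $\C M$ is precisely the corollary you are working towards (\cref{cor:kappa_frame_congruences_form_a_frame}); the Funayama--Nakayama argument in \S2.2 only covers finitary lattice congruences. The fix is immediate and does not touch the rest of your proof: use the explicit descriptions of \cref{lem:open_and_closed_congruences} and the distributivity of $M$ itself. If $x\wedge x^c=0$ and $x\vee x^c=1$ in $M$, then $a\wedge x=b\wedge x$ iff $(a\wedge x)\vee x^c=(b\wedge x)\vee x^c$ iff $a\vee x^c=b\vee x^c$, so $\Delta_x=\nabla_{x^c}$ directly. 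With that repair your computation $\langle(\varnabla_a,\varnabla_b)\rangle=\nabla_{\varnabla_b\wedge\varDelta_a}$ goes through, and the rest of the argument is sound.
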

\begin{proof}
 We wish to lift congruences on $L$ to congruences on $\Cvar L$. A natural way to do this is to map the congruence $C$ on $L$ to
 the congruence $\overline{C}$ on $\Cvar L$ generated by the set $\{(\varnabla_a, \varnabla_b) \mid (a,b) \in C\}$.
 Since $\varnabla_a \sim \varnabla_b$ if and only if $0 \sim \varnabla_b \wedge \varDelta_a$, we find that
 $\overline{C} = \langle (0, \varnabla_b \wedge \varDelta_a) \mid (a,b) \in C\rangle$, which is clearly generalised closed.
 
 There is another way we might lift congruences on $L$ to $\Cvar L$. If $C \in \C L$, we may apply $\Cvar$ to the quotient map $q: L \twoheadrightarrow L/C$.
 This gives a surjection $\Cvar q : \Cvar L \to \Cvar (L/C)$ and we denote the corresponding congruence by $\Cvar(C)$.
 In fact, we will show that $\Cvar(C)$ and $\overline{C}$ coincide in \cref{cor:kernel_of_Cf}, but we only need a weaker result here. Observe that
 $(\varnabla_a, \varnabla_b) \in \Cvar(C) \iff \Cvar q (\varnabla_a) = \Cvar q (\varnabla_b) \iff \varnabla_{q(a)} = \varnabla_{q(b)}
 \iff q(a) = q(b) \iff (a,b) \in C$ (using \cref{lem:universal_map_into_complemented_is_injective} for the third equivalence).
 So $\overline{C} \le \Cvar(C)$ and $(\varnabla_a, \varnabla_b) \in \overline{C} \iff (a,b) \in C$. Thus the map $C \mapsto \overline{C}$ is injective.
 
 On the other hand, $\Cvar L$ is generated by elements of the form $\varDelta_a \wedge \varnabla_b$ and so every generalised closed congruence on $\Cvar L$
 is generated by pairs of the form $(0, \varDelta_a \wedge \varnabla_b)$ and thus by pairs of the form $(\varnabla_a, \varnabla_b)$. Such a generalised closed
 congruence corresponds to the congruence on $L$ generated by the corresponding pairs of the form $(a,b)$.
 
 So the map $C \mapsto \overline{C}$ is bijective. It also clearly preserves joins and thus it gives an order isomorphism from $\C L$ to $\h_\kappa \Cvar L$.
\end{proof}
\begin{corollary}\label{cor:kappa_frame_congruences_form_a_frame}
 The lattice of congruences on a $\kappa$-frame is a frame. Furthermore, $\C$ can be made into a functor with $\C \cong \h_\kappa \circ \Cvar$.
\end{corollary}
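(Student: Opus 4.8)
The plan is to read off the result from \cref{thm:quotients_are_closed_quotients_of_congruence_frame}. That theorem gives an order isomorphism between $\C L$ and the lattice of generalised closed congruences on $\Cvar L$. Since every generalised closed congruence on a $\kappa$-frame $N$ is $\widetilde{\nabla}_I$ for a unique $\kappa$-ideal $I$ and, by \cref{lem:nabla_tilde_injective}, the map $\widetilde{\nabla}_\bullet \colon \h_\kappa N \to \C N$ both preserves order and reflects it (if $\widetilde{\nabla}_I \le \widetilde{\nabla}_J$ then $\widetilde{\nabla}_{I \vee J} = \widetilde{\nabla}_J$, whence $I \subseteq J$), this lattice is itself order-isomorphic to $\h_\kappa N$. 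Composing, we get order isomorphisms $\alpha_L \colon \C L \xrightarrow{\ \sim\ } \h_\kappa \Cvar L$. As $\h_\kappa \Cvar L$ is a frame of $\kappa$-ideals (as recalled in \cref{section:frames}) and the property of being a complete lattice satisfying the frame distributivity law is purely order-theoretic, it follows that $\C L$ is a frame.

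It remains to make $\C$ a functor with $\C \cong \h_\kappa \circ \Cvar$. Since $\Cvar$ is already a functor (via the universal property of $\varnabla_\bullet$) and $\h_\kappa \colon \kappa\Frm \to \Frm$ is a functor, $\h_\kappa \circ \Cvar$ is a functor, and I would simply transport this structure along the isomorphisms $\alpha_L$: for $f \colon L \to M$ define $\C f$ to be $\alpha_M^{-1} \circ (\h_\kappa \Cvar f) \circ \alpha_L$. Then $\C$ is a functor and $\alpha$ a natural isomorphism $\C \cong \h_\kappa \circ \Cvar$ essentially by construction. To confirm this is the expected $\C f$, one unwinds $\alpha_L$ --- it sends a congruence $C$ to the $\kappa$-ideal generated by $\{\varnabla_b \wedge \varDelta_a \mid (a,b) \in C\}$ --- and uses $(\Cvar f)(\varnabla_a) = \varnabla_{f(a)}$ and $(\Cvar f)(\varDelta_a) = \varDelta_{f(a)}$ to see that $\C f$ carries $C$ to the congruence on $M$ generated by $\{(f(a),f(b)) \mid (a,b) \in C\}$.

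The only step needing any care is the naturality claim: one must check that the family $\{\alpha_L\}$ is an actual natural transformation and not merely a pointwise collection of isomorphisms, which is what licenses transporting the functor structure. This reduces to the generator-level computation just mentioned --- tracking how $\Cvar f$ acts on the symbols $\varnabla_a$ and $\varDelta_a$ and matching this against the generating pairs $(\varnabla_a, \varnabla_b)$ of $\overline{C}$ --- and everything else is formal.
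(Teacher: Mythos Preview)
Your proposal is correct and follows essentially the same route as the paper, which gives no explicit proof for this corollary: the preceding theorem already ends by exhibiting the order isomorphism $\C L \cong \h_\kappa \Cvar L$, from which the frame property and the functor structure are immediate. Your only superfluous step is the final paragraph: once you \emph{define} $\C f$ by transport along the $\alpha_L$, naturality of $\alpha$ is automatic and requires no generator-level check; that check is only needed if you want to identify this $\C f$ with some independently given action, which the corollary does not ask for.
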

\begin{corollary}\label{cor:congruence_kappa_frame_Lindelof}
 The congruence frame of a $\kappa$-frame is $\kappa$-Lindelöf.
\end{corollary}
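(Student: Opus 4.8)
The plan is to reduce to the frame of $\kappa$-ideals via the isomorphism $\C L \cong \h_\kappa\Cvar L$ of \cref{cor:kappa_frame_congruences_form_a_frame} and then show that the top element of any frame of $\kappa$-ideals is a $\kappa$-Lindelöf element. Recall that a frame is $\kappa$-Lindelöf exactly when its top element is, so this suffices.

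First I would identify the top element of $\h_\kappa\Cvar L$: it is the improper $\kappa$-ideal, which is the principal ideal $\downarrow\!1$ generated by the top of $\Cvar L$. (Equivalently, under the correspondence of \cref{thm:quotients_are_closed_quotients_of_congruence_frame}, the top congruence $1 = L\times L = \nabla_1$ corresponds to the generalised closed congruence $\widetilde{\nabla}_{\downarrow 1}$, since $\varnabla_1\wedge\varDelta_0 = 1$ in $\Cvar L$.) Then I would prove the general fact that for any $\kappa$-frame $M$ the principal ideal $\downarrow\!m$ is a $\kappa$-Lindelöf element of $\h_\kappa M$; applying this with $m = 1$ in $M = \Cvar L$ finishes the proof. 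Suppose $\downarrow\!m \le \bigvee_{\alpha\in I} I_\alpha$ in $\h_\kappa M$. The join on the right is the $\kappa$-ideal generated by $\bigcup_{\alpha} I_\alpha$, which by the regularity of $\kappa$ is precisely the set of $x\in M$ with $x \le \bigvee T$ for some $\kappa$-set $T\subseteq\bigcup_\alpha I_\alpha$. Hence $m \le \bigvee T$ for such a $T$; choosing for each $t\in T$ an index with $t\in I_\alpha$ gives a $\kappa$-set $J\subseteq I$ (the image of $T$ under this choice) with $T\subseteq\bigcup_{\alpha\in J} I_\alpha$, so $m$, and therefore all of $\downarrow\!m$, already lies in $\bigvee_{\alpha\in J} I_\alpha$.

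Alternatively, one can simply invoke the facts recorded in the background section that every frame of the form $\h_\kappa M$ is $\kappa$-coherent and that $K_\kappa$ of a $\kappa$-coherent frame is a sub-$\kappa$-frame: being a sub-$\kappa$-frame it contains the top element, so the top element is $\kappa$-Lindelöf. There is no real obstacle in this corollary; the only point needing a little care is the mild use of choice in selecting the $\kappa$-set $J$ of indices, which falls under the standing assumption on choice for $\kappa$-frames (and is vacuous when $\kappa = \aleph_0$, where $T$ is finite).
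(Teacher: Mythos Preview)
Your proposal is correct and matches the paper's (implicit) argument: the corollary is stated without proof immediately after the isomorphism $\C L \cong \h_\kappa\Cvar L$, with the $\kappa$-Lindel\"of property of $\h_\kappa$-frames taken as background (via $\kappa$-coherence). Your direct verification that principal $\kappa$-ideals are $\kappa$-Lindel\"of elements is exactly the content being invoked, and your remark about the mild choice needed to select the index set $J$ is apt.
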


\begin{remark}
 The $\kappa$-frame $\Cvar L$ can now be viewed as a sub-$\kappa$-frame of $\C L$. Elements of $\Cvar L$ correspond
 to congruences on $L$ generated by fewer than $\kappa$ generators. For this reason, $\Cvar L$ is called the \emph{congruence $\kappa$-frame} of $L$.
 Since $\varnabla_\bullet$ and $\varDelta_\bullet$ are simply restrictions of $\nabla_\bullet$ and $\Delta_\bullet$, we will henceforth
 represent them by the same symbols.
\end{remark}

Notice that the lift congruence $\overline{C}$ from \cref{thm:quotients_are_closed_quotients_of_congruence_frame}
may be expressed as the join of principal closed congruences $\nabla_A$ for each congruence $A \le C$ with fewer than $\kappa$ generators.
In the case that $L$ is a frame, $\overline{C} = \nabla_C$. We will abuse notation and write $\overline{C} = \widetilde{\nabla}_C$ in general.

We recap the properties of the congruence lattice in light of the fact that it is a frame.
\begin{proposition}\label{prop:congruence_frame_summary}
 The congruence lattice of a $\kappa$-frame $L$ is a zero-dimensional frame $\C L$ and the map
 $\widetilde{\nabla}:\h_\kappa L \to \C L$ is an injective frame homomorphism. Furthermore, if $C \in \C L$
 then $\C (L/C)$ is canonically isomorphic to $\C L / \nabla_C$.
\end{proposition}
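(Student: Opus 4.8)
The plan is to assemble the statement from results already established; only the final clause requires a short argument.

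For the first two clauses: $\C L$ is a frame by \cref{cor:kappa_frame_congruences_form_a_frame}. To see it is zero-dimensional, I would take an arbitrary $A \in \C L$ and invoke \cref{cor:congruence_lattice_generators} to write $A = \bigvee\{\nabla_b \wedge \Delta_a \mid (a,b) \in A,\ a \le b\}$; by \cref{cor:nabla_delta_complements} each $\nabla_b$ and each $\Delta_a$ is complemented in $\C L$, and since the complemented elements of a frame are closed under finite meets, each $\nabla_b \wedge \Delta_a$ is complemented. Hence every congruence is a join of complemented elements, which is exactly zero-dimensionality of $\C L$. That $\widetilde{\nabla}\colon \h_\kappa L \to \C L$ is an injective frame homomorphism is then immediate from \cref{lem:nabla_tilde_injective}, which already shows it is injective and preserves finite meets and arbitrary joins; now that we know the codomain is a frame, this says precisely that it is an injective frame homomorphism.

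For the last clause I would argue as follows. Apply \cref{rem:lattice_of_quotients} to $L$ and the congruence $C$: the poset of quotients of $L/C$ is isomorphic to the poset of quotients of $L$ lying below $L/C$, the isomorphism sending a quotient of $L/C$ to the corresponding quotient of $L$. Translating both sides through the order-reversing bijection between quotients and congruences, this says that $\C(L/C)$ is order-isomorphic to $\;\uparrow\! C = \{D \in \C L \mid D \ge C\}$, via the map carrying a congruence $D$ on $L/C$ to its preimage congruence $\{(x,y) \in L \times L \mid ([x],[y]) \in D\}$ along the quotient map $q\colon L \twoheadrightarrow L/C$. On the other hand, \cref{lem:open_closed_lowerset_and_upperset} applied to the frame $\C L$ and the element $C \in \C L$ gives $\C L/\nabla_C \cong \;\uparrow\! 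C$ via $[E] \mapsto E \vee C$. Composing the two yields $\C(L/C) \cong \C L/\nabla_C$, the composite carrying $D$ to the class in $\C L/\nabla_C$ of its preimage congruence on $L$. (Alternatively one could route this through \cref{thm:quotients_are_closed_quotients_of_congruence_frame} and the identification $\C \cong \h_\kappa \circ \Cvar$, but the route above is shorter.)

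The only points needing care are bookkeeping and one word of justification: one must check that the order-reversals on the two sides of \cref{rem:lattice_of_quotients} cancel, so that $\C(L/C)$ and $\;\uparrow\! C$ carry matching orders, and one should note that an order isomorphism between complete lattices automatically preserves all meets and joins, so the composite is a genuine frame isomorphism and not merely a poset isomorphism. Since this composite is induced solely by the quotient map $q$, it involves no arbitrary choices and is therefore canonical. There is no analytic difficulty here; the proposition is simply a consolidation of the preceding development.
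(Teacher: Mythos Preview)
Your proposal is correct and matches the paper's treatment: the proposition is stated there without proof as an explicit recap of the preceding development, and your assembly from \cref{cor:kappa_frame_congruences_form_a_frame}, \cref{cor:congruence_lattice_generators}, \cref{cor:nabla_delta_complements}, \cref{lem:nabla_tilde_injective}, \cref{rem:lattice_of_quotients}, and \cref{lem:open_closed_lowerset_and_upperset} is exactly the intended justification. Your care about the order-reversals cancelling and about order isomorphisms of complete lattices being frame isomorphisms is well placed and fills in the only points that might otherwise give pause.
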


We would like to be able to present a similar result for congruence $\kappa$-frames, but the last claim is not immediate.
We will need to use a result from \cref{subsec:lindelof_congruences} relating the congruences on a $\kappa$-frame $L$ and the congruences on the
frame of $\kappa$-ideals on $L$.
\begin{proposition}\label{prop:congruence_kappa_frame_summary} %
 The lattice of the congruences on a $\kappa$-frame $L$ that have fewer than $\kappa$ generators is a zero-dimensional $\kappa$-frame $\Cvar L$
 and the map $\nabla_\bullet: L \to \Cvar L$ is an injective $\kappa$-frame homomorphism. Furthermore, if $C \in \Cvar L$
 then $\Cvar (L/C)$ is canonically isomorphic to $\Cvar L / \widetilde{\nabla}_C$.
\end{proposition}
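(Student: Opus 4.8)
\emph{First two assertions.} I would note these are essentially a recap of earlier material. By the remark following \cref{cor:kappa_frame_congruences_form_a_frame}, $\Cvar L$ is the sub-$\kappa$-frame of $\C L$ consisting of the congruences with fewer than $\kappa$ generators, so it is a $\kappa$-frame; it is zero-dimensional because it is generated as a $\kappa$-frame by the complemented elements $\nabla_a$, $\Delta_a$ and the complemented elements of a $\kappa$-frame are closed under finite meets, so every element is a $\kappa$-join of complemented ones. That $\nabla_\bullet$ is a $\kappa$-frame homomorphism is part of \cref{def:congruence_kappa_frame}, and its injectivity is \cref{lem:universal_map_into_complemented_is_injective}.

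\emph{The isomorphism.} For $C \in \Cvar L$ I would first observe that $\widetilde{\nabla}_C = \nabla_C$, the principal closed congruence of $\Cvar L$ at $C$, since $C$ is the largest $<\kappa$-generated congruence below itself and $\nabla_\bullet$ is order-preserving; thus $\Cvar L / \widetilde{\nabla}_C = \Cvar L / \nabla_C$. Writing $q : L \twoheadrightarrow L/C$ and $\pi : \Cvar L \twoheadrightarrow \Cvar L/\nabla_C$ for the quotient maps, the plan is to build comparison maps both ways and check they are mutually inverse. One way: the composite $\pi \circ \nabla_\bullet : L \to \Cvar L/\nabla_C$ sends $a \mapsto \pi(\nabla_a)$, and for $(a,b) \in C$ it identifies $a$ and $b$ --- by \cref{lem:open_and_closed_congruences} this is the assertion $\nabla_a \vee C = \nabla_b \vee C$ in $\Cvar L$, which is the same as the join in $\C L$ (the inclusion $\Cvar L \hookrightarrow \C L$ preserves finite joins), so \cref{cor:closed_congruences_in_quotient} applies. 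Hence $\pi \circ \nabla_\bullet = g \circ q$ for some $g : L/C \to \Cvar L/\nabla_C$ whose image is complemented, and \cref{lem:congruence_frame_universal_prop} then yields a $\kappa$-frame map $\varphi : \Cvar(L/C) \to \Cvar L/\nabla_C$ with $\varphi(\nabla_{[a]}) = \pi(\nabla_a)$ and $\varphi(\Delta_{[a]}) = \pi(\Delta_a)$. The other way: I would apply the functor $\Cvar$ to $q$, and using \cref{cor:congruence_lattice_generators,lem:congruence_intervals,lem:principal_congruence} write $C = \bigvee_\alpha (\nabla_{b_\alpha} \wedge \Delta_{a_\alpha})$ as a $\kappa$-join over a $<\kappa$-sized family with $a_\alpha \le b_\alpha$; since $[a_\alpha] = [b_\alpha]$ in $L/C$ this gives $\Cvar q(C) = \bigvee_\alpha (\nabla_{[a_\alpha]} \wedge \Delta_{[a_\alpha]}) = 0$, so $\Cvar q$ annihilates $(0,C)$ and (by \cref{rem:factor_through_quotients}) factors through $\pi$ as $\psi \circ \pi$ with $\psi : \Cvar L/\nabla_C \to \Cvar(L/C)$ satisfying $\psi(\pi(\nabla_a)) = \nabla_{[a]}$ and $\psi(\pi(\Delta_a)) = \Delta_{[a]}$.

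\emph{Conclusion and main obstacle.} To finish I would observe that $\Cvar(L/C)$ is generated by the $\nabla_{[a]}$, $\Delta_{[a]}$ (as $q$ is onto) and $\Cvar L/\nabla_C$ by the $\pi(\nabla_a)$, $\pi(\Delta_a)$, so the composites $\psi\varphi$ and $\varphi\psi$ fix generators and hence are the respective identity maps; therefore $\varphi$ is an isomorphism, and since it is assembled from the canonical data $q$, $\Cvar q$, $\nabla_\bullet$ it is natural in $L$ and $C$. The hard part will be the computation $\Cvar q(C) = 0$ --- equivalently, checking that the kernel of $\Cvar q$ is no larger than $\nabla_C$: this is exactly where the hypothesis $C \in \Cvar L$ genuinely enters, to express $C$ as a $\kappa$-join of finitely generated pieces $\nabla_b \wedge \Delta_a$, and where one must be careful that such $\kappa$-joins are computed the same way in $\Cvar L$ as in $\C L$. (Alternatively one could route everything through the frame-level \cref{prop:congruence_frame_summary} applied to $\h_\kappa L$ together with the identification of $\Cvar L$ with $K_\kappa(\C L)$ --- presumably the route via \cref{subsec:lindelof_congruences} alluded to above --- but the direct argument seems more economical.)
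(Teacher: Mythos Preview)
Your argument is correct, and it is \emph{not} the route the paper takes. The paper proves the final isomorphism indirectly, exactly along the lines you sketch in your closing parenthetical: it passes to the frame level via $\C \cong \h_\kappa \circ \Cvar$, applies \cref{prop:congruence_frame_summary} to get $\h_\kappa\Cvar(L/C) \cong \h_\kappa\Cvar L/\nabla_C$, invokes the forward reference \cref{lem:congruences_on_frames_of_ideals} to rewrite the right-hand side as $\h_\kappa(\Cvar L/\widetilde{\nabla}_C)$, and then cancels $\h_\kappa$. Your direct construction of mutually inverse maps via the universal property of $\Cvar$ is self-contained and avoids the forward reference entirely; the price is that you must do the explicit bookkeeping with generators, whereas the paper's route is a two-line deduction once \cref{lem:congruences_on_frames_of_ideals} is in hand. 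Both approaches make essential use of $C \in \Cvar L$ at the same point --- to ensure the relevant join is a $\kappa$-join computed identically in $\Cvar L$ and $\C L$.

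One small wording slip: you write that $\Cvar q(C) = 0$ is ``equivalently, checking that the kernel of $\Cvar q$ is no larger than $\nabla_C$'', but $\Cvar q(C) = 0$ gives the \emph{other} containment, $\nabla_C \le \ker(\Cvar q)$, which is all you need to factor $\Cvar q$ through $\pi$. The reverse containment is not something you check directly; it falls out once $\varphi\psi$ and $\psi\varphi$ are seen to fix generators. This does not affect the validity of your argument.
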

\begin{proof}
 For the final claim, note that $\h_\kappa\Cvar (L/C) \cong \h_\kappa\Cvar L / \nabla_C \cong \h_\kappa (\Cvar L / \widetilde{\nabla}_C)$
 by \cref{prop:congruence_frame_summary,lem:congruences_on_frames_of_ideals}. Thus we obtain $\Cvar (L/C) \cong \Cvar L / \widetilde{\nabla}_C$ as required. 
\end{proof}

We now examine how congruences lift from $L$ to $\Cvar L$ in more detail.

\begin{lemma}\label{lem:kernel_on_congruence_frame} %
 If $L$ and $M$ are $\kappa$-frames and $\varphi: \Cvar L \to M$ is a homomorphism, then
 $\cl(\ker \varphi) = \widetilde{\nabla}_{\ker (\varphi \circ \nabla_\bullet)}$.
\end{lemma}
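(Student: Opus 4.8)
The plan is to recognise both sides as generalised closed congruences on $\Cvar L$ and then compare them using the order isomorphism of \cref{thm:quotients_are_closed_quotients_of_congruence_frame}. That isomorphism identifies generalised closed congruences on $\Cvar L$ with congruences on $L$ via $D \mapsto \overline D$, and from its proof $(\nabla_a,\nabla_b)\in\overline D \iff (a,b)\in D$; so two generalised closed congruences on $\Cvar L$ coincide as soon as they contain the same pairs $(\nabla_a,\nabla_b)$ with $a,b\in L$. Write $C := \ker(\varphi\circ\nabla_\bullet)$, a congruence on $L$. By the notational convention $\widetilde\nabla_C = \overline C$, the right-hand side is generalised closed and satisfies $(\nabla_a,\nabla_b)\in\widetilde\nabla_C \iff (a,b)\in C \iff \varphi(\nabla_a)=\varphi(\nabla_b)$. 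The left-hand side $\cl(\ker\varphi)$ is generalised closed by the definition of $\cl$, so it remains to show $(\nabla_a,\nabla_b)\in\cl(\ker\varphi) \iff \varphi(\nabla_a)=\varphi(\nabla_b)$ for all $a,b\in L$.

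One implication is trivial, since $\cl(\ker\varphi)\le\ker\varphi$. For the converse, assume $\varphi(\nabla_a)=\varphi(\nabla_b)$. The crux is to exhibit a single principal closed congruence on $\Cvar L$ that lies below $\ker\varphi$ and already identifies $\nabla_a$ with $\nabla_b$. Recalling that $\Delta_a$ is the complement of $\nabla_a$ in $\Cvar L$ (by the defining relations of $\Cvar L$, cf.\ \cref{cor:nabla_delta_complements}) and that $\varphi$ preserves complements, one checks that the element $i := (\nabla_b\wedge\Delta_a)\vee(\nabla_a\wedge\Delta_b)$ of $\Cvar L$ has $\varphi(i)=0$; hence $(0,i)\in\ker\varphi$, so the principal closed congruence $\nabla_i$ satisfies $\nabla_i\le\ker\varphi$, whence $\nabla_i\le\cl(\ker\varphi)$ since $\cl(\ker\varphi)$ is the largest generalised closed congruence inside $\ker\varphi$. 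A short computation with $\nabla_a\vee\Delta_a=1$ (and symmetrically for $b$) gives $\nabla_a\vee i = \nabla_a\vee\nabla_b = \nabla_b\vee i$, so $(\nabla_a,\nabla_b)\in\nabla_i$ by \cref{lem:open_and_closed_congruences}, and therefore $(\nabla_a,\nabla_b)\in\cl(\ker\varphi)$.

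Combining the two paragraphs, $\cl(\ker\varphi)$ and $\widetilde\nabla_C$ are generalised closed congruences on $\Cvar L$ containing exactly the same pairs $(\nabla_a,\nabla_b)$, hence equal by \cref{thm:quotients_are_closed_quotients_of_congruence_frame}. The only nontrivial point is the construction of the element $i$ bridging ``$\varphi$ identifies $\nabla_a$ and $\nabla_b$'' with ``one closed congruence inside $\ker\varphi$ identifies them''; everything else is bookkeeping with the material of \cref{subsec:closed_open_congs}. A slightly different route avoids invoking \cref{thm:quotients_are_closed_quotients_of_congruence_frame} directly: compute $\cl = \widetilde\nabla_\bullet\circ(\widetilde\nabla_\bullet)_*$ on $\C(\Cvar L)$, observing that the right adjoint sends $\ker\varphi$ to the $\kappa$-ideal $J = \{c\in\Cvar L \mid \varphi(c)=0\}$, and then use \cref{lem:joins_with_generalised_closed} inside $\Cvar L$ together with the same element $i$ to identify $\widetilde\nabla_J$ with $\overline C$.
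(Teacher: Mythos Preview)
Your argument is correct and follows essentially the same strategy as the paper: both recognise each side as a generalised closed congruence on $\Cvar L$ and then compare them on the ``canonical'' pairs coming from $L$. The paper's version is slightly slicker: instead of testing pairs $(\nabla_a,\nabla_b)$ for arbitrary $a,b$ and building the symmetric-difference element $i=(\nabla_b\wedge\Delta_a)\vee(\nabla_a\wedge\Delta_b)$, it restricts to $a\le b$ (which suffices since the elements $\Delta_a\wedge\nabla_b$ with $a\le b$ generate $\Cvar L$) and then simply runs the chain of equivalences
\[
(0,\Delta_a\wedge\nabla_b)\in\ker\varphi \iff (\nabla_a,\nabla_b)\in\ker\varphi \iff (a,b)\in\ker(\varphi\circ\nabla_\bullet) \iff (\nabla_a,\nabla_b)\in\widetilde\nabla_{\ker(\varphi\circ\nabla_\bullet)} \iff (0,\Delta_a\wedge\nabla_b)\in\widetilde\nabla_{\ker(\varphi\circ\nabla_\bullet)}.
\]
Under the assumption $a\le b$ your element $i$ collapses to $\Delta_a\wedge\nabla_b$, so the two arguments are really the same; the paper just avoids the extra symmetrisation and the explicit appeal to \cref{thm:quotients_are_closed_quotients_of_congruence_frame} by working directly at the level of the generating ideal.
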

\begin{proof}
 Observe the following sequence of equivalences.
 \begin{align*}
  (0, \Delta_a \wedge \nabla_b) \in \ker \varphi
  &\iff (\nabla_a, \nabla_b) \in \ker \varphi \\
  &\iff (a,b) \in \ker(\varphi \circ \nabla_\bullet) \\
  &\iff (\nabla_a, \nabla_b) \in \widetilde{\nabla}_{\ker(\varphi\circ\nabla_\bullet)} \\
  &\iff (0, \Delta_a \wedge \nabla_b) \in \widetilde{\nabla}_{\ker(\varphi\circ\nabla_\bullet)}%
 \end{align*}
 Since congruences of the form $\Delta_a \wedge \nabla_b$ generate $\Cvar L$, the result follows.
\end{proof}
\begin{corollary}\label{cor:kernel_of_Cf}
 If $f: L \to M$ is a $\kappa$-frame homomorphism then $\cl(\ker(\Cvar f)) = \widetilde{\nabla}_{\ker f}$.
 Furthermore, $\ker(\Cvar f) = \widetilde{\nabla}_{\ker f}$ if $f$ is surjective.
\end{corollary}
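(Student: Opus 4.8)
The plan is to obtain the first identity as an immediate consequence of \cref{lem:kernel_on_congruence_frame}, and to get the refinement for surjective $f$ by a short diagram chase built on the universal property of the congruence $\kappa$-frame (\cref{lem:congruence_frame_universal_prop}).

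For the general case I would apply \cref{lem:kernel_on_congruence_frame} with $\varphi = \Cvar f$, which gives $\cl(\ker \Cvar f) = \widetilde{\nabla}_{\ker(\Cvar f \circ \nabla_\bullet)}$, so it only remains to identify $\ker(\Cvar f \circ \nabla_\bullet)$. By the naturality of $\nabla$ recorded after \cref{lem:universal_map_into_complemented_is_injective} we have $\Cvar f \circ \nabla_\bullet = \nabla_\bullet \circ f$, where the outer $\nabla_\bullet$ is that of $M$; and since $\nabla_\bullet : M \to \Cvar M$ is injective by \cref{lem:universal_map_into_complemented_is_injective}, $\ker(\nabla_\bullet \circ f) = \ker f$. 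Hence $\cl(\ker \Cvar f) = \widetilde{\nabla}_{\ker f}$.

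Now suppose $f$ is surjective and write $K = \ker f$. One inclusion is free: $\widetilde{\nabla}_K$ is generated by the pairs $(\nabla_a, \nabla_b)$ with $(a,b) \in K$, and each such pair lies in $\ker \Cvar f$ because $\Cvar f(\nabla_a) = \nabla_{f(a)} = \nabla_{f(b)} = \Cvar f(\nabla_b)$; so $\widetilde{\nabla}_K \le \ker \Cvar f$ (equivalently, this follows from the first part since $\cl$ is deflationary). For the reverse inclusion I want the quotient map $s : \Cvar L \twoheadrightarrow \Cvar L / \widetilde{\nabla}_K$ to factor through $\Cvar f$. The composite $s \circ \nabla_\bullet : L \to \Cvar L / \widetilde{\nabla}_K$ identifies every pair in $K$, by the very description of $\widetilde{\nabla}_K$ just used, so, $f$ being a surjection, \cref{rem:factor_through_quotients} yields a $\kappa$-frame homomorphism $g : M \to \Cvar L / \widetilde{\nabla}_K$ with $g \circ f = s \circ \nabla_\bullet$. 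Since $f$ is onto, the image of $g$ equals $s(\nabla_\bullet(L))$, which consists of complemented elements ($\nabla_a$ is complemented in $\Cvar L$ by \cref{cor:nabla_delta_complements}, and homomorphisms preserve complements of complemented elements); so \cref{lem:congruence_frame_universal_prop} applied to $g$ produces a $\kappa$-frame homomorphism $t : \Cvar M \to \Cvar L / \widetilde{\nabla}_K$ with $t \circ \nabla_\bullet = g$. Then $t \circ \Cvar f$ and $s$ agree after precomposition with $\nabla_\bullet : L \to \Cvar L$, which is an epimorphism by \cref{lem:nabla_epi}, so $t \circ \Cvar f = s$, whence $\ker \Cvar f \le \ker s = \widetilde{\nabla}_K$, as required.

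The step requiring the most care is the application of \cref{lem:congruence_frame_universal_prop}: one must verify that $g$ really does land in the complemented elements of $\Cvar L / \widetilde{\nabla}_K$, and this is exactly the point at which surjectivity of $f$ is used a second time (the first being in producing $g$ itself). Everything else is formal bookkeeping with the relevant universal properties. It is worth noting that when $K$ has fewer than $\kappa$ generators — in particular always in the frame case — the surjective statement also drops out of \cref{prop:congruence_kappa_frame_summary} together with \cref{prop:congruence_frame_summary}; the argument above is what is needed to treat arbitrary kernels.
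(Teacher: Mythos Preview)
Your argument is correct. The first part is exactly the paper's proof: apply \cref{lem:kernel_on_congruence_frame} to $\varphi=\Cvar f$, use naturality of $\nabla$ and injectivity of $\nabla_\bullet$.

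For the surjective case you diverge from the paper. The paper simply invokes \cref{prop:congruence_kappa_frame_summary} to obtain the isomorphism $\Cvar(L/C)\cong \Cvar L/\widetilde{\nabla}_C$ and reads off the kernel; but that proposition is itself proved via a forward reference to \cref{lem:congruences_on_frames_of_ideals} in \cref{section:reflections_of_congruence_frames}, and as stated it only covers $C\in\Cvar L$. Your diagram chase instead establishes the required factorisation directly from the universal property of $\Cvar M$ and the epimorphicity of $\nabla_\bullet$, so it is self-contained, works for an arbitrary kernel $K=\ker f$, and avoids the forward dependency altogether. In effect you are reproving the relevant instance of $\Cvar(L/K)\cong \Cvar L/\widetilde{\nabla}_K$ from first principles. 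The paper's route is shorter on the page but leans on heavier machinery; yours is the more elementary argument and, as you note in your final paragraph, is what is genuinely needed to treat kernels that need not lie in $\Cvar L$.
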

\begin{proof}
 For the first part, simply recall that $\nabla_\bullet \circ f  = \Cvar f \circ \nabla_\bullet$ from the definition of $\Cvar f$ and
 then notice that $\ker(\nabla_\bullet \circ f) = \ker f$ since $\nabla_\bullet$ is injective.
 
 Now suppose $f: L \twoheadrightarrow L/C$ is a quotient map.
 By \cref{prop:congruence_kappa_frame_summary}, $\Cvar(L/C) \cong \Cvar L / \widetilde{\nabla}_C$.
The composition of this isomorphism with $\Cvar f$ gives a map from $\Cvar L$ to $\Cvar L / \widetilde{\nabla}_C$ whose kernel
is readily seen to be $\widetilde{\nabla}_C$.
\end{proof}
\begin{example}
 The above surjectivity requirement is necessary.
 Let $M$ be the complete atomic Boolean algebra with $|\R|$ atoms, $L$ the frame of opens of $\R$ and $f: L \hookrightarrow M$ the usual inclusion map.
 The above corollary shows that $\C f$ is dense. However, it fails to be injective.
 Every subframe of the frame $\C M \cong M$ is spatial (see \cref{lem:congruence_frame_of_Boolean_frame}), but in \cref{subsec:spatial_reflection}
 we will show that $\C L$ is spatial if and only if every quotient of $L$ is spatial. Since $L/\D$ is a non-spatial quotient
 of $L$, $\C L$ is non-spatial and so $f$ cannot be injective.
\end{example}

\subsection{The congruence tower}\label{subsec:congruence_tower}

We constructed $\Cvar L$ by freely adjoining complements to a $\kappa$-frame $L$.
We might think of the functor $\Cvar$ as making $L$ `more Boolean', but $\Cvar L$ is seldom Boolean itself.
Repeated application of $\Cvar$ (taking colimits at limit ordinals) leads to a
transfinite sequence $\Cvar L, \Cvar^2 L, \Cvar^3 L, \dots$ which is known as the \emph{congruence tower} of $L$.

The next lemma shows that if a Boolean $\kappa$-frame is reached, the sequence stabilises.
\begin{lemma}\label{lem:congruence_frame_of_Boolean_frame}
 $\nabla_\bullet: L \to \Cvar L$ is an isomorphism if and only if $L$ is Boolean.
\end{lemma}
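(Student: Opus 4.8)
The plan is to prove both implications directly, exploiting that $\nabla_\bullet$ is already known to be injective (\cref{cor:nabla_mono}), so that in each direction only surjectivity and the transport of complements across $\nabla_\bullet$ remain to be handled. In fact both directions reduce to the single observation that, in $\Cvar L$, the complement of $\nabla_a$ is $\Delta_a$ (\cref{cor:nabla_delta_complements}), together with the identities $\nabla_a \wedge \nabla_b = \nabla_{a\wedge b}$ and $\nabla_a \vee \nabla_b = \nabla_{a\vee b}$ from \cref{lem:joins_and_meets_nabla}.

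For the direction ``$L$ Boolean $\Rightarrow$ $\nabla_\bullet$ an isomorphism'', I would show $\nabla_\bullet$ is surjective. By \cref{def:congruence_kappa_frame}, $\Cvar L$ is generated as a $\kappa$-frame by the elements $\nabla_a$ and $\Delta_a$ ($a \in L$); since the image of the $\kappa$-frame homomorphism $\nabla_\bullet$ is a sub-$\kappa$-frame of $\Cvar L$, it suffices to check that each $\Delta_a$ lies in that image. When $L$ is Boolean, $a$ has a complement $a^c \in L$, and \cref{lem:joins_and_meets_nabla} gives $\nabla_a \wedge \nabla_{a^c} = \nabla_0 = 0$ and $\nabla_a \vee \nabla_{a^c} = \nabla_1 = 1$; by uniqueness of complements and \cref{cor:nabla_delta_complements} this forces $\Delta_a = \nabla_{a^c}$, which is in the image of $\nabla_\bullet$. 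Hence $\nabla_\bullet$ is surjective, and being also injective it is an isomorphism. (Alternatively one can invoke the essential-uniqueness clause of \cref{lem:congruence_frame_universal_prop}: when $L = BL$, the identity $1_L : L \to L$ satisfies the same universal property as $\nabla_\bullet$, which identifies $\nabla_\bullet$ with an isomorphism.)

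For the converse, suppose $\nabla_\bullet$ is an isomorphism and fix an arbitrary $a \in L$. By surjectivity there is some $b \in L$ with $\nabla_b = \Delta_a$, and then $\nabla_{a\wedge b} = \nabla_a \wedge \nabla_b = \nabla_a \wedge \Delta_a = 0 = \nabla_0$ and $\nabla_{a\vee b} = \nabla_a \vee \nabla_b = \nabla_a \vee \Delta_a = 1 = \nabla_1$, using \cref{cor:nabla_delta_complements} and \cref{lem:joins_and_meets_nabla}. Injectivity of $\nabla_\bullet$ then yields $a \wedge b = 0$ and $a \vee b = 1$, so $a$ is complemented; as $a$ was arbitrary, $L$ is Boolean.

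I do not expect a genuine obstacle here; the only points requiring a moment's care are that the image of $\nabla_\bullet$ is indeed a sub-$\kappa$-frame (so that verifying the assertion on the generators of $\Cvar L$ is enough) and that complements in $\Cvar L$ are unique (so that $\Delta_a$ may be identified with $\nabla_{a^c}$) — both immediate from the standing facts on $\kappa$-frames and from \cref{cor:nabla_delta_complements}.
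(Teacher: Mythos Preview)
Your argument is correct. The converse direction is essentially the paper's argument unpacked: the paper simply says that $(\nabla_\bullet)^{-1}(\Delta_a)$ is a complement of $a$, which is exactly your $b$.

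For the forward direction the paper takes a slightly different route. Rather than verifying surjectivity on generators, it observes that when $L$ is Boolean the identity $1_L : L \to L$ has image in $BL = L$ and hence factors through $\nabla_\bullet$ by the universal property (\cref{lem:congruence_frame_universal_prop}); thus $\nabla_\bullet$ is a split monomorphism, and since it is already an epimorphism (\cref{lem:nabla_epi}) it is an isomorphism. You in fact mention this alternative yourself. The paper's version is slicker in that it avoids any computation with $\Delta_a$, while yours is more self-contained because it does not appeal to the epimorphism property of $\nabla_\bullet$. A small cosmetic point: the cleanest reference for injectivity of $\nabla_\bullet : L \to \Cvar L$ at this stage is \cref{lem:universal_map_into_complemented_is_injective} rather than \cref{cor:nabla_mono}, though after the identification of $\Cvar L$ with a sub-$\kappa$-frame of $\C L$ either works.
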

\begin{proof}
 If $L$ is Boolean then the identity map on $L$ factors through $\nabla_\bullet$. So $\nabla_\bullet$ is a split monomorphism as well as an epimorphism and
 thus an isomorphism.
 
 If $\nabla_\bullet: L \to \Cvar L$ is an isomorphism then every $a \in L$ has a complement $(\nabla_\bullet)^{-1}(\Delta_a)$ and so $L$ is Boolean.
\end{proof}

In the case of $\kappa$-frames (\emph{but not frames!}), Madden showed in \cite{Madden} that the congruence tower eventually yields a Boolean $\kappa$-frame
after at most $\kappa$-many operations. The resulting $\kappa$-frame $\mathfrak{B} L$ is the \emph{Boolean reflection} of $L$ and any $\kappa$-frame
homomorphism from $L$ to a Boolean $\kappa$-frame factors uniquely through the canonical map $d:L \to \mathfrak{B} L$.

Such a Boolean reflection might fail to exist for frames. If $L$ is the free frame on $\N$ then $\mathfrak{B} L$ would be the free complete Boolean
algebra on $\N$, but Hales \cite{Hales} and Gaifman \cite{Gaifman} showed that no such object exists. Thus, the transfinite sequence of iterated
congruence frames of a frame may never stabilise.

The congruence tower can also be used to characterise epimorphisms of frames and $\kappa$-frames.
A $\kappa$-frame homomorphism $f: L \to M$ is an epimorphism if and only if $\mathfrak{B} f$ is a surjection.
A frame homomorphism $f: L \to M$ is an epimorphism if and only if there is an ordinal $\alpha$ such that the image of $\C^\alpha f$
in $\C^\alpha M$ contains the image of the canonical map $\nabla^\alpha$ from $M$ to $\Cvar^\alpha M$.
These characterisations were shown in \cite{Madden} and \cite{EpimorphismFrm} respectively.

\subsection{Clear congruences on frames}\label{subsec:clear_congruences_on_frames}

In \cref{subsec:boolean_congruence_frames}, we will answer the question of which frames have a Boolean congruence frame,
but before we can do that we will need to know some more about clear congruences on frames.

Clear congruences on frames are slightly better behaved than clear congruences on $\kappa$-frames.
Since frames are clear if and only if they are Boolean, the correspondence between quotients and congruences
means that if $L$ is a frame, any congruence lying above a clear congruence in $\C L$ is itself clear.
We will restrict our consideration to frame congruences for the rest of the section.

When $L$ is a frame, the the largest dense congruence $\D$ corresponds to the nucleus $a \mapsto a^{**}$.
Elements of the form $a^{**}$ are called \emph{regular elements} of $L$ and $L/\D$ can
be thought of as the complete Boolean algebra of regular elements of $L$.
We might ask which congruences lie above $\D$.

The following lemma gives an easy way to compute $\D \vee C$
for any congruence $C \in \C L$. One surprising consequence is that $\D \vee C$ only depends on the closure of $C$. This was first shown
by Beazer and Macnab in \cite{Macnab}.

\begin{lemma}\label{lem:join_with_largest_dense_cong} %
 If $C \in \C L$ and $\cl(C) = \nabla_a$ then $\D \vee C = \partial_{a^{**}}$.
\end{lemma}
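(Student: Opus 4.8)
The plan is to prove the two inclusions $\partial_{a^{**}} \le \D \vee C$ and $\D \vee C \le \partial_{a^{**}}$ separately, using the explicit description of joins with generalised closed congruences from \cref{lem:joins_with_generalised_closed} (here, since $L$ is a frame, $\D$ itself is not closed, so instead I would work with the explicit descriptions of $\D$ from \cref{lem:largest_dense_congruence} and of $\partial_{a^{**}}$ from \cref{cor:clear_cong_characterisation}). First I would record what the hypothesis $\cl(C) = \nabla_a$ buys us: by \cref{lem:joins_with_principal} and the description of $\cl$, it says precisely that $(x,y) \in C$ forces $x \vee a = y \vee a$ is \emph{not} automatic, but rather that the largest closed congruence below $C$ is $\nabla_a$; concretely $\nabla_a \le C$, so $a \sim 0$ in $C$, and no larger $\nabla_b$ sits below $C$. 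In particular $(0,a) \in C$, hence $(x, x\vee a) \in C$ for all $x$.

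For $\D \vee C \le \partial_{a^{**}}$: since $\partial_{a^{**}}$ is by definition the largest congruence dense in $\nabla_{a^{**}}$, it suffices to show both $\D \le \partial_{a^{**}}$ and $C \le \partial_{a^{**}}$, i.e.\ that both are dense in $\nabla_{a^{**}}$. For $\D$: one checks $\cl(\D) = 0 \le \nabla_{a^{**}}$ since $\D$ is dense. For $C$: I would show $\cl(C) = \nabla_a \le \nabla_{a^{**}}$, which holds because $\nabla_\bullet$ is order-preserving and $a \le a^{**}$. Actually one must be slightly careful: density \emph{in} $\nabla_{a^{**}}$ requires $\cl(C) \le \nabla_{a^{**}}$ \emph{and} $\nabla_{a^{**}} \le C$? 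No — rereading the definition, $C$ dense in $D$ means $D \le C$ and $\cl(C) \le D$. So I need $\nabla_{a^{**}} \le \D \vee C$ first. That is the crux of the first half, and I expect to get it from: $\nabla_a \le C$ gives $\nabla_a \le \D \vee C$, and then I must promote $a$ to $a^{**}$ using $\D$; since the nucleus of $\D$ sends $a$ to $a^{**}$, we have $(a, a^{**}) \in \D$, hence $\nabla_{a^{**}} = \nabla_a \vee \nabla_{a^{**}} \le C \vee \D$ using $(0,a) \in C$ and $(a,a^{**}) \in \D$ and \cref{lem:joins_and_meets_nabla}(iii). So $\D \vee C$ is dense in $\nabla_{a^{**}}$ and therefore lies below $\partial_{a^{**}}$.

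For the reverse inclusion $\partial_{a^{**}} \le \D \vee C$: take $(x,y) \in \partial_{a^{**}}$ with $x \le y$; by \cref{cor:clear_cong_characterisation} this means $x \wedge w \le a^{**} \iff y \wedge w \le a^{**}$ for all $w$ (phrasing "$\in {\downarrow}a^{**}$" as "$\le a^{**}$"). I want $(x,y) \in \D \vee C$. Using the description of $\D \vee C$ — here I would instead use that $\D \vee C$ is the congruence whose nucleus I can compute — it suffices to show $x$ and $y$ have the same image under the quotient map $L \twoheadrightarrow L/(\D \vee C)$. The strategy: the quotient $L/(\D\vee C)$ is a dense quotient of $L/\nabla_{a^{**}} \cong {\uparrow}a^{**}$ (by the first half, $\nabla_{a^{**}} \le \D\vee C$), and a dense quotient of a frame is determined by the Booleanisation; so $x \sim y$ in $\D \vee C$ iff $(x \vee a^{**})^{**} = (y \vee a^{**})^{**}$ computed in ${\uparrow}a^{**}$, i.e.\ iff $x$ and $y$ become equal after joining with $a^{**}$ and then applying double-pseudocomplement relative to $a^{**}$. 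The hypothesis $(x,y)\in\partial_{a^{**}}$, which says $x,y$ have the same "$a^{**}$-pseudocomplement" in the sense of \cref{cor:clear_cong_heyting_arrow}, is exactly what forces this. The main obstacle, and where I would spend the most care, is making this last identification precise: relating the clear congruence $\partial_{a^{**}}$ (defined via the Heyting arrow $\mathord{\downarrow}x \to \mathord{\downarrow}a^{**}$ in $\ideal L$, equivalently via $x \mapsto x \vee a^{**}$ followed by pseudocomplementation) to the composite nucleus for $\D \vee C$, and checking that no information beyond "$\cl(C)=\nabla_a$" is needed — that the whole dependence on $C$ has collapsed to its closure. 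I expect this to come down to the frame identity $(x\vee c)^{**} = (x^{**} \vee c^{**})^{**}$ together with $\cl(C) = \nabla_a$ implying $C$ is dense in $\nabla_a$, so that modulo $C$ the map $x \mapsto x\vee a$ is injective on the relevant Booleanised data.
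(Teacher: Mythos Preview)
Your plan has a genuine gap in the direction $\D \vee C \le \partial_{a^{**}}$. You argue that it suffices to show $\D \vee C$ is dense in $\nabla_{a^{**}}$, which requires both $\nabla_{a^{**}} \le \D \vee C$ (which you do establish, via $(0,a)\in C$ and $(a,a^{**})\in\D$) and $\cl(\D \vee C) \le \nabla_{a^{**}}$. For the latter you check $\cl(\D)=0\le\nabla_{a^{**}}$ and $\cl(C)=\nabla_a\le\nabla_{a^{**}}$, but $\cl$ does \emph{not} preserve binary joins, so this does not yield $\cl(\D\vee C)\le\nabla_{a^{**}}$. Concretely, in the chain $L=[0,1]$ one has $\cl(\D)=0$ and $\cl(\nabla_a)=\nabla_a$ for any $a\in(0,1)$, yet $\D\vee\nabla_a=1$, so $\cl(\D\vee\nabla_a)=1\not\le\nabla_a$. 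Equivalently, the implication ``$\cl(E)\le\nabla_b$ implies $E\le\partial_b$'' that you are tacitly using is false in general (take $E=\partial_0$ and $b=a$ in the three-chain, where the paper notes $\partial_0$ and $\partial_a$ are incomparable).

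This is precisely the point where the paper does real work: it proves the elementwise inclusion $\partial_a\le\partial_{a^{**}}$, which together with $C\le\partial_a$ gives $C\le\partial_{a^{**}}$. Your plan sidesteps this step without replacing it. Your reverse direction then inherits the same gap, since your argument there assumes $L/(\D\vee C)$ is a \emph{dense} quotient of $L/\nabla_{a^{**}}$, which again needs $\cl(\D\vee C)\le\nabla_{a^{**}}$. By contrast, the paper first settles the special case $\D\vee\nabla_a=\partial_{a^{**}}$ (via \cref{cor:closed_congruences_in_quotient} and clearness), then sandwiches $\D\vee C$ between $\D\vee\nabla_a$ and $\D\vee\partial_a$ and collapses the sandwich using $\partial_a\le\partial_{a^{**}}$. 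If you want to salvage your route, you must either prove $\partial_a\le\partial_{a^{**}}$ directly (as the paper does) or give an independent computation of $\cl(\D\vee C)$.
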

\begin{proof}
 By \cref{cor:closed_congruences_in_quotient}, $\cl(\D \vee \nabla_a) = \nabla_{a^{**}}$
 and so $\D \vee \nabla_a = \partial_{a^{**}}$ since $\D \vee \nabla_a$ is clear.
 We thus have $\D \le \partial_{a^{**}} \le \D \vee C \le \D \vee \partial_a$ and we need only show $\partial_a \le \partial_{a^{**}}$.
 
 Let $(x,y) \in \partial_a$. We may assume $x \le y$ without loss of generality.
 Now suppose $x \wedge z \le a^{**}$ for some $z \in L$. Then $x \wedge z \wedge a^* \le 0 \le a$ and so $y \wedge z \wedge a^* \le a$
 since $(x,y) \in \partial_a$. But then $y \wedge z \wedge a^* \le a \wedge a^* = 0$ and so $y \wedge z \le a^{**}$.
 Thus $(x,y) \in \partial_{a^{**}}$ as required.
\end{proof}
\begin{remark}
 In a similar way one can show that $\partial_b \vee \nabla_a = \partial_{(a \rightarrow b) \rightarrow b}$ whenever $b \le a$.
\end{remark}
\begin{remark}
 Notice that while the map $\partial_\bullet$ does not even preserve order, the map $a \mapsto \partial_{a^{**}}$ is actually a frame homomorphism
 from $L$ to $\uparrow\! \D \subseteq \C L$.
\end{remark}
\begin{corollary}\label{cor:congruences_above_largest_dense_cong} %
 The congruences lying above $\D$ form a frame isomorphic to $L / \D$.
\end{corollary}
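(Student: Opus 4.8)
The plan is to reduce the statement to two facts already available in the excerpt: that the congruences lying above a given congruence form a principal closed quotient of the congruence frame, and that the congruence frame of a Boolean frame is that frame itself.

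First I would observe that the congruences lying above $\D$ constitute the upset $\uparrow\!\D$ of $\C L$, taken with the order (and hence the meets and joins) inherited from $\C L$. Since $\C L$ is itself a frame by \cref{cor:kappa_frame_congruences_form_a_frame}, I may apply \cref{lem:open_closed_lowerset_and_upperset} with $\C L$ in the role of $L$ and $\D$ in the role of $a$: this yields a frame isomorphism $\uparrow\!\D \cong \C L/\nabla_\D$, where $\nabla_\D$ is the principal closed congruence on $\C L$ determined by the element $\D$. The \emph{furthermore} clause of \cref{prop:congruence_frame_summary}, applied with $C = \D$, then identifies $\C L/\nabla_\D$ canonically with $\C(L/\D)$.

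Second, I would check that $L/\D$ is Boolean. Taking $I = \,\downarrow\!0$ in \cref{cor:clear_cong_characterisation} gives $\partial_{\downarrow 0} = \{(a,b) \mid a \wedge x = 0 \iff b \wedge x = 0 \text{ for all } x \in L\} = \D$, so $\D$ is a clear congruence; hence $L/\D$ is a clear frame by \cref{lem:quotient_by_clear}, and a clear frame is Boolean by \cref{cor:clear_frame_Boolean}. (Equivalently, one may simply invoke the identification of $L/\D$ with the complete Boolean algebra of regular elements $a^{**}$.) Now \cref{lem:congruence_frame_of_Boolean_frame} says that $\nabla_\bullet$ is an isomorphism onto the congruence frame precisely for Boolean frames, so $\C(L/\D) \cong L/\D$. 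Composing the isomorphisms of the two steps gives $\uparrow\!\D \cong \C L/\nabla_\D \cong \C(L/\D) \cong L/\D$, which is the claim.

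I do not anticipate a genuine obstacle; the one point requiring care is the direction of the order conventions — ``lying above $\D$'' is an \emph{upset} of $\C L$, which is why it corresponds to a principal \emph{closed} quotient $\C L/\nabla_\D$ rather than an open one — together with the remark that the lattice operations of $\uparrow\!\D$ are those of $\C L$, so that the resulting isomorphism is one of frames and not merely of posets. For readers who prefer an explicit map, one could instead argue via \cref{lem:join_with_largest_dense_cong}: the assignment $a \mapsto \partial_{a^{**}}$ is a frame homomorphism $L \to \,\uparrow\!\D$ which is surjective, since every congruence above $\D$ equals $\D \vee C = \partial_{a^{**}}$ where $\nabla_a = \cl(C)$, and which has kernel $\D$, since $\partial_\bullet$ reflects order and $a^{**} = b^{**}$ exactly when $(a,b) \in \D$; but the route through \cref{prop:congruence_frame_summary} and \cref{lem:congruence_frame_of_Boolean_frame} is the shortest.
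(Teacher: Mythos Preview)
Your proposal is correct. Your primary route, via the chain $\uparrow\!\D \cong \C L/\nabla_\D \cong \C(L/\D) \cong L/\D$ using \cref{prop:congruence_frame_summary} and \cref{lem:congruence_frame_of_Boolean_frame}, is a genuinely different argument from the one the paper intends: the corollary is placed immediately after the remark that $a \mapsto \partial_{a^{**}}$ is a frame homomorphism $L \to \uparrow\!\D$, and the implicit proof is to observe that this map is surjective (by \cref{lem:join_with_largest_dense_cong}, every $C \ge \D$ equals $\partial_{a^{**}}$ for $\nabla_a = \cl(C)$) with kernel $\D$ --- which is exactly the alternative you sketch at the end. Your structural route has the advantage of not relying on the computation in \cref{lem:join_with_largest_dense_cong} at all, and it makes transparent \emph{why} the isomorphism should hold (the upset above any congruence is a congruence frame, and congruence frames of Boolean frames collapse); the paper's route, by contrast, yields the explicit isomorphism $a \mapsto \partial_{a^{**}}$ directly, which is useful if one wants to compute with it.
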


\subsection{Boolean congruence frames}\label{subsec:boolean_congruence_frames}
We can now tackle the characterisation of the frames $L$ for which $\C L$ is Boolean.
Such frames are called \emph{scattered} \cite{PleweRareSublocales}.

\begin{lemma}\label{lem:no_dense_elements_Boolean}
 A frame $M$ is Boolean if and only if it has no nontrivial dense elements.
\end{lemma}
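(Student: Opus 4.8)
The plan is to prove the two implications separately, the forward one being essentially immediate and the converse resting on a single observation.

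For the forward direction, suppose $M$ is Boolean. Then every element $a$ has a complement, and since the complement of a complemented element is given by its pseudocomplement, that complement is $a^*$; in particular $a^{**} = a$. Now if $a$ is dense, i.e.\ $a^* = 0$, then $a = a^{**} = 0^* = 1$, so the only dense element of $M$ is $1$. (Here I am reading ``no nontrivial dense elements'' as ``$1$ is the only dense element'', which is the sensible interpretation since $0$ can be dense only in the degenerate frame.)

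For the converse, the key point is that for \emph{any} $a \in M$ the element $a \vee a^*$ is dense. Indeed, using the identity $(x \vee y)^* = x^* \wedge y^*$ recorded in the preliminaries together with $x \wedge x^* = 0$, we get
\[
  (a \vee a^*)^* = a^* \wedge a^{**} = a^* \wedge (a^*)^* = 0 .
\]
By hypothesis every dense element equals $1$, so $a \vee a^* = 1$. Since also $a \wedge a^* = 0$ always holds, $a^*$ is a complement of $a$. As $a$ was arbitrary, $M$ is Boolean.

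I do not expect any real obstacle here: the whole argument is a two-line computation once one notices that $a \vee a^*$ is dense. The only point requiring a moment's care is pinning down the intended meaning of ``nontrivial dense element'' (namely, dense and distinct from $1$), after which both directions are forced.
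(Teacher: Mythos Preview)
Your proof is correct and matches the paper's argument essentially line for line: the paper also dismisses the forward direction as trivial and, for the converse, observes that $(a \vee a^*)^* = a^* \wedge a^{**} = 0$ to conclude $a \vee a^* = 1$.
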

\begin{proof}
 The forward implication is trivial.
 Suppose $M$ has no nontrivial dense elements and consider an element $a \in M$. Certainly, $a \wedge a^* = 0$.
 But $a \vee a^*$ is a dense element of $M$, since $(a \vee a^*)^* = a^* \wedge a^{**} = 0$.
 But then $a \vee a^* = 1$ and so $a$ is complemented.
\end{proof}

\begin{remark}
A dense congruence is \emph{not} the same as a dense element of $\C L$. Rather,
dense elements of $\C L$ are called \emph{rare} congruences. They were first discussed in \cite{PleweRareSublocales}
using localic terminology. \Cref{lem:no_dense_elements_Boolean} implies that $\C L$ is Boolean if and only if $L$ has no nontrivial rare congruences.
\end{remark}

\begin{lemma}\label{lem:rare_congruence} %
 A congruence $C \in \C L$ is rare if and only if whenever $a, b \in L$ and $a < b$, there exists a pair $(c, d) \in C$ with $a \le c < d \le b$.
\end{lemma}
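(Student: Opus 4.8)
The plan is to unwind the definition of rareness: by \cref{cor:kappa_frame_congruences_form_a_frame} the lattice $\C L$ is a frame, so $C$ being rare means $C^* = 0$ in $\C L$, which is equivalent to the assertion that the only congruence $E$ with $C \wedge E = 0$ is $E = 0$. I will prove the two implications separately. The workhorse in both directions is \cref{lem:congruence_intervals}: a nonzero congruence $E$ contains some pair with $a < b$ (replace any nontrivial pair $(p,q)$ by $(p \wedge q, p \vee q)$), and then it contains every pair $(c,d)$ with $a \le c \le d \le b$.

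For the ($\Leftarrow$) direction, assume the interval condition and suppose $C \wedge E = 0$; I claim $E = 0$. If not, choose $(a,b) \in E$ with $a < b$. The hypothesis produces $(c,d) \in C$ with $a \le c < d \le b$, but then $(c,d) \in E$ as well by \cref{lem:congruence_intervals}, so $(c,d) \in C \cap E = 0$, i.e.\ $c = d$, a contradiction. Applying this with $E = C^*$ gives $C^* = 0$, so $C$ is rare.

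For the ($\Rightarrow$) direction, assume $C$ is rare and fix $a < b$. I will test $C$ against the principal congruence $E = \langle(a,b)\rangle$, which equals $\nabla_b \wedge \Delta_a$ by \cref{lem:principal_congruence} and is nonzero since $(a,b) \in E$. Rareness forces $C \wedge E \ne 0$, so, after normalising with \cref{lem:congruence_intervals}, there is a pair $(x,y) \in C \cap E$ with $x < y$; by \cref{lem:open_and_closed_congruences}, membership in $E$ says $x \vee b = y \vee b$ and $x \wedge a = y \wedge a$. Now truncate this witness into the interval by setting $c = (x \vee a)\wedge b$ and $d = (y \vee a)\wedge b$. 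Joining with $(a,a)$ and then meeting with $(b,b)$ preserves membership in $C$, so $(c,d) \in C$, and $a \le c \le d \le b$ is immediate from $a \le b$ and $x \le y$.

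The one genuine obstacle is to show $c < d$ — that truncating the witness to $[a,b]$ does not collapse it. Here I would use that $\nabla_b$ and $\Delta_b$, and $\nabla_a$ and $\Delta_a$, are complementary in $\C L$ (\cref{cor:nabla_delta_complements}). If $c = d$, then $x \vee a$ and $y \vee a$ have equal meets with $b$ by assumption and equal joins with $b$ because $(x\vee a)\vee b = (x\vee b)\vee a = (y\vee b)\vee a = (y\vee a)\vee b$; hence $(x\vee a, y\vee a) \in \Delta_b \wedge \nabla_b = 0$, giving $x\vee a = y\vee a$. Together with $x \wedge a = y \wedge a$ this puts $(x,y)$ in $\Delta_a \wedge \nabla_a = 0$, so $x = y$, contradicting $x < y$. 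Therefore $c < d$ and $(c,d)$ is the pair we sought.
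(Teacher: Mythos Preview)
Your proof is correct and follows essentially the same route as the paper: both reduce rareness to meeting each nonzero principal congruence $\langle(a,b)\rangle$ nontrivially, and both truncate a witness $(x,y)$ to $(c,d) = ((x\vee a)\wedge b,\,(y\vee a)\wedge b)$. The only cosmetic difference is in establishing $c < d$: the paper observes in one stroke that $(x,y) \notin \langle(a,b)\rangle^c = \nabla_a \vee \Delta_b$, which by \cref{lem:joins_with_principal} says precisely $(x\vee a)\wedge b \ne (y\vee a)\wedge b$, whereas you unwind the same fact by hand via two applications of \cref{cor:nabla_delta_complements}.
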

\begin{proof}
A congruence $C$ is rare if and only if $C$ meets every nonzero element of $\C L$. Since principal congruences generate $\C L$, it is enough
to consider meets with these and so $C$ is rare if and only if $C \wedge \langle(a,b)\rangle \ne 0$ whenever $a < b$.
Hence $C$ is rare if whenever $a < b$ there are $x, y \in L$, $x < y$ such that $(x,y) \in C \cap \langle(a,b)\rangle$.

($\Leftarrow$) If $(c,d) \in C$ with $a \le c < d \le b$ then certainly $(c,d) \in C \cap \langle(a,b)\rangle$.

($\Rightarrow$) Suppose we have $(x,y) \in C \cap \langle(a,b)\rangle$ with $x < y$. Then $(x,y) \notin \langle(a,b)\rangle^c = \nabla_a \vee \Delta_b$.
So $(x \vee a) \wedge b \ne (y \vee a) \wedge b$. So letting $c = (x \vee a) \wedge b$ and $d = (y \vee a) \wedge b$, we have $a \le c < d \le b$ and
$(c,d) \in C$ as required.
\end{proof}

As an application of the above characterisation, we may prove the following.
\begin{corollary} %
Suppose $L$ is a complete chain. Then $\C L$ is Boolean if and only if $L$ is well-ordered.
\end{corollary}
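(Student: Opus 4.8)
The plan is to combine the remark following \cref{lem:no_dense_elements_Boolean} — that $\C L$ is Boolean if and only if $L$ has no rare congruence other than $L\times L$ — with the explicit description of rare congruences in \cref{lem:rare_congruence}. Two standing facts will be used repeatedly: by \cref{lem:congruence_intervals} every congruence class is convex, and (since frame congruence classes have a largest element) the class of $0$ under a congruence $C$ is exactly the interval $[0,\nu_C(0)]$. In particular $\nu_C(0) < 1$ precisely when $C \ne L\times L$.

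For the forward direction, assume $L$ is well-ordered and let $C$ be a rare congruence; I will show $C = L\times L$. If not, then $\nu_C(0) < 1$, so the nonempty set $\{x \in L \mid x > \nu_C(0)\}$ has a least element $m$ by well-ordering, and $m$ covers $\nu_C(0)$. Since the class of $0$ is $[0,\nu_C(0)]$ and $m \notin [0,\nu_C(0)]$, we have $(\nu_C(0), m) \notin C$. But \cref{lem:rare_congruence} applied to the pair $\nu_C(0) < m$ produces $(c,d) \in C$ with $\nu_C(0) \le c < d \le m$, and since $[\nu_C(0), m] = \{\nu_C(0), m\}$ this forces $(c,d) = (\nu_C(0), m) \in C$ — a contradiction. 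Hence the only rare congruence is $L\times L$, so $\C L$ is Boolean.

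For the converse I argue contrapositively. Suppose $L$ is not well-ordered, fix a nonempty $S \subseteq L$ with no least element, and set $q = \bigwedge S$. Then $q \notin S$, so $q < s$ for every $s \in S$ and in particular $q < 1$; moreover $q$ has no cover, since if $e$ covered $q$ then, as $q = \bigwedge S < e$, some $s \in S$ would satisfy $q < s < e$, contradicting that nothing lies strictly between $q$ and $e$. Now consider $C = \{(x,y) \mid x \le q \iff y \le q\}$, whose two classes are $[0,q]$ and $\{x \mid x > q\}$. I would check directly that $C$ is a frame congruence: both classes being convex makes closure under binary meets immediate, and closure under arbitrary joins holds because the join of a family of pairs from $C$ lands in the ``high'' class as soon as one of the pairs does, and in $[0,q]\times[0,q]$ otherwise. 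Since $q < 1$, we have $C \ne L\times L$. Finally $C$ is rare by \cref{lem:rare_congruence}: given $a < b$, if $b \le q$ or $a > q$ then $(a,b) \in C$ already works, while if $a \le q < b$ then, $q$ having no cover, there is $e$ with $q < e < b$, and $(e,b) \in C$ with $a \le e < b \le b$. Thus $\C L$ has a rare congruence different from $L\times L$ and is not Boolean.

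The only step demanding genuine care is the verification that the relation $C$ of the converse is a frame congruence, and in particular that it is closed under arbitrary (not merely finite) joins; the rest is bookkeeping with convex congruence classes. The key geometric observation is that ``$q = \bigwedge S$ has no cover'' is exactly what \cref{lem:rare_congruence} needs in order to fill in the intervals that straddle $q$ — and when $q = 0$ this construction simply recovers the largest dense congruence, so the argument uniformly covers the atomless case.
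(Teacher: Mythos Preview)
Your proof is correct and follows essentially the same route as the paper: both directions hinge on \cref{lem:rare_congruence}, the forward direction uses the successor of $\nu_C(0)$ exactly as the paper does, and your converse congruence $C=\{(x,y)\mid x\le q\iff y\le q\}$ is precisely the paper's $\partial_q$. The only economy the paper gains is that by naming this relation as the clear congruence $\partial_q$ it need not verify the frame-congruence axioms by hand; your remark that convexity ``makes closure under binary meets immediate'' is a little glib, but the check is trivial in a chain and your argument goes through.
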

\begin{proof}
By \cref{lem:no_dense_elements_Boolean}, we may instead show there are no nontrivial rare congruences on $L$ if and only if $L$ is well-ordered.

($\Leftarrow$) Suppose $L$ is well-ordered and let $C$ be a rare congruence on $L$. Suppose $C \ne 1$. Let $x = \bigvee [0]_C < 1$.
Then the set $S = \{y \in L \mid y > x\}$ is non-empty and therefore has a least element $s$. Since $C$ is rare, there exists a pair $(a,b) \in C$
such that $x \le a < b \le s$. Since $a < s$, we have $a \notin S$ and so $a = x$. Then $(0,a) \in C$ and $(a,b) \in C$ means $(0,b) \in C$.
But $b > x$. So this is a contradiction and thus $C = 1$.

($\Rightarrow$) Conversely, suppose $S \subseteq L$ is a nonempty set without a least element and let $y = \bigwedge S$.
Then we claim that $\partial_y = \{(a,b) \mid a \le y \iff b \le y\}$ is rare. We need only show that whenever $y < x$ there is a $z \in L$
such that $y < z < x$. But since $x > y$, there is a $z \in S$ with $z < x$ and certainly $y < z$.
\end{proof}

The next lemma, taken from \cite{PleweRareSublocales}, shows that we only need to consider the clear congruences in order to show $\C L$ is Boolean.
\begin{lemma} %
 $\C L$ is Boolean if and only if every clear congruence is complemented.
\end{lemma}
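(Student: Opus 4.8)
The plan is to reduce the statement to the characterisation of Boolean frames by dense elements established in \cref{lem:no_dense_elements_Boolean}, applied to the frame $M = \C L$. The forward implication needs no work: if $\C L$ is Boolean then every element of $\C L$ is complemented, and in particular so is every clear congruence. So the content is in the converse.

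For the converse, I would assume that every clear congruence is complemented and show that $\C L$ has no nontrivial dense element — equivalently (by the remark after \cref{lem:no_dense_elements_Boolean}) no nontrivial rare congruence — so that \cref{lem:no_dense_elements_Boolean} delivers that $\C L$ is Boolean. Thus suppose $C \in \C L$ satisfies $C^* = 0$ (rareness) and $C \ne 1$, and derive a contradiction. The key step is to use \cref{lem:meet_of_clear} to write $C = \bigwedge_{\alpha \in A} \partial_{I_\alpha}$ as a meet of clear congruences; since $C \ne 1$ the index set is nonempty and not every $\partial_{I_\alpha}$ can be $1$, so I may fix some $\alpha$ with $\partial_{I_\alpha} \ne 1$.

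Now I would observe that rareness propagates upward: since $C \le \partial_{I_\alpha}$ and pseudocomplementation reverses order, $\partial_{I_\alpha}^{*} \le C^{*} = 0$, so $\partial_{I_\alpha}$ is itself rare. By hypothesis it is complemented, and a complemented element which is dense must be $1$ — its complement lies below its pseudocomplement $0$, hence is $0$, so $\partial_{I_\alpha} = \partial_{I_\alpha} \vee 0 = 1$. This contradicts the choice of $\alpha$, finishing the argument. (If one wishes to be careful about the degenerate case, the trivial frame $L$ is handled separately, or one simply notes $\C L$ is then the one-element, hence Boolean, frame.)

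The only step that I expect to need a moment's care is the passage from the rare congruence $C$ to a proper rare \emph{clear} congruence above it: this relies both on the upward stability of rareness and on the nondegeneracy of the meet decomposition from \cref{lem:meet_of_clear}. Everything else is immediate from the basic properties of pseudocomplements recorded in the background section, so I would keep the write-up short.
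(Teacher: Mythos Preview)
Your argument is correct and follows the same overall strategy as the paper: reduce to \cref{lem:no_dense_elements_Boolean} by showing any rare congruence must be $1$, via a proper clear congruence lying above it which is then rare, complemented, and hence $1$.

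The only difference is in how that clear congruence above $C$ is produced. You invoke \cref{lem:meet_of_clear} to decompose $C$ as a meet of clear congruences and pick a nontrivial term. The paper instead goes directly: setting $\nabla_a = \cl(C)$ gives $\nabla_a \le C \le \partial_a$, so $\partial_a$ is the desired clear congruence above $C$; then $\partial_a = 1$ forces $a = 1$ and hence $C = 1$. The paper's route is slightly lighter (it needs only the definition of $\partial_a$ rather than the full meet-decomposition lemma), and it avoids the case analysis on nonemptiness of the index set; your route has the mild advantage that it would work verbatim in any setting where meets of clear elements suffice to recover every element, without needing to know that $\partial_{\cl(C)}$ sits above $C$.
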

\begin{proof}
 Let $C$ be a rare congruence and let $\nabla_a = \cl(C)$. Then $\nabla_a \le C \le \partial_a$. So $\partial_a$ is also rare.
 But $\partial_a$ rare and complemented means $\partial_a = 1$. So $a = 1$ and therefore $C = 1$. Thus, $\C L$ is Boolean by
 \cref{lem:no_dense_elements_Boolean}.
\end{proof} %

The final piece of the puzzle is to characterise when the clear congruences are complemented.
This was shown by Beazer and Macnab in \cite{Macnab}.
\begin{theorem}\label{thm:complemented_clear_congruence}
 The congruence $\partial_a$ is complemented if and only if there is a least $b \in L$ with $b \ge a$ and $(b,1) \in \partial_a$.
\end{theorem}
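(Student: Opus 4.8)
The plan is to reduce to the case $a=0$ and then characterise when the largest dense congruence $\D$ of a frame is complemented. By \cref{cor:clear_cong_characterisation} the congruence $\partial_a$ corresponds to $\D$ in $\C(L/\nabla_a)$ under the isomorphism $\C(L/\nabla_a)\cong\C L/\nabla_{\nabla_a}$ of \cref{prop:congruence_frame_summary}, and $L/\nabla_a\cong\;\uparrow\! a$ by \cref{lem:open_closed_lowerset_and_upperset}. Since $\nabla_a$ is complemented in $\C L$ (with complement $\Delta_a$, by \cref{cor:nabla_delta_complements}), a short distributive computation shows that $\partial_a$ is complemented in $\C L$ if and only if it is complemented in $\C L/\nabla_{\nabla_a}\cong\C(\uparrow\! a)$; and for $b\ge a$ the condition $(b,1)\in\partial_a$ says exactly that $b$ is a dense element of $\uparrow\! a$. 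So it suffices to prove: for any frame $M$, the congruence $\D$ is complemented in $\C M$ if and only if $M$ has a least dense element.

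For the backward direction, suppose $d_0$ is the least dense element of $M$; I claim $\nabla_{d_0}$ is the complement of $\D$. First, $\D\vee\nabla_{d_0}=1$ by \cref{lem:joins_with_principal}, since $(0,1)\in\nabla_{d_0}\vee\D$ iff $(d_0,1)\in\D$ iff $d_0^*=0$. For $\D\wedge\nabla_{d_0}=0$, take $(x,y)$ in the intersection with $x\le y$: then $x^{**}=y^{**}$, so $x^*=y^*$, and $x\vee d_0=y\vee d_0$, whence $y=y\wedge(x\vee d_0)=x\vee(y\wedge d_0)$; as $x\vee x^*$ is dense we have $d_0\le x\vee x^*$, so $y\wedge d_0\le(y\wedge x)\vee(y\wedge x^*)=x\vee 0=x$ and $y=x$.

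For the forward direction, let $C$ be the complement of $\D$ in $\C M$ and set $d:=\nu_C(0)$, so that $\cl(C)=\nabla_d$. Applying \cref{lem:join_with_largest_dense_cong} to $C$ gives $\D\vee C=\partial_{d^{**}}$, which equals $1$ since $C$ complements $\D$; thus $1$ is dense in $\nabla_{d^{**}}$, and since $\cl(1)=1$ this forces $\nabla_{d^{**}}=1$, so $d^{**}=1$ and $d$ is dense. Now let $d'$ be any dense element of $M$. Then $\D\vee\nabla_{d'}=1$, again by \cref{lem:joins_with_principal}, so $C=C\wedge(\D\vee\nabla_{d'})=(C\wedge\D)\vee(C\wedge\nabla_{d'})=C\wedge\nabla_{d'}\le\nabla_{d'}$; taking closures gives $\nabla_d=\cl(C)\le\nabla_{d'}$, i.e.\ $d\le d'$ by \cref{cor:nabla_mono}. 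Hence $d$ is the least dense element of $M$, and unwinding the reduction $b:=d$ is the least element of $L$ with $b\ge a$ and $(b,1)\in\partial_a$.

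The step I expect to be most delicate is the bookkeeping in the reduction: checking that complementedness of $\partial_a$ genuinely transfers between $\C L$ and $\C(\uparrow\! a)$, and making sure \cref{lem:join_with_largest_dense_cong} is applied in $\C(\uparrow\! a)$ --- where $\partial_a$ is the largest dense congruence --- rather than in $\C L$. Everything else is routine distributive-lattice manipulation. Alternatively one can avoid the reduction and work directly in $L$, using $\partial_a=\{(x,y)\mid x\to a=y\to a\}$ from \cref{cor:clear_cong_heyting_arrow} with candidate complement $\Delta_a\wedge\nabla_b$; there the key point is that $x\vee(x\to a)$ always belongs to $\{c\ge a\mid(c,1)\in\partial_a\}$ and hence dominates $b$.
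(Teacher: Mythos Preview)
Your proof is correct. Both you and the paper use the same reduction to $M=L/\nabla_a\cong\;\uparrow\! a$ and the same key lemma (\cref{lem:join_with_largest_dense_cong}), but the two arguments are organised differently.

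For the forward direction, the paper moves to the quotient, notes that by \cref{lem:join_with_largest_dense_cong} one has $\D\vee\overline{C}=\D\vee\cl(\overline{C})$, and then observes that since a complement is the \emph{smallest} $B$ with $\D\vee B=1$, necessarily $\overline{C}=\cl(\overline{C})$; hence $\D$ is open, say $\D=\Delta_{[b]}$, and $[b]$ is the least element of $[1]_\D$. Your version instead extracts $d=\nu_C(0)$, uses \cref{lem:join_with_largest_dense_cong} to show $d$ is dense, and then uses distributivity of $\C M$ to show $C\le\nabla_{d'}$ for every dense $d'$. Both reach the same conclusion; the paper's route is slightly more conceptual (the complement is forced to be closed), yours slightly more explicit.

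The backward directions genuinely diverge. The paper works directly in $L$ (no reduction) and proves the identity $\partial_a=\nabla_a\vee\Delta_b$ via a Heyting-algebra computation with $z=x\to a$, showing $b\le x\vee z$ and then that $(x\vee a)\wedge b=(y\vee a)\wedge b$. You instead stay in the reduced setting and verify that $\nabla_{d_0}$ complements $\D$ directly, using $d_0\le x\vee x^*$ and $x^*=y^*$ to get $y\wedge d_0\le x$. Your computation is a bit shorter; the paper's buys the explicit formula $\partial_a=\nabla_a\vee\Delta_b$ (equivalently $(\partial_a)^c=\Delta_a\wedge\nabla_b$), which is the content of the corollary immediately following the theorem, whereas in your argument this formula only emerges after unwinding the reduction. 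Your alternative sketch at the end, with candidate complement $\Delta_a\wedge\nabla_b$ and the observation that $x\vee(x\to a)$ lies in $\{c\ge a\mid(c,1)\in\partial_a\}$, is essentially the paper's backward computation.
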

\begin{proof}
 ($\Rightarrow$) Suppose $\partial_a$ has a complement $C$. Moving to the quotient $L/\nabla_a$, we get that $\D_{L/\nabla_a}$ and
 $\overline{C} = [C]_{L/\nabla_a}$ are complements. Now by \cref{lem:join_with_largest_dense_cong},
 $\D_{L/\nabla_a} \vee \overline{C} = \D_{L/\nabla_a} \vee \cl(\overline{C})$.
 But $\overline{C}$ is the smallest congruence $B$ for which $\D_{L/\nabla_a} \vee B = 1$, so $\overline{C} = \cl(\overline{C})$.
 Thus $\D_{L/\nabla_a}$ is an open congruence and so $[1]_{\D_{L/\nabla_a}}$ has a least element $[b] \in L/\nabla_a$.
 Then $a \vee b$ is a well-defined element of $L$ satisfying our requirements.
 
 ($\Leftarrow$) Suppose there is a smallest $b \ge a$ such that $(b,1) \in \partial_a$. We show $\partial_a = \nabla_a \vee \Delta_b$.
 We always have $\nabla_a \le \partial_a$ and $\Delta_b \le \partial_a$ since $(b,1) \in \partial_a$.
 Now let $(x,y) \in \partial_a$ with $x \le y$. We need $(x \vee a) \wedge b = (y \vee a) \wedge b$.
 
 Choose $z$ as large as possible so that $x \wedge z \le a$ and consider $x \vee z$.
 Let $w \in L$ be such that $(x \vee z) \wedge w \le a$. Then $(x \wedge w) \vee (z \wedge w) \le a$ and
 so $x \wedge w \le a$ and $z \wedge w \le a$. From the former, $w \le z$, and then from the latter,
 $w = z \wedge w \le a$. Thus, $(x \vee z, 1) \in \partial_a$ and so $b \le x \vee z$.
 
 Let $\overline{x} = x \vee a$ and $\overline{y} = y \vee a$. Notice that and $y \wedge z \le a$ since $(x,y) \in \partial_a$ and that $z \ge a$.
 So $a \le z \wedge \overline{y} \le a$ and $x \le x \wedge \overline{y} \le x \vee a$. Now
 $\overline{x} = (x \wedge \overline{y}) \vee (z \wedge \overline{y}) = (x \vee z) \wedge \overline{y}$.
 So we have $b \wedge \overline{x}= b \wedge (x \vee z) \wedge \overline{y} = b \wedge \overline{y}$, as required.
\end{proof}
\begin{corollary}
 Every complemented clear congruence is a join of an open congruence and a closed congruence.
\end{corollary}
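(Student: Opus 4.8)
The plan is to simply read off the identity already established inside the proof of \cref{thm:complemented_clear_congruence}. Since $\partial_a$ is assumed complemented, that theorem provides a least element $b \in L$ with $b \ge a$ and $(b,1) \in \partial_a$. The $(\Leftarrow)$ direction of its proof does more than assert that such a $b$ exists: it shows outright that $\partial_a = \nabla_a \vee \Delta_b$. So I would begin by invoking \cref{thm:complemented_clear_congruence} to obtain this $b$, and then cite (or briefly re-run) the computation $\partial_a = \nabla_a \vee \Delta_b$.

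The second and final step is to observe that $\nabla_a$ is a (principal) closed congruence and $\Delta_b$ is an open congruence, by definition. Hence $\partial_a$ is exhibited as the join of a closed congruence and an open congruence, which is exactly the claim. One small thing worth spelling out for the reader is that we are working with frames here (as flagged at the start of \cref{subsec:clear_congruences_on_frames}), so ``closed congruence'' and ``principal closed congruence'' coincide and there is no subtlety about generalised closure.

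There is essentially no obstacle: the content is entirely contained in the preceding theorem, and the corollary is just a repackaging of the formula $\partial_a = \nabla_a \vee \Delta_b$ in the language of open and closed congruences. The only care needed is to make sure the reference points at the constructive half of the theorem's proof rather than merely its statement, so that the explicit decomposition is available rather than just the existence of~$b$. Concretely, the proof would read: ``By \cref{thm:complemented_clear_congruence} there is a least $b \ge a$ with $(b,1) \in \partial_a$, and the proof of that result shows $\partial_a = \nabla_a \vee \Delta_b$. Since $\nabla_a$ is a closed congruence and $\Delta_b$ an open congruence, the claim follows.''
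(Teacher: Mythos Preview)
Your proposal is correct and is exactly the argument the paper intends: the corollary is stated without proof, as an immediate consequence of the identity $\partial_a = \nabla_a \vee \Delta_b$ established in the $(\Leftarrow)$ direction of \cref{thm:complemented_clear_congruence}, combined with the existence of $b$ from the $(\Rightarrow)$ direction.
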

\begin{corollary}
 The congruence frame of a frame $L$ is Boolean if and only if for every $a \in L$, there is a smallest $b \in L$ with $b \ge a$ and $(b,1) \in \partial_a$.
\end{corollary}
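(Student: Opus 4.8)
The plan is to chain together the lemma immediately preceding \cref{thm:complemented_clear_congruence} with the theorem itself; the corollary is essentially just their conjunction made explicit. The one preliminary point worth recording is that, since $L$ is a frame, the only $\kappa$-ideals in sight are the principal ones (with $\h_\infty L \cong L$), so that the clear congruences on $L$ are precisely the congruences $\partial_a$ for $a \in L$. This is immediate from the definition of a clear congruence together with the convention $\partial_{\downarrow a} = \partial_a$.

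With that in hand, I would argue as follows. The lemma that $\C L$ is Boolean if and only if every clear congruence is complemented, combined with the identification of the clear congruences with the family $\{\partial_a \mid a \in L\}$, gives that $\C L$ is Boolean if and only if $\partial_a$ is complemented for every $a \in L$.

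Finally I would substitute the characterisation of \cref{thm:complemented_clear_congruence}: $\partial_a$ is complemented precisely when there is a least $b \in L$ with $b \ge a$ and $(b,1) \in \partial_a$. Running this equivalence through the universally quantified statement for each $a \in L$ turns the previous line into exactly the statement of the corollary.

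There is no genuine obstacle here: all of the real work has been carried out in \cref{thm:complemented_clear_congruence} and in the lemma reducing Booleanness of $\C L$ to complementedness of clear congruences, and this corollary simply unwinds the two equivalences into a single criterion phrased directly in terms of $L$. The only thing one has to be a little careful about is the identification of clear congruences on a frame with the $\partial_a$, which is why I would spell that out first.
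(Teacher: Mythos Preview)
Your proposal is correct and matches the paper's intent: the corollary is stated without proof, as an immediate consequence of chaining the preceding lemma (Booleanness of $\C L$ reduces to complementedness of clear congruences) with \cref{thm:complemented_clear_congruence}. Your observation that the clear congruences on a frame are exactly the $\partial_a$ is the only small point needing justification, and you handle it correctly.
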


The characterisations of the frames $L$ for which $\C^2 L$, $\C^3 L$ and $\C^4 L$ are Boolean can be found in \cite{PleweRareSublocales}.

\section{Strictly zero-dimensional biframes}\label{section:strictly_zero_dimensional_biframes} %

\subsection{Congruence biframes}

Congruence frames have a natural biframe structure \cite{FrithCong}.
\begin{definition}
 Suppose $L$ is a frame or $\kappa$-frame. Let $\nabla L$ be the subframe of $\C L$ consisting of the generalised closed congruences and let $\Delta L$
 be the subframe of $\C L$ generated by the principal open congruences. Then $\C L = (\C L, \nabla L, \Delta L)$ is called the
 \emph{congruence biframe} of $L$.
\end{definition}

Notice that if $f: L \to M$ is a $\kappa$-frame homomorphism, then $\C f$ maps principal closed congruences to principal closed congruences and
principal open congruences to principal open congruences. Thus $\C f$ is a biframe homomorphism and the congruence functor can be viewed as
mapping into the category of biframes.

The congruence biframe is zero-dimensional: $\nabla L$ is generated by principal closed congruences, which have complements in $\Delta L$, and
$\Delta L$ is generated by the principal open congruences, which have complements in $\nabla L$.
If we restrict to the case of frames, we can say even more.
\begin{definition}
 A biframe $L = (L_0, L_1, L_2)$ is \emph{strictly zero-dimensional} if every element $a \in L_1$ is complemented in $L_0$ with its complement lying in $L_2$
 and furthermore these complements generate $L_2$. We will call $L$ a \emph{strictly zero-dimensional biframe over $L_1$}.
\end{definition}
\begin{remark}
 The original definition in \cite{StrictlyZeroDimensional} allowed the roles of $L_1$ and $L_2$ above to be interchanged, but as has become standard,
 we will fix the chirality.
\end{remark}
The congruence biframe of a frame is strictly zero-dimensional. We may view the congruence functor as mapping from the category of frames
to the category of strictly zero-dimensional biframes (and biframe homomorphisms), $\StrZdBiFrm$.
The resulting functor $\C:\Frm \to \StrZdBiFrm$ is then fully faithful, since any map from a strictly zero-dimensional biframe is
determined by its action on the first part. Thus $\C$ gives a (slightly non-standard) embedding of the category of frames into the category of biframes.

The current section is largely motivated by the following rather satisfactory characterisation of $\C$.
\begin{proposition}\label{prop:congruence_free_str0dbifrm} %
 Let $\P: \StrZdBiFrm \to \Frm$ be the functor that takes the first part of a strictly zero-dimensional biframe.
 Then $\C: \Frm \to \StrZdBiFrm$ is left adjoint to $\P$.
\end{proposition}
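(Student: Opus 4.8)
The plan is to exhibit, for each frame $L$, a universal arrow from $L$ to the functor $\P$, namely the map $\nabla_\bullet\colon L\to\P\,\C L=\nabla L$; by the standard criterion for adjunctions this gives $\C\dashv\P$ with unit $\nabla_\bullet$. First I would check that $\P\,\C L=\nabla L$ is genuinely a copy of $L$. Indeed, by \cref{lem:joins_and_meets_nabla}(iii) --- used here with arbitrary joins, since we are working with frames --- every generalised closed congruence on a frame has the form $\nabla_a$, so $\nabla_\bullet\colon L\to\nabla L$ is surjective; it is also injective by \cref{prop:congruence_frame_summary} (or \cref{cor:nabla_mono}), hence a frame isomorphism. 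In particular $\P\,\C\cong 1_{\Frm}$.

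Next I would carry out the core step. Let $M=(M_0,M_1,M_2)$ be a strictly zero-dimensional biframe and $f\colon L\to\P M=M_1$ a frame homomorphism; I must produce a unique biframe homomorphism $\overline f\colon\C L\to M$ with $\P\overline f\circ\nabla_\bullet=f$. Since $M$ is strictly zero-dimensional, every element of $M_1$ is complemented in $M_0$, so $f$, viewed as a map $L\to M_0$, has image inside the Boolean algebra $BM_0$; the universal property of the congruence frame (\cref{lem:congruence_frame_universal_prop}, in the frame case) then supplies a unique \emph{frame} homomorphism $\overline f\colon\C L\to M_0$ with $\overline f\circ\nabla_\bullet=f$. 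It remains to verify that $\overline f$ is a morphism of biframes. On the first part, $\overline f(\nabla_a)=f(a)\in M_1$ for each $a\in L$, and since the $\nabla_a$ generate $\nabla L$ and $M_1$ is a subframe, $\overline f(\nabla L)\subseteq M_1$. On the second part, $\nabla_a\wedge\Delta_a=0$ and $\nabla_a\vee\Delta_a=1$ (\cref{cor:nabla_delta_complements}), so $\overline f(\Delta_a)$ is the complement of $\overline f(\nabla_a)=f(a)$ in $M_0$; strict zero-dimensionality of $M$ forces this complement into $M_2$, and as the $\Delta_a$ generate $\Delta L$ we conclude $\overline f(\Delta L)\subseteq M_2$. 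Thus $\overline f\colon\C L\to M$ is a biframe homomorphism with $\P\overline f\circ\nabla_\bullet=\overline f_1\circ\nabla_\bullet=f$.

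Uniqueness will be immediate: two biframe homomorphisms $\varphi,\psi\colon\C L\to M$ with $\P\varphi\circ\nabla_\bullet=\P\psi\circ\nabla_\bullet$ have underlying frame maps $\C L\to M_0$ that agree after precomposition with $\nabla_\bullet\colon L\to\C L$, and so agree because $\nabla_\bullet$ is an epimorphism (\cref{lem:nabla_epi}). Hence $(\C L,\nabla_\bullet)$ is a universal arrow from $L$ to $\P$; the bijection $\StrZdBiFrm(\C L,M)\cong\Frm(L,\P M)$, $\varphi\mapsto\P\varphi\circ\nabla_\bullet$, is natural in both variables by functoriality of $\C$ and the uniqueness just established, and this witnesses the adjunction. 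The one step that really needs care --- and the main obstacle --- is showing that $\overline f$ sends $\Delta L$ into $M_2$, which is exactly where the hypothesis ``strictly zero-dimensional'' (rather than merely ``zero-dimensional'') is used: it is the defining requirement that the complement of a first-part element lie in the second part.
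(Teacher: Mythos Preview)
Your proof is correct and follows essentially the same route as the paper: invoke the universal property of \cref{lem:congruence_frame_universal_prop} to obtain the frame map $\overline f$, then verify it is a biframe morphism by checking that $\overline f(\nabla_a)=f(a)\in M_1$ and that $\overline f(\Delta_a)=f(a)^c\in M_2$ by strict zero-dimensionality. Your added verification that $\nabla_\bullet\colon L\to\nabla L$ is an isomorphism and your explicit uniqueness argument via \cref{lem:nabla_epi} are helpful elaborations, but the overall strategy is the paper's.
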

\begin{proof}
 This fact is simply a re-imagining of the usual universal property of \cref{lem:congruence_frame_universal_prop}.
 Let $L$ be a frame and $M$ a strictly zero-dimensional biframe. Suppose we have a frame homomorphism $f: L \to \P M$.
 Then every element in the image of $f$ has a complement in $M_0$ and thus there is a unique $\overline{f}: \C L \to M$ making the diagram below commute.
\begin{center}
   \begin{tikzpicture}[node distance=3.5cm, auto]
    \node (CL) {$\C L$};
    \node (M) [right of=CL] {$M$};
    \node (L) [below of=CL] {$L$};
    \draw[->] (L) to node {$\nabla_\bullet$} (CL);
    \draw[->, dashed] (CL) to node {$\overline{f}$} (M);
    \draw[->] (L) to node [swap, yshift=-2pt, xshift=-7pt] {$f$} (M);
   \end{tikzpicture}
\end{center}
 The commutativity of the diagram ensures that $\overline{f}$ maps $\nabla L$ into $M_1$. Then $\overline{f}$ maps $\Delta L$ into $M_2$ since these
 subframes are both generated by the complements of elements in the first parts. So $\overline{f}$ is a biframe homomorphism and $\C$ satisfies the universal
 property of the left adjoint of $\P$.
\end{proof}
\begin{corollary} %
 The category of congruence biframes is a coreflective subcategory of the category $\StrZdBiFrm$.
 We will call this the \emph{congruential coreflection} of a strictly zero-dimensional biframe.
\end{corollary}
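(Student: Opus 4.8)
The plan is to deduce this directly from the adjunction $\C \dashv \P$ of \cref{prop:congruence_free_str0dbifrm}, together with the fact, already noted above, that $\C : \Frm \to \StrZdBiFrm$ is fully faithful. The underlying principle is purely formal: the essential image of a fully faithful left adjoint is always a coreflective subcategory, with coreflection obtained by applying the left adjoint to the right adjoint. So I would take the congruential coreflection of a strictly zero-dimensional biframe $M$ to be $\C\P M$, equipped with the counit $\varepsilon_M : \C\P M \to M$ of the adjunction; note that $\C\P M$ is by construction a congruence biframe.

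The first step is to check that $\varepsilon_M$ is universal among morphisms into $M$ out of congruence biframes. Given a frame $L$ and a biframe homomorphism $h : \C L \to M$, the adjunction bijection $\StrZdBiFrm(\C L, M) \cong \Frm(L, \P M)$ produces a frame homomorphism $h^\flat : L \to \P M$, and by construction of this bijection its inverse sends $g$ to $\varepsilon_M \circ \C g$, so $\varepsilon_M \circ \C h^\flat = h$. This gives the required factorisation through $\varepsilon_M$. For uniqueness, suppose $k : \C L \to \C\P M$ satisfies $\varepsilon_M \circ k = h$; since $\C$ is full and both $\C L$ and $\C\P M$ are congruence biframes, $k = \C k'$ for some $k' : L \to \P M$, and then injectivity of the adjunction bijection forces $k' = h^\flat$, hence $k = \C h^\flat$.

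Having established the universal property, I would conclude in one of two equivalent ways. Either observe that the $\varepsilon_M$ assemble (by the usual naturality argument) into a right adjoint $\C\P$, corestricted to the full subcategory of congruence biframes, for the inclusion of that subcategory into $\StrZdBiFrm$; or note that $\C$ restricts to an equivalence between $\Frm$ and the category of congruence biframes, under which this inclusion corresponds to $\C$ itself, so that the inclusion inherits a right adjoint from $\C$. Either way, the category of congruence biframes is coreflective in $\StrZdBiFrm$.

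I do not expect any genuine obstacle: once \cref{prop:congruence_free_str0dbifrm} and the full faithfulness of $\C$ are in hand, the argument is entirely formal. The only point needing a little care is to read ``category of congruence biframes'' as the full (replete) subcategory of $\StrZdBiFrm$ on the objects isomorphic to some $\C L$, so that fullness of $\C$ can legitimately be invoked in the uniqueness clause and so that ``coreflective subcategory'' is being used in its standard sense.
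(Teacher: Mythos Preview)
Your proposal is correct and matches the paper's intent: the corollary is stated without proof, being the standard formal consequence of the adjunction $\C \dashv \P$ in \cref{prop:congruence_free_str0dbifrm} together with the already-noted full faithfulness of $\C$, and you have simply spelled out that argument. Your identification of the coreflection as $\C\P M$ with counit $\varepsilon_M$ is exactly how the paper uses it subsequently (e.g.\ in \cref{thm:dense_quotients_of_congruence_frame}).
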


\begin{remark}
 In \cref{subsec:congruence_kappa_frames}, the congruence frame was described as the frame obtained by freely adjoining complements to every element in a
 given frame. In this spirit, strictly zero-dimensional biframes are obtained from their first parts by adjoining complements in a potentially more general way.
 \Cref{prop:congruence_free_str0dbifrm} then justifies our previous claim by showing that \emph{$\C L$ is the free strictly zero-dimensional biframe over $L$}.
\end{remark}

\subsection{Characterisations of strictly zero-dimensional biframes}\label{subsec:characterisations_of_str0d_biframes}

We may use the congruence biframe to completely characterise strictly zero-dimensional biframes over a given frame $L$.
It is perhaps surprising that the total parts of the strictly zero-dimensional biframes over $L$ depend only on the total part of $\C L$.
\begin{theorem}\label{thm:dense_quotients_of_congruence_frame} %
 The strictly zero-dimensional biframes over $L$ are precisely the dense quotients of $\C L$.
\end{theorem}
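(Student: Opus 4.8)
The plan is to prove the two inclusions separately. Throughout, recall that for a frame $L$ the generalised closed congruences coincide with the principal closed ones (apply \cref{lem:joins_and_meets_nabla}(iii) with arbitrary index sets), so $\nabla L = \{\nabla_a \mid a \in L\}$ and $\nabla_\bullet\colon L \to \nabla L$ is an isomorphism by \cref{cor:nabla_mono}. A dense quotient of the biframe $\C L$ is a triple $(N, q[\nabla L], q[\Delta L])$ arising from a dense surjective frame homomorphism $q\colon \C L \to N$, and a strictly zero-dimensional biframe over $L$ is one whose first part is identified with $L$.

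First I would show that any dense quotient $q\colon \C L \to N$ is a strictly zero-dimensional biframe over $L$. The complementation structure transfers at once: $q(\nabla_a)$ has complement $q(\Delta_a) \in q[\Delta L]$, and since $\Delta L$ is generated by the $\Delta_a$, its image $q[\Delta L]$ is generated by the complements of elements of $q[\nabla L]$. It remains to see that $q \circ \nabla_\bullet\colon L \to q[\nabla L]$ is an isomorphism; it is surjective by construction, so I only need injectivity. Suppose $a \neq b$, say $a \wedge b < a$. Then $\langle(a\wedge b,\,a)\rangle = \nabla_a \wedge \Delta_{a\wedge b} \neq 0$ by \cref{lem:principal_congruence}, so density gives $q(\nabla_a) \wedge q(\Delta_{a\wedge b}) \neq 0$; but if $q(\nabla_a) = q(\nabla_b)$ then $q(\nabla_{a\wedge b}) = q(\nabla_a)\wedge q(\nabla_b) = q(\nabla_a)$ by \cref{lem:joins_and_meets_nabla}(i), whence $q(\nabla_a)\wedge q(\Delta_{a\wedge b}) = q(\nabla_{a\wedge b}\wedge\Delta_{a\wedge b}) = q(0) = 0$, a contradiction. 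Hence the first part is isomorphic to $L$ and the quotient is strictly zero-dimensional over $L$.

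Conversely, given a strictly zero-dimensional biframe $M = (M_0,M_1,M_2)$ with $M_1 \cong L$, let $f\colon L \to M_0$ be the injective composite of this isomorphism with the inclusion $M_1 \hookrightarrow M_0$. Every element of the image of $f$ is complemented in $M_0$, so by the universal property (\cref{lem:congruence_frame_universal_prop}, equivalently \cref{prop:congruence_free_str0dbifrm}) there is a biframe homomorphism $\overline{f}\colon \C L \to M$ with $\overline{f}\circ\nabla_\bullet = f$ and $\overline{f}(\Delta_a) = f(a)^c$. Since $\overline{f}$ hits every element of $M_1$ and every generator of $M_2$, it is a surjective biframe homomorphism and the associated quotient biframe is (isomorphic to) $M$. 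So it only remains to check that $\overline{f}$ is dense. If $\overline{f}(C) = 0$, writing $C = \bigvee\{\nabla_b\wedge\Delta_a \mid (a,b)\in C,\ a\le b\}$ by \cref{cor:congruence_lattice_generators} and applying $\overline{f}$ yields $\bigvee\{f(b)\wedge f(a)^c \mid (a,b)\in C,\ a\le b\} = 0$, so $f(b)\wedge f(a)^c = 0$, i.e.\ $f(b) \le f(a)^{**} = f(a)$, for each such pair. With $a \le b$ this forces $f(a) = f(b)$, hence $a = b$ by injectivity of $f$, and therefore $C = 0$.

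The transfer of complements and the surjectivity of $\overline{f}$ are routine bookkeeping; the two steps that use something are the injectivity of $q\circ\nabla_\bullet$ in the first direction and the density of $\overline{f}$ in the second, both of which rest on density together with the identity $\nabla_b\wedge\Delta_a = \langle(a,b)\rangle$ of \cref{lem:principal_congruence} (and, for the second, on complemented elements being regular). Both points can alternatively be obtained uniformly from \cref{lem:kernel_on_congruence_frame}: for a homomorphism $\varphi\colon \C L \to N$ it gives $\cl(\ker\varphi) = \widetilde{\nabla}_{\ker(\varphi\circ\nabla_\bullet)}$, so by injectivity of $\widetilde{\nabla}_\bullet$ (\cref{lem:nabla_tilde_injective}) the map $\varphi$ is dense exactly when $\varphi\circ\nabla_\bullet$ is injective; applied to $q$ this gives the first direction and applied to $\overline{f}$ (with $\overline{f}\circ\nabla_\bullet = f$ injective) the second. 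I expect the density of $\overline{f}$ to be the only step requiring any real thought.
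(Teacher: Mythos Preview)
Your proof is correct and follows essentially the same route as the paper: use the universal property (the congruential coreflection) to produce a biframe map $\C L \to M$, check surjectivity on both parts via the generators, and then argue that density of the total part is equivalent to injectivity on the first part. The only difference is presentational: the paper handles both key steps by a single appeal to \cref{lem:kernel_on_congruence_frame} (which you also mention as the uniform alternative at the end), whereas you first give direct arguments unpacking $C = \bigvee \nabla_b \wedge \Delta_a$ and using that complemented elements are regular.
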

\begin{proof}
 Let $M$ be a strictly zero-dimensional biframe over $L$ and let  $\chi: \C L \to M$ be its congruential coreflection.
 Consider the following diagram.
 
 \begin{center}
   \begin{tikzpicture}[node distance=3.5cm, auto]
    \node (CL) {$\C L$};
    \node (M) [right of=CL] {$M$};
    \node (L) [below of=CL] {$L$};
    \draw[->] (L) to node {$\nabla_\bullet$} (CL);
    \draw[->] (CL) to node {$\chi$} (M);
    \draw[->] (L) to node [swap, yshift=-2pt, xshift=-7pt] {$\subseteq$} (M);
   \end{tikzpicture}
 \end{center}
 
 By commutativity of the diagram, every element $a \in L \subseteq M$ is $\chi(\nabla_a)$. Since $\nabla_a$ has a complement in $\C L$,
 every element $a^c \in M$ for $a \in L$ is also in the image of $\chi$. But such elements generate $M_2$
 since $M$ is strictly zero-dimensional. Thus $\chi_1$, $\chi_2$ and $\chi_0$ are surjective.
 
 Now by \cref{lem:kernel_on_congruence_frame}, $\cl(\ker \chi) = \nabla_{\ker(\chi \circ \nabla_\bullet)} = \nabla_{\ker(\subseteq)} = 0$
 and so $\chi$ is dense.
 
 Conversely, if $\chi: \C L \to M$ is a dense quotient, $\P \chi$ is injective by the same reasoning and so $M_1 \cong L$.
 The complements of elements of $\nabla L$ map to complements of elements of $M_1$ under $\chi$ and these generate $M_2$ since the former
 generate $\Delta L$. Thus, $M$ is strictly zero-dimensional.
\end{proof}
\begin{corollary}\label{cor:frame_of_str0d_biframes_over_L} %
 The poset of (isomorphism classes of) strictly zero-dimensional biframes over $L$, ordered in the natural way, is dually isomorphic to
 the frame $\C^2 L / \Delta_{\D_{\C L}}$. The congruence frame $\C L$ is the largest strictly zero-dimensional biframe over $L$, while
 $\C L / \D_{\C L}$ is the smallest.
\end{corollary}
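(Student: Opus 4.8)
The plan is to reduce the statement to \cref{thm:dense_quotients_of_congruence_frame}, which identifies the strictly zero-dimensional biframes over $L$ with the dense quotients of $\C L$, and then to rephrase ``dense quotient of $\C L$'' in terms of a down-set in $\C^2 L$. First I would recall that, since $\C L$ is a frame, its quotients correspond to its congruences and hence to the elements of the frame $\C^2 L = \C(\C L)$ (\cref{cor:kappa_frame_congruences_form_a_frame}), and that by \cref{rem:lattice_of_quotients} the natural order on the quotients of $\C L$ is dual to the order on $\C^2 L$. By \cref{lem:dense} a quotient $\C L \twoheadrightarrow \C L/C$ is dense precisely when $C$ is a dense congruence on $\C L$, and by \cref{lem:largest_dense_congruence} (using that $\cl$ is monotone) the dense congruences on $\C L$ are exactly those lying in $\downarrow\!\D_{\C L} \subseteq \C^2 L$. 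So the poset of dense quotients of $\C L$, with its natural order, is dual to the sub-poset $\downarrow\!\D_{\C L}$ of $\C^2 L$.

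Next I would apply \cref{lem:open_closed_lowerset_and_upperset} to the frame $\C^2 L$ and its element $\D_{\C L}$, obtaining a frame isomorphism $\C^2 L/\Delta_{\D_{\C L}} \cong\; \downarrow\!\D_{\C L}$ (namely $[x] \mapsto x \wedge \D_{\C L}$). Composing with the previous paragraph and with \cref{thm:dense_quotients_of_congruence_frame} shows that the poset of (isomorphism classes of) strictly zero-dimensional biframes over $L$ is dually isomorphic to $\C^2 L/\Delta_{\D_{\C L}}$. The extremal cases then follow by tracing the bottom and top of $\downarrow\!\D_{\C L}$ through these identifications: $0 \in \C^2 L$ corresponds to the identity quotient $\C L$, so $\C L$ itself is the largest such biframe, while $\D_{\C L}$ corresponds to the quotient $\C L/\D_{\C L}$, which is therefore the smallest.

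The argument is essentially assembly, so rather than a genuine obstacle the step that wants care is the bookkeeping of order-reversals: quotients versus congruences, congruences of $\C L$ versus elements of $\C^2 L$, and the natural order on biframes over $L$ all contribute, and one must check they compose to a \emph{dual} isomorphism with ``largest'' and ``smallest'' on the right side. It is also worth confirming that the natural order on strictly zero-dimensional biframes over $L$ --- where $M \le N$ iff some biframe map $N \to M$ restricts to the identity on first parts --- agrees with the quotient order of $\C L$ under \cref{thm:dense_quotients_of_congruence_frame}; this uses that such a map is automatically a surjection, since its image contains $L = M_1$ together with the complements generating $M_2$, and is unique, since a map out of a strictly zero-dimensional biframe is determined by its first part.
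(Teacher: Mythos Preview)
Your argument is correct and is precisely the intended one: the paper states this as an immediate corollary of \cref{thm:dense_quotients_of_congruence_frame} without giving a proof, and the deduction you spell out --- identifying dense quotients of $\C L$ with $\downarrow\!\D_{\C L}\subseteq \C^2 L$ via \cref{lem:largest_dense_congruence} and then with $\C^2 L/\Delta_{\D_{\C L}}$ via \cref{lem:open_closed_lowerset_and_upperset} --- is exactly the unwinding the author has in mind. Your care with the order-reversals and the check that the ``natural order'' on biframes over $L$ matches the quotient order are appropriate and correct.
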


\begin{definition}
 A strictly zero-dimensional biframe is called \emph{discrete} if its total part is Boolean.
\end{definition}
\begin{corollary}\label{cor:str0d_biframe_boolean_total_part} %
 A strictly zero-dimensional biframe over $L$ is discrete if and only if it is isomorphic to $\C L / \D_{\C L}$.
\end{corollary}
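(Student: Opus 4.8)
The plan is to read off both implications from \cref{thm:dense_quotients_of_congruence_frame}, using the fact that for frames `clear' and `Boolean' coincide (\cref{cor:clear_frame_Boolean}).

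For the backward implication I would first observe that, since $\D_{\C L}$ is the largest dense congruence on the frame $\C L$, the quotient map $\C L \twoheadrightarrow \C L/\D_{\C L}$ is dense, so $\C L/\D_{\C L}$ is a strictly zero-dimensional biframe over $L$ by \cref{thm:dense_quotients_of_congruence_frame} (cf.\ also \cref{cor:frame_of_str0d_biframes_over_L}). It remains only to check that its total part $\C L/\D_{\C L}$ is Boolean. Applying \cref{cor:clear_cong_characterisation} with the principal ideal $\downarrow\! 0$ gives $\widetilde{\nabla}_{\downarrow 0} = \nabla_0 = 0$, so $\partial_{\downarrow 0}$ is the largest congruence dense in $0$, which is exactly $\D_{\C L}$ by \cref{lem:largest_dense_congruence}. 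Hence $\D_{\C L}$ is a clear congruence, so $\C L/\D_{\C L}$ is a clear frame by \cref{lem:quotient_by_clear} and therefore Boolean by \cref{cor:clear_frame_Boolean}. Thus $\C L/\D_{\C L}$ is discrete.

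For the forward implication, let $M$ be a discrete strictly zero-dimensional biframe over $L$. By \cref{thm:dense_quotients_of_congruence_frame}, $M$ is isomorphic to a dense quotient of the congruence biframe, say $M \cong \C L/C$ where $C$ is a dense congruence on the frame $\C L$. Since $M$ is discrete, $\C L/C$ is Boolean, hence clear, and so $C$ is a clear congruence on the frame $\C L$ by \cref{lem:quotient_by_clear}; as $\C L$ is a frame this means $C = \partial_a$ for some $a \in \C L$. Because $\partial_a$ is dense in $\nabla_a$ we have $\nabla_a \le \partial_a = C$, and since $\nabla_a$ is generalised closed and $\cl$ is monotone this gives $\nabla_a = \cl(\nabla_a) \le \cl(C) = 0$, the final equality holding because $C$ is dense. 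Hence $\nabla_a = 0 = \nabla_0$, so $a = 0$ by \cref{cor:nabla_mono}, and therefore $C = \partial_{\downarrow 0} = \D_{\C L}$. Thus $M \cong \C L/\D_{\C L}$, as required.

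The routine half is the backward implication; the genuine content is the forward one, and within it the key move is recognising that a Boolean total part forces the defining congruence to be clear, and then that the only dense clear congruence on a frame is its largest dense congruence. Everything else is simply assembling results already established.
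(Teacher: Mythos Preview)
Your proof is correct and follows essentially the same route as the paper. The only cosmetic difference is in the forward direction: the paper observes that $C$ dense means $\partial_c = C \le \D = \partial_0$ and then invokes the fact that $\partial_\bullet$ reflects order to conclude $c=0$, whereas you unpack this order-reflection directly via the closure (which is exactly how one would prove that $\partial_\bullet$ reflects order in the first place).
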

\begin{proof}
 First notice that $\C L / \D_{\C L}$ is Boolean since $\D_{\C L}$ is clear. Now suppose $\C L / C$ is a strictly zero-dimensional biframe over $L$
 with Boolean total part. Then $C = \partial_c$ for some $c \in \C L$. But $\partial_c \le \D$ then implies $c = 0$ since $\partial_\bullet$ reflects order.
\end{proof}
\begin{corollary}\label{cor:scattered_frames_have_unique_str0d_biframe_structure} %
 There is a unique strictly zero-dimensional biframe over $L$ if and only if $\C L$ is a Boolean frame (i.e.\ if $L$ is scattered).
\end{corollary}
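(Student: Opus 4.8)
The plan is to obtain this immediately from the description of all strictly zero-dimensional biframes over $L$ already established in \cref{cor:frame_of_str0d_biframes_over_L}. That corollary says the isomorphism classes of such biframes, ordered naturally, form a poset that is dually isomorphic to the frame $\C^2 L/\Delta_{\D_{\C L}}$. Since there always exists at least one strictly zero-dimensional biframe over $L$ --- namely $\C L$ itself --- the uniqueness asserted in the corollary is equivalent to this poset being a singleton, which in turn is equivalent to the frame $\C^2 L/\Delta_{\D_{\C L}}$ being the one-element frame. So the whole task reduces to deciding when that frame is trivial.

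Next I would identify when $\C^2 L/\Delta_{\D_{\C L}}$ collapses. By \cref{lem:open_closed_lowerset_and_upperset} it is isomorphic to $\downarrow\!\D_{\C L}$ inside $\C^2 L$ (equivalently, by injectivity of $\Delta_\bullet$ from \cref{cor:nabla_mono} together with $\Delta_0 = \langle(0,1)\rangle = 1$, the congruence $\Delta_{\D_{\C L}}$ is the top congruence on $\C^2 L$ exactly when $\D_{\C L}=0$). Either way, $\C^2 L/\Delta_{\D_{\C L}}$ is the one-element frame precisely when $\D_{\C L}=0$ in $\C^2 L$, i.e.\ precisely when $\C L$ is a clear frame. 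Finally, \cref{cor:clear_frame_Boolean} upgrades ``$\C L$ is clear'' to ``$\C L$ is Boolean'', which is the definition of $L$ being scattered, completing the chain of equivalences.

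As a sanity check and alternative route, one can instead phrase the argument through \cref{cor:frame_of_str0d_biframes_over_L}'s other conclusion that $\C L$ is the largest and $\C L/\D_{\C L}$ the smallest strictly zero-dimensional biframe over $L$: uniqueness then says these coincide, so the canonical dense quotient map $\C L\twoheadrightarrow\C L/\D_{\C L}$ is an isomorphism, forcing its kernel $\D_{\C L}$ to be $0$; conversely $\D_{\C L}=0$ makes this map the identity. One can also run it through \cref{thm:dense_quotients_of_congruence_frame}: the strictly zero-dimensional biframes over $L$ are the dense quotients of $\C L$, and there is only one of these (namely $\C L$) exactly when $\C L$ has no nonzero dense congruence, i.e.\ when $\D_{\C L}=0$.

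I do not expect any real obstacle, since all the work has effectively been done in \cref{cor:frame_of_str0d_biframes_over_L} and \cref{cor:clear_frame_Boolean}. The only things needing a moment's care are the observation that ``unique'' is the same as ``the relevant poset (a frame, by \cref{cor:frame_of_str0d_biframes_over_L}) is a singleton'' --- which uses that the poset is nonempty --- and the bookkeeping that $\D_L=0$ is by definition what it means for a frame to be clear.
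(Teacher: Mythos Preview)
Your proposal is correct and is exactly the intended argument: the paper states this result as an immediate corollary of \cref{cor:frame_of_str0d_biframes_over_L} (and implicitly \cref{cor:clear_frame_Boolean}) without writing out a proof, and what you have supplied is precisely the unpacking of that implication. Your alternative routes via \cref{thm:dense_quotients_of_congruence_frame} or via the largest/smallest biframes coinciding are equally valid and equally close to the paper's viewpoint.
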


If the congruence biframe of $L$ is the `free strictly zero-dimensional biframe over $L$', then a `free strictly zero-dimensional biframe'
would be the congruence biframe of a free frame. As one might hope, every strictly zero-dimensional biframe is a quotient of a free strictly
zero-dimensional biframe.
\begin{lemma}\label{lem:quotient_of_free_str0dbifrm} %
 Every strictly zero-dimensional biframe is a quotient of a free strictly zero-dimensional biframe.
 Congruence biframes are precisely the closed quotients of free strictly zero-dimensional biframes.
\end{lemma}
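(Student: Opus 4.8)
Recall that a \emph{free strictly zero-dimensional biframe} means one of the form $\C F$ with $F$ a free frame. The plan is to obtain the first statement directly from the adjunction $\C \dashv \P$ of \cref{prop:congruence_free_str0dbifrm}, and the characterisation in the second statement from the description of $\C(L/C)$ as the closed quotient $\C L/\nabla_C$ in \cref{prop:congruence_frame_summary}.

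For the first claim, start with an arbitrary strictly zero-dimensional biframe $M$. Its first part $\P M = M_1$ is a frame, so there is a free frame $F$ (say the free frame on the underlying set of $M_1$) together with a surjective frame homomorphism $g\colon F \twoheadrightarrow M_1$. Regarding $g$ as a frame map $F \to \P M$, the adjunction of \cref{prop:congruence_free_str0dbifrm} yields a biframe homomorphism $\bar g\colon \C F \to M$ with $\bar g(\nabla_a) = g(a)$ for all $a \in F$. Since $g$ is surjective, $\bar g$ maps $\nabla F$ onto $M_1$; and since $M$ is strictly zero-dimensional, $M_2$ is generated by the complements of elements of $M_1$, which are exactly the elements $g(a)^c = \bar g(\nabla_a)^c = \bar g(\Delta_a)$, so $\bar g$ is surjective onto $M_2$ as well. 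Hence $\bar g$ is a biframe surjection and $M$ is a quotient of the free strictly zero-dimensional biframe $\C F$.

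For the second claim, first let $L$ be any frame and write $L \cong F/C$ with $F$ free (taking $F$ free on the underlying set of $L$ and $C$ the kernel of the evaluation surjection $q\colon F \twoheadrightarrow L$). Then $\C q\colon \C F \to \C L$ is a biframe surjection (it carries $\nabla F$ and $\Delta F$ onto $\nabla L$ and $\Delta L$) with kernel $\nabla_C$ by \cref{cor:kernel_of_Cf}, so $\C L \cong \C F/\nabla_C$ as biframes; thus every congruence biframe is a closed quotient of a free strictly zero-dimensional biframe. Conversely, a closed quotient of $\C F$ has the form $\C F/\nabla_a$ for some $a \in \C F$; since $a$ is a congruence on $F$, \cref{prop:congruence_frame_summary} identifies $\C F/\nabla_a$, together with its quotient biframe structure, with $\C(F/a)$, which is a congruence biframe. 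This establishes the stated characterisation.

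\textbf{Main obstacle.} The only point requiring genuine care is matching up the biframe structures: one must verify that the ``canonical'' frame isomorphism $\C(L/C) \cong \C L/\nabla_C$ of \cref{prop:congruence_frame_summary} is in fact a biframe isomorphism, i.e.\ that it respects the $\nabla$- and $\Delta$-parts. This follows by realising the isomorphism through $\C q$ for the quotient map $q$ and using that $\C q$ sends principal closed (resp.\ open) congruences to principal closed (resp.\ open) congruences; once this is in place, the rest of the argument is routine bookkeeping with the adjunction and \cref{prop:congruence_frame_summary}.
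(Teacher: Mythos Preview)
Your proof is correct and uses the same ingredients as the paper, but the organisation differs slightly. The paper proves the second claim first (exactly as you do, via $\C L \cong \C F/\nabla_C$ from \cref{prop:congruence_frame_summary}), and then derives the first claim by composing: a general strictly zero-dimensional biframe $M$ is a \emph{dense} quotient of $\C M_1$ by \cref{thm:dense_quotients_of_congruence_frame}, and $\C M_1$ is a \emph{closed} quotient of some $\C F$ by the second claim, so $M$ is a quotient of $\C F$. Your direct argument via the adjunction avoids invoking \cref{thm:dense_quotients_of_congruence_frame} (in effect re-verifying that the counit $\chi$ is surjective), which is a little more self-contained; the paper's decomposition into closed-then-dense, on the other hand, is what immediately yields \cref{lem:congruential_coreflection_as_closure} as a corollary. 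Your remark that one must check the isomorphism $\C(L/C)\cong \C L/\nabla_C$ respects the biframe parts is well taken; the paper leaves this implicit, relying on the fact that $\C q$ sends principal closed/open congruences to principal closed/open congruences.
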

\begin{proof}
 Let $L$ be a frame. Then $L$ is a quotient of a free frame, $L \cong F/C$. Applying the functor $\C$ we find $\C L \cong \C F / \nabla_C$
 and so $\C L$ is a closed quotient of $\C F$. The converse holds since $\C F / \nabla_C \cong \C (F / C)$ for any closed congruence $\nabla_C$.
 
 Now let $M$ be a strictly zero-dimensional biframe. Then $M$ is a dense quotient of $\C M_1$, which is a closed quotient of some $\C F$.
 So $M$ is a quotient of $\C F$.
\end{proof}
\begin{corollary} %
 A frame is isomorphic to the total part of a strictly zero-dimensional biframe if and only if it is a quotient of the congruence frame of a free frame.
 It is isomorphic to the congruence frame of a frame if and only if this quotient may be taken to be closed.
\end{corollary}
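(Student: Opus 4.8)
The plan is to read off both equivalences from \cref{lem:quotient_of_free_str0dbifrm} together with \cref{thm:dense_quotients_of_congruence_frame}; the only point that needs a small argument is that a quotient of a congruence frame which is not dense can be rewritten as a \emph{dense} quotient of another congruence frame by first dividing out the closure of the kernel. I will use freely that every frame $L$ is a quotient of a free frame $F$, whence $\C L \cong \C F/\nabla_C$ is a quotient of $\C F$ by \cref{prop:congruence_frame_summary}; so ``quotient of the congruence frame of a free frame'' and ``quotient of the congruence frame of some frame'' cut out the same class of frames, and likewise for closed quotients.

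The forward directions are immediate. If $N$ is isomorphic to the total part of a strictly zero-dimensional biframe $M$, then \cref{lem:quotient_of_free_str0dbifrm} yields a surjective biframe homomorphism $\C F \twoheadrightarrow M$ with $F$ free, and its total-part component is a frame surjection $\C F \twoheadrightarrow N$. If in addition $N \cong \C L$, choose a free frame $F$ with $L \cong F/D$; then $N \cong \C(F/D) \cong \C F/\nabla_D$ by \cref{prop:congruence_frame_summary}, a closed quotient of $\C F$.

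For the converse of the first assertion, suppose $q: \C F \twoheadrightarrow N$ is a frame surjection with $F$ free and kernel $C$. Since $\C F$ is a frame, every generalised closed congruence on it is principal closed, so $\cl(C) = \nabla_E$ for some $E \in \C F$, and $\C F/\nabla_E \cong \C(F/E)$ by \cref{prop:congruence_frame_summary}. As $\cl$ is deflationary, $\cl(C) \le C$, so $q$ factors through the closed quotient $\C F \twoheadrightarrow \C F/\nabla_E$, giving a frame surjection $p: \C(F/E) \twoheadrightarrow N$. The key step is to check that $p$ is dense: applying \cref{cor:closure_in_quotient} with base congruence $\nabla_E$ and $A = C \ge \nabla_E$, the closure of $\ker p$ corresponds in $\C(\C F)$ to $\nabla_E \vee \cl(C) = \cl(C) \vee \cl(C) = \nabla_E$, i.e.\ to $0$ in $\C(\C F/\nabla_E)$; hence $\cl(\ker p) = 0$ and $p$ is dense by \cref{lem:dense}. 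Then \cref{thm:dense_quotients_of_congruence_frame} exhibits $N$ as the total part of a strictly zero-dimensional biframe over $F/E$, as required.

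Finally, the converse of the second assertion is immediate: a closed quotient $\C F/\nabla_E$ of a congruence frame is again a congruence frame, namely $\C(F/E)$, by \cref{prop:congruence_frame_summary}. The only real obstacle is the density computation in the previous paragraph; the rest is bookkeeping with the cited results and the reduction to free frames already carried out in the proof of \cref{lem:quotient_of_free_str0dbifrm}.
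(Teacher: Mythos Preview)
Your argument is correct and is essentially the approach the paper has in mind (the corollary is stated without proof, as an immediate consequence of \cref{lem:quotient_of_free_str0dbifrm} and \cref{thm:dense_quotients_of_congruence_frame}).

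Two minor remarks. First, when you invoke \cref{thm:dense_quotients_of_congruence_frame} for the dense surjection $p:\C(F/E)\twoheadrightarrow N$, you are implicitly equipping $N$ with the pushed-forward biframe structure $N_i = p((\C(F/E))_i)$ so that $p$ becomes a biframe map; this is harmless but worth making explicit. Second, the density computation, while correct, is not strictly necessary for the first converse: given \emph{any} frame surjection $q:\C F\twoheadrightarrow N$, one can push forward the biframe structure directly and observe that every element of $q(\nabla F)$ is the image of some $\nabla_a$, hence complemented with complement $q(\Delta_a)\in q(\Delta F)$, and that these complements generate $q(\Delta F)$; so the resulting biframe is strictly zero-dimensional on the nose. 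Your closed-then-dense factorisation reaches the same conclusion and has the merit of identifying the first part as $F/E$ explicitly, in keeping with the argument for the next corollary (\cref{lem:congruential_coreflection_as_closure}).
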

As a corollary, we also obtain a result reminiscent of the classical construction of the Stone-Čech compactification.
\begin{corollary}\label{lem:congruential_coreflection_as_closure} %
 Suppose $L$ is a strictly zero-dimensional biframe and is isomorphic to the biframe $\C F / C$ for some free frame $F$ and some congruence $C$ on $\C F$.
 Then the congruential coreflection of $L$ is given by $\C F / \cl(C)$ together with the obvious map.
\end{corollary}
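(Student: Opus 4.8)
The plan is to show that the canonical surjection $\rho\colon \C F/\cl(C)\to \C F/C\cong L$ induced by $\cl(C)\le C$ is itself a congruential coreflection of $L$; since the congruential coreflection is unique up to isomorphism (\cref{prop:congruence_free_str0dbifrm}), this proves the claim. There are two things to establish: that $\C F/\cl(C)$ is a congruence biframe, and that $\rho$ is a dense biframe surjection onto $L$. A short general argument then forces $\rho$ to be the coreflection.

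First, $F$ is a free frame, so $\C F$ is a free strictly zero-dimensional biframe, and $\cl(C)$, being a generalised closure, is a generalised closed congruence on $\C F$; hence $\C F/\cl(C)$ is a closed quotient of $\C F$ and therefore a congruence biframe by \cref{lem:quotient_of_free_str0dbifrm}. (Explicitly, since $F$ is a frame we may write $\cl(C)=\nabla_D$ for a congruence $D$ on $F$, and then \cref{prop:congruence_frame_summary} identifies $\C F/\cl(C)$ with $\C(F/D)$.) Because $L\cong\C F/C$ as biframes and $\rho$ is induced by the frame quotient $\C F/\cl(C)\to\C F/C$, it carries each part of $\C F/\cl(C)$ onto the corresponding part of $L$; in particular $\rho$ is a biframe surjection.

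Next I would check that $\rho$ is dense. Viewing the quotient of $N:=\C F/\cl(C)$ that produces $L$ as a congruence on $N$, it corresponds under \cref{rem:lattice_of_quotients} to the congruence $C$ among the congruences on $\C F$ lying above $\cl(C)$ (since $\C F\twoheadrightarrow N\xrightarrow{\rho}L$ is the quotient by $C$). By \cref{cor:closure_in_quotient}, the closure of this congruence in $\C N$ corresponds to $\cl(C)\vee\cl(C)=\cl(C)$ in $\C(\C F)$, which is precisely the bottom congruence on $N$. Hence $\cl(\ker\rho)=0$ and $\rho$ is dense by \cref{lem:dense}.

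Finally, I claim that any dense biframe surjection $\rho\colon N\twoheadrightarrow L$ with $N$ a congruence biframe is a congruential coreflection of $L$. Writing $N=\C K$ with $K=N_1$, \cref{thm:dense_quotients_of_congruence_frame} shows that $L$ is a strictly zero-dimensional biframe over $K$, so $\rho_1\colon K\to L_1$ is an isomorphism. Factoring $\rho$ through the coreflection $\varepsilon_L\colon\C L_1\to L$ via the universal property of \cref{prop:congruence_free_str0dbifrm} gives $\rho=\varepsilon_L\circ\C(g)$ for a unique frame map $g\colon K\to L_1$; comparing first parts, $\rho_1=(\varepsilon_L)_1\circ g$ with $\rho_1$ and $(\varepsilon_L)_1$ isomorphisms, so $g$, and hence $\C(g)$, is an isomorphism. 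Thus $(N,\rho)\cong(\C L_1,\varepsilon_L)$, and applying this to $N=\C F/\cl(C)$ yields the result. I expect the density step to be the main obstacle, since one must track the correspondences between quotients and congruences across the three frames $\C F$, $\C F/\cl(C)$ and $\C(\C F)$ without confusion; the remaining steps are routine once the right characterisation of the coreflection is in hand.
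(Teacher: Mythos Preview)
Your proof is correct and follows essentially the same approach as the paper: factor the quotient $\C F\twoheadrightarrow L$ into a closed quotient followed by a dense one, identify the closed part as a congruence biframe via \cref{lem:quotient_of_free_str0dbifrm}, and recognise the dense part as the coreflection. The paper's proof is a one-liner appealing to the closed--dense factorisation and ``the argument above'' (i.e.\ the construction in \cref{lem:quotient_of_free_str0dbifrm}, which already exhibits $L$ as a dense quotient of $\C L_1$ and $\C L_1$ as a closed quotient of $\C F$); since the closed--dense factorisation is determined by $\cl(C)$, the two factorisations coincide. Your final paragraph, establishing that any dense biframe surjection from a congruence biframe onto $L$ is isomorphic to the coreflection, makes explicit what the paper leaves to the reader, and your use of \cref{cor:closure_in_quotient} for density is a clean alternative to invoking the uniqueness of the closed--dense factorisation directly.
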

\begin{proof}
 This follows directly from the argument above and the fact that every quotient may be factored into closed quotient followed by a dense quotient.
\end{proof}

\subsection{Compact strictly zero-dimensional biframes}\label{subsec:cpt_str0d_biframes}

A result of \cite{BanaschewskiFrithGilmour} states that every compact congruence frame is the congruence frame of a Noetherian frame.
In fact, every compact strictly zero-dimensional biframe is of this form.
\begin{lemma}\label{lem:compact_str0d_implies_congruential} %
 Every compact strictly zero-dimensional biframe is the congruence biframe of a Noetherian frame.
\end{lemma}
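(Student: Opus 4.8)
The plan is to show that a compact strictly zero-dimensional biframe $M$ has first part $M_1$ which is a Noetherian frame, and then to invoke \Cref{lem:compact_str0d_implies_congruential}'s companion result (the fact from \cite{BanaschewskiFrithGilmour} together with \Cref{thm:dense_quotients_of_congruence_frame}) to conclude $M \cong \C M_1$. First I would unpack what strict zero-dimensionality buys us: the first part $M_1$ is a subframe of the total part $M_0$, every $a \in M_1$ is complemented in $M_0$ with complement $a^c \in M_2$, and these complements generate $M_2$. Since $M$ is compact, $M_0$ is compact, and a complemented element of a compact frame is itself a compact element of that frame; hence every $a \in M_1$ is compact \emph{in $M_0$}, and therefore compact in the subframe $M_1$ (a join in $M_1$ is a join in $M_0$). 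So every element of $M_1$ is compact, i.e.\ $M_1$ is Noetherian. This is the conceptual heart of the argument and it is short.

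Next I would produce the isomorphism $M \cong \C M_1$. By \Cref{thm:dense_quotients_of_congruence_frame}, $M$ is a dense quotient of $\C M_1$; let $\chi : \C M_1 \to M$ be this dense surjection (equivalently, the congruential coreflection of $M$). I want to show $\chi$ is injective, i.e.\ $\ker \chi = 0$. Here I would use \Cref{lem:kernel_on_congruence_frame} and \Cref{cor:kernel_of_Cf}: since $\chi$ is surjective, $\ker \chi = \cl(\ker\chi) = \widetilde{\nabla}_{\ker(\chi\circ\nabla_\bullet)} = \nabla_{\ker(\subseteq)} = 0$ — wait, that only shows $\ker\chi$ is \emph{generalised closed and} equal to its own closure, which for a \emph{dense} congruence forces $\ker\chi = 0$ directly, since $\cl(\ker\chi) = 0$ by density. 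So in fact density alone gives injectivity here once we know $\chi$ is a quotient map, and strict zero-dimensionality over $M_1$ is exactly the condition making $\chi$ surjective. Thus $\chi$ is an isomorphism of biframes and $M \cong \C M_1$ with $M_1$ Noetherian.

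Alternatively — and this may be the cleaner route to actually write down — I would bypass the density computation and argue that for a Noetherian frame $N$, the congruence frame $\C N$ is already compact (indeed $\C N \cong \h_\kappa \Cvar N$ is coherent-type, and for Noetherian $N$ one checks the top congruence $1 = N\times N$ is generated by finitely many... no, that needs care). Given the subtlety, the \emph{main obstacle} I anticipate is precisely pinning down why the dense quotient $\chi : \C M_1 \to M$ must be an isomorphism rather than a proper dense quotient: one needs that $M$, being compact, cannot be a proper dense quotient of $\C M_1$ unless $\C M_1$ itself is already... this is not automatic, so the real content must be that \emph{compact strictly zero-dimensional biframes over $N$ coincide with $\C N$}, which I expect follows from the result of \cite{BanaschewskiFrithGilmour} applied to the total part $M_0$ (a compact congruence-frame-like object) combined with \Cref{cor:frame_of_str0d_biframes_over_L}, which exhibits $\C M_1$ as the \emph{largest} strictly zero-dimensional biframe over $M_1$ and $\C M_1/\D_{\C M_1}$ as the smallest; compactness should rule out every proper dense quotient because such quotients correspond to nonzero congruences $C \le \D_{\C M_1}$ on $\C M_1$, and a nonzero dense congruence on a compact frame whose quotient is again compact... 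I would need to check no such $C$ exists, likely using that $M_1 \cong L$ is Noetherian to show $\D_{\C M_1} = 0$, i.e.\ $\C M_1$ has no nontrivial dense congruence, which holds iff $\C M_1$ is already a... . So the crux reduces to: \emph{the congruence frame of a Noetherian frame has trivial largest dense congruence}, equivalently is its own smallest strictly zero-dimensional biframe, and this is where I would spend the effort.
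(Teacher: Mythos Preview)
Your first step is exactly right and matches the paper: every $a\in M_1$ is complemented in the compact frame $M_0$, hence compact there, hence compact in the subframe $M_1$; so $M_1$ is Noetherian. You also correctly obtain the dense surjection $\chi:\C M_1\to M$ from \Cref{thm:dense_quotients_of_congruence_frame}.

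The gap is in the injectivity of $\chi$. Your first attempt conflates two different statements: \Cref{lem:kernel_on_congruence_frame} gives only $\cl(\ker\chi)=\nabla_{\ker(\chi\circ\nabla_\bullet)}=0$, which is precisely the statement that $\chi$ is dense --- it does \emph{not} say $\ker\chi$ is closed, and \Cref{cor:kernel_of_Cf} does not apply since $\chi$ is not of the form $\C f$. So ``density alone gives injectivity'' is simply false in general (indeed, that is the whole point of \Cref{cor:frame_of_str0d_biframes_over_L}: there can be many proper dense quotients of $\C M_1$).

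Your fallback approach, reducing to $\D_{\C M_1}=0$ (equivalently, $M_1$ scattered), is also wrong: not every Noetherian frame is scattered. Take the complete chain $L=\{0\}\cup\{1/n:n\ge 1\}$. Every element has only finitely many elements above it, so $L$ is Noetherian; but $L$ is not well-ordered (the set $\{1/n:n\ge 2\}$ has no least element), so by the paper's characterisation of complete chains with Boolean congruence frame, $\C L$ is not Boolean. Thus $\D_{\C L}\ne 0$ and your proposed route cannot close.

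The missing idea is a one-line classical fact already recorded in the background section: \emph{a dense frame homomorphism from a regular frame into a compact frame is injective}. Since $\C M_1$ is zero-dimensional (hence regular) and $M_0$ is compact by hypothesis, the dense surjection $\chi$ is injective, hence an isomorphism. That is the paper's entire argument for this step.
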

\begin{proof}
 Let $L$ be a compact strictly zero-dimensional biframe. Every element of $L_1$ is complemented and thus compact in $L_0$.
 Hence $L_1$ is a Noetherian frame. By \cref{thm:dense_quotients_of_congruence_frame},
 the coreflection $\chi: \C L_1 \to L$ is a dense quotient. But a dense frame homomorphism from a regular frame to a compact frame
 is injective and so $\chi$ is an isomorphism.
\end{proof}
\begin{corollary} %
 The functors $\C$ and $\P$ restrict to an equivalence between the category of Noetherian frames and the category of
 compact strictly zero-dimensional biframes.
\end{corollary}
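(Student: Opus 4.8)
The plan is to read the statement off the adjunction $\C \dashv \P$ of \cref{prop:congruence_free_str0dbifrm} together with \cref{lem:compact_str0d_implies_congruential}, by checking (a) that the two functors genuinely restrict to the subcategories named, and (b) that on those subcategories the unit and counit of the adjunction become isomorphisms. That $\P$ carries a compact strictly zero-dimensional biframe $M$ to a Noetherian frame is exactly the opening move in the proof of \cref{lem:compact_str0d_implies_congruential}: every element of $M_1$ is complemented in $M_0$, hence compact in $M_0$, hence compact in the subframe $M_1$, so $\P M = M_1$ is Noetherian; and a biframe homomorphism restricts to a frame homomorphism of first parts. The only new point in the object assignments is that $\C N$ is compact whenever $N$ is Noetherian (it is always strictly zero-dimensional).

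For that compactness I would first observe that in a Noetherian frame $N$ every lattice congruence is automatically a frame congruence: if $(a_\beta,b_\beta)$ lie in a lattice congruence $T$, then $\bigvee_\beta a_\beta$ and $\bigvee_\beta b_\beta$ are compact, so both are already joins over one common finite index set $F$, and $(\bigvee_{\beta\in F}a_\beta,\bigvee_{\beta\in F}b_\beta)\in T$ since $T$ is closed under finite joins; hence $T$ is closed under arbitrary joins. Therefore the frame congruence generated by any $S\subseteq N\times N$ coincides with the lattice congruence it generates, and the latter construction is finitary. Consequently, if $\mathbf 1=\bigvee_\alpha C_\alpha$ in $\C N$, then $(0,1)$ lies in the lattice congruence generated by some finite subset of $\bigcup_\alpha C_\alpha$, which is contained in $C_{\alpha_1}\vee\dots\vee C_{\alpha_n}$ for finitely many indices, so $\mathbf 1 = C_{\alpha_1}\vee\dots\vee C_{\alpha_n}$ and $\C N$ is compact. (Alternatively, one can write $N\cong\ideal K(N)$ with $K(N)$ a bounded distributive lattice and invoke \cref{lem:congruences_on_frames_of_ideals} to identify $\C N$ with $\ideal(\C K(N))$, which is compact because the frame of ideals of any frame is.)

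Granting this, $\C$ and $\P$ restrict to an adjunction between Noetherian frames and compact strictly zero-dimensional biframes, and it remains to check that the unit and counit are invertible there. The unit at a frame $L$ is $\nabla_\bullet\colon L\to\P\C L=\nabla L$, which is an isomorphism for \emph{every} frame: it is injective by \cref{cor:nabla_mono}, and it surjects onto $\nabla L$ because for a frame the generalised closed congruences are precisely the $\nabla_a$ by \cref{lem:joins_and_meets_nabla} — equivalently, this is just the fact that $\C\colon\Frm\to\StrZdBiFrm$ is fully faithful, as already noted. The counit at a strictly zero-dimensional biframe $M$ is the congruential coreflection $\chi\colon\C M_1\to M$, and \cref{lem:compact_str0d_implies_congruential} says exactly that $\chi$ is an isomorphism when $M$ is compact. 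An adjunction whose unit and counit are both natural isomorphisms is an adjoint equivalence, which finishes the argument, with $\P$ as quasi-inverse to $\C$.

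I expect the only real obstacle to be the compactness of $\C N$ for $N$ Noetherian; the rest is bookkeeping with the adjunction and \cref{lem:compact_str0d_implies_congruential}. The point to be careful about is that the argument showing lattice congruences on a Noetherian frame are frame congruences is not circular — it uses only that each element of $N$ is compact, nothing about $\C N$ — and that "compact element" is used consistently with "finitely generated congruence" throughout.
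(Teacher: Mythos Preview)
Your argument is correct and is essentially the approach the paper intends: the corollary is stated without proof, as an immediate consequence of \cref{lem:compact_str0d_implies_congruential} combined with the adjunction $\C\dashv\P$ of \cref{prop:congruence_free_str0dbifrm} (whose unit is always an isomorphism since $\C$ is fully faithful, and whose counit is exactly the $\chi$ shown to be invertible in the lemma). The one ingredient you supply that the paper does not prove at this point is the compactness of $\C N$ for $N$ Noetherian; the paper tacitly takes this as known from \cite{BanaschewskiFrithGilmour} and only later records it explicitly as the $\kappa=\aleph_0$ case of \cref{lem:lindelof_congruence_frame}, via the same ``every lattice congruence on a Noetherian frame is a frame congruence'' observation you give.
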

\begin{remark}
 The above equivalence can alternatively be viewed as a restriction of the equivalence between the category of coherent frames and
 the category of compact zero-dimensional biframes, which is itself a restriction of the equivalence described in
 \cite{StablyContinuousFrames} between the categories of stably continuous frames and compact regular biframes.
 We will briefly describe the former equivalence.
\subsubsection*{Aside: Coherent frames and the patch biframe}
\begin{quote}
 The patch biframe $P L$ of a coherent frame $L$ is a sub-biframe of $\C L$ with first part $\nabla L$ and second part
 generated by $\{\Delta_c \mid c \in K(L)\}$. It can be shown to be compact and zero-dimensional. Furthermore, if $f: L \to M$
 is a proper map of coherent frames, $\C f$ restricts to a map $P f: P L \to P M$ turning $P$ into a functor from the category of
 coherent frames and proper frame homomorphisms to the category of compact zero-dimensional biframes. On the other hand, the
 first part of a compact zero-dimensional biframe is coherent and the functors $P$ and $\P$ give an equivalence of categories.
 \par\medskip
 When $L$ is Noetherian, every element is compact and every frame map is proper and so $P$ and $\C$ coincide.
\end{quote}
\smallskip
 The equivalence involving stably continuous frames and the one involving coherent frames are both mentioned in \cite[pp.~19--21]{BiframeThesis}.
 But when restricting further to Noetherian frames, the target category is incorrectly given as the category of compact Boolean biframes,
 instead of the compact strictly zero-dimensional biframes as we have shown above.
 This mistake led to the conclusion that there are infinite compact Boolean biframes, in contrast to the situation
 with frames. While this is not entirely relevant to our current interests, we will take this opportunity to set the record straight.
 The proof uses ideas from theorem 1.14, theorem 4.28 and theorem 6.4 in \cite{LatticesAndOrderedSets}.
\end{remark}
\begin{proposition}\label{thm:compact_Boolean_biframe_finite}
 Under the assumption of the Axiom of Dependent Choice, every compact Boolean biframe is finite.
\end{proposition}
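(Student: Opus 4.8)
The plan is to reduce the statement to the fact quoted in the introduction, that a distributive lattice satisfying both chain conditions is finite (\cite[Theorem~4.28]{LatticesAndOrderedSets}), by first showing that the first part of a compact Boolean biframe is a finite distributive lattice and then propagating finiteness to the total part.

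Let $L = (L_0, L_1, L_2)$ be a compact Boolean biframe. First I would observe that every element of $L_1$ is complemented in $L_0$ (because $L$ is Boolean), hence compact in $L_0$ (since $L_0$ is compact and complemented elements of compact frames are compact), hence compact in the subframe $L_1$. So $L_1$ is a Noetherian frame, and since every element of a Noetherian frame is compact, every ascending chain in $L_1$ stabilises; that is, $L_1$ satisfies the ascending chain condition, and this deduction needs no form of choice. The observation that drives the rest of the argument is that $a \mapsto a^c$, the complement computed in $L_0$, is an order-reversing bijection of $L_1$ onto $L_2$: it is well defined and takes values in $L_2$ exactly because $L$ is Boolean, it is a bijection with inverse $b \mapsto b^c$ by the same property applied to $L_2$, and it reverses order because pseudocomplementation does. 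Applying the Noetherian argument to $L_2$ now gives that $L_2$ also satisfies the ascending chain condition, so $L_1 \cong L_2\op$ satisfies the descending chain condition.

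Hence $L_1$ is a distributive lattice (being a frame) with both chain conditions, so \cite[Theorem~4.28]{LatticesAndOrderedSets} --- this is the only place the Axiom of Dependent Choice is needed --- gives that $L_1$ is finite. Then $L_2 \cong L_1\op$ is finite, so $L_1 \cup L_2$ is a finite subset of $L_0$, and it generates $L_0$ as a frame. To finish I would note that a frame generated by finitely many elements is finite: the finite meets of a finite generating set form a finite meet-subsemilattice $M$, and $\{\bigvee S \mid S \subseteq M\}$ is then a subframe of $L_0$ (closed under finite meets by frame distributivity) which is finite and contains the generators, hence equals $L_0$. Therefore $L_0$ is finite, and so is the biframe $L$.

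I do not expect a real obstacle. The only points that need a little care are checking that $a \mapsto a^c$ genuinely sends $L_1$ into $L_2$ (immediate from the definition of a Boolean biframe) and tracking the use of choice --- which enters solely through the cited finiteness theorem, the two chain conditions themselves being supplied by the choice-free implication ``Noetherian $\Rightarrow$ ACC''. One could alternatively route the proof through the coherent representation $L_0 \cong \ideal(B L_0)$ and identify $L_1$ with a Noetherian sub-meet-semilattice of the Boolean algebra $B L_0$, but that repackages the same content in heavier notation, so the direct approach above seems preferable.
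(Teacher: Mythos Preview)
Your argument is correct. The key observation --- that complementation gives an order-reversing bijection $L_1 \to L_2$, so both parts are Noetherian and hence $L_1$ satisfies both chain conditions --- is exactly the same as the paper's, but from that point the two proofs diverge. The paper first invokes \cref{lem:compact_str0d_implies_congruential} to write the biframe as $\C L$ for a Noetherian frame $L$, then uses dependent choice to show $L$ is spatial, reduces to finiteness of the set of primes, and finishes by ruling out infinite antichains among the primes (essentially unpacking the proof of \cite[Theorem~4.28]{LatticesAndOrderedSets} in this special case). You instead stay with the biframe as given, apply Theorem~4.28 to $L_1$ as a black box, and then observe that a frame with a finite generating set is finite. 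Your route is shorter and avoids both the congruence-biframe representation and the spatiality detour; the paper's route has the advantage of being more self-contained and of exposing precisely where dependent choice enters (in converting the chain conditions into the existence of maximal and minimal elements, and in extracting a countable sequence from an infinite antichain). Both are legitimate; yours is the more economical packaging of the same content.
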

\begin{proof}
 Suppose $M$ is a compact Boolean biframe. Then $M \cong \C L$ for some Noetherian frame $L$ by \cref{lem:compact_str0d_implies_congruential}.
 We show that $L$ is finite.
 
 Assuming dependent choice, Noetherian frames are spatial. (By dependent choice, $L$ is Noetherian if and only if every non-empty subset
 of $L$ has a maximal element. Now suppose $b \not\le a$ and consider the set $S$ of all $c \in L$ such that $a \le c$ and $b \not\le c$.
 This set is non-empty since $a \in S$ and thus $S$ has a maximal element $p$. To show $L$ is spatial, we must show that $p$ is prime.
 Suppose $x,y \not\le p$. Without loss of generality we may assume $x,y > p$. Then $x, y \notin S$ and since $x, y > p \ge a$, we must have
 $x, y \ge b$. But then $x \wedge y \ne p$ and so $p$ is prime.)
 
 Since $\C L$ is Boolean, complementation gives an order-reversing isomorphism between $\nabla L$ and $\Delta L$.
 Similarly to before $\Delta L$ is also a Noetherian frame and thus so is $L\op$. Therefore, $L$ satisfies the descending chain condition.
 
 Notice that since $L$ is spatial, $L$ is finite if and only if $L$ has a finite set of primes. Denote this set by $P$.
 By the above $P$ satisfies both the descending chain condition and the ascending chain condition. So by theorem 1.14 of \cite[p.~17]{LatticesAndOrderedSets}
 (which uses dependent choice), $P$ is finite if it contains no infinite antichains.
 
 Let $A$ be an infinite antichain in $P$. By countable choice, $A$ has an countably infinite subset which we index as $a_0, a_1, a_2, \dots$.
 The sequence $a_0, a_0 \wedge a_1, a_0 \wedge a_1 \wedge a_2, \dots$ must stabilise since $L$ satisfies the descending chain condition.
 Thus there is some $n \in \N$ for which $a_0 \wedge a_1 \wedge \dots \wedge a_n \le a_{n+1}$.
 Now since $a_{n+1}$ is prime, $a_k \le a_{n+1}$ for some $k \le n$. But this contradicts the fact that $A$ is an antichain.
 Thus no infinite antichains can exist.
\end{proof}

\begin{example}\label{ex:lindelof_str0d_biframe}
 We provide an example of a Lindelöf strictly zero-dimensional biframe that is not congruential.
 We will use many results from \cref{section:reflections_of_congruence_frames}, but we present the example here to contrast
 with the \cref{lem:compact_str0d_implies_congruential}.
 Let $L$ be the \emph{chain} of reals in the interval $[0,1]$ with the usual order.
 Since $L$ is hereditarily Lindelöf, $\C L$ is Lindelöf by \cref{lem:lindelof_congruence_frame}.
 Notice that $\D_L = \{(0,0)\} \cup \{(x,y) \mid x, y > 0\} = \langle (x,y) \mid x,y \in (0,1] \cap \Q \rangle$.
 So $\D_L$ is countably generated and thus cozero by \cref{lem:cozero_elements_of_CL}. It is also rare by \cref{lem:rare_congruence}.
 Therefore $\C L / \Delta_{\D_L}$ is a nontrivial dense Lindelöf quotient of $\C L$ by \cref{cor:regular_Lindelof_quotients}. %
\end{example}

\subsection{Strictly zero-dimensional biframes of congruences}\label{subsec:str0d_biframes_of_congruences}

Let $M$ be a strictly zero-dimensional biframe and $\chi: \C M_1 \to M$ its congruential coreflection.
Every element $a \in M_0$ may be associated with a congruence $\chi_*(a)$ on $M_1$, where $\chi_*$ is the
right adjoint of $\chi$. In this way, we may view the elements of any strictly zero-dimensional biframe
as certain congruences on the first part, although the correspondence does not generally preserve joins.

We now take a look at the smallest strictly zero-dimensional biframe over $L$. The congruence $\D_{\C L}$ corresponds
to the nucleus $\chi_*\chi: C \mapsto C^{**}$.
\begin{definition}
 A congruence $C$ on a frame $L$ is called \emph{smooth} if it is a regular element of $\C L$. That is, if $C = C^{**}$ in $\C L$.
\end{definition}
\begin{lemma}
 The image of $\C L / \D$ under $\chi_*$ is the set of smooth congruences on $L$.
\end{lemma}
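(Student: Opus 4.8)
The plan is to unwind what $\chi$ is in this situation and then identify the image of its right adjoint with a fixed-point set. Here the strictly zero-dimensional biframe under consideration is $M = \C L/\D$ with $\D = \D_{\C L}$, which is the smallest strictly zero-dimensional biframe over $L$ by \cref{cor:frame_of_str0d_biframes_over_L}; its first part is $L$, so its congruential coreflection is the quotient map $\chi\colon \C L \twoheadrightarrow \C L/\D$ (a dense quotient, since $\D$ is dense, in accordance with \cref{thm:dense_quotients_of_congruence_frame}). What has to be shown is that the image of $\chi_*$ is exactly the set of smooth congruences on $L$, that is, the regular elements of $\C L$.

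The first step is to invoke the general description of quotient frames from \cref{subsec:frame_congruences}: for any surjective frame homomorphism $q\colon K \twoheadrightarrow K/E$ the right adjoint $q_*$ is injective, it realises $K/E$ as a subposet of $K$, and that subposet is precisely the set of elements fixed by the nucleus $q_*q$ (indeed $q q_* = 1$, so a $k \in K$ lies in the image of $q_*$ precisely when $q_*q(k) = k$). Applying this with $K = \C L$, $E = \D$ and $q = \chi$ yields that the image of $\chi_*$ is $\{\,C \in \C L \mid \chi_*\chi(C) = C\,\}$.

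It then only remains to recognise the fixed points of the nucleus $\chi_*\chi$. As already noted just before the statement, $\D = \D_{\C L}$ is the largest dense congruence on the frame $\C L$, and the largest dense congruence of a frame is the double-pseudocomplement nucleus; hence $\chi_*\chi$ is the map $C \mapsto C^{**}$, with the pseudocomplement computed in $\C L$. Consequently the image of $\chi_*$ is $\{\,C \in \C L \mid C = C^{**}\,\}$, which is exactly the set of regular elements of $\C L$ --- the smooth congruences on $L$ --- and the proof is complete.

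I do not expect a genuine obstacle here: the argument is essentially a chain of definitions. The only points deserving care are that every pseudocomplement and every regular element in sight is taken in $\C L$ and not in $L$, and that the fact that the largest dense congruence of a frame is the double-negation nucleus is being applied to the frame $\C L$ rather than to $L$ itself. One should also be sure to phrase the identification of the image of $\chi_*$ with the set of fixed points of $\chi_*\chi$ using exactly the subposet-of-fixed-points picture of quotient frames established in \cref{subsec:frame_congruences}.
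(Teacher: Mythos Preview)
Your argument is correct and matches the paper's approach. The paper in fact states this lemma without proof, treating it as immediate from the sentence just before the definition of smooth congruences (that $\D_{\C L}$ corresponds to the nucleus $\chi_*\chi\colon C\mapsto C^{**}$) together with the general description in \cref{subsec:frame_congruences} of quotient frames as fixed-point sets of their associated nuclei; your write-up simply spells out these two ingredients explicitly.
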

\begin{remark}\label{rem:biframe_of_smooth_congruences}
 In this way, we may consider $\C L / \D$ itself to be the `biframe of smooth congruences' on $L$.
\end{remark}

The following well-known result allows us to relate the sets of congruences associated with different strictly zero-dimensional biframes.
\begin{lemma}
 Let $\nu$ and $\mu$ be nuclei on a frame $L$ and let $\fix \nu$ denote the set of fixed-points of $\nu$.
 Then $\nu \le \mu$ implies $\fix \mu \subseteq \fix \nu$.
\end{lemma}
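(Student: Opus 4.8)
The plan is to unwind the definitions and use only two facts about nuclei: that they are pointwise-ordered in the obvious way (this is the meaning of $\nu \le \mu$, matching the order on congruences via \cref{lem:nuclei}) and that every nucleus is inflationary. No frame-theoretic machinery beyond \cref{lem:nuclei} is needed.

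First I would take an arbitrary $x \in \fix\mu$, so that $\mu(x) = x$ by definition. Applying the hypothesis $\nu \le \mu$ at the point $x$ gives $\nu(x) \le \mu(x) = x$. On the other hand, since $\nu$ is a nucleus it is inflationary, so $\nu(x) \ge x$. Combining these two inequalities yields $\nu(x) = x$, i.e.\ $x \in \fix\nu$. Since $x$ was arbitrary, $\fix\mu \subseteq \fix\nu$.

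There is essentially no obstacle here; the only thing to be careful about is that ``$\nu \le \mu$'' is being read as the pointwise order on nuclei (equivalently, the inclusion order on the associated congruences), which is exactly the convention recorded just after \cref{lem:nuclei}. I would state this explicitly at the start of the proof so the one-line argument is unambiguous.
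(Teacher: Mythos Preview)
Your proof is correct and is essentially identical to the paper's own argument: the paper also takes a fixed point $a$ of $\mu$ and observes $a \le \nu(a) \le \mu(a) \le a$ to conclude $a = \nu(a)$. There is nothing to add.
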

\begin{proof}
 Assume $\nu \le \mu$ and suppose $a = \mu(a)$.
 Observe that $a \le \nu(a) \le \mu(a) \le a$ and thus $a = \nu(a)$.
\end{proof}
\begin{corollary}\label{cor:smooth_elements_fixed_by_chi}
 If $M$ is a strictly zero-dimensional biframe and $\chi: \C M_1 \to M$ is its congruential reflection, then the image $\chi_*(M)$
 contains every smooth congruence on $M_1$.
\end{corollary}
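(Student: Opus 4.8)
The plan is to deduce this directly from the two lemmas that immediately precede it. First I would observe that by \cref{thm:dense_quotients_of_congruence_frame} the congruential coreflection $\chi\colon \C M_1 \to M$ is a dense quotient of $\C M_1$, so by \cref{lem:dense} its kernel is a dense congruence on $\C M_1$, and hence $\ker\chi \le \D_{\C M_1}$ by \cref{lem:largest_dense_congruence}.

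Next I would translate this inclusion into a statement about nuclei. Let $\nu = \chi_*\chi$ be the nucleus on $\C M_1$ corresponding to the quotient $\chi$, and let $\mu$ be the nucleus corresponding to $\D_{\C M_1}$. Since the order on congruences agrees with the pointwise order on nuclei, $\ker\chi \le \D_{\C M_1}$ gives $\nu \le \mu$. Moreover $\mu$ is precisely the nucleus $C \mapsto C^{**}$ on $\C M_1$, so $\fix\mu$ is the set of regular elements of $\C M_1$, that is, the smooth congruences on $M_1$. Applying the lemma just proved, $\nu \le \mu$ yields $\fix\mu \subseteq \fix\nu$, so every smooth congruence on $M_1$ is a fixed point of $\nu$.

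Finally I would identify $\fix\nu$ with $\chi_*(M)$: since $\chi$ is surjective, $\chi_*$ is injective and $\chi\chi_* = 1$, whence $\chi_*\chi\chi_* = \chi_*$, which shows $\chi_*(M) \subseteq \fix(\chi_*\chi) = \fix\nu$; the reverse inclusion is immediate, since any fixed point of $\chi_*\chi$ manifestly lies in the image of $\chi_*$. Chaining the inclusions then gives that $\chi_*(M)$ contains every smooth congruence on $M_1$. I do not anticipate a genuine obstacle here — all the weight is carried by \cref{thm:dense_quotients_of_congruence_frame} together with the two preceding lemmas and the fact that $\D$ of a congruence frame corresponds to double pseudocomplementation — and the only point needing a line of care is the routine identification $\fix(\chi_*\chi) = \chi_*(M)$, valid for any surjective frame homomorphism.
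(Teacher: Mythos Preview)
Your proof is correct and follows precisely the route the paper intends: the corollary is stated immediately after the nucleus lemma $\nu \le \mu \Rightarrow \fix\mu \subseteq \fix\nu$, and your argument supplies exactly the missing details --- that $\ker\chi \le \D_{\C M_1}$ because $\chi$ is dense (\cref{thm:dense_quotients_of_congruence_frame}), that the nucleus of $\D$ is double pseudocomplementation so its fixed points are the smooth congruences, and that $\fix(\chi_*\chi) = \chi_*(M)$ for any surjective frame map. There is nothing to add.
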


One can think of strictly zero-dimensional biframes as frames equipped with a prescribed frame of congruences.
For this interpretation to make sense, we would hope that the extremal epimorphisms from a strictly zero-dimensional biframe $L$
are in one-to-one correspondence with the elements of its total part and that these give quotients of the first part by
congruences in $\fix(\chi_*\chi)$.

\begin{lemma}\label{lem:monos_in_str0dbifrm} %
 A morphism $f: L \to M$ in $\StrZdBiFrm$ is monic if and only if it is dense if and only if $\P f$ is injective.
\end{lemma}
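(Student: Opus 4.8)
The plan is to prove the cycle ``$f_0$ dense $\Rightarrow$ $\P f$ injective $\Rightarrow$ $f$ monic $\Rightarrow$ $f_0$ dense''. I would first record two elementary facts. The first is that every element of the total part $L_0$ of a strictly zero-dimensional biframe is a join of elements of the form $a \wedge b^c$ with $a,b \in L_1$ and $b^c$ the complement of $b$: since $L_0$ is generated as a frame by $L_1$ together with the complements of its members, and a finite meet of such generators equals $\bigl(\bigwedge_i a_i\bigr) \wedge \bigl(\bigvee_j b_j\bigr)^c$, which is again of this form because $L_1$ is a subframe. The second is that a biframe morphism out of a strictly zero-dimensional biframe is determined by its restriction to the first part (as noted before \cref{prop:congruence_free_str0dbifrm}), so the functor $\P$ is faithful; I would also use that any biframe morphism carries the complement of an element of $L_1$ to the complement of its image.

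The two easy links in the cycle I would dispatch directly. For ``$f_0$ dense $\Rightarrow$ $\P f$ injective'', if $f_1(a) = f_1(b)$ then $f_0(a \wedge b^c) = f_1(a) \wedge f_1(b)^c = 0$, so density forces $a \wedge b^c = 0$, i.e.\ $a \le b$, and symmetrically $a = b$. (The same argument run in reverse shows conversely that $\P f$ injective implies $f_0$ dense: if $f_0(x) = 0$, expand $x = \bigvee_\alpha (a_\alpha \wedge b_\alpha^c)$, deduce $f_1(a_\alpha \wedge b_\alpha) = f_1(a_\alpha)$ for each $\alpha$, and use injectivity to get $a_\alpha \le b_\alpha$, hence $x = 0$.) For ``$\P f$ injective $\Rightarrow$ $f$ monic'', if $fg = fh$ then $\P f \circ \P g = \P f \circ \P h$, so $\P g = \P h$ by injectivity of $\P f$, so $g = h$ by faithfulness of $\P$.

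The remaining implication ``$f$ monic $\Rightarrow$ $\P f$ injective'' is where the real work lies, and I expect this to be the main obstacle: monicity of $f$ only tells us immediately that $\P f$ is a monomorphism in $\Frm$, and we need to strengthen this to injectivity. Here I would invoke that $\C$ is left adjoint to $\P$ (\cref{prop:congruence_free_str0dbifrm}) together with the description of free frames from \cref{section:frames}: taking $A$ to be the three-element chain $\mathbbold{3}$, which is the free frame on one generator, the morphisms $\C A \to L$ in $\StrZdBiFrm$ correspond precisely to the elements of $L_1$. Given $c, c' \in L_1$ with $f_1(c) = f_1(c')$, the corresponding morphisms $\widehat c, \widehat{c'} \colon \C A \to L$ satisfy $\P(f \widehat c) = \P(f \widehat{c'})$ (both classify the common value $f_1(c) = f_1(c')$), hence $f \widehat c = f \widehat{c'}$ by faithfulness of $\P$, hence $\widehat c = \widehat{c'}$ by monicity of $f$, and therefore $c = c'$. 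Thus $\P f$ is injective and the cycle closes; the role of the free frame on a single generator is exactly to convert the formal monomorphism property back into honest injectivity.
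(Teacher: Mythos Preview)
Your proof is correct, but it follows a different route from the paper's.

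For the equivalence of density and injectivity of $\P f$, you argue directly from the normal form $x = \bigvee_\alpha (a_\alpha \wedge b_\alpha^{\,c})$ in $L_0$: density forces each $a_\alpha \wedge b_\alpha^{\,c}$ with vanishing image to be zero, and injectivity of $f_1$ lets you recover $a_\alpha \le b_\alpha$ from $f_1(a_\alpha) \le f_1(b_\alpha)$. The paper instead composes with the congruential coreflection $\chi\colon \C L_1 \to L$ and invokes \cref{lem:kernel_on_congruence_frame}: since $\chi$ is a dense surjection, $f$ is dense iff $f\chi$ is, and the kernel lemma translates density of $f\chi$ directly into injectivity of $\P(f\chi) = \P f$. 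Your argument is more elementary and self-contained; the paper's buys brevity by reusing earlier machinery.

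For the link between monicity of $f$ and injectivity of $\P f$, the paper simply notes that $\P$, being a faithful right adjoint, preserves and reflects monomorphisms, and then silently uses that monomorphisms in $\Frm$ are injections. Your argument with $\C\mathbbold{3}$ and the adjunction $\C \dashv \P$ is really the same content unpacked: you are reproducing, at the level of $\StrZdBiFrm$, the standard proof that monos in $\Frm$ are injective via the free frame on one generator. Either way works; the paper's formulation is terser, while yours makes the role of the free generator explicit.
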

\begin{proof}
 The functor $\P$ reflects monomorphisms since it is faithful and preserves monomorphisms since it is a right adjoint.
 
 Let $\chi: \C L_1 \to L$ be the congruential coreflection of $L$.
 If $f$ is dense, then so is $f\chi$ since $\chi$ is also dense. If $f$ is not dense, then neither is $f\chi$ since $\chi$ is surjective.
 Now by \cref{lem:kernel_on_congruence_frame}, $f\chi$ is dense if and only if $\P(f\chi) = \P f$ is injective.
\end{proof}
\begin{lemma}\label{lem:epis_in_str0dbifrm}
 A morphism $f: L \to M$ in $\StrZdBiFrm$ is an extremal epimorphism if and only if it is a closed quotient.
\end{lemma}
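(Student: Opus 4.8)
The plan is to prove the two implications separately. Throughout, for $c\in L_0$ I write $q\colon L\to L/\nabla_c$ for the closed quotient determined by $c$, where $L/\nabla_c$ carries the biframe structure whose first part is $\{x\vee c\mid x\in L_1\}$ and whose second part is generated by the elements $x^c\vee c$ with $x\in L_1$. The first thing to record is the routine check that this structure is again strictly zero-dimensional — the elements $x^c\vee c$ are precisely the complements in $L_0/\nabla_c$ of the generators $x\vee c$ of the first part — and that $q$ is surjective on all three parts. The two tools I will lean on are the characterisation of monomorphisms in $\StrZdBiFrm$ as the dense maps, equivalently the maps $g$ with $\P g$ injective (Lemma~\ref{lem:monos_in_str0dbifrm}), and the behaviour of the generalised closure under quotients, namely that $\cl$ in $\C(L_0/C)$ corresponds to $C\vee\cl_{L_0}(-)$ (Corollary~\ref{cor:closure_in_quotient}).

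For ``closed quotient $\Rightarrow$ extremal epimorphism'', suppose $q\colon L\to L/\nabla_c$ is closed and $q=mf$ with $m\colon N\to L/\nabla_c$ monic. Since $q$ is surjective on each part, so is $m$; since $m$ is monic, $\P m=m_1$ is injective, so $m_1$ is a frame isomorphism. From $q_1=m_1f_1$ I get that $f_1$ is surjective, and then strict zero-dimensionality of $N$ (its second part is generated by the complements of its first-part elements) upgrades this to: $f$ is surjective on all parts, so $N_0\cong L_0/\ker f_0$. Since $(x,y)\in\ker q_0$ iff $(f_0x,f_0y)\in\ker m_0$, the congruence $\ker q_0$ is the lift of $\ker m_0$ along $f_0$, whence $\ker f_0\le\ker q_0=\nabla_c$. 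On the other hand, $m$ is dense iff $\cl(\ker m_0)=0$ in $\C N_0$, which by Corollary~\ref{cor:closure_in_quotient} (with $C=\ker f_0$ and the congruence $\ker q_0\ge\ker f_0$ in $\C L_0$) says precisely $\cl_{L_0}(\nabla_c)\le\ker f_0$; but $\nabla_c$ is its own generalised closure, so $\nabla_c\le\ker f_0$. Hence $\ker f_0=\nabla_c=\ker q_0$, so $f$ and $q$ present the same quotient of $L_0$ and the comparison map $m_0$ is an isomorphism; together with $m_1$ an isomorphism this makes $m$ an isomorphism, so $q$ is extremal.

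For ``extremal epimorphism $\Rightarrow$ closed quotient'', let $f\colon L\to M$ be extremal. First I would show $f$ is surjective on all parts by forming the sub-biframe $\iota\colon N\hookrightarrow M$ with $N_1=f_1(L_1)$, with $N_2$ the subframe of $M_0$ generated by the complements in $M_0$ of the elements of $N_1$, and with $N_0=\langle N_1\cup N_2\rangle$: using that $M$ is strictly zero-dimensional, $N$ is again strictly zero-dimensional, $\iota$ is a subframe inclusion on total parts hence dense hence monic, and $f$ factors through $\iota$ (since $f_0$ maps $L_1$ into $N_1$ and the complements of $L_1$-elements to complements of $N_1$-elements, which lie in $N_2$). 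Extremality forces $\iota$ to be an isomorphism, so $f_1(L_1)=M_1$, and strict zero-dimensionality again propagates this to surjectivity of $f$ on all parts. Then I would invoke the standard factorisation of a quotient into a closed quotient followed by a dense quotient (as used in the proof of Corollary~\ref{lem:congruential_coreflection_as_closure}): since $L_0$ is a frame, $\cl(\ker f_0)$ is a principal closed congruence $\nabla_c$ (every generalised closed congruence on a frame is principal, by Lemma~\ref{lem:joins_and_meets_nabla}), and $f=d\circ e$ with $e\colon L\to L/\nabla_c$ the closed quotient and $d\colon L/\nabla_c\to M$ dense — the density of $d$ coming from Corollary~\ref{cor:closure_in_quotient}, after checking that $d$ respects the parts so that it is a biframe morphism. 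As $d$ is monic and $f$ is extremal, $d$ is an isomorphism, so $f$ is isomorphic to the closed quotient $e$.

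The step I expect to be the main obstacle is the kernel bookkeeping in the first implication. The subtlety is that $\P m$ being an isomorphism does \emph{not} by itself force $m$ to be an isomorphism — the functor $\P$ does not reflect isomorphisms, as the quotient $\C L\to\C L/\D_{\C L}$ (not an isomorphism when $L$ is not scattered) shows — so the closedness of $q$ has to be used essentially. The clean way to use it is the observation that $\nabla_c$ coincides with its own generalised closure, fed through Corollary~\ref{cor:closure_in_quotient} via the lift of $\ker m_0$ along $f_0$. Getting the directions of the inequalities right (remembering that $\cl$ is deflationary on the congruence frame and that $\C L$ carries the order opposite to the sublocale order) and discharging the side conditions that the induced biframe structures really are strictly zero-dimensional and that $e$, $d$, $\iota$ are genuine biframe morphisms is where the care is needed.
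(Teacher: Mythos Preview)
Your argument is correct and rests on the same two ingredients as the paper: the identification of monomorphisms with dense maps (Lemma~\ref{lem:monos_in_str0dbifrm}) and the behaviour of closure under passage to quotients (Corollary~\ref{cor:closure_in_quotient}/\ref{cor:dense_quotients_congruence}). The execution, however, differs in both directions.

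For ``closed $\Rightarrow$ extremal'' you work directly on the given factorisation $q=mf$: first force $f$ to be surjective (using surjectivity of $q$ and the isomorphism $m_1$), then compare kernels to obtain $\ker f_0=\nabla_c$ and hence that $m_0$ is an isomorphism. The paper instead inserts a second closed--dense factorisation of $g$, writing $f=m'g'$ with $m'=mh$ dense and $g'$ closed, and compares $\ker f$ with $\ker g'$ in one stroke; once $m'$ is an isomorphism, $m$ is a split epimorphism and a monomorphism, hence an isomorphism. Your route is more explicit about the biframe structure (you actually verify surjectivity on parts and that $m_0$ bijective suffices), while the paper's is shorter because the auxiliary factorisation lets it bypass any discussion of $f$.

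For ``extremal $\Rightarrow$ closed'' your image-factorisation step establishing surjectivity of $f$ is correct but superfluous: the closed--dense factorisation $f=d\circ e$ with $e\colon L\to L/\nabla_{c}$ (where $\nabla_c=\cl(\ker f_0)$) exists for an arbitrary $f$, the intermediate object is strictly zero-dimensional by the routine check you already recorded, and $d$ is dense regardless of whether $f$ is onto. The paper simply invokes this factorisation and lets extremality kill $d$. Your detour does no harm, but it can be dropped.
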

\begin{proof}
 Suppose $f$ is an extremal epimorphism. Factorise $f$ as $st$ where $s$ is dense and $t$ is a closed quotient.
 Then since $f$ is extremal, $s$ is an isomorphism and thus $f$ is a closed quotient.
 
 Conversely suppose $f$ is a closed quotient and $f = mg$ where $m$ is a dense. We may factorise $g$ as $h g'$ where
 $g'$ is a closed surjection and $h$ is dense, so that $f = m'g'$ where $m' = mh$.
 
 Evidently $m'$ is dense and $\ker f \ge \ker g'$. It follows that $\cl(\ker f) = \ker g'$ by \cref{cor:dense_quotients_congruence}.
 But $\ker f$ is closed and so $\ker f = \ker g'$ and $m'$ is an isomorphism. Thus, $m$ is a split epimorphism. But $m$ is already a monomorphism
 and therefore an isomorphism.
\end{proof}
\begin{lemma}\label{lem:closed_quotients_of_str0d_biframe}
 If $M$ is a strictly zero-dimensional biframe over $L$ and $a \in M$ then $M/\nabla_a$ is a strictly zero-dimensional biframe
 over $L / \chi_*(a)$.
\end{lemma}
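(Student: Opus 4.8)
The plan is to verify the two parts of the claim in turn: that $M/\nabla_a$ is a strictly zero-dimensional biframe at all, and that its first part is isomorphic to $L/\chi_*(a)$.

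For strict zero-dimensionality I would argue directly. Write $q\colon M_0 \to M_0/\nabla_a \cong \;\uparrow\! a$ for the closed quotient map $x \mapsto x \vee a$, so that $M/\nabla_a = (\;\uparrow\! a,\, q[M_1],\, q[M_2])$, each $q[M_i]$ being a subframe of $\;\uparrow\! a$ since $q$ is a frame homomorphism. If $x \in M_1$ then, since $M$ is strictly zero-dimensional over $L = M_1$, the element $x$ has a complement $x^c \in M_2$ in $M_0$. A short distributivity computation gives $(x \vee a) \wedge (x^c \vee a) = a$ and $(x \vee a) \vee (x^c \vee a) = 1$, so $q(x^c)$ is the complement of $q(x)$ in $\;\uparrow\! a$, and it lies in $q[M_2]$. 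As the elements $x^c$ (for $x \in M_1$) generate $M_2$ and $q$ is a surjective frame homomorphism, their images generate $q[M_2]$; hence $M/\nabla_a$ is strictly zero-dimensional over its first part $q[M_1]$.

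The substance of the proof is then the identification $q[M_1] \cong L/\chi_*(a)$. Since $q|_{M_1}\colon M_1 \to q[M_1]$ is a surjective frame homomorphism, $q[M_1] \cong M_1/\ker(q|_{M_1})$ with $\ker(q|_{M_1}) = \{(x,y) \in M_1 \times M_1 \mid x \vee a = y \vee a\}$; so it suffices to prove this congruence equals $\chi_*(a)$. A congruence is determined by which comparable pairs it contains (by \cref{lem:congruence_intervals}), so I may restrict to pairs $x \le y$, for which $x \vee a = y \vee a$ is equivalent to $y \le x \vee a$. On the other side, $(x,y) \in \chi_*(a)$ means $\langle(x,y)\rangle \le \chi_*(a)$ in $\C L$, which by the adjunction $\chi \dashv \chi_*$ is equivalent to $\chi(\langle(x,y)\rangle) \le a$. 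Using \cref{lem:principal_congruence} to write $\langle(x,y)\rangle = \nabla_y \wedge \Delta_x$, the fact that $\chi \circ \nabla_\bullet$ is the inclusion $L \hookrightarrow M_0$, and that frame homomorphisms preserve complements, one computes $\chi(\langle(x,y)\rangle) = \chi(\nabla_y) \wedge \chi(\Delta_x) = y \wedge x^c$. So the whole matter reduces to the elementary equivalence $y \wedge x^c \le a \iff y \le x \vee a$ for $x \le y$: one direction follows from $y = x \vee (y \wedge x^c)$, the other from $y \wedge x^c \le (x \vee a) \wedge x^c \le a$. This yields $\ker(q|_{M_1}) = \chi_*(a)$, hence $q[M_1] \cong L/\chi_*(a)$, and together with the previous paragraph this is exactly the assertion.

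The only point requiring care is keeping track of where complements are taken: $x^c$ must be understood as the complement of $x$ in $M_0$ (which exists precisely because $M$ is strictly zero-dimensional), and one must observe that $\chi(\Delta_x)$ is this same $M_0$-complement, which is immediate from uniqueness of complements once we know the frame homomorphism $\chi$ preserves them. Beyond that bookkeeping, the argument is just routine lattice algebra together with the defining property of the adjunction $\chi \dashv \chi_*$.
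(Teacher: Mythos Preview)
Your proof is correct, but it takes a genuinely different route from the paper's.

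The paper argues structurally via \cref{thm:dense_quotients_of_congruence_frame}: writing $M \cong \C L / D$ with $D$ dense, it uses \cref{lem:closed_quotient_of_quotient} to identify $M/\nabla_a \cong \C L / (D \vee \nabla_{\chi_*(a)})$, then observes that $\cl(D \vee \nabla_{\chi_*(a)}) = \nabla_{\chi_*(a)}$ (via \cref{cor:closed_congruences_in_quotient}), so that $M/\nabla_a$ is a dense quotient of $\C L / \nabla_{\chi_*(a)} \cong \C(L/\chi_*(a))$ and hence strictly zero-dimensional over $L/\chi_*(a)$ by \cref{thm:dense_quotients_of_congruence_frame} again. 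You instead work elementwise: you check strict zero-dimensionality of the closed quotient by hand, and you identify the first part by computing $\ker(q|_{M_1})$ directly, reducing it via the adjunction $\chi \dashv \chi_*$ and \cref{lem:principal_congruence} to the elementary equivalence $y \wedge x^c \le a \iff y \le x \vee a$. The paper's argument is shorter and highlights how the lemma is really just \cref{thm:dense_quotients_of_congruence_frame} pushed through a closed quotient; your argument is self-contained, does not invoke that characterisation theorem, and makes the isomorphism $\P(M/\nabla_a) \cong L/\chi_*(a)$ completely explicit at the level of elements.
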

\begin{proof}
 Suppose $M \cong \C L / D$. Then $M/\nabla_a \cong \C L / (D \vee \nabla_{\chi_*(a)})$ by \cref{lem:closed_quotient_of_quotient}
 and $\cl(D \vee \nabla_{\chi_*(a)}) = \nabla_{\chi_*(a)}$ by \cref{cor:closed_congruences_in_quotient}. %
 Thus, $M/\nabla_a$ is a dense quotient of $\C L / \nabla_{\chi_*(a)} \cong \C (L / \chi_*(a))$.
\end{proof}

There is another different way to view strictly zero-dimensional biframes as frames of congruences, which comes from examining how the
congruence frame of a subframe relates to the congruence frame on the parent frame.
The isomorphism $\C (L/C) \cong \C L / \nabla_C$ gives a simple relationship between the congruence frame on a quotient frame
and the congruence frame on the parent frame. The behaviour of congruence frames on subframes is more nuanced.

Let $\iota: L \hookrightarrow M$ be an inclusion of frames. Then $\C \iota$ is dense by \cref{lem:monos_in_str0dbifrm}, but it needn't be injective.
So the image of $\C \iota$ in $\C M$ is a strictly zero-dimensional biframe over $L$, but it needn't be congruential.
\begin{definition}
 If $\iota: L \hookrightarrow M$ is an inclusion of frames, we call $\image(\C \iota)$ the strictly zero-dimensional biframe
 over $L$ \emph{induced by the inclusion} $\iota: L \hookrightarrow M$.
\end{definition}
\begin{definition}
 We say a biframe $M = (M_0, M_1, M_2)$ is a \emph{sub-biframe} of a biframe $L$ if
 $M_0 \subseteq L_0$, $M_1 \subseteq L_1$ and $M_2 \subseteq L_2$.
\end{definition}
Note that the strictly zero-dimensional sub-biframes of a strictly zero-dimensional biframe $N$ are in one-to-one correspondence with the subframes of
$\P N$. In the above situation, $\image(\C \iota)$ is a strictly zero-dimensional sub-biframe of $\C M$.
We can thus view the elements of this strictly zero-dimensional biframe as congruences (and even compute joins in the usual way) ---
but they are congruences on $M$ instead of $L$. In fact, every strictly zero-dimensional biframe may be expressed in this way.
\begin{lemma}
 Every strictly zero-dimensional biframe is induced by an some inclusion.
\end{lemma}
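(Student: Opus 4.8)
The plan is to show that every strictly zero-dimensional biframe $M$ is induced by the inclusion of its own first part into its total part, that is, by $\iota\colon M_1 \hookrightarrow M_0$. So I would take the codomain frame to be $N = M_0$ and prove that $M \cong \image(\C\iota)$ as biframes.

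First I would set $L = M_1$ and recall what singles out $\image(\C\iota)$ and $M$ among the strictly zero-dimensional biframes over $L$. By \cref{thm:dense_quotients_of_congruence_frame}, $M$ is a dense quotient of $\C L$; write $\chi\colon \C L \twoheadrightarrow M$ for the congruential coreflection, so that $\chi \circ \nabla_\bullet$ is just the inclusion $L \subseteq M_0$. On the other hand $\C\iota$ is dense, so corestricting it to its image expresses $\image(\C\iota)$ as a dense quotient $e\colon \C L \twoheadrightarrow \image(\C\iota)$, which is a biframe quotient (surjective on each part, by the definition of the image sub-biframe). By \cref{cor:frame_of_str0d_biframes_over_L} --- or directly, since the kernel on the total part already determines a biframe quotient --- it therefore suffices to prove that $e$ and $\chi$ have the same kernel, i.e. $\ker(\C\iota) = \ker\chi$.

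The crux is the identity $\C\iota = \nabla_\bullet \circ \chi$, where $\nabla_\bullet\colon M_0 \to \C M_0$ is the unit. To establish it I would precompose both sides with $\nabla_\bullet\colon L \to \C L$: the left side gives $\C\iota\circ\nabla_\bullet = \nabla_\bullet\circ\iota$ by the definition of $\C\iota$, while the right side gives $\nabla_\bullet\circ(\chi\circ\nabla_\bullet) = \nabla_\bullet\circ\iota$ as well. Since $\nabla_\bullet\colon L \to \C L$ is an epimorphism (\cref{lem:nabla_epi}), the two maps agree, so $\C\iota = \nabla_\bullet\circ\chi$. Now $\nabla_\bullet\colon M_0 \to \C M_0$ is injective (\cref{cor:nabla_mono}) and hence collapses nothing, so $\ker(\C\iota) = \ker(\nabla_\bullet\circ\chi) = \ker\chi$. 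Combined with the previous paragraph this yields $\image(\C\iota)\cong M$.

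I do not anticipate a genuine obstacle here: morally the content is just that applying $\C$ to the inclusion $M_1 \hookrightarrow M_0$ coincides with first taking the congruential coreflection $\chi$ and then embedding into $\C M_0$ via the (injective) unit. The only points needing mild care are checking that $e$ really is a biframe quotient, so that $\image(\C\iota)$ is legitimately a strictly zero-dimensional biframe over $L$, and invoking the classification of such biframes so that the equality of kernels upgrades to a biframe isomorphism respecting the distinguished copy of $L$.
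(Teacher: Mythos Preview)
Your proposal is correct and uses the same inclusion as the paper, namely $\iota\colon M_1 \hookrightarrow M_0$. The paper's proof is a single sentence asserting that this inclusion induces $M$ as a strictly zero-dimensional sub-biframe of $\C M_0$, leaving the verification implicit; you have supplied that verification in full, via the clean factorisation $\C\iota = \nabla_\bullet \circ \chi$ obtained from the epimorphism property of $\nabla_\bullet$, which immediately gives $\ker(\C\iota) = \ker\chi$ by injectivity of $\nabla_\bullet\colon M_0 \to \C M_0$.
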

\begin{proof}
 Suppose $L = (L_0, L_1, L_2)$ is a strictly zero-dimensional biframe. Then inclusion of frames $L_1 \hookrightarrow L_0$ induces
 $L$ as strictly zero-dimensional sub-biframe of $\C L_0$.
\end{proof}

\subsection{Skula biframes}\label{subsec:skula_biframe}

Congruence frames have a spatial analogue, the Skula topology \cite{Skula}, which will be of importance in \cref{subsec:spatial_reflection}.
\begin{definition}
 If $(X, \tau)$ is a topological space, the \emph{Skula modification} of $X$ is the space $(X, \sigma)$
 where $\sigma$ is the topology generated by the sets in $\tau$ and their complements.
\end{definition}
This construction is functorial, since any continuous function between topological spaces is also continuous with respect to their Skula modifications.

\begin{remark}
 Recalling \cref{def:T_D}, we notice that the Skula modification of a $T_D$ space is discrete. In fact, the spaces with discrete Skula modification
 are precisely the $T_D$ spaces.
\end{remark}

The Skula modification of $(X,\tau)$ is associated with a natural bitopological structure $(X, \tau, \upsilon)$, where
$\upsilon$ is the topology on $X$ generated by taking the closed sets in $(X, \tau)$ as basic open sets \cite{StrictlyZeroDimensional}.
Then the Skula topology $\sigma$ is the join of these two topologies on $X$.

Whenever $(X, \tau)$ is $T_0$, the Skula modification is zero-dimensional and $T_0$ and so $(X, \tau, \upsilon)$ is a sober bispace
and may thus be recovered from the biframe $(\sigma, \tau, \upsilon)$, which is strictly zero-dimensional.
\begin{definition}
 The biframe $(\sigma, \tau, \upsilon)$ is called the \emph{Skula biframe} of the space $(X, \tau)$.
 The corresponding functor is denoted by $\Sk: \Top\op \to \StrZdBiFrm$.
\end{definition}
\begin{proposition}\label{prop:T0_spaces_equivalent_to_spatial_str0d_biframes}
 The functor $\Sk: \Top_0\op \to \StrZdBiFrm$ gives a dual equivalence of categories between the $T_0$ topological spaces and the
 spatial strictly zero-dimensional biframes.
\end{proposition}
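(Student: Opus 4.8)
The plan is to realise $\Sk$ as a composite $\Sk = \Omega \circ J\op$, where $J\colon \Top_0 \to \mathsf{BiTop}$ is the functor $(X,\tau) \mapsto (X,\tau,\upsilon)$ and $\Omega$ now also denotes the biframe-of-opens functor $\mathsf{BiTop}\op \to \StrZdBiFrm$ sending $(X,\tau_1,\tau_2)$ to $(\langle\tau_1\cup\tau_2\rangle,\tau_1,\tau_2)$, and then to exploit the dual adjunction between biframes and bispaces of \cite{Biframes}. That this composite really is $\Sk$ uses only the identity $\langle\tau\cup\upsilon\rangle = \sigma$, which is immediate from the definitions of $\sigma$ and $\upsilon$. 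The biframe $\Sk(X,\tau) = \Omega(X,\tau,\upsilon)$ is strictly zero-dimensional over $\tau$, since each $U \in \tau$ has the $\tau$-closed set $X \setminus U \in \upsilon$ as its complement in $\sigma$ and such complements generate $\upsilon$; and it is spatial because $(X,\tau,\upsilon)$ is a sober bispace (as recorded just before the statement) and $\Omega$ carries sober bispaces to spatial biframes.

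For full faithfulness I would show that each factor is fully faithful on the relevant subcategory. The functor $J$ is fully faithful: a function $f\colon X \to Y$ which is continuous for the $\tau$-topologies automatically pulls $\tau_Y$-closed sets back to $\tau_X$-closed sets, hence is continuous for the $\upsilon$-topologies, so the bicontinuous maps $JX \to JY$ are exactly the continuous maps $X \to Y$. Since the dual adjunction of \cite{Biframes} restricts to a dual equivalence between the sober bispaces and the spatial biframes, $\Omega$ is fully faithful on sober bispaces, and in particular on the image of $J$ (each $JX$ being a sober bispace when $X$ is $T_0$). Composing the two factors, $\Sk$ is fully faithful.

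For essential surjectivity, let $M = (M_0,M_1,M_2)$ be a spatial strictly zero-dimensional biframe. Spatiality furnishes a sober bispace $Z = (Z,\rho_1,\rho_2)$ with $\Omega Z \cong M$; write $\rho_0 = \langle\rho_1\cup\rho_2\rangle$. Transporting the isomorphism $\rho_i \cong M_i$, strict zero-dimensionality says that every $U \in \rho_1$ is complemented in $\rho_0$ --- so its complement is $Z\setminus U$, a $\rho_1$-closed set --- with complement in $\rho_2$, and that these complements generate $\rho_2$; hence $\rho_2$ is generated by the $\rho_1$-closed sets, i.e.\ $Z = J(Z,\rho_1)$. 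Finally $(Z,\rho_1)$ is $T_0$: the total topology $\rho_0$ of the sober bispace $Z$ separates points, and every member of $\rho_0$ is a join of finite meets of members of $\rho_1 \cup \rho_2$, so two distinct points are already separated by a $\rho_1$-open or a $\rho_1$-closed set, hence in either case by a $\rho_1$-open set. Therefore $M \cong \Omega(J(Z,\rho_1)) = \Sk(Z,\rho_1)$, and a fully faithful, essentially surjective functor yields an equivalence, so $\Sk$ gives the asserted dual equivalence between $T_0$ spaces and spatial strictly zero-dimensional biframes.

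I expect the crux to be full faithfulness --- specifically, that a biframe map $\Sk Y \to \Sk X$ is induced by a genuine continuous map $X \to Y$ although $T_0$ spaces need not be sober. The resolution is to work with the soberness of the \emph{bispace} $(X,\tau,\upsilon)$, guaranteed by $T_0$-ness of $(X,\tau)$, rather than with $(X,\tau)$ or its Skula modification. The remaining ingredients --- the identity $\langle\tau\cup\upsilon\rangle=\sigma$, full faithfulness of $J$, and the fact that a sober bispace has $T_0$ total topology (distinct points giving distinct points of the biframe) --- are routine.
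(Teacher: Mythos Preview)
Your argument is correct and is precisely the approach the paper intends: the paper does not supply a proof for this proposition, but the sentence immediately preceding it---that for $T_0$ spaces the bispace $(X,\tau,\upsilon)$ is sober and hence recoverable from its biframe---is exactly the key observation you exploit via the factorisation $\Sk = \Omega \circ J\op$ and the biframe--bispace duality of \cite{Biframes}. Your write-up simply fills in the details (full faithfulness of $J$, essential surjectivity via transporting strict zero-dimensionality back to the bispace) that the paper leaves to the reader.
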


It it possible to rephrase the above construction frame theoretically. The following lemma is immediate.
\begin{lemma}\label{lem:skula_biframe_induced_by_subframe}
 If $(X, \tau)$ is a topological space, then $\Sk X$ is the strictly zero-dimensional biframe over $\tau$ induced by
 the inclusion $\tau \subseteq 2^X$.
\end{lemma}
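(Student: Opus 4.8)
The plan is to unwind both descriptions and match them part by part. Write $\iota\colon \tau \hookrightarrow 2^X$ for the inclusion of the topology into the discrete topology, so the strictly zero-dimensional biframe over $\tau$ induced by $\iota$ is, by definition, the strictly zero-dimensional sub-biframe $\image(\C\iota)$ of $\C(2^X)$. The first thing I would record is a concrete description of $\C(2^X)$: since $2^X$ is a complete Boolean algebra, \cref{lem:congruence_frame_of_Boolean_frame} gives that $\nabla_\bullet\colon 2^X \to \C(2^X)$ is an isomorphism of frames, and since every $S \in 2^X$ is complemented we have $\Delta_S = \nabla_{X\setminus S}$ (since $\nabla_{X\setminus S}$ and $\Delta_S$ are both the complement of $\nabla_S$ in $\C(2^X)$, by \cref{cor:nabla_delta_complements,lem:joins_and_meets_nabla}), so $\nabla(2^X) = \Delta(2^X) = \C(2^X)$. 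Thus $\nabla_\bullet$ is in fact an isomorphism of biframes $(2^X, 2^X, 2^X) \xrightarrow{\ \sim\ } \C(2^X)$, and I will identify $\C(2^X)$ with $2^X$ along it, so that the congruence $\nabla_S$ on $2^X$ becomes the subset $S$ itself.

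Next I would invoke the naturality of $\nabla_\bullet$, i.e.\ $\C\iota \circ \nabla_\bullet = \nabla_\bullet \circ \iota$: under the above identification the composite $\tau \xrightarrow{\nabla_\bullet} \C\tau \xrightarrow{\C\iota} \C(2^X) = 2^X$ is simply $\iota$, so $\C\iota$ sends each principal closed congruence $\nabla_U$ (for $U \in \tau$) to the open set $U$ and hence each principal open congruence $\Delta_U = \nabla_U^{\,c}$ to its complement $X \setminus U$. Now I read off the three parts of $\image(\C\iota)$, using that the image of a frame homomorphism is the subframe generated by the images of any set of generators, and that a subframe of $2^X$ generated by a family $\mathcal F$ of subsets is precisely the topology on $X$ with $\mathcal F$ as a subbasis. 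The first part $\nabla\tau$ of $\C\tau$ is the isomorphic copy of $\tau$ under $\nabla_\bullet$, so its image is $\{U \mid U \in \tau\} = \tau$. The second part $\Delta\tau$ is generated by $\{\Delta_U \mid U \in \tau\}$, so its image is the topology generated by the $\tau$-closed sets, which is $\upsilon$. Finally, $\C\tau$ is generated under finite meets and arbitrary joins by the $\nabla_U$ and $\Delta_U$ (\cref{cor:congruence_lattice_generators}), so the total part of $\image(\C\iota)$ is the topology generated by $\tau$ together with all complements of members of $\tau$, i.e.\ the Skula topology $\sigma$. Hence $\image(\C\iota) = (\sigma, \tau, \upsilon) = \Sk X$.

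I do not expect a serious obstacle here: the content is just that applying $\C$ to the inclusion of a topology into the discrete topology on the same point set, together with the fact that the congruence biframe of a discrete frame is the discrete biframe, reproduces the Skula construction verbatim. The only place to be careful is the bookkeeping of which part is which: in the congruence biframe the \emph{first} part consists of the generalised closed congruences, which correspond to the \emph{opens} of the original space, while the second part consists of the open congruences, which correspond to the \emph{closed} sets --- and this is exactly why the second part $\upsilon$ of $\Sk X$ is the one generated by the $\tau$-closed sets. So the identification one obtains is precisely the expected one.
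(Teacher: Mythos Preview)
Your proposal is correct and is exactly the unfolding of definitions the paper has in mind; the paper itself declares the lemma ``immediate'' and gives no proof, so your argument simply spells out what the paper leaves implicit.
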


We may consider the Skula biframe of a frame $L$ by taking the Skula biframe of its spectrum $\Sigma L$.
The resulting biframe $\Sk\Sigma L$ is then a strictly zero-dimensional biframe over the spatial reflection $\Spat L$.

\begin{lemma}\label{lem:skula_spatial_reflection_of_congruence} %
 If $L$ is a frame, $\Sk\Sigma L$ is the spatial reflection of $\C L$.
 Furthermore, the functors $\Sk\Sigma$ and $\Spat\C$ are naturally isomorphic. %
\end{lemma}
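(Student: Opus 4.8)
The plan is to realise both $\Sk\Sigma L$ and the spatial reflection of $\C L$ inside the powerset frame $2^{\Sigma L}$, which is Boolean and spatial. First I would recall that by \cref{lem:skula_biframe_induced_by_subframe} the biframe $\Sk\Sigma L$ is precisely $\image(\C\iota)$, where $\iota\colon\Omega\Sigma L\hookrightarrow 2^{\Sigma L}$ is the inclusion. The composite $j=\iota\sigma_L\colon L\to 2^{\Sigma L}$ sends $a$ to $U_a=\{p\text{ prime}\mid a\not\le p\}$ and lands in the Boolean frame $2^{\Sigma L}$, so by the universal property of the congruence biframe (\cref{prop:congruence_free_str0dbifrm}, \cref{lem:congruence_frame_universal_prop}) it extends uniquely to a biframe homomorphism $\theta\colon\C L\to 2^{\Sigma L}$, where $2^{\Sigma L}$ carries its discrete biframe structure; explicitly $\theta(\nabla_a)=U_a$ and $\theta(\Delta_a)=U_a^{\,c}$. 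Since $\C L$ is generated by the $\nabla_a$ and $\Delta_a$, and $\nabla L$, $\Delta L$ are generated by the $\nabla_a$ and the $\Delta_a$ respectively, the image of $\theta$ is exactly the sub-biframe of $2^{\Sigma L}$ whose parts are generated by $\{U_a\}$ and $\{U_a^{\,c}\}$, i.e.\ $\Sk\Sigma L$. I write $\theta'\colon\C L\twoheadrightarrow\Sk\Sigma L$ for the corestriction.

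Next, since the total part of $\Sk\Sigma L$ is a subframe of the spatial frame $2^{\Sigma L}$ it is spatial, so $\theta'$ factors through the spatial reflection as $\theta'=\pi\circ\sigma_{\C L}$ for a surjective biframe homomorphism $\pi\colon\Spat\C L\to\Sk\Sigma L$; the main work will be to show $\pi$ is an isomorphism. Because $\Sigma\sigma_{\C L}$ is a homeomorphism and $\theta'$ is surjective (so $\Sigma\theta'$ is injective), and because a surjection between spatial frames is an isomorphism as soon as it is bijective on spectra --- the nontrivial direction using that right adjoints of frame maps preserve primes together with spatiality of the domain --- it suffices to show that $\Sigma\theta'\colon\Sigma\Sk\Sigma L\to\Sigma\C L$ is surjective. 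For this I would first identify the prime elements of $\C L$: applying \cref{lem:congruence_frame_universal_prop} with the Boolean frame $\mathbbold{2}$ gives a natural bijection between frame homomorphisms $L\to\mathbbold{2}$ and frame homomorphisms $\C L\to\mathbbold{2}$, hence between the points of $\Sigma L$ and the points of $\Sigma\C L$; unwinding it, the prime of $\C L$ attached to a prime $x\in L$ is the congruence $C_x=\{(a,b)\in L\times L\mid a\le x\Leftrightarrow b\le x\}$, namely the kernel of the frame homomorphism $a\mapsto[a\not\le x]$. One then checks directly from the description of $\theta$ that $x\notin\theta(C)\iff C\le C_x$ for every $C\in\C L$, so that $(\theta')_*$ of the $\Sk\Sigma L$-prime $q_x=\bigvee\{W\in(\Sk\Sigma L)_0\mid x\notin W\}$ equals $\bigvee\{C\mid x\notin\theta(C)\}=C_x$. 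Thus every prime of $\C L$ lies in the image of $\Sigma\theta'$, so $\pi_0$ is a frame isomorphism; as $\pi$ restricts to surjections on both parts (because $\theta'=\pi\sigma_{\C L}$), it is a biframe isomorphism, and $\theta'$ realises $\Sk\Sigma L$ together with $\theta'$ as a spatial reflection of $\C L$.

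For naturality I would note that every ingredient is natural in $L$: the unit $\sigma$, the inclusions $\iota$ (their naturality square commutes since $(\Sigma f)^{-1}$ restricts to $\Omega\Sigma f$), the extension $\theta$ (the mate of a natural transformation valued in Boolean frames is natural), the epi--mono image factorisation producing $\theta'$, and the spatial reflection; so the isomorphisms $\pi_L$ assemble into a natural isomorphism $\Spat\C\cong\Sk\Sigma$.

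The hard part will be the second paragraph --- in particular pinning down the prime congruences of $\C L$ as exactly the $C_x$ and verifying that $(\theta')_*$ recovers them; everything else is formal once this is in hand. An alternative to the universal-property argument there is to observe that $C\in\C L$ is prime iff the zero congruence of $L/C$ is meet-irreducible in $\C(L/C)\cong\;\uparrow\!C$, which forces $L/C\cong\mathbbold{2}$: such a quotient has a single covering pair, and by \cref{lem:open_and_closed_congruences} together with \cref{lem:congruence_intervals} that pair must be $(0,1)$. This gives the same conclusion more by hand.
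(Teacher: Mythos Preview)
Your argument is correct and takes a genuinely different route from the paper's.

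Both proofs begin by producing a surjection $\C L\twoheadrightarrow\Sk\Sigma L$ and factoring it through the spatial reflection to obtain a surjection $\pi\colon\Spat\C L\to\Sk\Sigma L$; the divergence is in how this $\pi$ is shown to be an isomorphism. The paper argues structurally: it first observes that the first part of $\Spat\C L$ is spatial (being a subframe of a spatial frame), so $\P\pi$ must be an isomorphism since $\Spat L$ is already the largest spatial quotient of $L$; it then embeds $\Spat\C L$ into a powerset $2^X$ and invokes the characterisation of strictly zero-dimensional biframes as those induced by inclusions (together with the fact that the Skula construction commutes with the $T_0$ reflection) to identify $\Spat\C L$ with a Skula biframe. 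Your approach is instead pointwise: you classify the primes of $\C L$ directly via the universal property of $\nabla_\bullet$ applied to $\mathbbold{2}$---essentially anticipating \cref{lem:prime_congruence}, which the paper only proves later in \cref{subsec:spatial_reflection}---and then verify that $(\theta')_*$ hits every such prime, so that $\Sigma\pi$ is a bijection and hence $\pi$ is an isomorphism between spatial frames. Your route avoids the biframe machinery of \cref{subsec:str0d_biframes_of_congruences} and the appeal to $T_0$ reflections, at the cost of redoing the prime calculation; the paper's route exploits the surrounding theory more fully but leans on the somewhat delicate step of identifying the two induced sub-biframes.

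Two minor remarks. First, your naturality paragraph is rather compressed; the paper spells this out with an explicit cube diagram, chasing the faces to conclude that the right-hand square commutes after precomposing with the epimorphism $\P\sigma_{\C L}$. Your sketch is morally correct but would benefit from the same care, particularly in saying exactly which natural transformation the ``extension $\theta$'' is the mate of. Second, when you speak of $\pi$ as a biframe map you are implicitly equipping $\Spat\C L$ with the biframe structure whose parts are the $\sigma$-images of $\nabla L$ and $\Delta L$; this is what the paper does as well, but it is worth making explicit so that the final step (upgrading the frame isomorphism $\pi_0$ to a biframe isomorphism) is transparent.
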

\begin{proof}
 Let $\sigma: \C L \twoheadrightarrow \Spat\C L$ be the spatial reflection of $\C L$.
 Since $\Spat L$ is a quotient of $L$, $\Sk\Sigma L$ is a (spatial) quotient of $\C L$
 and thus factors through $\sigma$ to give $\varphi: \Spat\C L \twoheadrightarrow \Sk\Sigma L$.
 
 Any subframe of a spatial frame is spatial and so the first part of $\Spat\C L$ is spatial.
 But $\Spat L$ is the largest spatial quotient of $L$ and so $\P \varphi$ is an isomorphism
 and $\Spat\C L$ is strictly zero-dimensional over $\Spat L$.
 
 Now since $\Spat\C L$ is spatial, we may embed it in a complete atomic Boolean algebra $B = 2^X$.
 We let $\iota: \Spat\C L \hookrightarrow \C B$. Then $\P \iota: \Spat L \hookrightarrow B$ and $\Spat\C L$
 is the strictly zero-dimensional biframe over $\Spat L$ induced by $\P \iota$.
 But by \cref{lem:skula_biframe_induced_by_subframe} the same is true of $\Sk X$ where $X$ is endowed with the topology
 $\image(\P \iota)$. Finally, $\Sk X \cong \Sk\Sigma L$ since the Skula modification commutes
 with the $T_0$ reflection of spaces. Thus, $\varphi$ is an isomorphism.
 
 We now show naturality. Since $\P$ is faithful, $\varphi_L : \Spat\C L \to \Sk\Sigma L$ is natural if and only
 if $\P \varphi_L$ is natural. Consider the following diagram.
 \begin{center}
   \begin{tikzpicture}[node distance=3.5cm, auto,
    back line/.style={color=gray, text=black},
    cross line/.style={preaction={draw=white, -,line width=6pt}, color=black}]
    
    \node (A) {$L$};
    \node [right of=A] (B) {$\P\Spat\C L$};
    \node [below of=A] (C) {$\Spat L$};
    \node [right of=C] (D) {$\P\Sk\Sigma L$};
    
    \node [right of=A, above of=A, node distance=2cm] (A1) {$M$};
    \node [right of=A1] (B1) {$\P\Spat\C M$};
    \node [below of=A1] (C1) {$\Spat M$};
    \node [right of=C1] (D1) {$\P\Sk\Sigma M$};
    
    \draw[->>] (A1) to node {$\P\sigma_{\C M}$} (B1);
    \draw[back line, ->>] (A1) to node [yshift=9pt, xshift=-2pt] {$\sigma_M$} (C1);
    \draw[->>] (B1) to node {$\P\varphi_M$} (D1);
    \draw[back line, double equal sign distance] (C1) to node {} (D1);
    
    \draw[cross line, ->>] (A) to node [swap, yshift=1pt, xshift=-5pt] {$\P\sigma_{\C L}$} (B);
    \draw[cross line, ->>] (A) to node [swap] {$\sigma_L$} (C);
    \draw[cross line, ->>] (B) to node [swap, yshift=-15pt, xshift=1pt] {$\P\varphi_L$} (D);
    \draw[cross line, double equal sign distance] (C) to node {} (D);
    
    \draw[->] (A) to node {$f$} (A1);
    \draw[->] (B) to node [yshift=-6pt, xshift=2pt] {$\P\Spat\C f$} (B1);
    \draw[back line, ->] (C) to node [yshift=1pt, xshift=9pt] {$\Spat f$} (C1);
    \draw[->] (D) to node [swap, yshift=4pt] {$\P\Sk\Sigma f$} (D1);
   \end{tikzpicture}
 \end{center}
 We must show the right face commutes. The front and back faces commute by the definition of $\varphi$,
 the left and top faces commute by the naturality of $\sigma$ and $\P\sigma \C$ respectively and the
 bottom face commutes since $\Omega\Sigma$ and $\P\Sk\Sigma$ are equal identically.
 So we have
 $\P\Sk\Sigma f \circ \P\varphi_L \circ \P \sigma_{\C L} =
  \P\Sk\Sigma f \circ \mathrm{id}_{\Omega\Sigma L} \circ \sigma_L =
  \mathrm{id}_{\Omega\Sigma M} \circ \Spat f \circ \sigma_L =
  \mathrm{id}_{\Omega\Sigma M} \circ \sigma_M \circ f =
  \P\varphi_M \circ \P\sigma_{\C M} \circ f =
  \P\varphi_M \circ \P\Spat\C f \circ \P\sigma_{\C L}$.
  Thus $\P\Sk\Sigma f \circ \P\varphi_L = \P\varphi_M \circ \P\Spat\C f$ since $\P\sigma_{\C L}$
  is an epimorphism and so $\P \varphi_L$ is natural.
\end{proof}
\begin{remark}
 Take note that the image of a closed congruence $\nabla_a$ under $\sigma$ corresponds to an \emph{open}
 set in the topology $\Sigma L$ under the above correspondence. So while we have been thinking of $\Sk \Sigma L$
 as the frame of Skula-open sets in $\Sigma L$, it might be better to think of it as the frame of \emph{Skula-closed}
 sets in $\Sigma L$ ordered by reverse inclusion. Indeed, the results of \cref{subsec:spatial_reflection} allow us
 to conclude that the Skula-closed sets in $\Sigma L$ are in correspondence with the spatial quotients of $L$.
\end{remark}

\begin{remark}
 We can use the Skula biframe to easily conclude some of the results of \cite{SobrificationRemainder}.
 Note that these results explicitly involve non-sober spaces and sobrification. The Skula biframe allows us
 to distinguish between a non-sober $T_0$ space and its sobrification, which is impossible with the
 usual pointfree approach.
 
 The subspaces of a sober space $X$ which have $X$ as their sobrification are in one-to-one
 correspondence with the dense spatial quotients of $\Sk\, X$ (since these have the same first part).
 Recall that $T_0$ space $Y$ is $T_D$ if and only if $\Sk\, Y$ is a discrete strictly zero-dimensional biframe.
 So by \cref{cor:str0d_biframe_boolean_total_part}, $X$ is the sobrification of a $T_D$ space
 if and only if $\C\Omega X / \D$ is spatial and such a $T_D$ space is unique if it exists.
\end{remark}

We now describe how the Skula biframe relates to the discrete strictly zero-dimensional biframe.
\begin{lemma}\label{lem:skula_versus_smallest_str0d_biframe}
 Let $S = \ker(\C L \twoheadrightarrow \Sk\Sigma L)$. The following equivalences hold.
 \vspace{-1.5ex}
 \begin{itemize}
  \item $S \le \D$ if and only if $L$ is spatial.
  \item $S$ is clear if and only if $\Sigma L$ is $T_D$.
  \item $S = \D$ if and only if $L$ is the frame of opens of a sober $T_D$ space.
 \end{itemize}
\end{lemma}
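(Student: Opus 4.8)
The plan is to exploit \cref{lem:skula_spatial_reflection_of_congruence}, which identifies $\Sk\Sigma L$ with the spatial reflection $\Spat\C L$, so that $S = \ker(\sigma_{\C L})$ where $\sigma_{\C L}\colon \C L \twoheadrightarrow \Spat\C L$ is the spatial reflection map of $\C L$. Throughout I would keep careful track of the fact that $S$ and $\D$ live in $\C^2 L = \C(\C L)$, and that here $\D$ denotes $\D_{\C L} = \partial_0$, the largest dense congruence on $\C L$, which in particular is clear.

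For the first bullet, I would argue that $S \le \D$ if and only if $S$ is a dense congruence on $\C L$ (as $\D_{\C L}$ is the largest such), which by \cref{lem:dense} is equivalent to the quotient map $\sigma_{\C L}$ being a dense frame homomorphism. Viewing $\sigma_{\C L}$ as a morphism $\C L \to \Sk\Sigma L$ in $\StrZdBiFrm$, \cref{lem:monos_in_str0dbifrm} makes this equivalent to $\P\sigma_{\C L}$ being injective. Finally, the construction of the isomorphism $\varphi$ in the proof of \cref{lem:skula_spatial_reflection_of_congruence} gives $\sigma_L = \P\varphi_L \circ \P\sigma_{\C L}$ with $\P\varphi_L$ an isomorphism, so $\P\sigma_{\C L}$ is injective exactly when $\sigma_L$ is, i.e.\ exactly when $\sigma_L$ is an isomorphism (it is always surjective), i.e.\ exactly when $L$ is spatial.

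For the second bullet, I would chain equivalences: $S$ is clear $\iff$ $\C L/S$ is a clear frame (\cref{lem:quotient_by_clear}) $\iff$ $\C L/S$ is Boolean (\cref{cor:clear_frame_Boolean}). By \cref{lem:skula_spatial_reflection_of_congruence} the frame $\C L/S$ is the total part of $\Sk\Sigma L$, so this says exactly that $\Sk\Sigma L$ is a discrete strictly zero-dimensional biframe, which by the characterisation of $T_D$ spaces recalled in \cref{subsec:skula_biframe} holds precisely when the sober (hence $T_0$) space $\Sigma L$ is $T_D$. If one prefers not to quote that remark, the non-trivial implication is that a topology which is Boolean as a frame is discrete: being spatial it is atomic Boolean, each point $p$ then has a least open neighbourhood which is an atom, and a short $T_0$ argument shows this atom is $\{p\}$.

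For the third bullet I would combine the first two with the order-reflecting property of $\partial_\bullet$. If $S = \D$ then $S \le \D$, so by the first bullet $L$ is spatial and hence $L \cong \Omega\Sigma L$ is the frame of opens of the sober space $\Sigma L$; moreover $S = \D = \partial_0$ is clear, so by the second bullet $\Sigma L$ is $T_D$. Conversely, if $L \cong \Omega Y$ for a sober $T_D$ space $Y$, then $L$ is spatial and $\Sigma L \cong Y$ is $T_D$, so the first two bullets give $S \le \D$ and $S = \partial_c$ for some $c \in \C L$; from $\partial_c \le \D = \partial_0$ and the fact that $\partial_\bullet$ reflects order we get $c = 0$, whence $S = \D$. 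I expect the only real difficulty to be bookkeeping: keeping the various congruence frames distinct, checking that $\P\sigma_{\C L}$ really is canonically identified with $\sigma_L$, and recording $\D_{\C L} = \partial_0$ — all of which are implicit in, or immediate from, results already established.
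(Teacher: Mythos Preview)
Your proposal is correct and follows essentially the same route as the paper's proof, which is extremely terse: the paper simply notes that the first bullet follows from $\Sk\Sigma L$ being strictly zero-dimensional over $\Spat L$, that the second follows from the chain ``$\Sigma L$ is $T_D$ iff $\Sk\Sigma L$ is discrete iff $S$ is clear'', and that the third is immediate from the first two. Your argument is a faithful unpacking of exactly these steps, invoking the same lemmas (\cref{lem:skula_spatial_reflection_of_congruence}, \cref{lem:monos_in_str0dbifrm}, \cref{lem:quotient_by_clear}, \cref{cor:clear_frame_Boolean}) and the $T_D$ remark; for the third bullet your use of order-reflection of $\partial_\bullet$ is fine, though one could equally note that $S \le \D$ forces $\cl(S)=0$ directly.
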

\begin{proof}
 The first equivalence follows from $\Sk\Sigma L$ being strictly zero-dimensional over $\Spat L$.
 For the second equivalence, note that $\Sigma L$ is $T_D$ if and only if $\Sk\Sigma L$ is discrete
 if and only if $S$ is clear.
 The third equivalence is immediate from the first two.
\end{proof}

\subsection{Which morphisms lift?}\label{subsec:which_morphisms_lift}

The largest strictly zero-dimensional biframe over a frame (the congruence biframe) is functorial. The same cannot be said of the smallest
strictly zero-dimensional biframe (the discrete strictly zero-dimensional biframe).
We will see that only some morphisms $f: L \to M$ lift to morphisms $\overline{f}: \C L / \D \to \C M / \D$.
Classifying these morphisms is an interesting problem.

\begin{lemma}
 A morphism $f: L \to M$ lifts to a morphism $\overline{f}: \C L / \D \to \C M / \D$ if and only if $\ker f$ is smooth and
 the strictly zero-dimensional sub-biframe in $\C M / \D$ generated by $\image f$ is discrete.
\end{lemma}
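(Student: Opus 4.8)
The plan is to recognise ``$\overline{f}$ exists'' as a single factorisation condition on the congruence frame $\C L$, and then decode that condition using the lemmas above. Write $q\colon \C L \twoheadrightarrow \C L/\D$ and $q'\colon \C M \twoheadrightarrow \C M/\D$ for the canonical dense quotients and set $g = q'\circ \C f\colon \C L \to \C M/\D$, a biframe morphism. Recalling that $\C L/\D$ is strictly zero-dimensional over $L$ (so $\P q$ is essentially the identity on $L$), and likewise for $M$, we have $\P g = f$; and since $\C L$ is the free strictly zero-dimensional biframe over $L$ (\cref{prop:congruence_free_str0dbifrm}), $g$ is \emph{the} biframe morphism over $f$. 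Hence any lift $\overline f\colon \C L/\D \to \C M/\D$ must satisfy $\overline f\circ q = g$, and conversely if $g$ factors through $q$ then the factor is automatically a biframe morphism (as $q$ is surjective on each part) and is the desired lift. By \cref{rem:factor_through_quotients} the first conclusion is therefore: \emph{$\overline f$ exists if and only if $\D \le \ker g$ in $\C L$.}

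Next I would trap $\ker g$ between two explicit congruences. Applying \cref{lem:kernel_on_congruence_frame} to $g$ gives $\cl(\ker g) = \widetilde{\nabla}_{\ker(g\circ\nabla_\bullet)}$; but $g\circ\nabla_\bullet$ is $f$ followed by the injective inclusion $M \hookrightarrow (\C M/\D)_0$, so $\ker(g\circ\nabla_\bullet) = \ker f$ and $\cl(\ker g) = \nabla_{\ker f}$. As $\cl$ is deflationary this yields $\nabla_{\ker f} \le \ker g$, so $\ker g$ is dense in $\nabla_{\ker f}$ and hence lies below $\partial_{\ker f}$, the largest congruence dense in $\widetilde{\nabla}_{\downarrow\ker f}$ (\cref{cor:clear_cong_characterisation}). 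Thus, with no hypotheses on $f$, $\nabla_{\ker f} \le \ker g \le \partial_{\ker f}$.

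Then I would identify the discreteness condition with this upper bound being attained. The strictly zero-dimensional sub-biframe of $\C M/\D$ generated by $\image f$ has first part $\image f$ (already a subframe of $M$), and its total part is the subframe of $(\C M/\D)_0$ generated by the elements $f(a)$ together with their complements $f(a)^c = g(\Delta_a)$; since $\C L$ is generated by the $\nabla_a$ and $\Delta_a$, this total part is precisely $\image g \cong \C L/\ker g$. So the sub-biframe is discrete if and only if $\C L/\ker g$ is Boolean, i.e.\ (by \cref{cor:clear_frame_Boolean} and \cref{lem:quotient_by_clear}) if and only if $\ker g$ is clear. A clear congruence $\partial_J$ satisfies $\cl(\partial_J) = \widetilde{\nabla}_J$ (a quick consequence of $\partial_J$ being dense in, and lying above, $\widetilde{\nabla}_J$), so combining with $\cl(\ker g) = \nabla_{\ker f}$ and the injectivity of $\widetilde{\nabla}_\bullet$ forces $\ker g = \partial_{\ker f}$. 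Hence: \emph{the generated sub-biframe is discrete if and only if $\ker g = \partial_{\ker f}$.}

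Finally I would run the squeeze. If $\overline f$ exists then $\D \le \ker g$, so $\partial_{(\ker f)^{**}} = \D\vee\nabla_{\ker f} \le \ker g \le \partial_{\ker f}$ by \cref{lem:join_with_largest_dense_cong}; since $\partial_\bullet$ reflects order and always $\ker f \le (\ker f)^{**}$, this forces $\ker f = (\ker f)^{**}$, so $\ker f$ is smooth, whereupon the squeeze collapses to $\ker g = \partial_{\ker f}$, so the generated sub-biframe is discrete. Conversely, if $\ker f$ is smooth and the sub-biframe is discrete, then $\ker g = \partial_{\ker f} = \partial_{(\ker f)^{**}} = \D\vee\nabla_{\ker f} \ge \D$, so $\overline f$ exists. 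The step needing the most care is the third paragraph: one must check that the sub-biframe generated by $\image f$ really is $\image g$ (so that its total part is $\C L/\ker g$) and that a clear congruence is determined by its generalised closure; the remaining steps are bookkeeping with results already in hand.
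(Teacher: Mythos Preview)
Your proof is correct and takes a genuinely different route from the paper's. The paper argues categorically: for the forward direction it factorises the lifted map $\overline f$ as a surjection followed by an injection inside $\StrZdBiFrm$, observes that the intermediate biframe $N$ inherits discreteness from $\C L/\D$, and reads off the two hypotheses from the surjective and injective halves separately (the smooth kernel from \cref{lem:closed_quotients_of_str0d_biframe} applied to the closed quotient, the discreteness of the generated sub-biframe directly from $N$); for the converse it factorises $f$ itself as $L \twoheadrightarrow \image f \hookrightarrow M$ and lifts each piece by hand. You instead reduce everything to the single inequality $\D \le \ker g$ on $\C L$, compute $\cl(\ker g) = \nabla_{\ker f}$ via \cref{lem:kernel_on_congruence_frame}, identify the discreteness hypothesis with $\ker g$ being clear (hence equal to $\partial_{\ker f}$), and close with the Beazer--Macnab formula $\D \vee \nabla_c = \partial_{c^{**}}$ from \cref{lem:join_with_largest_dense_cong} together with the fact that $\partial_\bullet$ reflects order. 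Your argument is more order-theoretic and exploits the explicit congruence calculus on $\C L$ (the squeeze $\nabla_{\ker f} \le \ker g \le \partial_{\ker f}$ doing most of the work), whereas the paper keeps the two hypotheses conceptually separated and leans on the biframe-categorical machinery of \cref{subsec:str0d_biframes_of_congruences}; both are clean, and yours has the minor bonus of actually computing $\ker g = \partial_{\ker f}$ along the way.
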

\begin{proof}
 Suppose $\overline{f}: \C L / \D \to \C M / \D$. Factorise $\overline{f}$ as
 $\C L / \D \overset{q}{\twoheadrightarrow} N \overset{\iota}{\hookrightarrow} \C M / \D$,
 where $q$ is a surjection and $\iota$ is an injection. Since $\C L / \D$ is discrete,
 so is $N$ and $q$ is a closed quotient. Now by \cref{lem:closed_quotients_of_str0d_biframe},
 $\ker(\P \overline{f}) = \ker(\P q)$ is smooth and $N$ is clearly a strictly zero-dimensional sub-biframe of $M$ generated
 by $\image(\P \overline{f}) = \image f$.
 
 Conversely, suppose $f: L \to M$ has smooth kernel and $\image f$ generates a strictly zero-dimensional sub-biframe of $\C M / \D$.
 Factorise $f$ as $L \overset{r}{\twoheadrightarrow} P \overset{j}{\hookrightarrow} M$. Since $\ker r$ is smooth, we may view it as
 an element of $\C L / \D$ by \cref{rem:biframe_of_smooth_congruences} and $(\C L / \D) / \nabla_{\ker r}$ is strictly zero-dimensional
 over $P$ by \cref{lem:closed_quotients_of_str0d_biframe}. Now since $\C L / \D$ is discrete, $(\C L / \D) / \nabla_{\ker r}$ is also
 discrete and isomorphic to $\C P / \D$ by \cref{cor:str0d_biframe_boolean_total_part}. The quotient map
 $\C L / \D \twoheadrightarrow (\C L / \D) / \nabla_{\ker r}$ thus gives a lift for $r$.
 Now by the assumption there is an inclusion of $\C P / \D$ into $\C M / \D$ and this gives a lift for $j$. Composing these gives
 a lift for $f$ as required.
\end{proof}
\begin{example}\label{ex:homomorphism_that_doesnt_lift}
 Let $X$ be the sobrification of $\N$ with the cofinite topology and let $L$ be its frame of opens.
 There is a closed congruence $\partial_{p_n}$ for every closed point $n \in X$. But the non-closed point $\omega \in X$
 corresponds to the rare congruence $\partial_{p_\omega}$. So the quotient maps $L \twoheadrightarrow L/\partial_{p_n} \cong \mathbbm{2}$
 lift to maps $\C L / \D \to \mathbbm{2}$, giving points in $\C L / \D$, but the map $L \to L/\partial_\omega$
 does not lift and $\omega$ is not a point of $\C L / \D$. This is related to the fact that $\omega$ was not
 a point in the original $T_D$ space $\N$.
\end{example}

Also, of interest are the morphisms $f: L \to M$ that will lift to morphisms between \emph{any} strictly zero-dimensional biframes over $L$ and $M$.
We say such morphisms lift \emph{absolutely}. The next lemma gives a characterisation of these morphisms, though it is not completely satisfactory
since we still need to mention the congruence functor.
\begin{lemma} %
 A morphism $f: L \to M$ lifts absolutely if and only if $\ker f$ is smooth and the (frame) image of\, $\C f$ is Boolean.
\end{lemma}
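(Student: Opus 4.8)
The plan is to first reduce ``lifts absolutely'' to a single concrete lifting problem, and then to recognise that problem as a condition on $\ker \C f$. Recall that the strictly zero-dimensional biframes over $L$ are exactly the dense quotients of $\C L$ (\cref{thm:dense_quotients_of_congruence_frame}), that $\C L$ is the largest and $\C L / \D_{\C L}$ the smallest among them (\cref{cor:frame_of_str0d_biframes_over_L}), and that the kernels of the associated quotient maps are precisely the dense congruences on $\C L$, all of which lie below $\D_{\C L}$ (\cref{lem:largest_dense_congruence}). Hence for any strictly zero-dimensional biframe $N$ over $L$ there is a canonical surjection $N \twoheadrightarrow \C L / \D_{\C L}$ whose first part is $\id_L$, and for any strictly zero-dimensional biframe $N'$ over $M$ a canonical surjection $\C M \twoheadrightarrow N'$ whose first part is $\id_M$. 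Consequently, if $f$ lifts to a single biframe morphism $g \colon \C L / \D_{\C L} \to \C M$, then the composite $N \twoheadrightarrow \C L / \D_{\C L} \xrightarrow{g} \C M \twoheadrightarrow N'$ lifts $f$ for \emph{every} pair $(N, N')$; the converse is immediate. So $f$ lifts absolutely precisely when it lifts to a biframe morphism $\C L / \D_{\C L} \to \C M$.

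Next I would show that this happens exactly when $\D_{\C L} \le \ker \C f$. Any biframe morphism $g \colon \C L / \D_{\C L} \to \C M$ with first part $f$ precomposes with the quotient map $\C L \twoheadrightarrow \C L / \D_{\C L}$ to give a biframe morphism $\C L \to \C M$ with first part $f$, which by the universal property of the congruence biframe (\cref{prop:congruence_free_str0dbifrm}) must be $\C f$; hence $\C f$ factors through $\C L \twoheadrightarrow \C L / \D_{\C L}$, forcing $\D_{\C L} \le \ker \C f$. Conversely, if $\D_{\C L} \le \ker \C f$ then $\C f$ factors through $\C L \twoheadrightarrow \C L / \D_{\C L}$ by \cref{rem:factor_through_quotients}, and the resulting frame map is automatically a biframe morphism, since it carries $\nabla_a \mapsto \nabla_{f(a)}$ into $\nabla M$ and $\Delta_a \mapsto \Delta_{f(a)}$ into $\Delta M$, and its first part is $f$.

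It then remains to unpack $\D_{\C L} \le \ker \C f$. Write $C = \ker f \in \C L$ and $K = \ker \C f$; by \cref{cor:kernel_of_Cf} (with $\widetilde{\nabla} = \nabla$ in the frame case) we have $\cl(K) = \nabla_C$. Since $\D_{\C L} \vee (-)$ depends only on the closure of its argument, \cref{lem:join_with_largest_dense_cong} gives $\D_{\C L} \vee K = \partial_{C^{**}}$, so $\D_{\C L} \le K$ is equivalent to $K = \partial_{C^{**}}$. I would now split this last equality into the two asserted conditions. If $K = \partial_{C^{**}}$, then applying $\cl$ gives $\nabla_C = \cl(K) = \cl(\partial_{C^{**}}) = \nabla_{C^{**}}$, so $C = C^{**}$ by injectivity of $\nabla_\bullet$ (\cref{cor:nabla_mono}) --- that is, $\ker f$ is smooth --- and then $K = \partial_{C^{**}} = \partial_C$ is clear, so $\image \C f \cong \C L / K$ is a clear, hence Boolean, frame (\cref{lem:quotient_by_clear,cor:clear_frame_Boolean}). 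Conversely, if $\ker f$ is smooth then $\partial_{C^{**}} = \partial_C$, and if $\image \C f$ is Boolean then $K$ is clear, so $K = \partial_a$ for some $a \in \C L$ with $\nabla_a = \cl(K) = \nabla_C$, whence $a = C$ and $K = \partial_C$; combining these two facts yields $K = \partial_{C^{**}}$, as needed.

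The step I expect to require the most care is the first reduction: one must check that the canonical comparison maps between strictly zero-dimensional biframes over a fixed frame genuinely are surjective and genuinely restrict to the identity on first parts, and that composites of biframe morphisms behave as expected on first parts under the various identifications ``$\P N \cong L$''. Once this bookkeeping is in place, the remainder is a routine manipulation of closures together with the identity $\D_{\C L} \vee (-) = \D_{\C L} \vee \cl(-)$.
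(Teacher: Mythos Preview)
Your proof is correct and follows essentially the same approach as the paper's: reduce ``lifts absolutely'' to the single lifting problem $\C L/\D_{\C L} \to \C M$, recast that as $\D_{\C L} \le \ker \C f$, and then use the clear-congruence machinery (\cref{lem:join_with_largest_dense_cong}, \cref{cor:kernel_of_Cf}) to unpack this into the two stated conditions. The paper's proof is considerably terser---it states the first reduction without justification and phrases the final step via \cref{cor:congruences_above_largest_dense_cong} rather than your explicit computation of $\D_{\C L} \vee K = \partial_{C^{**}}$---but the skeleton is the same.
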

\begin{proof}
 It suffices to check that $f$ lifts to a morphism from $\C L / \D$ to $\C M$.
 So we require $\ker (\C f) \ge \D$. This is equivalent to $\ker (\C f) = \partial_C$ for some smooth congruence $C$ on $L$
 by \cref{lem:join_with_largest_dense_cong}.
 
 Now notice that $\ker (\C f) = \partial_C$ if and only if $\cl(\ker (\C f)) = \nabla_C$ and $\ker(\C f)$ is clear.
 That is, if $\ker f = C$ and $\C L / \ker(\C f) \cong \image(\C f)$ is Boolean.
\end{proof}
\begin{corollary}
 For a morphism $f: L \to M$ to lift absolutely, it is sufficient for $\ker f$ to be smooth and $\image f$ to be scattered.
\end{corollary}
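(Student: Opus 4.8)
The plan is to reduce at once to the immediately preceding lemma, which characterises the morphisms that lift absolutely as exactly those $f\colon L \to M$ for which $\ker f$ is smooth and the (frame) image of $\C f$ is Boolean. Since the smoothness of $\ker f$ is already among our hypotheses, the entire content of the corollary is the implication: if $\image f$ is scattered, then $\image(\C f)$ is a Boolean frame.

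First I would factor $f$ through its image in $\Frm$ as $L \overset{q}{\twoheadrightarrow} N \overset{\iota}{\hookrightarrow} M$, where $N = \image f$ and $q$ is the corestriction of $f$. Applying the congruence functor gives $\C f = \C\iota \circ \C q$. Writing $N \cong L/\ker q$, the canonical isomorphism $\C N \cong \C L/\nabla_{\ker q}$ of \cref{prop:congruence_frame_summary} (equivalently, the argument of \cref{lem:quotient_of_free_str0dbifrm}) shows that $\C q$ is a surjection, so that $\image(\C f) = \image(\C\iota)$; since $\C q$ is surjective on the total part as well, the same equality holds for the total parts of these strictly zero-dimensional sub-biframes of $\C M$.

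Finally, $\C\iota$ is a frame homomorphism with domain $\C N$, so its image is (isomorphic to) a quotient of $\C N$. Since $N$ is scattered, $\C N$ is Boolean by the very definition of scattered frames (\cref{subsec:boolean_congruence_frames}), and every quotient of a Boolean frame is Boolean. Hence $\image(\C\iota)$, and therefore $\image(\C f)$, is Boolean, and the preceding lemma yields the conclusion. I do not anticipate a genuine obstacle here; the one point needing care is purely bookkeeping — making sure the hypothesis ``$\image f$ scattered'' is fed into the frame $\C N$ and the conclusion ``$\image(\C f)$ Boolean'' is read off on total parts — but this is automatic, since the images in question are computed inside the total part of $\C M$.
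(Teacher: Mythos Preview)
Your argument is correct. Both you and the paper reduce to the preceding lemma, so the only task is to show that $\image(\C f)$ is Boolean when $\image f$ is scattered, and your factorisation $f = \iota q$ together with the surjectivity of $\C q$ and the observation that $\image(\C\iota)$ is a frame quotient of the Boolean frame $\C N$ does exactly this.

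The paper's proof is organised slightly differently: rather than factoring $f$, it observes directly that $\image(\C f)$ is a strictly zero-dimensional biframe over $\image f$ and then invokes \cref{cor:scattered_frames_have_unique_str0d_biframe_structure}, which says that a scattered frame admits a \emph{unique} strictly zero-dimensional biframe structure (necessarily $\C(\image f)$, hence Boolean). Your route is marginally more elementary in that it avoids the uniqueness corollary and uses only the trivial fact that quotients of Boolean frames are Boolean; the paper's route, on the other hand, keeps the biframe perspective in the foreground and makes the role of the structural classification of strictly zero-dimensional biframes more visible. The underlying content is the same.
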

\begin{proof}
 Simply notice that $\image (\C f)$ is a strictly zero-dimensional biframe over $\image f$.
 The result then follows from \cref{cor:scattered_frames_have_unique_str0d_biframe_structure}.
\end{proof}

\subsection{Clear elements}\label{subsec:clear_elements}

Let $M$ be a strictly zero-dimensional biframe over $L$ and let $\chi: \C L \to M$ be its congruential coreflection.
By analogy to congruence frames we call elements of $L$ the \emph{closed elements} of $M$ and we may define a function
$\cl: M \to M$ so that $\cl(a)$ is the largest closed element below $a$. As before this map is monotone, deflationary and idempotent
and preserves finite meets. The next lemma shows that it interacts well with the right adjoint of $\chi$.
\begin{lemma}\label{lem:closure_preserved_by_chi_adjoint}
 If $a \in M$ then $\chi_*(\cl(a)) = \cl(\chi_*(a))$.
\end{lemma}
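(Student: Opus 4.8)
The plan is to reduce the identity to the single observation that $\chi_*$ fixes every complemented element of $\C L$, and then prove that observation directly from the density of $\chi$.

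First I would set up the reduction. By \cref{thm:dense_quotients_of_congruence_frame} the congruential coreflection $\chi\colon \C L \to M$ is a dense frame surjection, so $\chi\chi_* = \mathrm{id}_M$. The closed elements of $M$ are exactly the elements $\chi(\nabla_b)$ for $b \in L$, and since $\chi$ and $\nabla_\bullet\colon L \to \C L$ both preserve arbitrary joins (for $\nabla_\bullet$ this is \cref{lem:joins_and_meets_nabla}), this set is closed under arbitrary joins in $M$; writing $j = \chi\,\nabla_\bullet\colon L \hookrightarrow M$ for the inclusion of the first part, it follows that $\cl = j\,j_*$ on $M$, with $j_* = (\nabla_\bullet)_*\,\chi_*$. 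Viewing $\C L$ itself as a strictly zero-dimensional biframe over $L$ (with identity coreflection), the same description gives $\cl = \nabla_\bullet\,(\nabla_\bullet)_*$ on $\C L$; here one should note that this agrees with the generalised closure of \cref{subsec:closed_open_congs}, since for a frame the generalised closed congruences are precisely the $\nabla_b$. Hence, putting $e = (\nabla_\bullet)_*(\chi_*(a)) \in L$, the left-hand side of the lemma is $\chi_*(\cl(a)) = \chi_*(\chi(\nabla_e))$ while the right-hand side is $\cl(\chi_*(a)) = \nabla_e$, so everything comes down to showing $\chi_*\chi(\nabla_e) = \nabla_e$.

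For the key step I would prove that $\chi_*\chi(c) = c$ for every complemented $c \in \C L$; this applies to $c = \nabla_e$ since $\nabla_e$ has complement $\Delta_e$ by \cref{cor:nabla_delta_complements}. Put $D = \chi_*\chi(c)$. Since $\chi_*\chi$ is a nucleus, $D \ge c$, and since $\chi\chi_* = \mathrm{id}_M$ we get $\chi(D) = \chi\chi_*\chi(c) = \chi(c)$. Then $\chi(D \wedge c^c) = \chi(D) \wedge \chi(c^c) = \chi(c) \wedge \chi(c^c) = \chi(c \wedge c^c) = 0$, so density of $\chi$ forces $D \wedge c^c = 0$; as $\C L$ is a frame and $D \ge c$, this gives $D = D \wedge (c \vee c^c) = (D \wedge c) \vee (D \wedge c^c) = c$, completing the argument.

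The only real obstacle is bringing the two closure operators into comparable form — once one sees that both sides reduce to the complemented congruence $\nabla_e$, the remainder is the short density computation above; the rest is routine bookkeeping about right adjoints of composites.
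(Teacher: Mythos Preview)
Your proof is correct and follows essentially the same approach as the paper: both arguments reduce to the fact that $\chi_*\chi$ fixes each closed congruence $\nabla_e$, which the paper obtains by invoking \cref{cor:smooth_elements_fixed_by_chi} (closed congruences are complemented, hence smooth, hence fixed) while you reprove it directly from the density of $\chi$. The paper verifies the two inequalities separately rather than packaging them into your single identity $\chi_*(\cl(a)) = \chi_*\chi(\nabla_e)$ with $\nabla_e = \cl(\chi_*(a))$, but the content is the same.
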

\begin{proof}
 We certainly have $\cl(a) \le a$. By \cref{cor:smooth_elements_fixed_by_chi}, $\chi_*$ preserves closed elements %
 and so $\chi_*(\cl(a))$ is a closed congruence less than $\chi_*(a)$.
 
 Now suppose $\nabla_c \le \chi_*(a)$. Then $\chi(\nabla_c) \le a$. But $\chi(\nabla_c)$ is closed, so $\chi(\nabla_c) \le \cl(a)$
 and $\nabla_c \le \chi_*(\cl(a))$. Thus, $\chi_*(\cl(a))$ is the largest closed congruence below $\chi_*(a)$.
\end{proof}

We may also generalise the notion of a clear congruence to elements of any strictly zero-dimensional biframe.
\begin{definition}
 An element $a$ of a strictly zero-dimensional biframe $M$ is called \emph{clear} if it is the largest element of $M$
 with closure $\cl(a)$.
\end{definition}

\begin{lemma}\label{lem:clear_element_characterisation}
 An element $a \in M$ is clear if and only if $\chi_*(a)$ is a clear congruence if and only if the first part of $M / \nabla_a$ is Boolean.
\end{lemma}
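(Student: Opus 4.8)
The plan is to prove $(ii)\Leftrightarrow(iii)$, $(ii)\Rightarrow(i)$ and $(i)\Rightarrow(ii)$, where $(i)$ is ``$a$ is clear'', $(ii)$ is ``$\chi_*(a)$ is a clear congruence'' and $(iii)$ is ``the first part of $M/\nabla_a$ is Boolean''. For $(ii)\Leftrightarrow(iii)$: by \cref{lem:closed_quotients_of_str0d_biframe} the biframe $M/\nabla_a$ is strictly zero-dimensional over $L/\chi_*(a)$, so its first part is $L/\chi_*(a)$; by \cref{cor:clear_frame_Boolean} this frame is Boolean iff it is clear, and by \cref{lem:quotient_by_clear} that happens iff $\chi_*(a)$ is a clear congruence.

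For $(ii)\Rightarrow(i)$ I would first record that, $L$ being a frame, a congruence $C\in\C L$ is clear iff it is the largest congruence with closure $\cl(C)$: every congruence is dense in its own closure, and writing $\cl(C)=\nabla_c$ the congruence $\partial_c$ of \cref{cor:clear_cong_characterisation} is by construction the largest congruence dense in $\nabla_c$ and satisfies $\cl(\partial_c)=\nabla_c$. Now $\chi$ is a dense surjection by \cref{thm:dense_quotients_of_congruence_frame}, so $\chi_*$ is an order-embedding with $\chi\chi_*=1_M$, and $\cl$ commutes with $\chi_*$ by \cref{lem:closure_preserved_by_chi_adjoint}. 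Hence if $\chi_*(a)$ is clear and $b\in M$ has $\cl(b)=\cl(a)$, then $\cl(\chi_*(b))=\chi_*(\cl b)=\chi_*(\cl a)=\cl(\chi_*(a))$, whence $\chi_*(b)\le\chi_*(a)$ by maximality and $b=\chi\chi_*(b)\le\chi\chi_*(a)=a$.

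For $(i)\Rightarrow(ii)$ I argue by contraposition. Write $\cl(\chi_*(a))=\nabla_{a_0}$ (on a frame every generalised closed congruence is principal closed, by \cref{lem:joins_and_meets_nabla}), so that $a_0=\bigvee\{c\in L\mid\chi(\nabla_c)\le a\}$ and $(0,x)\in\chi_*(a)\iff x\le a_0$. Suppose $\chi_*(a)$ is not clear; then by the first paragraph $L/\chi_*(a)$ is not Boolean, so by \cref{lem:no_dense_elements_Boolean} it has a nontrivial dense element, i.e.\ there is $d\in L$ with $(d,1)\notin\chi_*(a)$ and $d\wedge x\le a_0\Rightarrow x\le a_0$ for all $x$. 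Put $m:=a\vee\chi(\Delta_d)=\chi\bigl(\chi_*(a)\vee\Delta_d\bigr)$. Then $m\ge a$, and $m\ne a$, since $m=a$ would give, on applying $\chi_*$, $\chi_*(a)\vee\Delta_d\le\chi_*\chi\bigl(\chi_*(a)\vee\Delta_d\bigr)=\chi_*(m)=\chi_*(a)$, hence $(d,1)\in\chi_*(a)$. So it suffices to show $\cl(m)=\cl(a)$, since then $a$ is not the largest element of $M$ with its closure.

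Proving $\cl(m)=\cl(a)$ is the crux. Since $\Delta_d,\nabla_d$ are complementary in $\C L$ by \cref{cor:nabla_delta_complements}, the element $\chi(\Delta_d)$ is complemented in $M_0$ with complement the closed element $d':=\chi(\nabla_d)$. Given any closed element $\chi(\nabla_c)\le m$, meeting with $d'$ and using $\chi(\Delta_d)\wedge d'=0$ yields $\chi(\nabla_{c\wedge d})=\chi(\nabla_c)\wedge d'\le m\wedge d'=a\wedge d'\le a$, so $\chi(\nabla_{c\wedge d})\le\cl(a)=\chi(\nabla_{a_0})$. Applying $\chi_*$ and using $\chi_*\chi(\nabla_y)=\nabla_y$ for every $y\in L$ — which holds because $\ker\chi$ is a dense congruence on $\C L$, so $\chi_*\chi(\nabla_y)\le\nabla_y^{**}=\nabla_y$ — we get $\nabla_{c\wedge d}\le\nabla_{a_0}$, hence $c\wedge d\le a_0$, hence $c\le a_0$ by the choice of $d$, i.e.\ $\chi(\nabla_c)\le\cl(a)$. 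Thus every closed element below $m$ lies below $\cl(a)$, so $\cl(m)\le\cl(a)$, and equality follows from $a\le m$. The two steps that need a little care are the translation of ``nontrivial dense element of $L/\chi_*(a)$'' into the concrete form used above and the identity $\chi_*\chi(\nabla_y)=\nabla_y$, but both are immediate from the cited results; everything else is bookkeeping.
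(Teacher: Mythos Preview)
Your proof is correct. The equivalence $(ii)\Leftrightarrow(iii)$ and the implication $(ii)\Rightarrow(i)$ are handled exactly as in the paper.

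For the remaining implication the two arguments are contrapositive reformulations of one another, but the presentations differ. The paper proves $(i)\Rightarrow(iii)$ directly: it passes to $M/\nabla_a$, observes that the closure of $b\ge a$ there is $\cl(b)\vee a$ so that $0$ is clear in the quotient, and then for any dense $d$ in the first part $N=\P(M/\nabla_a)$ computes $\cl(d^c)=d^*=0$, whence $d^c=0$ by clearness of $0$, so $N$ is Boolean by \cref{lem:no_dense_elements_Boolean}. You instead prove $\neg(ii)\Rightarrow\neg(i)$ upstairs in $M$: starting from a nontrivial dense $[d]\in L/\chi_*(a)$ you build the explicit witness $m=a\vee\chi(\Delta_d)$ and verify $\cl(m)=\cl(a)$ using $\chi_*\chi(\nabla_y)=\nabla_y$. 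Your $m$ is exactly the lift of the paper's $d^c$, and your closure computation is the same calculation as the paper's $\cl(d^c)=d^*=0$ unwound to $M$. The paper's version is shorter because the passage to the quotient absorbs the bookkeeping (the identity $\chi_*\chi(\nabla_y)=\nabla_y$ is implicit in the formula $\cl_{M/\nabla_a}(b)=\cl(b)\vee a$); your version has the virtue of making the witnessing element explicit.
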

\begin{proof}
 By \cref{lem:closed_quotients_of_str0d_biframe} and \cref{cor:clear_frame_Boolean}, $\chi_*(a)$ is clear if and only if $M / \nabla_a$ is Boolean.
 
 Now let $c = \cl(a)$ and suppose $\chi_*(a)$ is clear. Then $\chi_*(a) = \partial_c$ by \cref{lem:closure_preserved_by_chi_adjoint}.
 If $\cl(b) = c$ then $\nabla_c \le \chi_*(b) \le \partial_c$ and so $b = \chi\chi_*(b) \le a$ and $a$ is clear.
 
 Conversely, suppose $a$ is clear and consider the strictly zero-dimensional biframe $M/\nabla_a$.
 Identifying elements of $M/\nabla_a$ with elements of $M$ lying above $a$, we easily see that the closure of $b \ge a$ in $M/\nabla_a$ is given by
 $\cl(b) \vee a$ in $M$. Thus, since $a$ is clear in $M$, so is the bottom element of $M/\nabla_a$.
 
 We now show that $N = \P(M/\nabla_a)$ is Boolean. Suppose $d$ is a dense element of $N$. This means that $\cl(d^c) = d^* = 0$.
 But $0$ is clear in $M/\nabla_a$ and thus $d^c = 0$.
 So $d = 1$ and $N$ is Boolean by \cref{lem:no_dense_elements_Boolean}.
\end{proof}
\begin{corollary}\label{lem:clear_element_preserved_by_chi_adjoint}
 If $a \in M$ is clear and $c = \cl(a)$, then $\chi_*(a) = \partial_c$.
\end{corollary}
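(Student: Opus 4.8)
The plan is to read the statement off \cref{lem:clear_element_characterisation,lem:closure_preserved_by_chi_adjoint}. First I would invoke \cref{lem:clear_element_characterisation}: since $a$ is clear, $\chi_*(a)$ is a clear congruence on $L$, and because $L$ is a frame every $\kappa$-ideal is principal, so $\chi_*(a) = \partial_d$ for some $d \in L$. Everything then reduces to checking that $d$ is the element of $L$ corresponding, under the first-part identification $M_1 \cong L$, to the closed element $c = \cl(a)$ of $M$.

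The identification step is where the second lemma comes in. I would first note that $\cl(\partial_d) = \nabla_d$: by \cref{cor:clear_cong_characterisation}, $\partial_d$ is the largest congruence dense in $\nabla_d$, so $\cl(\partial_d) \le \nabla_d$, while $\nabla_d \le \partial_d$ forces $\nabla_d = \cl(\nabla_d) \le \cl(\partial_d)$. Combining with \cref{lem:closure_preserved_by_chi_adjoint} gives $\chi_*(c) = \chi_*(\cl(a)) = \cl(\chi_*(a)) = \nabla_d$. On the other hand, $c$ is a closed element of $M$, hence $c = \chi(\nabla_{c'})$ for a unique $c' \in L$, and $\chi_*$ carries it back to $\nabla_{c'}$ --- this is precisely the fact ``$\chi_*$ preserves closed elements'' already used in the proof of \cref{lem:closure_preserved_by_chi_adjoint}, which holds because $\nabla_{c'}$ is complemented, hence smooth ($\nabla_{c'}^{**} = \nabla_{c'}$), hence in the image of $\chi_*$ by \cref{cor:smooth_elements_fixed_by_chi} and so fixed by $\chi_*\chi$. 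Therefore $\nabla_{c'} = \nabla_d$, so $c' = d$ by \cref{cor:nabla_mono}, and $\chi_*(a) = \partial_d = \partial_{c'} = \partial_c$.

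The only real delicacy --- and it is purely bookkeeping rather than mathematics --- is keeping track of what ``$\partial_c$'' means when $c$ lives in $M$ rather than $L$: it must denote $\partial$ applied to $c$ regarded as an element of $M_1 \cong L$, and one has to make sure this is the same $L$-element that $\chi_*$ produces (via $\chi_*(\chi(\nabla_x)) = \nabla_x$). No computation beyond what is already done for \cref{lem:clear_element_characterisation,lem:closure_preserved_by_chi_adjoint} is needed; indeed the corollary is essentially the remark that the clause ``then $\chi_*(a) = \partial_c$'' appearing midway through the proof of \cref{lem:clear_element_characterisation} is valid for every clear $a$.
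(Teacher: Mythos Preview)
Your proposal is correct and follows essentially the same route as the paper: the paper's proof is simply the two-line observation that $\cl(\chi_*(a)) = \nabla_c$ by \cref{lem:closure_preserved_by_chi_adjoint} and that $\chi_*(a)$ is clear by \cref{lem:clear_element_characterisation}, whence $\chi_*(a) = \partial_c$. Your version merely unpacks the identifications (introducing an auxiliary $d$ and then showing $d = c'$) that the paper leaves implicit, but the mathematical content is identical.
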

\begin{proof}
 We have $\cl(\chi_*(a)) = \nabla_c$ by \cref{lem:closure_preserved_by_chi_adjoint} and $\chi_*(a)$ clear by \cref{lem:clear_element_characterisation}.
\end{proof}
\begin{corollary}
 If $a \in M$ is clear, then every element $b \ge a$ is also clear.
\end{corollary}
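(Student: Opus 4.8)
The plan is to transport the statement across the congruential coreflection $\chi \colon \C L \to M$ and reduce it to the fact, already recorded in \cref{subsec:clear_congruences_on_frames}, that on a \emph{frame} every congruence lying above a clear congruence is again clear. (Recall that throughout this section $L$ is a frame, so this is available.)

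First I would use \cref{lem:clear_element_characterisation}, which says that an element of $M$ is clear exactly when its image under the right adjoint $\chi_*$ is a clear congruence on $L$. Thus, given a clear element $a \in M$ and any $b \ge a$, the congruence $\chi_*(a)$ is clear. Since $\chi$ is a biframe homomorphism, its right adjoint $\chi_*$ is monotone, so $\chi_*(b) \ge \chi_*(a)$ in $\C L$. Now invoking that clear frames coincide with Boolean frames (\cref{cor:clear_frame_Boolean}) and that quotients of Boolean frames are Boolean, every congruence above the clear congruence $\chi_*(a)$ is itself clear; in particular $\chi_*(b)$ is clear. Applying \cref{lem:clear_element_characterisation} once more, in the converse direction, gives that $b$ is clear, as desired. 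One could instead write $\chi_*(a) = \partial_c$ with $c = \cl(a)$ via \cref{lem:clear_element_preserved_by_chi_adjoint} and argue with $\partial_c$ explicitly, but the characterisation lemma makes this unnecessary.

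There is no substantial obstacle here; the only points needing a moment's care are that the upward closure of the clear congruences under the congruence order is a feature of frames (not general $\kappa$-frames), which is precisely the ambient hypothesis of this section, and that $\chi_*$ is order-preserving — both immediate.
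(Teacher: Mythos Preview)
Your proposal is correct and is essentially the intended argument: the paper states the corollary without proof, leaving it as an immediate consequence of \cref{lem:clear_element_characterisation} together with the upward-closure of clear congruences on a frame recorded at the start of \cref{subsec:clear_congruences_on_frames}. One could equivalently use the third equivalence in \cref{lem:clear_element_characterisation} (that $\P(M/\nabla_a)$ is Boolean) and note that $\P(M/\nabla_b)$ is a quotient of it, but this is the same idea.
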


\begin{example}
 Unlike for congruence frames, clear elements might sometimes fail to exist.
 If $L$ is the chain $[0,1]$, then the strictly zero-dimensional biframe $\C L / \D$ has no nontrivial clear elements
 since every clear congruence on $L$ is rare.
\end{example}

In fact, the existence of all clear elements characterises the congruential strictly zero-dimensional biframes.

\begin{theorem}\label{lem:str0d_biframe_congruential_iff_no_missing_clear_elements}
 A strictly zero-dimensional biframe $M$ is congruential if and only if it has no missing clear elements.
\end{theorem}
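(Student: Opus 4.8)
The plan is to dispose of the forward implication quickly and then establish the converse by contraposition. For the forward direction, if $M$ is congruential then its congruential coreflection is an isomorphism, so I may identify $M$ with $\C L$ for the frame $L=M_1$. By \cref{lem:clear_element_characterisation} (applied with $\chi=\mathrm{id}$) the clear elements of $\C L$ are exactly the clear congruences $\partial_c$, $c\in L$, and each has closure $\nabla_c$ (since $\partial_c$ is the largest congruence dense in $\nabla_c$ and $\nabla_c\le\partial_c$). As $c$ ranges over $L$ these closures exhaust the closed elements of $\C L$, so $M$ has no missing clear elements.

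For the converse I would assume $M$ is \emph{not} congruential and exhibit a missing clear element. Write $L=M_1$ and let $\chi\colon\C L\to M$ be the congruential coreflection, which by \cref{thm:dense_quotients_of_congruence_frame} is a dense surjection; since $M$ is not congruential, $D:=\ker\chi\neq 0$. First I would extract a convenient pair: by \cref{lem:congruence_intervals} there is $(A,B)\in D$ with $A<B$, and by \cref{lem:meet_of_clear} the congruence $A$ is the meet of the clear congruences lying above it — which, because $L$ is a frame, are precisely the $\partial_c$ with $c\in L$. Since $B\not\le A$, some such $\partial_c$ satisfies $A\le\partial_c$ but $B\not\le\partial_c$; joining $(A,B)$ with $(\partial_c,\partial_c)$ inside the congruence $D$ then yields $(\partial_c,\,B\vee\partial_c)\in D$ with $B\vee\partial_c>\partial_c$.

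Next I would show that the closed element $\chi(\nabla_c)$ of $M$ has no clear element above it of the same closure. Suppose for contradiction that $a\in M$ is clear with $\cl(a)=\chi(\nabla_c)$. Then \cref{lem:clear_element_preserved_by_chi_adjoint} gives $\chi_*(a)=\partial_c$, and since $\chi$ is surjective, $a=\chi\chi_*(a)=\chi(\partial_c)=\chi(B\vee\partial_c)$, the last equality because $(\partial_c,B\vee\partial_c)\in D=\ker\chi$. But the nucleus $\chi_*\chi$ is inflationary, so $\chi_*(a)=\chi_*\chi(B\vee\partial_c)\ge B\vee\partial_c>\partial_c=\chi_*(a)$, a contradiction. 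Hence $\chi(\nabla_c)$ witnesses a missing clear element, which proves the contrapositive.

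I expect the main obstacle to be bookkeeping rather than any deep argument: being precise about the reading of ``missing clear element'' (that some closed element of $M$ fails to lie below a clear element of equal closure), and about the identification of the closed elements of $M$ with elements of $L$ so that \cref{lem:clear_element_preserved_by_chi_adjoint} applies with the correct $\partial_c$. The one genuine use of hypothesis beyond generalities is that $M_1$ is a frame, which is exactly what allows the clear-congruence decomposition of $A$ to be taken over the \emph{principal} clear congruences $\partial_c$.
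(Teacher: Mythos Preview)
Your argument is correct but takes a different route from the paper. The paper proves the converse directly: assuming every closed element of $M$ lies below a clear element, it shows that $\chi$ is \emph{codense} (if $\chi(A)=1$ then, writing $\nabla_a=\cl(A)$, the clear element $b$ of $M$ with closure $a$ satisfies $\chi_*(b)=\partial_a$ and hence $b=\chi(\partial_a)=1$, forcing $a=1$ and $A=1$), and then invokes the standing fact that a codense map out of a regular frame is injective. Your contrapositive instead starts from a nontrivial pair in $\ker\chi$, uses \cref{lem:meet_of_clear} to push it up to a pair $(\partial_c,\,B\vee\partial_c)$ with a clear congruence in the first coordinate, and derives a contradiction from \cref{lem:clear_element_preserved_by_chi_adjoint} and the inflationary nucleus $\chi_*\chi$. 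The paper's proof is shorter and avoids the meet-of-clear decomposition by leaning on the codense-plus-regular trick; your version is more explicit in that it actually names the closed element $c$ at which the clear element is missing, at the cost of invoking an extra lemma.
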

\begin{proof}
 If $M$ is congruential then all clear congruences exist.
 
 Now suppose $M$ has all clear congruences and take $A \in \C L$ such that $\chi(A) = 1$. Then $\nabla_a \le A \le \partial_a$ for some $a \in L$
 and so $\chi(\partial_a) = 1$. Let $b \in M$ be the clear element with closure $a$.
 Then $\chi_*(b) = \partial_a$ by \cref{lem:clear_element_preserved_by_chi_adjoint}
 and thus $b = \chi\chi_*(b) = \chi(\partial_a) = 1$. But then $a = \cl(b) = 1$ and so $A = 1$.
 Hence $\chi$ is codense. Since $\C L$ is regular, $\chi$ is therefore injective and $M$ is congruential.
\end{proof}

\section{Reflections and coreflections of congruence frames}\label{section:reflections_of_congruence_frames}

Properties of the congruence frame $\C L$ often give us information about hereditary properties of the frame or $\kappa$-frame $L$ itself.
We will take a look at what information we can obtain from the spatial reflection and the compact regular coreflection of
congruence frames. We also consider the congruence frame on the free frame generated by a $\kappa$-frame.

\subsection{Spatial congruences}\label{subsec:spatial_reflection}

In \cref{ex:smallest_dense_sublocale_of_reals}, we saw that quotients of spatial frames need not be spatial.
In this section we will characterise precisely which quotients of a given $\kappa$-frame are spatial and
describe the $\kappa$-frames for which every quotient is spatial. The characterisation was first proved for frames
in \cite{BooleanAssemblies} by a topological argument and again in \cite{SpatialSublocales} with a more
frame-theoretic approach using nuclei. We will use congruences and extend the result to $\kappa$-frames.

\subsubsection*{Aside: $\kappa$-spaces and spatial $\kappa$-frames}
\begin{quote}
 While topological spaces are ubiquitous throughout mathematics, the generalisation of $\kappa$-spaces is seldom encountered.
 One example is provided by the zero-set spaces used to study realcompactness and Wallman-type compactifications, which are
 precisely the regular $\sigma$-spaces. The adjunction between zero-set spaces and regular $\sigma$-frames is discussed in
 \cite{RegularSigmaFrames}. More general $\sigma$-spaces were first studied in \cite{SigmaSpaces}.
 
 \begin{definition}
  A \emph{$\kappa$-space} is a pair $(X, \tau)$ where $X$ is a set and $\tau$ is a sub-$\kappa$-frame of the power set $2^X$.
  The elements of $\tau$ are known as $\kappa$-open sets.
  
  A \emph{$\kappa$-continuous} map between $\kappa$-spaces $(X,\tau)$ and $(Y,\rho)$ is a function $f:X \to Y$ such that
  $f^{-1}(U) \in \tau$ for all $U \in \rho$.
 \end{definition}
 
 There is a contravariant adjunction between $\kappa$-spaces and $\kappa$-frames, which is directly analogous to the one between
 topological spaces and frames (see \cite{SigmaSpaces} for details).
 
 The functor $\Omega: \kappa\Top\op \to \kappa\Frm$ is given by $\Omega(X,\tau) = \tau$ and
 $(\Omega f)(U) = f^{-1}(U)$.
 The functor $\Sigma: \kappa\Frm \to \kappa\Top\op$ is given by
 $\Sigma L = (\Hom(L, \mathbbm{2}), \{U_a \mid a \in L \})$ and $\Sigma f = \Hom(f, \mathbbm{2})$
 where $U_a = \{f \in \Hom(L, \mathbbm{2}) \mid f(a) = 1\}$.
 
 The functor $\Sigma$ is then left adjoint to $\Omega$. The unit $\sigma: 1_{\kappa\Frm} \to \Omega\Sigma$ and
 the counit $\sob\op: \Sigma\Omega \to 1_{\kappa\Top\op}$ of this adjunction are given by
 $\sigma_L(a) = U_a$ and 
 $\sob_X(x)(U) = \begin{cases}
                  1 & \text{ if } x \in U \\ 
                  0 & \text{ otherwise}
                 \end{cases}.$
 
 The map $\sigma_L$ is surjective.
 
 \bigskip%
 
 A $\kappa$-frame homomorphism $f: L \to \mathbbm{2}$ is called a \emph{point} of $L$.
 Such a function is uniquely determined by the $\kappa$-ideal $f^{-1}(\{0\})$. The points of $L$
 correspond bijectively to the \emph{prime} $\kappa$-ideals, which we will also call points.
 \begin{definition}
  A $\kappa$-ideal $I$ on a $\kappa$-frame is called \emph{prime} if whenever $x \wedge y \in I$,
  either $x \in I$ or $y \in I$.
 \end{definition}
 \begin{lemma}
  A $\kappa$-ideal $I$ on a $\kappa$-frame $L$ is prime if and only if $I$ is a prime element of $\h_\kappa L$.
 \end{lemma}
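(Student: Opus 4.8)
The plan is to unwind both conditions directly using the description of $\h_\kappa L$: its order is inclusion, binary meets are intersections, and its top element is $L$ itself. Two elementary facts will do all the work, namely that each principal downset $\downarrow x$ is a $\kappa$-ideal and that $\downarrow x \cap \downarrow y = \downarrow(x \wedge y)$.

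For the implication from left to right, I would start from a prime $\kappa$-ideal $I$. As in ring theory, ``prime'' is understood to include ``proper'', so $I \ne L$ and $I$ is not the top of $\h_\kappa L$; this is also forced by the bijection with points $L \to \mathbbm{2}$, since such a map sends $1$ to $1$. Now suppose $J \cap K \subseteq I$ for $\kappa$-ideals $J$ and $K$, and argue by contradiction: if $x \in J \setminus I$ and $y \in K \setminus I$ witness that neither is contained in $I$, then $x \wedge y$ lies in both $J$ and $K$ (they are downsets), hence in $J \cap K \subseteq I$, so primeness of $I$ gives $x \in I$ or $y \in I$ --- a contradiction. Therefore $J \subseteq I$ or $K \subseteq I$, and $I$ is a prime element of $\h_\kappa L$.

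For the converse, I would take a prime element $I$ of $\h_\kappa L$; again $I \ne L$, so $I$ is a proper $\kappa$-ideal. If $x \wedge y \in I$ then, since $I$ is a downset, $\downarrow(x \wedge y) = \downarrow x \cap \downarrow y \subseteq I$, and primeness of $I$ yields $\downarrow x \subseteq I$ or $\downarrow y \subseteq I$, that is, $x \in I$ or $y \in I$. Hence $I$ is a prime $\kappa$-ideal. There is no real obstacle here; the only thing to keep straight is the bookkeeping around properness, so that the meet condition on ideals and the condition ``$\ne 1$ together with the meet condition'' for elements genuinely match up.
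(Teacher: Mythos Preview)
The paper states this lemma without proof (it appears in the aside on $\kappa$-spaces), so there is nothing to compare against; your argument is the standard elementary one and it is correct. Your handling of the properness issue is appropriate: the paper's definition of prime $\kappa$-ideal does not literally include ``$I \ne L$'', but the sentence immediately preceding it sets up the bijection with points $L \to \mathbbm{2}$, which forces $1 \notin I$, so your reading is the intended one and is needed for the equivalence to go through.
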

 \begin{definition}
  A $\kappa$-frame $L$ is called \emph{spatial} if the map $\sigma_L : L \to \Omega\Sigma L$ is an isomorphism.
 \end{definition}
 \begin{proposition}\label{prop:spatial_kappa_frame}
  A $\kappa$-frame is spatial if and only if every principal $\kappa$-ideal is a meet of prime $\kappa$-ideals.
 \end{proposition}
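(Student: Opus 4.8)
The plan is to exploit that $\sigma_L$ is always surjective, so that $L$ is spatial exactly when $\sigma_L$ is injective, and then to translate injectivity of $\sigma_L$ into the separation property of prime $\kappa$-ideals that underlies the statement ``every principal $\kappa$-ideal is a meet of primes''. The first step is to unwind the condition $U_a = U_b$. Using the bijection between points $f \in \Hom(L,\mathbbm{2})$ and prime $\kappa$-ideals $P = f^{-1}(\{0\})$, together with the fact that $f(a) = 1$ iff $a \notin P$, the equation $U_a = U_b$ says precisely that a prime $\kappa$-ideal contains $a$ if and only if it contains $b$; equivalently, $\downarrow\! a$ and $\downarrow\! b$ lie below exactly the same prime $\kappa$-ideals in $\h_\kappa L$.

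For the ``if'' direction, I would assume every principal $\kappa$-ideal is a meet of prime $\kappa$-ideals. If $\downarrow\! a = \bigwedge_i P_i$ with each $P_i$ prime, then in fact $\downarrow\! a = \bigwedge\{P \text{ prime} \mid P \ge\, \downarrow\! a\}$, since the right-hand side lies below each $P_i$ and hence below $\downarrow\! a$, while the reverse inequality is automatic. So if $U_a = U_b$, then by the translation above the primes above $\downarrow\! a$ coincide with those above $\downarrow\! b$, whence $\downarrow\! a = \bigwedge\{P \text{ prime} \mid P \ge\, \downarrow\! a\} = \bigwedge\{P \text{ prime} \mid P \ge\, \downarrow\! b\} = \,\downarrow\! b$, and therefore $a = b$. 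Thus $\sigma_L$ is injective and $L$ is spatial.

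For the ``only if'' direction, assume $\sigma_L$ is injective and fix $a \in L$; I would show $\downarrow\! a = \bigwedge\{P \text{ prime } \kappa\text{-ideal} \mid a \in P\}$. The inequality $\le$ is clear. For $\ge$, suppose $b \not\le a$, so $a \ne a \vee b$. By injectivity $U_a \ne U_{a\vee b}$, and since $a \le a\vee b$ we have $U_a \subseteq U_{a\vee b}$, so there is a point $f$ with $f(a \vee b) = 1$ and $f(a) = 0$; as $f(a\vee b) = f(a) \vee f(b)$, this forces $f(b) = 1$. The prime $\kappa$-ideal $P = f^{-1}(\{0\})$ then contains $a$ but not $b$, so $b \notin \bigwedge\{P \text{ prime} \mid a \in P\}$. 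Hence $\downarrow\! a$ is a meet of prime $\kappa$-ideals, as required.

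The argument is elementary once these translations are in place, so there is no serious obstacle; the main point to get right is to phrase the separation step using the join $a\vee b$ rather than the meet, so that a point detecting $a\vee b$ but not $a$ automatically detects $b$. It is also worth noting that no choice principle beyond what is silently assumed for $\kappa$-frames enters here, since points are only ever extracted from a given failure of injectivity and never manufactured from scratch.
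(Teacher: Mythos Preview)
The paper does not actually supply a proof of this proposition: it is stated without proof inside the aside on $\kappa$-spaces as background material, in the same spirit as the analogous well-known fact for frames. So there is no ``paper's own proof'' to compare against.

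Your argument is correct. The key observations---that $\sigma_L$ is surjective so spatiality reduces to injectivity, and that $U_a = U_b$ translates to ``$a$ and $b$ lie in exactly the same prime $\kappa$-ideals''---are sound, and both directions go through as you wrote them. The trick of using $a \vee b$ in the ``only if'' direction so that a separating point automatically has $f(b)=1$ is the clean way to do it. One minor cosmetic point: in the ``if'' direction you could skip the intermediate step showing $\downarrow\! a = \bigwedge\{P\text{ prime}\mid P \ge\,\downarrow\! a\}$ and argue directly that any meet of primes above $\downarrow\! a$ equals any meet of primes above $\downarrow\! b$ once the sets of such primes coincide, but what you have is fine.
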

\end{quote}
\smallskip

\begin{definition}
 A congruence $C$ on a $\kappa$-frame $L$ is called \emph{spatial} if the quotient $L/C$ is spatial.
\end{definition}
\begin{remark}
 If $X$ is a topological space, a subspace $A \subseteq X$ induces a spatial congruence as described in
 \cref{subsec:congruences_and_subspaces}. However, it is possible that a congruence $C$ on $L$ is spatial
 without being induced by any subspace of $X$. The congruence $\partial_{p_\omega}$ on $\Omega(\N)$ from \cref{ex:homomorphism_that_doesnt_lift}
 is one such example. However, it is shown in \cite[pp.~99--103]{PicadoPultr} that this can never happen when $X$ is sober.
\end{remark}

\begin{remark}
Recall that an element $a$ in a frame $L$ is called spatial if it is the meet of prime elements.
This should not cause confusion however, since we will shortly show that a congruence is spatial
if and only if it is a spatial element of $\C L$.
\end{remark}

We assume throughout this subsection that $L$ is a $\kappa$-frame.
We begin with a characterisation of the prime elements of $\C L$.
\begin{lemma}\label{lem:prime_congruence}
 An element of $\C L$ is prime if and only if it is of the form $\partial_P$ where $P$ is a prime $\kappa$-ideal.
\end{lemma}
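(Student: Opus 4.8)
The plan is to show both inclusions. First I would show that every $\partial_P$ with $P$ a prime $\kappa$-ideal is a prime element of $\C L$. Recall from \cref{cor:kappa_frame_congruences_form_a_frame} that $\C L \cong \h_\kappa \Cvar L$ and from \cref{thm:quotients_are_closed_quotients_of_congruence_frame} that congruences on $L$ correspond to generalised closed congruences on $\Cvar L$ via $C \mapsto \widetilde{\nabla}_C$. A slicker route, though, is to use \cref{prop:congruence_kappa_frame_summary}: $\Cvar(L/\partial_P) \cong \Cvar L / \widetilde{\nabla}_{\partial_P}$, and likewise $\C(L/\partial_P) \cong \C L / \nabla_{\partial_P} \cong\, \uparrow\!\partial_P$ in $\C L$ by \cref{lem:open_closed_lowerset_and_upperset}. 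So $\partial_P$ is prime in $\C L$ if and only if $\C(L/\partial_P)$ has a largest element among congruences other than $1$ — equivalently, if $L/\partial_P$ has exactly one nontrivial quotient, namely $\mathbbm{2}$. But $L/\partial_P$ is a clear $\kappa$-frame by \cref{lem:quotient_by_clear}, and by \cref{cor:clear_cong_characterisation} it embeds (via the composite $L/\partial_P \hookrightarrow \h_\kappa L/\widetilde{\nabla}_P \to \h_\kappa L / \widetilde{\nabla}_P$, using that $\partial_P$ corresponds to $\D$ in $\C(L/\widetilde{\nabla}_P)$) into a structure determined by $P$; since $P$ is prime, $\h_\kappa L / \widetilde{\nabla}_P \cong \,\uparrow\! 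P$ is a chain-like object with top and $\mathbbm{2}$ as its only proper quotient-relevant piece. The cleanest phrasing: $\partial_P$ is prime iff $\D$ is the largest nontrivial congruence on the Boolean $\kappa$-frame $L/\partial_P$, and this holds precisely because $P$ prime forces $\h_\kappa(L/\partial_P)$ to have a unique coatom.

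Conversely, suppose $Q \in \C L$ is prime. I would first show $Q$ is clear. By \cref{lem:meet_of_clear}, $Q = \bigwedge_\alpha \partial_{I_\alpha}$; since $Q$ is prime (hence meet-irreducible), $Q = \partial_{I_\alpha}$ for some $\alpha$, so $Q$ is clear, say $Q = \partial_I$. Next I must show $I$ is prime. Since $\partial_I$ is prime, the quotient $L/\partial_I$ has a unique proper nontrivial quotient. Now $\widetilde{\nabla}_I \le \partial_I$, so $L/\widetilde{\nabla}_I \cong \,\uparrow\! I$ in $\h_\kappa L$ receives $L/\partial_I$ as a quotient, and $\partial_I$ corresponds to $\D$ on $L/\widetilde{\nabla}_I$ by \cref{cor:clear_cong_characterisation}. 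The key computation: for any $a \notin I$, the congruence $\partial_I \vee \nabla_a$ lies strictly above $\partial_I$ (by \cref{cor:closed_congruences_in_quotient}, since $(a,b)\notin\partial_I$ for suitable witnesses when $a\notin I$), hence equals $1$ by primeness... wait — primeness of $Q=\partial_I$ as an element means $x \wedge y \le \partial_I \Rightarrow x \le \partial_I$ or $y \le \partial_I$, i.e. $\partial_I$ is meet-prime. To get $I$ prime I would instead argue: if $x\wedge y \in I$ but $x,y \notin I$, then $\Delta_x \wedge \Delta_y = \Delta_{x\wedge y}$ and one checks $\Delta_{x \wedge y} \wedge (\text{stuff}) \le \partial_I$ while neither factor is; more directly, $\nabla_x \wedge \nabla_y = \nabla_{x \wedge y} \le \widetilde\nabla_I \le \partial_I$, so by primeness $\nabla_x \le \partial_I$ or $\nabla_y \le \partial_I$, giving $x \in I$ or $y \in I$ by \cref{cor:clear_cong_characterisation} (taking the witness $z=1$: $x \wedge 1 = x \in I$). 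That is exactly primeness of $I$.

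The main obstacle I anticipate is the forward direction — confirming that $\partial_P$ is genuinely meet-prime in $\C L$ (not merely that its quotient is "small"). The cleanest argument: suppose $A \wedge B \le \partial_P$ with $A \not\le \partial_P$ and $B \not\le \partial_P$; passing to $\C(L/\widetilde\nabla_P) \cong\, \uparrow\!\widetilde\nabla_P$ we may assume $A, B \ge \widetilde\nabla_P$, and $\partial_P$ becomes $\D$ in the clear $\kappa$-frame $\overline L := L/\widetilde\nabla_P$. So it suffices to show $\D$ is meet-prime in $\C \overline L$ when $\overline L$ has $P' := 0$... — better: when $\h_\kappa \overline L$, equivalently $\,\uparrow\! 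P$ in $\h_\kappa L$, has a unique coatom. Using $\C\overline L \cong \h_\kappa \Cvar \overline L$ and the description of $\D$, primeness of $\D$ reduces to: the Boolean frame $\overline L / \D$ has no proper nontrivial quotient other than through a unique coatom, which follows since $P$ prime makes $\h_\kappa \overline L$ have $\mathbbm 2$ as its only proper quotient. I expect the bookkeeping of moving between $L$, $\overline L = L/\widetilde\nabla_P$, and their congruence/$\kappa$-ideal frames to be the fiddly part; everything else is an application of \cref{cor:clear_cong_characterisation}, \cref{lem:meet_of_clear}, and \cref{prop:congruence_kappa_frame_summary}.
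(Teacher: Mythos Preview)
Your proposal has a genuine gap in the backward direction, and the forward direction is left essentially unproved.

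In the backward direction, your step ``$Q = \bigwedge_\alpha \partial_{I_\alpha}$; since $Q$ is prime (hence meet-irreducible), $Q = \partial_{I_\alpha}$ for some $\alpha$'' is incorrect. Primeness in a frame gives only \emph{finite} meet-irreducibility, not complete meet-irreducibility. For instance, $0$ is prime in the chain $[0,1]$, yet $0 = \bigwedge_n \tfrac{1}{n}$ with no term equal to $0$. So \cref{lem:meet_of_clear} does not let you conclude that $Q$ is clear. The subsequent argument that $I$ is prime --- via $\nabla_x \wedge \nabla_y = \nabla_{x\wedge y} \le \widetilde\nabla_I \le \partial_I$ and primeness --- is fine, but it rests on the unestablished clearness of $Q$.

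For the forward direction you correctly flag the difficulty, but none of your sketches closes it. Reducing to ``$\D$ is meet-prime in $\C(L/\widetilde\nabla_P)$'' merely restates the problem, and the appeal to $\h_\kappa\overline L$ having ``$\mathbbm{2}$ as its only proper quotient'' is neither made precise nor connected to meet-primeness of $\D$ in $\C\overline L$.

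The paper sidesteps both issues by working through the universal property instead of arguing elementwise. Prime elements of $\C L$ correspond to frame homomorphisms $\overline f:\C L \to \mathbbm{2}$; since $\mathbbm{2}$ is Boolean, the universal property of $\Cvar L$ together with $\C L \cong \h_\kappa\Cvar L$ puts these in bijection with $\kappa$-frame homomorphisms $f:L\to\mathbbm{2}$, i.e.\ with prime $\kappa$-ideals $P$. The prime element attached to $\overline f$ is then identified as $\partial_P$ using \cref{lem:kernel_on_congruence_frame} (to pin down its closure as $\widetilde\nabla_P$) and the observation that $\ker f$ is clear because $L/\ker f \cong \mathbbm{2}$ is Boolean. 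This handles both directions simultaneously and never needs complete meet-irreducibility or a direct primeness verification.
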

\begin{proof}
 The prime elements of $\C L$ are in one-to-one correspondence with the frame homomorphisms from $\C L$ to $\mathbbm{2}$,
 while universal property of $\C L \cong \h_\kappa\Cvar L$ gives a correspondence between frame homomorphisms from $\C L$ to $\mathbbm{2}$
 and $\kappa$-frame homomorphisms from $L$ to $\mathbbm{2}$.
  \begin{center}
   \begin{tikzpicture}[node distance=2.5cm, auto]
    \node (CL) {$\C L$};
    \node (L) [below of=CL] {$L$};
    \node (M) [right of=L] {$\mathbbm{2}$};
    \draw[->] (L) to node {$\nabla_\bullet$} (CL);
    \draw[->, dashed] (CL) to node {$\overline{f}$} (M);
    \draw[->] (L) to node [swap] {$f$} (M);
   \end{tikzpicture}
 \end{center}
 A prime element $C$ of $\C L$ is then an element such that $\cl(\ker \overline{f}) = \nabla_C$ for some
 $\kappa$-frame map $f: L \to \mathbbm{2}$. By \cref{lem:kernel_on_congruence_frame}, we find that $C = \ker f$.
 The map $f$ corresponds to a prime $\kappa$-ideal $P$ so that $\nabla_P = \cl(\ker f)$. Finally, $\ker f$ is clear
 since $\mathbbm{2}$ is Boolean and so $C = \ker f = \partial_P$.
\end{proof}

\begin{lemma}\label{lem:prime_congruence_elements}
 If $P \in \h_\kappa L$ is prime, then $(x,y) \in \partial_P$ if and only if $x \in P \iff y \in P$.
\end{lemma}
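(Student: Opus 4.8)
The plan is to reduce everything to the explicit description of clear congruences from \cref{cor:clear_cong_characterisation}: for a $\kappa$-ideal $I$ (in particular for $I = P$),
\[
 \partial_P = \{(x,y) \mid x \wedge z \in P \iff y \wedge z \in P \text{ for all } z \in L\}.
\]
So the statement to prove amounts to showing that, when $P$ is prime, this family of conditions (one for each $z \in L$) collapses to the single condition obtained by taking $z = 1$.

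The forward direction is then immediate: if $(x,y) \in \partial_P$, simply specialise the defining condition to $z = 1$, giving $x = x \wedge 1 \in P \iff y = y \wedge 1 \in P$.

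For the backward direction, I would assume $x \in P \iff y \in P$ and verify the condition $x \wedge z \in P \iff y \wedge z \in P$ for every $z \in L$ by splitting into two cases. If $x, y \in P$, then since $P$ is a downset both $x \wedge z \le x$ and $y \wedge z \le y$ lie in $P$, so the equivalence holds trivially. If $x, y \notin P$, suppose $x \wedge z \in P$; here is where primality of $P$ (equivalently, that $P$ is a prime element of $\h_\kappa L$, as recorded earlier) enters: it forces $x \in P$ or $z \in P$, and since $x \notin P$ we conclude $z \in P$, whence $y \wedge z \le z \in P$; the reverse implication follows by symmetry. This covers all cases since the hypothesis rules out $x \in P$, $y \notin P$ and vice versa.

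There is no real obstacle here — the argument is a short case analysis — so the only thing to be careful about is invoking the correct characterisation of $\partial_P$ and noting that the primality of $P$ is exactly what makes the "$z = 1$" instance suffice.
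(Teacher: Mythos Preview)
Your proof is correct and takes essentially the same approach as the paper: both start from the explicit description of $\partial_P$ in \cref{cor:clear_cong_characterisation} and use primality of $P$ to collapse the family of conditions indexed by $z$ to the single condition $x \in P \iff y \in P$. The only cosmetic difference is that the paper organises the argument by splitting on whether $z \in P$ or $z \notin P$ (handling both directions at once), whereas you split on the two directions of the biconditional and, for the backward direction, on whether $x,y \in P$ or $x,y \notin P$; your choice of $z = 1$ for the forward direction plays the same role as the paper's observation that some $z \notin P$ exists since $P \ne 1$.
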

\begin{proof}
 By \cref{cor:clear_cong_characterisation}, $(x,y) \in \partial_P$ if and only if $x \wedge z \in P \iff y \wedge z \in P \,\text{ for all $z \in L$}$.
 For $z \in P$, this condition always holds. But when $z \notin P$, we have $x\wedge z \in P$ if and only if $x \in P$ since $P$ is prime.
 There is always such a $z$ since $P \ne 1$ and we obtain the desired result.
\end{proof}

\begin{lemma}
 $L$ is spatial if and only if the intersection of the primes in $\C L$ is $0$.
\end{lemma}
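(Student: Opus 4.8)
The plan is to show that the intersection of the prime congruences on $L$ is exactly the kernel of the canonical map $\sigma_L \colon L \to \Omega\Sigma L$, and then to exploit that $\sigma_L$ is always surjective.

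First I would use \cref{lem:prime_congruence}: the prime elements of $\C L$ are precisely the congruences $\partial_P$ with $P$ a prime $\kappa$-ideal, and since meets in the frame $\C L$ are just intersections of congruences, the intersection of the primes in $\C L$ equals $\bigcap_{P} \partial_P$, taken over all prime $\kappa$-ideals $P$ on $L$. Next, by the explicit description in \cref{lem:prime_congruence_elements}, a pair $(x,y)$ lies in $\bigcap_P \partial_P$ if and only if $x \in P \iff y \in P$ for every prime $\kappa$-ideal $P$. Translating each prime $\kappa$-ideal $P$ into the point $f_P \colon L \to \mathbbm{2}$ with $f_P^{-1}(\{0\}) = P$ (the bijection between points and prime $\kappa$-ideals recalled in the aside on $\kappa$-spaces), the condition $x \in P \iff y \in P$ reads $f_P(x) = f_P(y)$. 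Hence $(x,y) \in \bigcap_P \partial_P$ iff $f(x) = f(y)$ for every point $f$, iff $U_x = U_y$, iff $\sigma_L(x) = \sigma_L(y)$; that is, $\bigcap_P \partial_P = \ker \sigma_L$.

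To conclude, recall that $\sigma_L$ is surjective, and that a surjective $\kappa$-frame homomorphism is an isomorphism if and only if it is injective, i.e.\ if and only if its kernel is the trivial congruence $0$. Since $L$ is spatial precisely when $\sigma_L$ is an isomorphism, combining this with the displayed equality $\bigcap_P \partial_P = \ker\sigma_L$ and the first step yields that $L$ is spatial iff $\ker\sigma_L = 0$ iff the intersection of the primes in $\C L$ is $0$.

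I expect nothing deep to be required here; the only points needing care are that prime $\kappa$-ideals are proper (so the translation to points is genuinely the bijection of the aside) and that ``intersection of the primes in $\C L$'' coincides with the meet of the prime elements of the frame $\C L$. If one prefers to avoid $\sigma_L$, essentially the same argument can be run directly against \cref{prop:spatial_kappa_frame}: using \cref{lem:congruence_intervals} to reduce to pairs $x < y$, the vanishing of $\bigcap_P \partial_P$ says exactly that whenever $x < y$ some prime $\kappa$-ideal contains $x$ but not $y$, which is straightforwardly equivalent to every principal $\kappa$-ideal being the intersection of the prime $\kappa$-ideals lying above it.
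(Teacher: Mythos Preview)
Your proof is correct and uses the same key ingredients as the paper (\cref{lem:prime_congruence} and \cref{lem:prime_congruence_elements}); the paper runs the argument directly against \cref{prop:spatial_kappa_frame}, which is exactly the alternative you sketch in your final paragraph, whereas your main route packages the same computation as $\bigcap_P \partial_P = \ker\sigma_L$ before invoking surjectivity of $\sigma_L$. The two presentations are equivalent and equally short.
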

\begin{proof}
 By \cref{prop:spatial_kappa_frame}, $L$ is spatial if and only if for every $x,y \in L$ with $x < y$, there is a prime $\kappa$-ideal $P$ such that
 $x \in P$, but $y \notin P$. But by \cref{lem:prime_congruence_elements}, $x \in P$ and $y \notin P$ just means that $(x,y) \notin \partial_P$.
 So $L$ is spatial precisely when $(x,y) \in \bigwedge_{P \text{ prime}} \partial_P$ implies $x = y$. The result follows.
\end{proof}

\begin{corollary}\label{cor:spatial_congruence}
 A congruence $C$ is spatial if and only if it is an intersection of primes in $\C L$.
\end{corollary}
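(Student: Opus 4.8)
The plan is to reduce to the previous lemma applied to the quotient $L/C$, using the isomorphism $\C(L/C) \cong \C L / \nabla_C$ from \cref{prop:congruence_frame_summary}. By definition $C$ is spatial precisely when $L/C$ is spatial, and by the lemma above (with $L/C$ in place of $L$) this happens exactly when the intersection of the primes of $\C(L/C)$ is the zero congruence on $L/C$.

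Next I would transport this condition into $\C L$. By \cref{lem:open_closed_lowerset_and_upperset} the frame $\C L / \nabla_C$ may be identified with the principal up-set $\uparrow\! C = \{D \in \C L \mid D \ge C\}$, with meets computed as in $\C L$ and with bottom element $C$; thus $C$ is the image of the zero congruence on $L/C$. The key observation is that an element $P \ge C$ is prime in the frame $\uparrow\! C$ if and only if it is prime in $\C L$: one direction is immediate since the meets agree, and for the other, from $x \wedge y \le P$ in $\C L$ we get $(x \vee C) \wedge (y \vee C) = (x \wedge y) \vee C \le P$, so primality of $P$ in $\uparrow\! C$ forces $x \vee C \le P$ or $y \vee C \le P$, hence $x \le P$ or $y \le P$.

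Combining these, the intersection of the primes of $\C(L/C)$, viewed inside $\C L$, is exactly $\bigwedge\{P \mid P \ge C,\ P \text{ prime in } \C L\}$, and this meet equals $C$ if and only if $C$ is an intersection of primes in $\C L$: any representation of $C$ as a meet of primes uses only primes lying above $C$, so the displayed meet is then below $C$, while the reverse inequality is trivial. Chaining the equivalences yields the claim. I expect the only delicate point to be the second paragraph --- tracking which element the zero congruence on $L/C$ corresponds to under the chain of isomorphisms, and verifying the transfer of primality; everything else is bookkeeping. (Incidentally, this argument also confirms the earlier remark that a congruence is spatial exactly when it is a spatial element of $\C L$.)
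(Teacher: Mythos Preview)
Your proposal is correct and follows essentially the same approach as the paper: apply the previous lemma to $L/C$, identify $\C(L/C)$ with $\uparrow\!C \subseteq \C L$, and observe that the primes of $\uparrow\!C$ are exactly the primes of $\C L$ lying above $C$. The paper's proof is a one-line sketch of precisely this argument; you have simply filled in the details (the distributivity step for the primality transfer and the reduction of an arbitrary prime-meet representation to one using primes above $C$), all of which are fine.
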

\begin{proof}
 We simply use $\C(L/C) \cong \,\uparrow\!C \subseteq \C L$ and apply the previous lemma,
 noting that the prime elements in $\uparrow\!C$ are precisely the prime elements in $\C L$ lying above $C$.
\end{proof}
\begin{corollary}
 The frame $\C L$ is spatial if and only if every quotient of $L$ is spatial.
\end{corollary}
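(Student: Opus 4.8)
The plan is to read the result off from \cref{cor:spatial_congruence} together with the standard characterisation of spatial frames. Since $\C L$ is a genuine frame (\cref{cor:kappa_frame_congruences_form_a_frame}), it is spatial if and only if every one of its elements is a meet of prime elements. But the elements of $\C L$ are exactly the congruences on $L$, so this condition reads: every congruence $C \in \C L$ is an intersection of prime elements of $\C L$.

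Next I would apply \cref{cor:spatial_congruence}, which identifies the congruences that are intersections of primes in $\C L$ with the \emph{spatial} congruences, that is, those $C$ for which the quotient $L/C$ is spatial. (The extreme case $C = 1$, realised as the empty intersection of primes, corresponds to the trivial quotient, which is vacuously spatial, so nothing is lost in this step.) Hence $\C L$ is spatial if and only if $L/C$ is spatial for every congruence $C$. Since, by \cref{rem:lattice_of_quotients}, the congruences on $L$ correspond bijectively to the quotients of $L$, this is precisely the statement that every quotient of $L$ is spatial, which is what we want.

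There is essentially no obstacle here: all the substance was already established in the lemma preceding \cref{cor:spatial_congruence} (which characterises spatiality of a $\kappa$-frame by the intersection of the primes of its congruence frame being $0$) and in the passage along the isomorphism $\;\uparrow\! C \cong \C(L/C)$. The present corollary is just the global reformulation of that local statement, quantified over all congruences simultaneously, so the proof amounts to stringing the two equivalences together.
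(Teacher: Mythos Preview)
Your proposal is correct and matches the paper's intended argument: the paper states this corollary without proof, since it is immediate from \cref{cor:spatial_congruence} together with the fact that a frame is spatial precisely when every element is a meet of primes. Your write-up spells this out accurately.
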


\begin{remark}\label{rem:biframe_of_spatial_congruences}
 In \cref{subsec:skula_biframe} we discussed the Skula biframe of a frame $L$.
 In light of the above and \cref{lem:skula_spatial_reflection_of_congruence}, we may
 conclude that $\Sk\Sigma L$ is the strictly zero-dimensional biframe of \emph{spatial}
 congruences on $L$.
\end{remark}

\begin{theorem}\label{thm:spatial_reflection_of_quotients}
 $\Spat(L/C) \cong L/\sigma(C)$ where $\sigma$ is the spatial reflection of $\C L$.
\end{theorem}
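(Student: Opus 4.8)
The plan is to realise both sides as the quotient of $L$ by one and the same congruence, using the correspondence between congruences on $L/C$ and congruences on $L$ lying above $C$ from \cref{rem:lattice_of_quotients}. Throughout I regard $\sigma = \sigma_{\C L}$ as a nucleus on $\C L$, so that $\sigma(C) = \bigwedge\{p \in \C L \mid p \ge C,\ p \text{ prime}\}$; being a fixed point of an idempotent inflationary operator, $\sigma(C)$ is a spatial element of $\C L$ lying above $C$, hence a spatial congruence by \cref{cor:spatial_congruence}, and $\sigma(C) \ge C$, so $L/\sigma(C)$ is a quotient of $L/C$.

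Next I would check that $\Spat(L/C)$ is precisely the quotient of $L/C$ by its smallest spatial congruence. The map $\sigma_{L/C}$ is surjective, so $\Spat(L/C) = (L/C)/E_0$ for $E_0 = \ker\sigma_{L/C}$, and $E_0$ is spatial. It is the smallest such: by naturality of $\sigma$, any $\kappa$-frame map $q\colon L/C \to S$ into a spatial $\kappa$-frame $S$ satisfies $q = \sigma_S^{-1}\circ(\Omega\Sigma q)\circ\sigma_{L/C}$, hence factors through $\sigma_{L/C}$, so $E_0 \le \ker q$; applying this to the quotient maps witnessing other spatial congruences gives the claim. By \cref{cor:spatial_congruence} applied to $L/C$, the spatial congruences on $L/C$ are exactly the intersections of prime elements of $\C(L/C)$, so $E_0$ is the intersection of all of them.

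Finally, under the isomorphism $\C(L/C) \cong\;\uparrow\!C \subseteq \C L$ of \cref{rem:lattice_of_quotients}, the prime elements of $\C(L/C)$ correspond exactly to the prime elements of $\C L$ lying above $C$, as already noted in the proof of \cref{cor:spatial_congruence}. Hence $E_0$ corresponds to $\bigwedge\{p \in \C L \mid p \ge C,\ p \text{ prime}\} = \sigma(C)$, and transitivity of quotients (\cref{rem:lattice_of_quotients}) yields $\Spat(L/C) = (L/C)/E_0 \cong L/\sigma(C)$; since every identification made is canonical, so is this isomorphism. I do not expect a genuine obstacle; the only point needing care is the verification that the spatial reflection of a $\kappa$-frame is a reflection onto spatial $\kappa$-frames, which proceeds exactly as in the frame case via naturality and surjectivity of $\sigma$.
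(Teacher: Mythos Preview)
Your proposal is correct and follows essentially the same route as the paper: both identify $\Spat(L/C)$ with the quotient of $L/C$ by its smallest spatial congruence, transport this along the isomorphism $\C(L/C)\cong\;\uparrow\!C$ using that primes correspond to primes above $C$, and recognise the resulting meet of primes above $C$ as $\sigma(C)$. Your version merely spells out in more detail why the spatial reflection gives the \emph{largest} spatial quotient (via naturality of $\sigma$), a fact the paper simply invokes from the background section.
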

\begin{proof}
 We know that $\Spat(L/C)$ is the largest spatial quotient of $L/C$.
 This corresponds to the smallest spatial congruence lying above $C$ in $\C L$.
 But a congruence is spatial if and only if it is the intersection of prime congruences
 and $\sigma(C) = \bigwedge \{\partial_P \mid \partial_P \ge C, \partial_P \text{ prime}\}$
 is then the smallest such congruence above $C$.
\end{proof}

\subsection{Compactification of congruence frames}\label{subsec:congruence_frame_compactification}

\begin{definition}
 A frame $L$ is called \emph{hereditarily $\kappa$-Lindelöf} if all of its quotients are $\kappa$-Lindelöf.
\end{definition}
\begin{lemma}\label{lem:hereditary_lindelof} %
 A frame $L$ is hereditarily $\kappa$-Lindelöf if and only if every element $a \in L$ is $\kappa$-Lindelöf.
\end{lemma}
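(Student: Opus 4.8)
The plan is to prove the two implications directly, in each case translating a cover in the relevant frame using the elementary dictionary of \cref{subsec:closed_open_congs} and avoiding any use of the congruence frame itself.

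\emph{Forward direction.} Suppose $L$ is hereditarily $\kappa$-Lindelöf and fix $a \in L$. By \cref{lem:open_closed_lowerset_and_upperset}, $\downarrow\! a \cong L/\Delta_a$ is a quotient of $L$, hence $\kappa$-Lindelöf, so its top element $a$ is a $\kappa$-Lindelöf element of $\downarrow\! a$. Given a cover $a \le \bigvee_{\alpha \in I} c_\alpha$ in $L$, I would put $x_\alpha = a \wedge c_\alpha \le a$ and note $\bigvee_{\alpha \in I} x_\alpha = a \wedge \bigvee_{\alpha \in I} c_\alpha = a$, where this join may equally be computed in $\downarrow\! a$ (joins of families bounded by $a$ agree in $L$ and in $\downarrow\! a$). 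Applying $\kappa$-Lindelöfness of $a$ in $\downarrow\! a$ yields a $\kappa$-set $J \subseteq I$ with $a = \bigvee_{\alpha \in J} x_\alpha \le \bigvee_{\alpha \in J} c_\alpha$, so $a$ is $\kappa$-Lindelöf in $L$.

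\emph{Converse.} Suppose every element of $L$ is $\kappa$-Lindelöf and let $q : L \twoheadrightarrow L/C$ be a quotient; I must show $[1]$ is $\kappa$-Lindelöf in $L/C$. Given a cover $[1] \le \bigvee_{\alpha \in I} [c_\alpha]$, the point is that $q$ preserves joins, so $\bigvee_{\alpha \in I} [c_\alpha] = [b]$ where $b = \bigvee_{\alpha \in I} c_\alpha$ is computed in $L$; since $[1]$ is the greatest element of $L/C$ this forces $[b] = [1]$, that is $(b,1) \in C$. Now $b$ is an element of $L$ and hence $\kappa$-Lindelöf, so from $b = \bigvee_{\alpha \in I} c_\alpha$ I extract a $\kappa$-set $J \subseteq I$ with $b = \bigvee_{\alpha \in J} c_\alpha$; applying $q$ again gives $[1] = [b] = \bigvee_{\alpha \in J} [c_\alpha]$, the required $\kappa$-subcover. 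Thus $L/C$ is $\kappa$-Lindelöf, and $L$ is hereditarily $\kappa$-Lindelöf.

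Both halves are short; the only place demanding a little care is the coincidence of joins in $\downarrow\! a$ with those in $L$ in the forward direction, and the recognition — in the converse — that $(b,1) \in C$ is exactly what lets a $\kappa$-subcover of $b$ in $L$ descend to a $\kappa$-subcover of $[1]$ in $L/C$. I do not expect a genuine obstacle here; the main insight is simply that this is all that is needed and that no facts about $\C L$ enter the argument.
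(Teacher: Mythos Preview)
Your proof is correct and follows essentially the same approach as the paper: the forward direction is identical (open quotients $\downarrow\! a$ witness $\kappa$-Lindel\"ofness of each $a$), and the converse lifts a cover in the quotient to $L$, extracts a $\kappa$-subcover there, and pushes it back down. The only cosmetic difference is that the paper lifts via the right adjoint $h_*$ (using $hh_* = \mathrm{id}_M$ since $h$ is onto), whereas you choose arbitrary representatives $c_\alpha$ for the classes $[c_\alpha]$; both are sections of the quotient map and the argument runs the same way.
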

\begin{proof}
 If $L$ is hereditarily $\kappa$-Lindelöf, then every open quotient $L/\Delta_a \cong \,\downarrow\!\!a$ is $\kappa$-Lindelöf
 and thus every element $a \in L$ is $\kappa$-Lindelöf.
 
 Conversely, suppose every element of $L$ is $\kappa$-Lindelöf.
 Let $h: L \twoheadrightarrow M$ be a surjective frame homomorphism. For any join $a = \bigvee_{\alpha \in I} x_\alpha$ in $M$,
 we can consider $b = \bigvee_{\alpha \in I} h_*(x_\alpha)$ in $L$. Since $b$ is $\kappa$-Lindelöf, this has a subcover of cardinality
 less than $\kappa$, $b = \bigvee_{\alpha \in J} h_*(x_\alpha)$.
 But $h$ is surjective, so $hh_* = \mathrm{id}_M$ and thus $a = h(b)$ and $\bigvee_{\alpha \in J} x_\alpha$
 is a subcover of $\bigvee_{\alpha \in I} x_\alpha$ of the appropriate cardinality.
\end{proof}

It is shown in \cite{BanaschewskiFrithGilmour} that a congruence frame $\C L$ is compact if and only if
the frame $L$ is Noetherian. Their proof easily generalises to give the following lemma. We use $\C_\kappa L$
to denote the frame of $\kappa$-frame congruences on a \emph{frame} $L$.
\begin{lemma}\label{lem:lindelof_congruence_frame} %
 Let $L$ be a frame. The following are equivalent.
 \vspace{-1.5ex}
 \begin{enumerate}
  \itemsep=0em
  \item $\C L$ is $\kappa$-Lindelöf
  \item L hereditarily $\kappa$-Lindelöf
  \item $\C L = \C_\kappa L$ (i.e.\ every $\kappa$-frame congruence on $L$ is a frame congruence)
 \end{enumerate}
\end{lemma}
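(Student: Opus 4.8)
The plan is to prove the chain of implications $(1)\Rightarrow(2)\Rightarrow(3)\Rightarrow(1)$, using the isomorphism $\C L \cong \h_\kappa \Cvar L$ from \cref{cor:kappa_frame_congruences_form_a_frame} as the central technical fact, together with \cref{cor:congruence_kappa_frame_Lindelof} which already tells us $\C L$ is always $\kappa$-Lindel\"of \emph{as a $\kappa$-frame} in the sense that its top is a $\kappa$-join of principal closed congruences. For $(1)\Rightarrow(2)$: suppose $\C L$ is $\kappa$-Lindel\"of. By \cref{prop:congruence_frame_summary}, for any congruence $C$ on $L$ we have $\C(L/C)\cong \C L/\nabla_C$, which is a quotient of a $\kappa$-Lindel\"of frame and hence $\kappa$-Lindel\"of. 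In particular, taking $C=\Delta_a$, the frame $\C(\downarrow\! a)\cong \C(L/\Delta_a)$ is $\kappa$-Lindel\"of, and since $\downarrow\! a$ embeds in its congruence frame via $\nabla_\bullet$ (which is an injective frame homomorphism by \cref{prop:congruence_frame_summary}) and subframes of $\kappa$-Lindel\"of frames are $\kappa$-Lindel\"of, $\downarrow\! a$ is $\kappa$-Lindel\"of, i.e.\ $a$ is a $\kappa$-Lindel\"of element of $L$. Then \cref{lem:hereditary_lindelof} gives that $L$ is hereditarily $\kappa$-Lindel\"of.

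For $(2)\Rightarrow(3)$: assume every element of $L$ is $\kappa$-Lindel\"of. A $\kappa$-frame congruence $C$ on $L$ is automatically a frame congruence precisely when it is closed under arbitrary joins. So take $(a_\alpha,b_\alpha)\in C$ for $\alpha\in I$ with $a_\alpha\le b_\alpha$; we must show $(\bigvee a_\alpha,\bigvee b_\alpha)\in C$. Using \cref{lem:congruence_intervals} we may work with the intervals, and the key point is that $\bigvee b_\alpha$ is $\kappa$-Lindel\"of: from $\bigvee b_\alpha = \bigvee a_\alpha \vee \bigvee b_\alpha$ (trivially) one extracts a $\kappa$-subcover, but more usefully, writing $b=\bigvee b_\alpha$ and noting $b=\bigvee_\alpha b_\alpha$ is covered by the $b_\alpha$ themselves, $\kappa$-Lindel\"ofness gives a $\kappa$-set $J\subseteq I$ with $b=\bigvee_{\alpha\in J}b_\alpha$. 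Since $(a_\alpha,b_\alpha)\in C$ for each $\alpha\in J$ and $C$ is closed under $\kappa$-joins, $(\bigvee_{\alpha\in J}a_\alpha,\bigvee_{\alpha\in J}b_\alpha)=(\bigvee_{\alpha\in J}a_\alpha,b)\in C$; and since $a_\alpha\le \bigvee_{\alpha\in I}a_\alpha$ for all $\alpha$, by \cref{lem:congruence_intervals} also $(\bigvee_{\alpha\in J}a_\alpha,\bigvee_{\alpha\in I}a_\alpha)\in C$ once we observe $\bigvee_{\alpha\in J}a_\alpha\le \bigvee_{\alpha\in I}a_\alpha\le b$ — here one should be slightly careful and instead argue directly that $\bigvee_{\alpha\in I}a_\alpha$ and $b$ are $C$-related by squeezing, using that each interval $[\,a_\alpha,b_\alpha\,]$ collapses. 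Transitivity then yields $(\bigvee a_\alpha, b)\in C$ as desired, so $C$ is a frame congruence and $\C L=\C_\kappa L$.

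For $(3)\Rightarrow(1)$: if every $\kappa$-frame congruence on $L$ is a frame congruence, then $\C_\kappa L = \C L$. But the top of $\C L$, as a $\kappa$-frame, is a $\kappa$-join of principal closed congruences by \cref{cor:congruence_kappa_frame_Lindelof} (indeed $1_{\C L}=\widetilde\nabla_L$ and, since $\Cvar L$ is $\kappa$-Lindel\"of as noted there, the cover of $1$ by the $\nabla_a$ has a $\kappa$-subcover). More carefully: $\C L\cong\h_\kappa\Cvar L$, and the top $\kappa$-ideal is generated by a $\kappa$-Lindel\"of element of $\Cvar L$ (namely $1$, since $\Cvar L$ is $\kappa$-Lindel\"of by \cref{cor:congruence_kappa_frame_Lindelof}); a $\kappa$-ideal frame $\h_\kappa M$ is $\kappa$-Lindel\"of iff $M$ is $\kappa$-Lindel\"of, so $\C L$ is $\kappa$-Lindel\"of. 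I expect the main obstacle to be the bookkeeping in $(2)\Rightarrow(3)$: one must handle the join of the $a_\alpha$'s and the join of the $b_\alpha$'s simultaneously and avoid a circular appeal to closure under arbitrary joins — the clean route is to extract the $\kappa$-subcover only for $\bigvee b_\alpha$, then note $\bigvee_{\alpha\in J}a_\alpha\le\bigvee_{\alpha\in I}a_\alpha\le\bigvee_{\alpha\in I}b_\alpha=\bigvee_{\alpha\in J}b_\alpha$ and invoke \cref{lem:congruence_intervals} to slide between the two $a$-joins inside the already-collapsed interval.
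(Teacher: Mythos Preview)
Your proof is correct, with one small caveat: in $(1)\Rightarrow(2)$ you write that $\C L/\nabla_C$ is ``a quotient of a $\kappa$-Lindel\"of frame and hence $\kappa$-Lindel\"of'', but arbitrary quotients of $\kappa$-Lindel\"of frames need not be $\kappa$-Lindel\"of (open quotients already fail). What you actually use is that it is a \emph{closed} quotient, i.e.\ ${\uparrow}\nabla_C$, which shares its top with $\C L$ and is therefore $\kappa$-Lindel\"of. With that fixed, all three implications go through.

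Your route differs from the paper's in the first two steps. For $(1)\Rightarrow(2)$ the paper argues more directly: $\nabla_a$ is complemented in $\C L$, hence $\kappa$-Lindel\"of there, and since $\nabla_\bullet:L\hookrightarrow\C L$ is an injective frame map, $a$ is $\kappa$-Lindel\"of in $L$. Your detour through $\C(L/\Delta_a)$ works but is longer. For $(2)\Rightarrow(3)$ the paper observes that $L\times L$ is again hereditarily $\kappa$-Lindel\"of, so every arbitrary join in $L\times L$ reduces to a $\kappa$-join, whence any sub-$\kappa$-frame of $L\times L$ is already a subframe; this is a one-line argument that avoids the interval bookkeeping entirely. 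Your explicit computation (extract a $\kappa$-subcover of $\bigvee b_\alpha$, then slide $\bigvee_I a_\alpha$ into the collapsed interval via \cref{lem:congruence_intervals}) is perfectly valid and arguably more self-contained, but the paper's product-frame trick is cleaner. For $(3)\Rightarrow(1)$ you and the paper agree: $\C_\kappa L$ is $\kappa$-Lindel\"of by \cref{cor:congruence_kappa_frame_Lindelof}, and the hypothesis identifies it with $\C L$.
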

\begin{proof}
 $(1 \Rightarrow 2)$ Since $\nabla_a \in \C L$ is complemented, it is a $\kappa$-Lindelöf and, since $\nabla_\bullet$ is injective, so is $a \in L$.
 
 $(2 \Rightarrow 3)$ Since $L$ is hereditarily $\kappa$-Lindelöf, so is $L \times L$. So arbitrary joins in $L \times L$ may be replaced by ones
                     of cardinality less than $\kappa$. Thus any sub-$\kappa$-frame of $L \times L$ is already a subframe and any $\kappa$-frame
                     congruence on $L$ is a frame congruence.
 
 $(3 \Rightarrow 1)$ This follows directly from \cref{cor:congruence_kappa_frame_Lindelof}.
\end{proof}

The above lemma prompts us to consider the relationship between $\C_\kappa L$ and $\C L$ in general.\
If $S \subseteq L\times L$, we will use $\langle S \rangle_\kappa$ to denote the $\kappa$-frame congruence generated by $S$
and $\langle S \rangle_\Frm$ for the frame congruence generated by $S$. We recall that $\C_\kappa L$ has a natural biframe structure,
$(\C_\kappa L, \langle\nabla_a \mid a \in L\rangle, \langle\Delta_a \mid a \in L\rangle)$.
\begin{lemma} %
 The map $h: \C_\kappa L \to \C L$ given by $h(C) = \langle C \rangle_{\Frm}$ is a dense surjective biframe homomorphism
 and thus represents a biframe $\kappa$-Lindelöfication of $\C L$.
\end{lemma}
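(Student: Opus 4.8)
The plan is to recognise $h$ as (a transport of) the canonical frame homomorphism out of a frame of $\kappa$-ideals, after which all the stated properties follow with little further work.

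First I would recall that, viewing $L$ as a $\kappa$-frame, \cref{cor:kappa_frame_congruences_form_a_frame} gives $\C_\kappa L \cong \h_\kappa \Cvar L$, where $\Cvar L$ is the congruence $\kappa$-frame of $L$. Every principal closed or open congruence is singly generated and so lies in $\Cvar L$, and by \cref{cor:congruence_lattice_generators} these generate $\C L$ under arbitrary joins; thus the inclusion $\Cvar L \hookrightarrow \C L$ is a $\kappa$-frame homomorphism into a frame, and it extends uniquely, along the unit of the adjunction $\h_\kappa \dashv (\text{forgetful})$, to a frame homomorphism $e\colon \h_\kappa \Cvar L \to \C L$ with $e(I) = \bigvee I$. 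The bookkeeping to carry out is that under the isomorphism $\C_\kappa L \cong \h_\kappa \Cvar L$ this $e$ is exactly $h$: both send a $\kappa$-frame congruence $C$ to $\langle C\rangle_\Frm$, since on the one hand every $A \in \Cvar L$ with $A \subseteq C$ satisfies $A \subseteq \langle C\rangle_\Frm$, and on the other $C = \bigcup\{\langle (x,y)\rangle \mid (x,y) \in C\}$ exhibits $C$ as a union of members of $\Cvar L$, so the join in $\C L$ of all such $A$ is a frame congruence lying between $C$ and $\langle C\rangle_\Frm$ and hence equal to $\langle C\rangle_\Frm$. In particular $h$ is a frame homomorphism, so it preserves finite meets --- the point that is awkward to verify directly.

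Surjectivity and density are then immediate. For any frame congruence $D$ we have $h(\iota(D)) = \langle D\rangle_\Frm = D$, where $\iota\colon \C L \hookrightarrow \C_\kappa L$ is the inclusion, so $h$ is a split epimorphism and in particular surjective. For density, if $h(C)$ is the diagonal congruence $0$ then $C \subseteq \langle C\rangle_\Frm = h(C) = 0$, so $C = 0$. For the biframe structure, note that each $\nabla_a$ and each $\Delta_a$ is already a \emph{frame} congruence: by \cref{lem:open_and_closed_congruences} the sets $\{(x,y) \mid x \vee a = y \vee a\}$ and $\{(x,y) \mid x \wedge a = y \wedge a\}$ are closed under arbitrary joins. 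Hence $h$ fixes these elements, and since the first and second parts of the biframe $\C_\kappa L$ are the subframes generated by $\{\nabla_a \mid a \in L\}$ and $\{\Delta_a \mid a \in L\}$ respectively, while $\nabla L$ and $\Delta L$ are the corresponding subframes of $\C L$, the frame homomorphism $h$ carries the image of each part (a subframe of $\C L$ containing all the relevant generators) onto the matching part of $\C L$. Thus $h$ is a surjective biframe homomorphism, dense because its total part is.

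Finally, $\C_\kappa L$ is $\kappa$-Lindelöf by \cref{cor:congruence_kappa_frame_Lindelof} and, as a biframe, is zero-dimensional --- each part is generated by elements ($\nabla_a$, resp.\ $\Delta_a$) complemented in the other part --- hence regular. So $h$ is a dense surjection onto $\C L$ from a regular $\kappa$-Lindelöf biframe, which, by analogy with the definition of a compactification as a dense surjection from a compact regular frame, is what we mean by a biframe $\kappa$-Lindelöfication of $\C L$. The main obstacle throughout is the meet-preservation of $C \mapsto \langle C\rangle_\Frm$; the plan sidesteps the direct inclusion $\langle C\rangle_\Frm \cap \langle D\rangle_\Frm \subseteq \langle C \cap D\rangle_\Frm$ by instead identifying $h$ with the canonical frame map out of $\h_\kappa \Cvar L$, for which meet-preservation comes for free from the universal property of $\h_\kappa$.
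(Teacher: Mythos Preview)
Your argument is correct and takes a genuinely different route from the paper's. The paper proceeds by direct verification: join-preservation is immediate from the description on generators, and for meet-preservation it observes (via \cref{lem:open_and_closed_congruences}) that principal open and closed $\kappa$-frame congruences are literally the same subsets of $L\times L$ as the corresponding frame congruences, so that intersections of these coincide in $\C_\kappa L$ and $\C L$; hence $h$ preserves finite meets on a generating set, and distributivity extends this to all finite meets. Your approach instead realises $h$ as the transport of the canonical frame map $\h_\kappa\Cvar L \to \C L$, so that being a frame homomorphism is automatic and only the identification with $C \mapsto \langle C\rangle_\Frm$ needs checking. The paper's route is more elementary and makes the reason for meet-preservation explicit; yours is more structural and explains conceptually why such a map must exist.

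One point of care: you call the map $\Cvar L \to \C L$ an ``inclusion'', but this is neither obvious nor needed. Elements of $\Cvar L$ are $\kappa$-frame congruences with fewer than $\kappa$ generators and need not themselves be frame congruences, so there is no evident subset inclusion; whether $A \mapsto \langle A\rangle_\Frm$ is injective is exactly the open question the paper raises later for $\kappa=\aleph_1$. What you actually require is only the $\kappa$-frame homomorphism $\Cvar L \to \C L$ supplied by \cref{lem:congruence_frame_universal_prop} applied to $\nabla_\bullet\colon L \to \C L$, and your computation that the induced $e\colon \h_\kappa\Cvar L \to \C L$ agrees with $h$ goes through unchanged once you write $g(A)$ in place of $A$ where appropriate.
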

\begin{proof}
 Notice that if $C = \langle(c_\gamma,c'_\gamma) \mid \gamma \in I\rangle_\kappa$ is a $\kappa$-frame congruence on $L$, then
 $h(C) = \langle(c_\gamma,c'_\gamma) \mid \gamma \in I\rangle_\Frm$ and so $h$ clearly preserves joins.
 
 By \cref{lem:open_and_closed_congruences}, principal closed and open $\kappa$-frame congruences coincide and with closed and open frame congruences.
 Furthermore, since meets of congruences are simply intersections, arbitrary meets of these also coincide and are
 thus preserved by $h$. In particular, $h$ preserves finite meets of principal congruences, but these form a generating
 set of $\C_\kappa L$ and thus $h$ preserves all finite meets by distributivity.
 
 Therefore, $h$ is a frame homomorphism. Furthermore, $h$ is evidently part-preserving and thus a biframe homomorphism.
 Surjectivity is obvious since every (closed/open) frame congruence is also a (principal closed/open) $\kappa$-frame congruence,
 while density is clear since if $(x,y) \in C$ then certainly $(x,y) \in h(C)$.  The map $h: \C_\kappa L \to \C L$
 is then a $\kappa$-Lindelöfication since $\C_\kappa L$ is $\kappa$-Lindelöf and zero-dimensional.
\end{proof}
\begin{remark}
  Note that by \cref{cor:clear_cong_characterisation} a clear $\kappa$-frame congruence $\partial_a$ coming from a principal ideal
  and the frame congruence $\partial_a$ coincide. So by \cref{lem:meet_of_clear}, we might
  also describe $h(C)$ as $\bigwedge \{\partial_a \mid \partial_a \ge C\}$.
\end{remark}

It is natural to ask what kind of $\kappa$-Lindelöfication this might be. In particular, is it universal? We will examine this question further
for $\kappa = \aleph_0$ and $\kappa = \aleph_1$.

\begin{theorem}
 The compactification $h:\Clat L \twoheadrightarrow \C L$ described above is the universal biframe compactification of $\C L$.
\end{theorem}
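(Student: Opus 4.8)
The plan is to show that $h : \Clat L \twoheadrightarrow \C L$ satisfies the universal property of the biframe compactification, namely that every biframe homomorphism from $\C L$ to a compact regular biframe factors uniquely through $h$. Since biframe compactifications correspond bijectively to strong inclusions (as recalled for biframes in Section~1), an equivalent and more convenient goal is to identify the strong inclusion $\lhd$ on $\C L$ induced by $h$ and check directly that it is the \emph{largest} strong inclusion on $\C L$; a compact regular biframe coreflection exists precisely when there is a largest strong inclusion, and then $h$ realises it. Actually the cleanest route is probably to appeal to the known fact that for a zero-dimensional frame $M$, the universal zero-dimensional compactification is $\ideal(BM)$ where $BM$ is the Boolean algebra of complemented elements (this is quoted in Section~1, citing \cite{ZeroDimensionalCompactifications}); one then argues that for the \emph{biframe} $\C L$ the analogous universal compactification among all (not merely zero-dimensional) compact regular biframes coincides with the zero-dimensional one, because $\C L$ is itself zero-dimensional and any compactification of a zero-dimensional biframe is again zero-dimensional.

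First I would recall from the preceding lemma and its remark that $h(C) = \langle C\rangle_{\Frm} = \bigwedge\{\partial_a \mid \partial_a \ge C\}$, and that $\Clat L$ is zero-dimensional and $\kappa$-Lindelöf hence (for $\kappa = \aleph_0$) compact. Next I would verify that $\Clat L$ is in fact \emph{regular}: since it is zero-dimensional, every element is a join of complemented elements, and complemented elements $e$ satisfy $e \prec e$, so zero-dimensionality gives regularity. Thus $h$ is a dense surjection from a compact regular biframe, i.e.\ genuinely a compactification. Then I would show it is the universal one: given any compactification $k : N \twoheadrightarrow \C L$ with $N$ compact regular, I want to factor $k$ through $h$. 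Here I would use the biframe version of \cite{ZeroDimensionalCompactifications}: any compact regular biframe admitting a dense surjection onto a zero-dimensional biframe is itself zero-dimensional (density and compactness force the complemented elements to be preserved and reflected appropriately), so $N$ is a compact zero-dimensional biframe, hence $N \cong \ideal B$ for a Boolean algebra $B$, and the universal property of $\ideal(B(\C L)_i)$ on each part produces the required factorisation $N \to \Clat L$. Finally I would check this factorisation is a biframe homomorphism and is unique, using that $h$ is an epimorphism (density plus surjectivity onto a regular frame, or directly that the complemented generators determine the map).

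The main obstacle I anticipate is the identification $\Clat L \cong \ideal(B(\C L))$ — more precisely, showing that the biframe of $\kappa$-frame congruences on $L$ \emph{is} the biframe of ideals on the Boolean algebra of complemented congruences, so that the universal property of $\ideal(-)$ transfers. One expects $B(\C L) = B(\Cvar L) = \Cvar L$... no: $\Cvar L$ need not be Boolean, so $B(\C L)$ is the Boolean subalgebra generated by the $\nabla_a, \Delta_a$, which is exactly $B(\Cvar L)$; and $\Clat L = \h_{\aleph_0}\Cvar L = \ideal \Cvar L$ should restrict to $\ideal(B\Cvar L)$ on the relevant parts. Pinning down that $\Clat L$ as a biframe is precisely the biframe of ideals on the complemented elements of $\C L$ — i.e.\ that every $\kappa$-frame congruence is an ideal-join of complemented ones in a way compatible with the biframe parts — is the crux; once that is in hand the universal property is immediate from the cited zero-dimensional compactification result applied in the biframe setting. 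I would be careful throughout to invoke only dependent choice, matching the paper's stated scruples.
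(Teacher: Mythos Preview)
Your proposal contains a genuine gap. You correctly mention the strong-inclusion route at the outset, but then abandon it for what you call the ``cleanest route'': identifying $\Clat L$ with $\ideal(B(\C L))$ (or some biframe variant thereof) and invoking the universal zero-dimensional compactification. This identification is false. The example immediately following the theorem in the paper exhibits a complemented element of $\C L$ that is \emph{not} finitely generated, hence not in the image of $B(\Clat L)$ under $h$; consequently $B(\C L)\ne h(B(\Clat L))$ and $\Clat L\not\cong\ideal(B(\C L))$. Indeed, that example is presented precisely to show that $h$ is \emph{not} the universal \emph{frame} compactification of $\C L$. The theorem is strictly a biframe statement, and the distinction between frame and biframe compactifications is exactly what makes it true.

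The paper's proof stays with your first idea and carries it out directly: it computes the biframe strong inclusion $\lhd_i$ induced by $h$ and shows $\prec_i\subseteq\lhd_i$ (the reverse always holds). For the first part this is immediate, since $\nabla_a\prec_1\nabla_b$ means $\Delta_a\vee\nabla_b=1$ in $\C L$, and this join is already $1$ in $\Clat L$. For the second part, $A\prec_2 B$ in $\C L$ forces a single $\Delta_c$ with $A\le\Delta_c\le B$, and one lifts $A$ and $B$ to suitable elements $A',B'$ of $(\Clat L)_2$ with $A'\le\Delta_c\le B'$. The crucial point is that the biframe rather-below relation constrains the separating element to lie in the opposite part, and it is this constraint---absent in the frame setting---that allows the lift. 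Your $\ideal(B(-))$ approach ignores this asymmetry and so cannot succeed.
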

\begin{proof}
 We consider the biframe strong inclusion induced by $h$ on $\C L$. We have $A \vartriangleleft_i B$ in $\C L$
 if and only if $A' \prec_i B'$ in $\Clat L$ for some $A',B' \in (\Clat L)_i$ with $A = h(A')$, $B = h(B')$.
 If we show $A \prec_i B$ in $\C L$ implies $A \vartriangleleft_i B$, then $h$ is universal.
 
 If $\nabla_a \prec_1 \nabla_b$ then $\Delta_a \vee \nabla_b = 1$ in $\C L$.
 But then $\Delta_a \vee \nabla_b = 1$ in $\Clat L$ by \cref{lem:joins_with_principal}.
 So $\nabla_a \prec_1 \nabla_b$ in $\Clat L$ and clearly $h(\nabla_a) = \nabla_a$ and $h(\nabla_b) = \nabla_b$.
 
 If $A = \bigvee_\alpha \Delta_{a_\alpha}$, $B = \bigvee_\beta \Delta_{b_\beta}$ and $A \prec_2 B$ then there is a $\nabla_c$
 such that $A \wedge \nabla_c = 0$ and $B \vee \nabla_c = 1$ in $\C L$. So $A \le \Delta_c \le B$. 
 
 We now work in $\Clat L$. Let $A' = \bigvee_\alpha \Delta_{a_\alpha}$ and $B' = \bigvee\{\Delta_x \mid \Delta_x \le B\}$.
 We have $A = h(A')$ and $B = h(B')$. Now $A \le \Delta_c$ means $\Delta_{a_\alpha} \le \Delta_c$ for all $\alpha$ and
 so $A' \le \Delta_c$. Also, $\Delta_c \le B$ implies $\Delta_c \le B'$. Thus $A' \prec_2 B'$.
\end{proof}
We obtain a result of \cite{StrictlyZeroDimensional} as a corollary.
\begin{definition}
 A biframe is called \emph{strongly zero-dimensional} if its universal biframe compactification is zero-dimensional.
\end{definition}
\begin{corollary}
 The congruence biframe is strongly zero-dimensional.
\end{corollary}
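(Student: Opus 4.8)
The plan is to get this as an immediate consequence of the theorem just established, namely that the map $h:\Clat L \twoheadrightarrow \C L$ is the universal biframe compactification of $\C L$. By the definition of strong zero-dimensionality, it therefore suffices to check that the biframe $\Clat L$ is itself zero-dimensional. So there is essentially nothing left to do except recall the structure of $\Clat L$.

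For the zero-dimensionality of $\Clat L$, I would argue as follows. The biframe $\Clat L$ has first part generated by the principal closed congruences $\nabla_a$ ($a \in L$) and second part generated by the principal open congruences $\Delta_a$, as recorded just before the theorem. By \cref{cor:nabla_delta_complements}, $\nabla_a$ and $\Delta_a$ are complementary in $\Clat L$. Hence the first part is generated by elements whose complements lie in the second part, and, symmetrically, the second part is generated by elements whose complements lie in the first part. This is precisely the definition of a zero-dimensional biframe (and indeed this observation was already made in passing in the proof of the $\kappa$-Lindelöfication lemma above, for general $\kappa$; here we only need the case $\kappa = \aleph_0$).

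Putting the two pieces together: the universal biframe compactification of $\C L$ is $\Clat L$, which is zero-dimensional, so $\C L$ is strongly zero-dimensional by definition. I do not anticipate any real obstacle; the only points worth stating carefully are that the biframe structure on $\Clat L$ appearing in the theorem is the one with parts $\langle\nabla_a\rangle$ and $\langle\Delta_a\rangle$, and that the ``zero-dimensional'' in the definition of strongly zero-dimensional is the biframe notion (each part generated by elements complemented in the other part) — both already fixed in the text — so the corollary is a one-line deduction.
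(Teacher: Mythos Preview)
Your proposal is correct and matches the paper's approach exactly: the corollary is stated without proof in the paper, being an immediate consequence of the preceding theorem (that $\Clat L$ is the universal biframe compactification of $\C L$) together with the zero-dimensionality of $\Clat L$, which you correctly justify via \cref{cor:nabla_delta_complements}.
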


Zero-dimensional compactifications are best analysed through their complemented elements.
The complemented elements of $\Clat L$ have a particularly nice form.
\begin{lemma}\label{lem:complemented_lattice_congruences}
 The complemented elements in $\Clat L$ are precisely the finitely generated congruences.
\end{lemma}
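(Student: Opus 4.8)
The plan is to prove the two inclusions separately, the key structural fact being that $\Clat L$ is a \emph{compact} frame. Indeed $L$ is in particular a bounded distributive lattice, i.e.\ an $\aleph_0$-frame, so $\Clat L = \C_{\aleph_0} L$ is the congruence frame of an $\aleph_0$-frame and hence compact (this is \cref{cor:congruence_kappa_frame_Lindelof} in the case $\kappa = \aleph_0$).

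First I would check that finitely generated congruences are complemented. If $C$ is finitely generated then $C = \bigvee_{i=1}^n \langle(a_i,b_i)\rangle$ for finitely many pairs, which by \cref{lem:congruence_intervals} may be taken with $a_i \le b_i$; then $\langle(a_i,b_i)\rangle = \nabla_{b_i} \wedge \Delta_{a_i}$ exactly as in \cref{lem:principal_congruence}, whose proof uses only finite lattice operations and so applies verbatim to lattice congruences. The congruences $\nabla_a$ and $\Delta_a$ are complemented in $\Clat L$ --- the identities $\nabla_a \vee \Delta_a = 1$ and $\nabla_a \wedge \Delta_a = \langle(a,a)\rangle = 0$ from the proof of \cref{cor:nabla_delta_complements} hold here too --- and the complemented elements of any frame are closed under finite meets and finite joins, so $C$ is complemented.

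For the converse, suppose $C \in \Clat L$ is complemented. Since complemented elements of a compact frame are compact (one of the basic facts about compact frames recalled earlier), $C$ is a compact element of $\Clat L$. Writing $C = \bigvee\{\langle(x,y)\rangle \mid (x,y) \in C\}$ as in \cref{cor:congruence_lattice_generators} and then re-indexing by finite subsets exhibits $C$ as the \emph{directed} join, over finite $S \subseteq C$, of the finitely generated congruences $\bigvee\{\langle(x,y)\rangle \mid (x,y) \in S\}$; this directed join is again $C$, so compactness of $C$ forces it to equal one of these finitely generated congruences. There is no real obstacle here; the only points needing a moment's care are that the cited lemmas genuinely apply to lattice (rather than frame) congruences --- which they do, since each is either stated for arbitrary $\kappa$-frames or proved using only finite operations --- and the standing fact that complemented elements of compact frames are compact.
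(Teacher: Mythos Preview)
Your proof is correct and follows essentially the same approach as the paper: finitely generated congruences are complemented because they are finite joins of finite meets of the complemented generators $\nabla_a,\Delta_a$, and conversely complemented elements of the compact frame $\Clat L$ are compact and hence finitely generated. The paper's argument is just a terser version of yours, omitting the explicit verification that the cited lemmas apply at $\kappa=\aleph_0$ and the directed-join reformulation for extracting the finite generating set.
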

\begin{proof}
 The open and closed congruences are obviously complemented. Since finitely generated congruences are simply
 finite joins of finite meets of these, they are complemented too. Conversely, suppose $C$ is a complemented
 congruence in $\Clat L$. Then $C$ is compact and thus any generating set for $C$ may be replaced by a finite one.
\end{proof}

\begin{example}
 The compactification $h:\C_{\aleph_0} L \twoheadrightarrow \C L$ need not be a universal \emph{frame} compactification.
 If $h$ were a universal zero-dimensional compactification, then every complemented element of $\C L$ would be an image
 of a complemented element (see \cite{ZeroDimensionalCompactifications}). It therefore suffices to exhibit a complemented element
 of $\C L$, which is not finitely generated. Let $L$ be the chain $\omega + 1$. The congruence
 $\langle (2n,2n+1) \mid n < \omega \rangle$ is complemented, but not finitely generated.
\end{example}

\begin{remark} %
 In \cite{FrithCong} the congruence biframe is introduced in order to describe the frame analogue of the Pervin quasi-uniformity.
 This quasi-uniformity, now known as the \emph{Frith quasi-uniformity}, is totally bounded and has $(\prec_1, \prec_2)$ as its corresponding
 strong inclusion. Thus, the universal biframe compactification $h: \Clat L \to \C L$ constructed above is in fact the \emph{bicompletion}
 of $\C L$ with respect to the Frith quasi-uniformity. %
\end{remark}

We now consider the $\kappa = \aleph_1$ case and the Lindelöfication $h: \Csigma L \to \C L$,
though other choices of $\kappa$ are very similar.
Studying this Lindelöfication involves determining the cozero elements of $\Csigma L$ and $\C L$.
\begin{lemma}\label{lem:cozero_elements_of_CsigmaL} %
 The cozero elements of $\Csigma L$ are precisely the countably generated congruences.
\end{lemma}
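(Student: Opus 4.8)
The plan is to prove the two implications separately, working directly inside the frame $\Csigma L = \C_{\aleph_1} L$. Two facts from the excerpt will do the work: $\Csigma L$ is $\aleph_1$-Lindelöf (\cref{cor:congruence_kappa_frame_Lindelof}, applied to $L$ regarded as a $\sigma$-frame), and every congruence on $L$ is a join of congruences of the form $\nabla_b \wedge \Delta_a$ (\cref{cor:congruence_lattice_generators}), these being complemented in $\Csigma L$ by \cref{cor:nabla_delta_complements}.

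For the direction ``countably generated $\Rightarrow$ cozero'' I would argue as follows. Given a congruence $C$ generated by countably many pairs, first use \cref{lem:congruence_intervals} to replace each generating pair $(a,b)$ by $(a \wedge b, a \vee b)$, so that $C = \bigvee_{n \in \N} \langle(a_n,b_n)\rangle$ with $a_n \le b_n$, and then \cref{lem:principal_congruence} rewrites this as $C = \bigvee_n (\nabla_{b_n} \wedge \Delta_{a_n})$. Each $e_n := \nabla_{b_n} \wedge \Delta_{a_n}$ is complemented in $\Csigma L$ (complemented elements being closed under finite meets) and satisfies $e_n \le C$. The key observation is that a complemented element $e$ is completely below every $c \ge e$: the family $z_q = e$ for $q \in [0,1) \cap \Q$ together with $z_1 = c$ witnesses $e \prec\prec c$, since $e \prec e$ and $e \prec c$ both hold via the separating element $e^c$. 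Hence $C$ is a countable join of elements completely below it, i.e.\ cozero.

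For ``cozero $\Rightarrow$ countably generated'', suppose $C = \bigvee_{n \in \N} D_n$ with $D_n \prec\prec C$; in particular $D_n \prec C$, so there is $Z_n \in \Csigma L$ with $D_n \wedge Z_n = 0$ and $C \vee Z_n = 1$. Expanding $C$ by \cref{cor:congruence_lattice_generators} exhibits $1 = C \vee Z_n$ as a join of the family $\{\nabla_b \wedge \Delta_a \mid (a,b) \in C,\ a \le b\} \cup \{Z_n\}$; since $1$ is $\aleph_1$-Lindelöf, a countable subfamily already joins to $1$, and collecting the countably many $\nabla_b \wedge \Delta_a$ occurring in it into a congruence $C_n \le C$ generated by a countable set $P_n$ of pairs gives $C_n \vee Z_n = 1$. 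Frame distributivity then yields $D_n = D_n \wedge (C_n \vee Z_n) = (D_n \wedge C_n) \vee (D_n \wedge Z_n) = D_n \wedge C_n \le C_n$, so $D_n \le C_n \le C$ and therefore $C = \bigvee_n C_n$. This is generated by the countable set $\bigcup_n P_n$, so $C$ is countably generated.

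I expect the cozero $\Rightarrow$ countably generated direction to be the substantive step, and within it the decisive input is the $\aleph_1$-Lindelöf property of $\Csigma L$, which is precisely what allows a witnessing cover $C \vee Z_n = 1$ to be thinned to a countably generated piece lying below $C$; everything else is bookkeeping with the generators $\nabla_b \wedge \Delta_a$ and one application of frame distributivity. The only set-theoretic point worth noting is that $\bigcup_n P_n$ is a countable union of countable sets, hence countable, which is unproblematic under the choice conventions already in force for $\sigma$-frames; the converse direction, by contrast, is choice-free.
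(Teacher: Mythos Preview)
Your argument is correct. Both directions go through as written: in the forward direction, complemented elements satisfy $e \prec\prec e$ and hence $e \prec\prec C$ for any $C \ge e$, so a countable join of such elements is cozero; in the reverse direction, the Lindel\"of property of $\Csigma L$ lets you thin each cover $C \vee Z_n = 1$ down to a countably generated $C_n \le C$ with $D_n \le C_n$, and then $C = \bigvee_n C_n$ is countably generated. The use of countable choice you flag is standard for $\sigma$-frame arguments and matches the paper's conventions.

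The paper takes a shorter, less elementary route: it invokes a result from \cite{PseudoCozeroPartFrame} that in any Lindel\"of frame the cozero elements coincide with the Lindel\"of elements, and then argues (as in \cref{lem:complemented_lattice_congruences}) that the Lindel\"of elements of $\Csigma L$ are exactly the countably generated congruences. Your proof essentially unpacks that cited result in this particular zero-dimensional setting --- your reverse direction is really showing that a cozero element is Lindel\"of relative to the generating family $\{\nabla_b \wedge \Delta_a\}$, which is what forces countable generation. The trade-off is that the paper's version is a two-line appeal to a known characterisation, while yours is self-contained and makes explicit exactly where the Lindel\"of hypothesis enters.
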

\begin{proof}
 It is shown in \cite{PseudoCozeroPartFrame} that the cozero elements of a Lindelöf frame are precisely the Lindelöf elements.
 The proof then proceeds as for \cref{lem:complemented_lattice_congruences}.
\end{proof}
\begin{lemma}\label{lem:cozero_elements_of_CL}
 The $\sigma$-biframe $\Coz \C L$ of biframe cozero elements of $\C L$ is the $\sigma$-biframe of countably generated congruences.
\end{lemma}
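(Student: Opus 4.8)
The plan is to unwind the definition of the cozero $\sigma$-biframe and prove the two inclusions, the easy one via complemented elements and the harder one by transporting the question to the compact frame $\Clat L$; unlike in \cref{lem:cozero_elements_of_CsigmaL} we cannot appeal to Lindelöfness of $\C L$ directly.

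First I would record the relevant definitions. Write $\Coz \C L = (A_0, A_1, A_2)$, where $A_1$ (resp. $A_2$) is the set of cozero elements of the frame $\C L$ lying in $\nabla L$ (resp. in $\Delta L$) and $A_0 = \langle A_1 \cup A_2\rangle$ is the sub-$\sigma$-frame they generate. Next observe that the countably generated congruences form a sub-$\sigma$-frame $\Gamma$ of $\C L$: since $\nabla_b \wedge \Delta_a = \langle(a \wedge b,\,b)\rangle$ is singly generated by \cref{lem:principal_congruence}, a finite meet of countably generated congruences is again one (by frame distributivity), and closure under countable joins is immediate. Moreover $\Gamma = \langle(\Gamma \cap \nabla L) \cup (\Gamma \cap \Delta L)\rangle$, since any $C \in \Gamma$ is a countable join $\bigvee_n \langle(a_n,b_n)\rangle = \bigvee_n \nabla_{b_n} \wedge \Delta_{a_n}$ with each $\nabla_{b_n} \in \nabla L$ and $\Delta_{a_n} \in \Delta L$ singly generated. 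Thus the claim reduces to the equalities $A_i = \Gamma \cap (\C L)_i$ for $i = 1, 2$.

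For the inclusions $\Gamma \cap (\C L)_i \subseteq A_i$, note that every $\nabla_a$ and every $\Delta_a$ is complemented in $\C L$ (\cref{cor:nabla_delta_complements}), hence satisfies $\nabla_a \prec\prec \nabla_a$ and $\Delta_a \prec\prec \Delta_a$ (take the interpolating scale to be constant) and so is a cozero element of $\C L$; since cozero elements form a sub-$\sigma$-frame, every countably generated congruence is cozero, giving $\Gamma \subseteq \Coz \C L$. For the reverse inclusions, $A_1 \subseteq \Gamma \cap \nabla L$ holds automatically because every element of $\nabla L$ is some $\nabla_a$ and hence singly generated; the real content is that a cozero element of $\C L$ lying in $\Delta L$ is countably generated. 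Here I would use the theorem proved above that $h \colon \Clat L \twoheadrightarrow \C L$ is the universal biframe compactification of the biframe $\C L$, which is completely regular since it is zero-dimensional. In the compact frame $\Clat L$ every cozero element is a countable join of complemented elements: writing such an element as $\bigvee_n c_n$ with $c_n \prec\prec c$, each relation $c_n \prec c$ together with compactness and zero-dimensionality of $\Clat L$ produces a complemented $e_n$ with $c_n \le e_n \le c$. By \cref{lem:complemented_lattice_congruences} the complemented elements of $\Clat L$ are precisely the finitely generated congruences, so $\Coz \Clat L$ consists exactly of the countably generated (lattice) congruences. Since $h$ is the universal biframe compactification it carries the cozero $\sigma$-biframe of $\Clat L$ onto that of $\C L$, and as $h(C) = \langle C\rangle_{\Frm}$ this image is precisely the $\sigma$-biframe of countably generated frame congruences.

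The step I expect to be the main obstacle is this last one — that the universal biframe compactification identifies the cozero $\sigma$-biframes — because the total part of $\Clat L$ is not the Stone--Čech compactification of the frame $\C L$, so the usual frame-level argument does not apply verbatim. Should that prove awkward, the alternative is a direct argument in the second part: given $D \in \Delta L$ cozero in $\C L$, write $D = \bigvee_n D_n$ with the $D_n$ increasing and $D_n \prec\prec D_{n+1}$, and show that the congruence separating $D_n$ from $D_{n+1}$ may be chosen in $\nabla L$; then $D_n \le \Delta_{z_n} \le D_{n+1}$ for suitable $z_n \in L$, so that $D = \bigvee_n \Delta_{z_n}$ is a countable join of principal open congruences and hence countably generated (the corresponding statement for $\nabla L$ being trivial). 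Verifying that the separating congruence can always be taken in $\nabla L$ — equivalently, that the frame relation $\prec\prec$ restricted to $\Delta L$ is already witnessed inside the biframe $\C L$ — is the delicate point.
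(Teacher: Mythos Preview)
Your alternative (direct) argument is precisely what the paper does. For the second part the paper writes: let $A\in\Delta L$ be cozero, then $A=\bigvee_n A_n$ with $A_n\prec\prec_2 A$; from $A_n\prec_2 A$ one extracts $\nabla_{b_n}$ with $A_n\wedge\nabla_{b_n}=0$ and $A\vee\nabla_{b_n}=1$, whence $A_n\le\Delta_{b_n}\le A$ and $A=\bigvee_n\Delta_{b_n}$. The passage you flag as ``delicate'' --- that the frame relation $\prec$ restricted to $\Delta L$ can be witnessed by a separating element of $\nabla L$ --- is exactly the move the paper makes silently when it writes $\prec_2$ in place of the frame $\prec$; it offers no further justification. (For the first part there is nothing to do: every element of $\nabla L$ is a single $\nabla_a$, hence complemented, hence cozero and singly generated.)

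Your primary route via $h\colon\Clat L\to\C L$ does not sidestep this difficulty. The easy inclusion (countably generated $\Rightarrow$ cozero) follows immediately from each $\nabla_a$, $\Delta_a$ being complemented; the compactification is not needed for it. For the hard inclusion you would need $h$ to carry $\Coz\Clat L$ \emph{onto} $\Coz\C L$, and this is not a general property of biframe compactifications --- only the Stone--\v Cech map enjoys it at the frame level. Unwinding what surjectivity on cozeros would mean for $A\in\Delta L$, one is again asking that $A$ be a countable join of open congruences $\Delta_c$, i.e.\ the same delicate point reappears. So the detour through $\Clat L$ buys nothing; the direct argument is the right one, and your identification of its crux is accurate.
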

\begin{proof}
 Every element in the first part of $\Coz \C L$ is complemented and thus cozero.
 Similarly, every open congruence in $\Delta L$ is cozero, as are countable joins of open congruences.
 
 Now let $A \in \Delta L$ be a cozero element. Then $A = \bigvee_{n \in \N} A_n$ with $A_n \prec\prec_2 A$ for each $n$.
 Now if $A_n \prec_2 A$, there is a $b_n \in L$ such that $A_n \wedge \nabla_{b_n} = 0$ and $A \vee \nabla_{b_n} = 1$.
 Thus $A_n \le \Delta_{b_n} \le A$ and so $A = \bigvee_{n \in \N} \Delta_{b_n}$ is countably generated.
 
 The total part of the cozero $\sigma$-biframe is then the $\sigma$-frame generated by the above cozero elements and thus
 consists of the countably generated congruences.
\end{proof}

\begin{lemma}
 The map $h: \Csigma L \to \C L$ is a universal biframe Lindelöfication if and only if every countably generated
 $\sigma$-frame congruence on $L$ is a frame congruence.
\end{lemma}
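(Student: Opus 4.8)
The plan is to compare $h$ with the universal biframe Lindelöfication of $\C L$. Since $\C L$ is zero-dimensional it is completely regular, so its universal biframe Lindelöfication is the join map $\lambda\colon\h\Coz\C L\to\C L$. As $h\colon\Csigma L\to\C L$ is itself a biframe Lindelöfication, the universal property of $\lambda$ yields a unique biframe homomorphism $t\colon\h\Coz\C L\to\Csigma L$ with $h\circ t=\lambda$, and $h$ is a universal biframe Lindelöfication exactly when $t$ is an isomorphism. So the whole proof reduces to deciding when $t$ is invertible.

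First I would identify $t$ concretely. By \cref{lem:cozero_elements_of_CL} an element of $\h\Coz\C L$ is a $\sigma$-ideal $\mathcal I$ of countably generated frame congruences on $L$, with $\lambda(\mathcal I)=\bigvee\mathcal I=\langle\bigcup\mathcal I\rangle_\Frm$. One checks that $\bigcup\mathcal I$ is a $\sigma$-frame congruence (given countably many pairs in $\bigcup\mathcal I$, each lies in some member of $\mathcal I$, the join of these countably many members again lies in $\mathcal I$ since $\mathcal I$ is a $\sigma$-ideal, and that join, being a frame congruence, absorbs the join of the countably many pairs), so uniqueness forces $t(\mathcal I)=\bigcup\mathcal I$. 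Moreover $t$ is always injective: if $E\in\Coz\C L$ and $E\subseteq\bigcup\mathcal I$, write $E=\langle\{(x_n,y_n)\}_{n<\omega}\rangle_\Frm$ and choose $C_n\in\mathcal I$ with $(x_n,y_n)\in C_n$; then $\bigvee_nC_n\in\mathcal I$ and contains $E$, so $E\in\mathcal I$, and hence $\mathcal I=\{E\in\Coz\C L\mid E\subseteq\bigcup\mathcal I\}$ is recovered from $\bigcup\mathcal I$. Consequently $t$ is an isomorphism if and only if it is surjective, i.e.\ if and only if every $\sigma$-frame congruence on $L$ is the union of a $\sigma$-ideal of countably generated frame congruences.

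Now I would run both implications through this criterion. Assume every countably generated $\sigma$-frame congruence on $L$ is a frame congruence, and let $D$ be a $\sigma$-frame congruence; put $\mathcal I=\{E\in\Coz\C L\mid E\subseteq D\}$. This downset of $\Coz\C L$ is closed under countable joins: for $E_n\in\mathcal I$, a short check using the hypothesis (which makes the countably generated frame congruences and the countably generated $\sigma$-frame congruences coincide) shows $\bigvee_nE_n=\langle\bigcup_nE_n\rangle_\Frm=\langle\bigcup_nE_n\rangle_{\aleph_1}\subseteq D$, the last inclusion because $D$ is a $\sigma$-congruence containing each $E_n$. So $\mathcal I\in\h\Coz\C L$, and $t(\mathcal I)=\bigcup\mathcal I=D$ since $\langle(a,b)\rangle\in\mathcal I$ for every $(a,b)\in D$ (principal congruences are frame congruences by \cref{lem:principal_congruence}). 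Thus $t$ is surjective, hence an isomorphism, and $h$ is a universal biframe Lindelöfication. Conversely, suppose $h$ is universal, so $t$ is surjective, and let $F=\langle S\rangle_{\aleph_1}$ be a countably generated $\sigma$-frame congruence with $S$ countable. Write $F=\bigcup\mathcal J$ with $\mathcal J$ a $\sigma$-ideal of countably generated frame congruences. Each $s\in S$ lies in some $C_s\in\mathcal J$; since $S$ is countable, $\bigvee_{s\in S}C_s\in\mathcal J$, and this frame congruence contains $S$, hence contains $\langle S\rangle_\Frm$. Therefore $\langle S\rangle_\Frm\subseteq\bigvee_sC_s\subseteq\bigcup\mathcal J=F$, and since $F\subseteq\langle S\rangle_\Frm$ always, $F=\langle S\rangle_\Frm$ is a frame congruence.

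The main obstacle is the surjectivity analysis of $t$: the crux is that a $\sigma$-ideal of countably generated frame congruences is closed under countable joins \emph{computed in $\C L$}, and such a join can be strictly larger than the $\sigma$-frame congruence generated by the same pairs, so it fails to stay inside a given $\sigma$-frame congruence precisely when the stated hypothesis fails. Everything else — that $\bigcup\mathcal I$ is a $\sigma$-congruence, the injectivity bookkeeping, and the verification that $t$ is a biframe (not merely frame) homomorphism compatible with the two parts — is routine.
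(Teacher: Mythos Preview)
There is a genuine gap in the setup. The universal property of the Lindel\"of coreflection $\lambda$ gives a comparison map in the \emph{opposite} direction: since $\Csigma L$ is Lindel\"of and zero-dimensional, the map $h\colon\Csigma L\to\C L$ factors as $\lambda\circ s$ for a unique biframe homomorphism $s\colon\Csigma L\to\h\Coz\C L$ (concretely $s=\h\Coz h$ after identifying $\Csigma L$ with $\h\Coz\Csigma L$), and $h$ is universal exactly when $s$ is an isomorphism. There is no reason for a biframe homomorphism $t\colon\h\Coz\C L\to\Csigma L$ with $h\circ t=\lambda$ to exist in general, and your explicit candidate $t(\mathcal I)=\bigcup\mathcal I$ is \emph{not} one: it fails to preserve joins. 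Indeed, for principal $\sigma$-ideals $\mathcal I_n=\,\downarrow\!E_n$ one has $t\bigl(\bigvee_n\mathcal I_n\bigr)=\bigvee_n E_n$ computed in $\C L$ (a frame join), whereas $\bigvee_n t(\mathcal I_n)=\bigvee_n E_n$ computed in $\Csigma L$ (a $\sigma$-frame join), and these differ precisely when some countably generated $\sigma$-frame congruence is not a frame congruence. So the claim ``$h$ is universal exactly when $t$ is an isomorphism'' is unjustified, and this breaks your converse direction, which opens with ``suppose $h$ is universal, so $t$ is surjective''.

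The paper avoids this by working with $\Coz h\colon\Coz\Csigma L\to\Coz\C L$, which \emph{is} always a $\sigma$-frame homomorphism; then $h$ is universal iff $\Coz h$ is an isomorphism. For the converse the paper does not try to express a given $\sigma$-frame congruence as a union of a $\sigma$-ideal; instead it observes that $h$ fixes principal congruences, so once $\Coz h$ is an isomorphism (hence order-reflecting) one gets $\langle(a,b)\rangle\le C\iff\langle(a,b)\rangle\le h(C)$ for every $C\in\Coz\Csigma L$, whence $C=h(C)$ is a frame congruence. Your forward direction is essentially fine and would go through with $s$ in place of $t$ (or, more simply, by noting that under the hypothesis $\Coz h$ is literally the identity), but the converse needs the order-reflection argument rather than surjectivity of $t$.
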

\begin{proof}
 The universal biframe Lindelöfication of $\C L$ is given by $\lambda: \h \Coz \C L \to \C L$ with $I \mapsto \bigvee I$
 where $\Coz$ and $\h$ are respectively the biframe cozero and biframe $\sigma$-ideal functors.
 Thus, $h: \Csigma L \to \C L$ is universal if and only if $h$ induces an isomorphism of the biframe cozero elements of $\Csigma L$ and $\C L$,
 which by \cref{lem:cozero_elements_of_CsigmaL,lem:cozero_elements_of_CL} are the countably generated $\sigma$-frame and frame
 congruences respectively.
 
 If every element of countably generated $\sigma$-frame congruence is a frame congruence, then $\Coz h$ acts as the identity map
 and thus $h$ is universal.
 
 Conversely, suppose $h$ is universal and so $\Coz h$ is an isomorphism.
 Notice that $h$ acts as the identity on principal congruences. So letting $C \in \Coz \Csigma L$, we have
 $\langle(a,b)\rangle \le C$ if and only if $\langle(a,b)\rangle \le h(C)$.
 But this means $(a,b) \in C$ if and only if $(a,b) \in h(C)$ and thus $C = h(C)$ and $C$ is a frame congruence.
\end{proof}
\begin{remark}
 I do not have an example of a countably generated $\sigma$-frame congruence on a frame that is not a frame congruence.
 If no such example exists then $h:\Csigma L \to L$ is always universal, just as for the compactification $h_{\aleph_0}: \Clat L \to L$.
 This is an opportunity for further investigation.
\end{remark}

\subsection{Coherent congruences}\label{subsec:lindelof_congruences}

The following result provides a link between the $\kappa$-frame congruences on a $\kappa$-frame $L$
and the frame congruences on the frame freely generated by $L$.
This was observed in \cite{CoherentCongruences} for $\kappa = \aleph_0$, but we show that it holds in general.

\begin{lemma}\label{lem:congruences_on_frames_of_ideals}%
 Let $L$ be a $\kappa$-frame.
 There is an injective frame homomorphism $\iota: \C L \hookrightarrow \C \h_\kappa L$. %
 Furthermore, $\h_\kappa(L/C) \cong \h_\kappa L / \iota(C)$ and the latter quotient map is $\kappa$-proper.
\end{lemma}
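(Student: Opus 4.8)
The plan is to construct $\iota$ from the universal property of the congruence frame, read off an explicit formula for it, use that formula to obtain the claimed quotient isomorphism together with $\kappa$-properness, and finally deduce injectivity from a comparison of kernels.

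\emph{Constructing $\iota$.} Composing the universal property of $\Cvar$ from \cref{lem:congruence_frame_universal_prop} with the equivalence $\C\cong\h_\kappa\circ\Cvar$ of \cref{cor:kappa_frame_congruences_form_a_frame} and the free--forgetful adjunction between frames and $\kappa$-frames, one obtains the following universal property of $\C L$: for every frame $M$ and every $\kappa$-frame homomorphism $f\colon L\to M$ whose image consists of complemented elements, there is a unique frame homomorphism $\overline f\colon\C L\to M$ with $\overline f\circ\nabla_\bullet=f$. I would apply this with $M=\C\h_\kappa L$ and with $f\colon L\to\C\h_\kappa L$ the composite $a\mapsto\,\downarrow\! a\mapsto\nabla_{\downarrow a}$, which is a $\kappa$-frame homomorphism landing among the complemented elements ($\nabla_{\downarrow a}$ being complemented by $\Delta_{\downarrow a}$). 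This produces a frame homomorphism $\iota\colon\C L\to\C\h_\kappa L$ with $\iota(\nabla_a)=\nabla_{\downarrow a}$, and hence, as $\iota$ preserves complements, $\iota(\Delta_a)=\Delta_{\downarrow a}$. Combining these values with \cref{cor:congruence_lattice_generators}, \cref{lem:principal_congruence} and join-preservation then gives the explicit description $\iota(C)=\langle(\downarrow\! a,\downarrow\! b)\mid(a,b)\in C\rangle_{\Frm}$.

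\emph{The quotient.} Write $q_C\colon L\twoheadrightarrow L/C$ for the quotient map and $\pi\colon\h_\kappa L\twoheadrightarrow\h_\kappa L/\iota(C)$ for the quotient by $\iota(C)$. Since $\iota(C)$ contains $(\downarrow\! a,\downarrow\! b)$ for every $(a,b)\in C$, the composite of $\pi$ with the unit $\eta_L\colon L\to\h_\kappa L$ identifies such $a$ and $b$, hence factors through $q_C$ and, by freeness of $\h_\kappa$, extends to a frame homomorphism $\phi\colon\h_\kappa(L/C)\to\h_\kappa L/\iota(C)$. Conversely $\h_\kappa q_C$ sends $\downarrow\! a$ to $\downarrow\! q_C(a)$, so it annihilates every generator of $\iota(C)$ and thus factors as $\psi\circ\pi$ for a frame homomorphism $\psi\colon\h_\kappa L/\iota(C)\to\h_\kappa(L/C)$. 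Using naturality of the unit (so that $\h_\kappa q_C\circ\eta_L=\eta_{L/C}\circ q_C$) one checks that $\phi$ and $\psi$ restrict to identities on the images of the principal-ideal maps, which join-generate their codomains; hence $\phi$ and $\psi$ are mutually inverse and $\h_\kappa(L/C)\cong\h_\kappa L/\iota(C)$, with $\pi$ corresponding to $\h_\kappa q_C$. In particular $\ker(\h_\kappa q_C)=\iota(C)$. Finally, $\h_\kappa q_C$ takes principal ideals to principal ideals; since the $\kappa$-Lindelöf elements of $\h_\kappa M$ are exactly the principal ideals (an element lying in a join of $\kappa$-ideals already lies in the join of $\kappa$-many of them, and a $\kappa$-Lindelöf $\kappa$-ideal is the join of $\kappa$-many principal ideals below it, hence principal), the map $\pi$ is $\kappa$-proper.

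\emph{Injectivity and the main obstacle.} Suppose $\iota(C)=\iota(D)$. Then $\h_\kappa q_C$ and $\h_\kappa q_D$ are frame homomorphisms out of $\h_\kappa L$ with a common kernel $\iota(C)=\iota(D)$. Pulling this kernel back along $\eta_L$ and using its naturality and injectivity, I obtain $C=\ker q_C=\ker(\h_\kappa q_C\circ\eta_L)=\ker(\h_\kappa q_D\circ\eta_L)=\ker q_D=D$, so $\iota$ is injective. The only steps here that are not pure adjunction-chasing are pinning down the explicit form of $\iota(C)$ and verifying the isomorphism $\h_\kappa(L/C)\cong\h_\kappa L/\iota(C)$; once these are secured, the homomorphism property and injectivity of $\iota$, as well as the $\kappa$-properness, follow formally, and I expect the mutual-inverse check for that isomorphism to be the most delicate point.
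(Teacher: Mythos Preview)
Your argument is correct, and it reaches the same conclusions as the paper, but the route is genuinely different. The paper introduces an explicit description of $\iota(C)$: it defines a preorder $I\sqsubseteq_C J$ on $\h_\kappa L$ by ``every element of $I$ is $C$-related to some element of $J$'', sets $\widehat C=\ {\sqsubseteq_C}\cap{\sqsubseteq_C^{\mathrm{op}}}$, and then does two element-level verifications: first that $\iota(C)=\widehat C$, and second that $\ker(\h_\kappa q_C)=\widehat C$. Injectivity is then read off from the resulting equivalence $(\downarrow a,\downarrow b)\in\iota(C)\iff(a,b)\in C$, and the isomorphism $\h_\kappa(L/C)\cong\h_\kappa L/\iota(C)$ follows from the second verification.

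You instead bypass $\widehat C$ entirely: you construct $\phi$ and $\psi$ directly from the universal properties of quotients and of $\h_\kappa$, check they are mutually inverse on generators, conclude $\ker(\h_\kappa q_C)=\iota(C)$, and then pull back along the (injective, natural) unit $\eta_L$ to obtain injectivity of $\iota$. This is essentially the observation that $\h_\kappa$, being a left adjoint, sends the quotient $L\twoheadrightarrow L/C$ to the quotient of $\h_\kappa L$ by the congruence generated by the image of $C$, which is exactly $\iota(C)$. Your approach is cleaner and more conceptual; the paper's approach has the advantage of producing a concrete, usable description of the pairs in $\iota(C)$ (the preorder $\sqsubseteq_C$), which can be handy if one later needs to compute with these congruences rather than merely know they exist. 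The mutual-inverse check you flag as delicate is indeed routine once one tracks both composites on $\pi(\downarrow a)$ and $\downarrow q_C(a)$ and uses that these generate the respective codomains.
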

\begin{proof}
 We define $\iota$ using the universal properties of $\Cvar L$ and $\h_\kappa$ as in the following diagram.
 \begin{center}
   \begin{tikzpicture}[auto]
    \node (L) {$L$};
    \node (hL) [right=2.5cm of L] {$\h_\kappa L$};
    \node (ChL) [right=2.5cm of hL] {$\C\h_\kappa L$};
    
    \node (CL) [below=1.5cm of L] {$\Cvar L$};
    \node (hCL) [below=1.5cm of CL] {$\h_\kappa\Cvar L$};
    \node (hCL') [right=5.9cm of hCL] {$\C L$};
    
    \draw[->] (L) to node {$\downarrow$} (hL);
    \draw[->] (hL) to node {$\nabla$} (ChL);
    \draw[->] (L) to node [swap] {$\nabla$} (CL);
    \draw[->] (CL) to node [swap] {$\downarrow$} (hCL);
    \draw[double equal sign distance] (hCL) to node {} (hCL');
    
    \draw[->, dashed] (CL) to node {} (ChL);
    \draw[->, dashed] (hCL) to node {} (ChL);
    \draw[->, dashed] (hCL') to node [swap] {$\iota$} (ChL);
   \end{tikzpicture}
 \end{center}
 By commutativity, $\iota(\nabla_a) = \nabla_{\downarrow a}$ and so $\iota(C) = \langle (\downarrow\!a,\downarrow\!b) \mid (a,b) \in C\rangle$.
 
 If $I, J \in \h_\kappa L$, we will write $I \sqsubseteq_C J$ if $(\forall a \in I)(\exists b \in J)\ (a,b) \in C$.
 This is a pre-order which is closed under finite meets, $\kappa$-joins and $\kappa$-directed joins.
 Thus $\widehat{C} = \,\sqsubseteq_C \cap \sqsubseteq_C^\mathrm{op}$ is a congruence on $\h_\kappa L$.
 We show that $\iota(C) = \widehat{C}$.
 
 Certainly, $\iota(C) \le \widehat{C}$. Now let $(I,J) \in \widehat{C}$.
 Then $(\forall a \in I)(\exists b_a \in J)\ (\downarrow\!\!a,\downarrow\!\!b_a) \in \iota(C)$.
 So $I = \bigvee_{a \in I} \downarrow\!\!a \sim_{\iota(C)} \bigvee_{a \in I} \downarrow\!\!b_a := K \le J$. Similarly, we have a $K' \in \h_\kappa L$ such that
 $J \sim_{\iota(C)} K' \le I$. Thus $(I, J) = (I, K) \vee (K',J) \in \iota(C)$ and $\widehat{C} \le \iota(C)$ as required.
 
 Therefore $(\downarrow\!a, \downarrow\!b) \in \iota(C) \iff (a,b) \in C$ and $\iota$ is injective.
 
 Now let $f = \h_\kappa\nu_C$ where $\nu_C: L \twoheadrightarrow L/C$ is a quotient map. Clearly $f$ is surjective, $\kappa$-proper
 and $\ker f \ge \widehat{C}$. We show that $\ker f \le \widehat{C}$.
 
 Suppose $(I, J) \in \ker f$. We may express $J$ as a $\kappa$-directed join $J = \bigvee_\alpha {\downarrow\!b_\alpha}$.
 Then $f(I) = f(J) = \bigvee_\alpha {\downarrow\!\nu_C(b_\alpha)}$, which is also $\kappa$-directed and so
 $f(I) = \bigcup_\alpha {\downarrow \!\nu_C(b_\alpha)}$.
 If $a \in I$, then $\nu_C(a) \in f(I)$ and so $\nu_C(a) \le \nu_C(b_\beta)$ for some $\beta$.
 We find that $b_\beta \wedge a \in J$ and also $(a, b_\beta \wedge a) \in C$.
 Therefore $I \sqsubseteq J$. Similarly, we find $J \sqsubseteq I$ and hence $(I,J) \in \widehat{C}$.
 
 Thus, $\ker f = \widehat{C}$ and $\h_\kappa L / \widehat{C} \cong \h_\kappa(L/C)$.
\end{proof}

\begin{definition}
 Let $L$ be a $\kappa$-coherent frame. We call a congruence $C$ on $L$ a \emph{$\kappa$-coherent} congruence if
 it is generated by pairs of $\kappa$-Lindelöf elements of $L$.
\end{definition}
\begin{remark}
 Notice that a congruence on a $\kappa$-coherent frame $L$ is $\kappa$-coherent if and only if it lies in the subframe of $\C L$ generated
 by all the closed congruences and the open congruences of the form $\Delta_a$ where $a$ is $\kappa$-Lindelöf element of $L$.
\end{remark}

\begin{corollary}\label{cor:coherent_quotients}
 Let $L$ be a $\kappa$-coherent frame. A quotient $L/C$ is $\kappa$-coherent with $\kappa$-proper quotient map if and only if
 the congruence $C$ is $\kappa$-coherent.
\end{corollary}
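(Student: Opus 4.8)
The plan is to reduce at once to the case $L = \h_\kappa M$ where $M = K_\kappa(L)$, which is legitimate since every $\kappa$-coherent frame is of this form. Under this identification the $\kappa$-Lindelöf elements of $L$ are exactly the principal $\kappa$-ideals $\downarrow\!a$ ($a \in M$), so $K_\kappa(L)$ is canonically $M$. Using the description $\iota(D) = \langle(\downarrow\!a,\downarrow\!b) \mid (a,b)\in D\rangle$ from \cref{lem:congruences_on_frames_of_ideals}, the first point to record is that the $\kappa$-coherent congruences on $L$ --- those generated by pairs of $\kappa$-Lindelöf elements --- are precisely the congruences of the form $\iota(D)$ for $D \in \C M$. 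Indeed, if $C = \langle S\rangle$ with $S$ a set of pairs of $\kappa$-Lindelöf elements, then each pair in $S$ is of the form $(\downarrow\!a,\downarrow\!b)$ with $(a,b)\in D := \{(a,b)\in M\times M \mid (\downarrow\!a,\downarrow\!b)\in C\}$ (a congruence on $M$, being the preimage of $C$ under $\downarrow\!\colon M \to L$), whence $C = \langle S\rangle \subseteq \iota(D) \subseteq C$; and conversely every $\iota(D)$ is visibly generated by such pairs.

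The backward implication is then a direct reading of \cref{lem:congruences_on_frames_of_ideals}. If $C = \iota(D)$ is $\kappa$-coherent, that lemma gives $L/C = \h_\kappa M/\iota(D) \cong \h_\kappa(M/D)$ with $\kappa$-proper quotient map, and $\h_\kappa(M/D)$ is $\kappa$-coherent as a frame of $\kappa$-ideals. Hence $L/C$ is $\kappa$-coherent with $\kappa$-proper quotient map.

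For the forward implication, suppose $q\colon L \twoheadrightarrow L/C$ is $\kappa$-proper with $L/C$ $\kappa$-coherent. First I would check that $q$ restricts to a \emph{surjection} $q|_M\colon M \to K_\kappa(L/C)$: properness gives $q(\downarrow\!a)\in K_\kappa(L/C)$, and given $c \in K_\kappa(L/C)$ one writes $c = q(I)$, expands $I = \bigvee_{a\in I}\downarrow\!a$, and uses $\kappa$-Lindelöfness of $c$ to cut this down to a $\kappa$-subjoin $\downarrow\!b$ ($b\in M$) with $q(\downarrow\!b) = c$. Put $D = \ker(q|_M)$, so $M/D \cong K_\kappa(L/C)$. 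Since $L/C$ is $\kappa$-coherent, $L/C \cong \h_\kappa K_\kappa(L/C) \cong \h_\kappa(M/D) \cong \h_\kappa M/\iota(D)$; comparing the two resulting quotient maps $L \to \h_\kappa(M/D)$ on the generators $\downarrow\!a$ of $L$ (both send $\downarrow\!a$ to $\downarrow\![a]$) shows that $q$ is exactly the quotient by $\iota(D)$, so $C = \iota(D)$ is $\kappa$-coherent.

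The main obstacle is precisely this last bookkeeping in the forward direction: one must verify that the $\kappa$-coherent structure witness $L/C \cong \h_\kappa K_\kappa(L/C)$ is compatible with $q$ --- i.e. that, up to the relevant isomorphisms, $q$ \emph{is} $\h_\kappa\nu_D$, rather than merely that $L/C$ and $L/\iota(D)$ are abstractly isomorphic --- so that $\ker q = \iota(D)$ on the nose. Everything else is a direct appeal to \cref{lem:congruences_on_frames_of_ideals} together with routine manipulation of $\kappa$-Lindelöf elements.
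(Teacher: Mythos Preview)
Your proposal is correct and follows essentially the same route as the paper: for the backward direction both invoke \cref{lem:congruences_on_frames_of_ideals} directly, and for the forward direction both restrict the quotient map $q$ to $\kappa$-Lindel\"of elements, verify that this restriction is a surjection onto $K_\kappa(L/C)$, set $D$ equal to its kernel, and then identify $q$ with $\h_\kappa$ of this restriction (by agreement on generators) to conclude $C = \iota(D)$. The ``bookkeeping obstacle'' you flag is exactly what the paper handles with the terse line $q = \h_\kappa\tilde{q}$, and your justification via comparison on principal ideals is the right one.
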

\begin{proof}
 The backward direction follows directly from the previous lemma. We now show the forward direction.
 
 Suppose $q: L \twoheadrightarrow L/C$ is a $\kappa$-proper map between $\kappa$-coherent frames.
 Let $\Lind L$ denote the $\kappa$-Lindelöf elements of $L$. Then $q(\Lind L) \subseteq \Lind L/C$.
 Now since $L$ is $\kappa$-coherent, $\Lind L$ is closed under finite meets and $\kappa$-joins
 and generates $L$ under arbitrary joins. Similar results hold for $q(\Lind L)$ since $q$ is a surjective frame map.
 But every element of $\Lind L/C$ is then a $\kappa$-join of elements of $q(\Lind L)$ and thus $q(\Lind L) = \Lind L/C$.
 Hence $q$ induces a surjective $\kappa$-frame homomorphism $\tilde{q}: \Lind L \twoheadrightarrow \Lind L/C$
 and $q = \h_\kappa \tilde{q}$\, so that $C = \iota(\ker \tilde{q})$.
\end{proof}

An especially interesting application of the above is to completely regular $\kappa$-Lindelöf frames.
\begin{corollary}\label{cor:regular_Lindelof_quotients} %
 Let $L$ be a completely regular $\kappa$-Lindelöf frame for $\kappa \ge \aleph_1$. The $\kappa$-Lindelöf
 quotients of $L$ are precisely the quotients by $\kappa$-coherent congruences.
\end{corollary}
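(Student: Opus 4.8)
The plan is to reduce the statement to Corollary~\ref{cor:coherent_quotients}. Since $\kappa \ge \aleph_1$, a completely regular $\kappa$-Lindelöf frame is $\kappa$-coherent (as recorded in the background), so $L$ satisfies the hypothesis of that corollary, and it suffices to identify its $\kappa$-coherent quotients (those $L/C$ which are $\kappa$-coherent with $\kappa$-proper quotient map) with its $\kappa$-Lindelöf quotients. One inclusion is immediate: if $C$ is $\kappa$-coherent then $L/C$ is $\kappa$-coherent by Corollary~\ref{cor:coherent_quotients}, and any $\kappa$-coherent frame is $\kappa$-Lindelöf because its top element lies in the sub-$\kappa$-frame $K_\kappa$ and is hence $\kappa$-Lindelöf.

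For the converse, let $q\colon L \twoheadrightarrow M$ be the quotient map onto $M = L/C$ and assume $M$ is $\kappa$-Lindelöf; by Corollary~\ref{cor:coherent_quotients} it is enough to show that $M$ is $\kappa$-coherent and that $q$ is $\kappa$-proper. For $\kappa$-properness, take a $\kappa$-Lindelöf $a \in L$. Regularity of $L$ together with $\kappa$-Lindelöf-ness of $a$ lets us write $a = \bigvee_{i \in J} b_i$ with $|J| < \kappa$ and $b_i \prec a$, and applying $q$ to a separating element for $b_i \prec a$ shows $q(b_i) \prec q(a)$ in $M$. Now if $q(a) \le \bigvee_l d_l$ is any cover, then for a separating element $w_i$ of $q(b_i) \prec q(a)$ we have $1 = q(a) \vee w_i \le \bigvee_l d_l \vee w_i$; since $1_M$ is $\kappa$-Lindelöf this admits a $\kappa$-subcover, and meeting it with $q(b_i)$, using $q(b_i) \wedge w_i = 0$, gives a $\kappa$-subcover of $q(b_i)$. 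As $\kappa$ is regular, the union of these over $i \in J$ is a $\kappa$-subcover of $q(a) = \bigvee_i q(b_i)$, so $q(a)$ is $\kappa$-Lindelöf.

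It remains to show $M$ is $\kappa$-coherent. Since $L$ is $\kappa$-coherent, $K_\kappa(L)$ generates $L$, so $q(K_\kappa(L))$ generates $M$. By $\kappa$-properness $q(K_\kappa(L)) \subseteq K_\kappa(M)$, and conversely a $\kappa$-Lindelöf element of $M$ is a $\kappa$-subjoin of a generating family drawn from $q(K_\kappa(L))$, so, taking the corresponding $\kappa$-join in $K_\kappa(L)$, it already lies in $q(K_\kappa(L))$; hence $K_\kappa(M) = q(K_\kappa(L))$. Being the image of a sub-$\kappa$-frame under a frame surjection, $K_\kappa(M)$ is a sub-$\kappa$-frame of $M$, and we have just seen it generates $M$, so $M$ is $\kappa$-coherent. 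Corollary~\ref{cor:coherent_quotients} then yields that $C$ is $\kappa$-coherent, finishing the proof.

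The delicate point is the $\kappa$-properness of $q$. The naive claim that a frame surjection carries $\kappa$-Lindelöf elements to $\kappa$-Lindelöf elements is false in general — for instance $\Omega([0,1])$ is a compact, non-coherent quotient of a (coherent) free frame — so complete regularity of $L$ must be used essentially, and it enters exactly through the separating elements for the relation $\prec$ in the argument above. The remaining steps are routine manipulations around Corollary~\ref{cor:coherent_quotients}.
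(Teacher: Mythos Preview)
Your proof is correct and follows the same overall strategy as the paper: reduce to Corollary~\ref{cor:coherent_quotients} by checking that, for a completely regular $\kappa$-Lindel\"of $L$ with $\kappa \ge \aleph_1$, any $\kappa$-Lindel\"of quotient map is $\kappa$-proper with $\kappa$-coherent image. The only difference is presentational: where the paper simply cites \cite{Madden} for the fact that $\kappa$-Lindel\"of quotients of completely regular $\kappa$-Lindel\"of frames have $\kappa$-proper quotient maps, you give the argument explicitly via separating elements for $\prec$, which has the merit of isolating exactly where complete regularity enters (and your closing remark on the non-coherent compact quotient of a free frame nicely underscores why this hypothesis cannot be dropped).
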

\begin{proof}
 By \cite{Madden}, every completely regular $\kappa$-Lindelöf (with $\kappa \ge \aleph_1$) is $\kappa$-coherent and
 furthermore, $\kappa$-Lindelöf quotients of these have $\kappa$-proper quotients maps. We may now apply \cref{cor:coherent_quotients}.
\end{proof}

In fact, we can say even more. For simplicity, we now restrict ourselves to the $\kappa = \aleph_1$ case,
but everything can be generalised to any $\kappa \ge \aleph_1$ (and even to $\kappa = \aleph_0$ if we
substitute in zero-dimensionality for complete regularity).
\begin{definition}
 A map $f: L \to M$ of completely regular frames is called \emph{coz-onto} if $\Coz f$ is surjective.
\end{definition}

\begin{theorem}\label{lem:lindelofication_of_quotients} %
 Let $L$ be a completely regular Lindelöf frame and $q: L \twoheadrightarrow L/C$ a coz-onto frame quotient.
 Then the canonical map $g: L / \iota\iota_*(C) \to L/C$ is the universal Lindelöfication of $L/C$.
\end{theorem}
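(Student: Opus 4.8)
The plan is to identify $L/\iota\iota_*(C)$ with the Lindelöfication $\h\Coz(L/C)$ and the map $g$ with the canonical join map $\lambda_{L/C}$, so that the universal property we must prove reduces to the one already known for the coreflection $\h\Coz$.

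First I would make the standard reductions. Since $L$ is completely regular Lindelöf, the counit $\lambda_L\colon\h\Coz L\to L$ is an isomorphism, so I identify $L$ with $\h\Coz L$; in particular every $a\in L$ is the join of the cozero elements below it. Under this identification the map $\iota$ of \cref{lem:congruences_on_frames_of_ideals}, applied to the $\sigma$-frame $\Coz L$, is an injective frame homomorphism $\iota\colon\C\Coz L\hookrightarrow\C L$ sending each closed congruence $\nabla_c$ ($c\in\Coz L$) to $\nabla_c$; since $\bigvee_{c\le a,\,c\in\Coz L}\nabla_c=\nabla_a$ for every $a\in L$, one checks that $\image\iota$ is the subframe of $\aleph_1$-coherent congruences, so $\iota\iota_*(C)$ is the largest $\aleph_1$-coherent congruence below $C$ (and indeed $\iota\iota_*(C)\le C$, as $\iota\iota_*$ is deflationary). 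Also $L/C$ is completely regular, since complete regularity passes to surjections, so $\lambda_{L/C}\colon\h\Coz(L/C)\to L/C$ is defined and $g$ is a well-defined frame surjection.

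The crux is the identity $\iota\iota_*(C)=\iota(E)$, where $E=\ker(\Coz q)$. Because $q$ is coz-onto, the quotient $\nu_E\colon\Coz L\twoheadrightarrow\Coz L/E$ is, up to isomorphism, $\Coz q\colon\Coz L\twoheadrightarrow\Coz(L/C)$, so \cref{lem:congruences_on_frames_of_ideals} gives $\iota(E)=\ker(\h\Coz q)$ together with isomorphisms $\h\Coz L/\iota(E)\cong\h(\Coz L/E)\cong\h\Coz(L/C)$ under which the quotient map is $\h\Coz q$. To see $\iota(E)\le C$: if $(a,a')\in\ker(\h\Coz q)$, then applying $\lambda_{L/C}$ to the two equal $\sigma$-ideals $\h\Coz q(\,\downarrow\! a\cap\Coz L)$ and $\h\Coz q(\,\downarrow\! a'\cap\Coz L)$ gives $q(a)=q(a')$, using that $a,a'$ are joins of the cozero elements below them and that $q$ preserves joins; hence $(a,a')\in C$. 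For maximality, if $\iota(D)\le C$ for some $D\in\C\Coz L$ and $(c,c')\in D$, then $(\nabla_c,\nabla_{c'})\in\iota(D)\le C$, so $q(c)=q(c')$, i.e.\ $(c,c')\in E$; as $\iota$ reflects order, $\iota(D)\le\iota(E)$. Therefore $\iota(E)$ is the largest element of $\image\iota$ lying below $C$, which is $\iota\iota_*(C)$.

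From this, $L/\iota\iota_*(C)=\h\Coz L/\iota(E)\cong\h\Coz(L/C)$, which is completely regular Lindelöf (by \cref{cor:regular_Lindelof_quotients}, or directly because $\h$ of a completely regular $\sigma$-frame is such), and under this isomorphism the quotient map $L\twoheadrightarrow L/\iota\iota_*(C)$ becomes $\h\Coz q$. Finally, both $g$ and $\lambda_{L/C}$, precomposed with the surjection $\h\Coz q$, equal $q$; since a surjection is an epimorphism, $g$ coincides with $\lambda_{L/C}$, which is the universal Lindelöfication of $L/C$. The step I expect to require the most care is the bookkeeping around the identification $L\cong\h\Coz L$ and checking the map-level (not merely object-level) compatibility of $\iota$, $g$ and $\lambda_{L/C}$, together with making sure coz-ontoness is invoked exactly where the isomorphism $\Coz L/E\cong\Coz(L/C)$ is used.
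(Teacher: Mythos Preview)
Your proof is correct and rests on the same key identity as the paper's, namely $\iota_*(C)=\ker(\Coz q)$ (equivalently, $\iota\iota_*(C)=\langle(a,b)\in C\mid a,b\in\Coz L\rangle$). The difference is largely one of packaging: you explicitly reconstruct $L/\iota\iota_*(C)\cong\h\Coz(L/C)$ via \cref{lem:congruences_on_frames_of_ideals} and then identify $g$ with $\lambda_{L/C}$, whereas the paper argues more indirectly by observing that $L/\iota\iota_*(C)$ is Lindel\"of (so $g$ is the universal Lindel\"ofication iff $\Coz g$ is an isomorphism) and then checking that $\Coz q$ and $\Coz r$ are surjections with the same kernel $\iota_*(C)$, forcing $\Coz g$ to be an isomorphism. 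Your route is more hands-on and makes the identification with $\h\Coz(L/C)$ visible; the paper's is terser and avoids tracking the isomorphism at the map level.

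One small slip: in your maximality step you write ``$(\nabla_c,\nabla_{c'})\in\iota(D)$'', but $\iota(D)$ is a congruence on $L\cong\h_\kappa\Coz L$, so its elements are pairs in $L\times L$, not pairs of congruences. You mean $(c,c')\in\iota(D)$ (identifying $c$ with $\downarrow\! c$), which is exactly what \cref{lem:congruences_on_frames_of_ideals} gives, and the conclusion $q(c)=q(c')$ then follows as you intend.
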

\begin{proof}
 Let $r: L \twoheadrightarrow L/\iota\iota_*(C)$ so that $q = g\circ r$ and $\Coz q = \Coz g \circ \Coz r$.
 
 By \cref{cor:regular_Lindelof_quotients}, $L / \iota\iota_*(C)$ is Lindelöf. Thus $g$ is the universal Lindelöfication if and only if
 $\Coz g$ is an isomorphism.
 
 Notice that $\ker(\Coz q) = \iota_*(\ker q) = \iota_*(C)$ and similarly $\ker(\Coz r) = \iota_*(C)$.
 But $\Coz r$ is surjective since $L / \iota\iota_*(C)$ is Lindelöf as in \cref{cor:regular_Lindelof_quotients}
 and $\Coz q$ is surjective by assumption. So the map $\Coz g$ is an isomorphism as required.
\end{proof}

\begin{remark}
 Note that we may compute $\iota\iota_*(C)$ as $\iota\iota_*(C) = \langle (a,b) \in C \mid a,b \in \Coz L \rangle$.
\end{remark}

\begin{remark}
 The spatial analogue of \cref{cor:regular_Lindelof_quotients,lem:lindelofication_of_quotients} was proven in \cite{ExtensionsOfZeroSets}.
 The notion of a coz-onto quotient corresponds to a z-embedded subspace, Lindelöfication corresponds to realcompactification and
 $\sigma$-coherent congruences to $G_\delta$-closed subsets. The frame of $\sigma$-coherent congruences is analogous to the
 $G_\delta$-modification of a Tychonoff space, which is the topology generated by the set of zero sets in the original space.
\end{remark}

Just as congruence frames provide insight into the $\kappa$-Lindelöf quotients of completely regular frames, so can our knowledge
of $\kappa$-Lindelöf quotients tell us about congruence frames. Consider the following well-known result.
\begin{lemma}\label{lem:meets_of_lindelof_congruences} %
 Let $L$ be a frame and $(C_\alpha)_{\alpha \in I}$ a family of
 congruences on $L$ with $|I| < \kappa$ and such that $L/C_\alpha$ is $\kappa$-Lindelöf for all $\alpha \in I$.
 Then $L /(\bigwedge_{\alpha \in I} C_\alpha)$ is $\kappa$-Lindelöf.
\end{lemma}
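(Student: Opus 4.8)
The plan is to prove the statement directly at the level of congruences, mimicking the classical fact that a $\kappa$-small union of $\kappa$-Lindel\"of sublocales is $\kappa$-Lindel\"of (recall that a meet of congruences corresponds to a join of sublocales, so $\bigwedge_{\alpha} C_\alpha$, which is just $\bigcap_{\alpha} C_\alpha$, represents the union of the $L/C_\alpha$). Write $C = \bigwedge_{\alpha \in I} C_\alpha$ and let $q : L \twoheadrightarrow L/C$ and $q_\alpha : L \twoheadrightarrow L/C_\alpha$ be the quotient maps.

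First I would reduce to the top element: $L/C$ is $\kappa$-Lindel\"of precisely when $q(1)$ is a $\kappa$-Lindel\"of element of $L/C$, so I take a family $(a_j)_{j \in J}$ in $L$ with $q(1) = \bigvee_{j \in J} q(a_j)$ and must produce a $\kappa$-subcover. Since quotient maps preserve joins, the hypothesis says exactly that $(\bigvee_{j \in J} a_j,\, 1) \in C$, hence $(\bigvee_{j \in J} a_j,\, 1) \in C_\alpha$ for every $\alpha \in I$.

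Next I would work in each quotient $L/C_\alpha$ separately. There $q_\alpha(1) = q_\alpha(\bigvee_{j \in J} a_j) = \bigvee_{j \in J} q_\alpha(a_j)$, and since $L/C_\alpha$ is $\kappa$-Lindel\"of there is a $\kappa$-set $J_\alpha \subseteq J$ with $(\bigvee_{j \in J_\alpha} a_j,\, 1) \in C_\alpha$. Setting $J' = \bigcup_{\alpha \in I} J_\alpha$, the regularity of $\kappa$ (applied to the fewer than $\kappa$ many $\kappa$-sets $J_\alpha$) gives $|J'| < \kappa$. Finally, for each $\alpha$ we have $\bigvee_{j \in J_\alpha} a_j \le \bigvee_{j \in J'} a_j \le 1$, so \cref{lem:congruence_intervals} promotes $(\bigvee_{j \in J_\alpha} a_j, 1) \in C_\alpha$ to $(\bigvee_{j \in J'} a_j, 1) \in C_\alpha$; as this holds for all $\alpha \in I$ we get $(\bigvee_{j \in J'} a_j, 1) \in C$, i.e.\ $q(1) = \bigvee_{j \in J'} q(a_j)$, the desired $\kappa$-subcover.

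There is no serious obstacle here; the one point that needs care is to enlarge each local subcover indexed by $J_\alpha$ to the common index set $J'$ \emph{before} intersecting the congruences --- this is exactly where \cref{lem:congruence_intervals} enters --- rather than trying to combine the individual subcovers after the fact, together with invoking regularity of $\kappa$ at precisely the step that bounds $|\bigcup_\alpha J_\alpha|$. A purely localic rephrasing in terms of sublocales is possible but offers no real simplification.
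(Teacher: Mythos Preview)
Your argument is correct and is essentially the paper's proof: the paper first reduces to the case $\bigwedge_\alpha C_\alpha = 0$ (so one works with covers of $1$ in $L$ itself) and then proceeds exactly as you do, picking a $\kappa$-small subcover $K_\alpha$ in each $L/C_\alpha$, taking $K = \bigcup_\alpha K_\alpha$, and invoking regularity of $\kappa$. Your version simply skips the preliminary reduction and carries the congruence $C$ along, but the mechanism is identical.
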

\begin{proof}
 It suffices to show this in the case that $\bigwedge_{\alpha \in I} C_\alpha = 0$.
 
 Suppose $\bigvee_{\beta \in J} b_\beta = 1$ in $L$. Let $q_\alpha: L \twoheadrightarrow L/C_\alpha$.
 Then $\bigvee_{\beta \in J} q_\alpha(b_\beta) = 1$. Since $L/C_\alpha$ is $\kappa$-Lindelöf, there is a subset
 $K_\alpha \subseteq J$ with $|K_\alpha| < \kappa$ such that $\bigvee_{\beta \in K_\alpha} q_\alpha(b_\beta) = 1$.
 
 Let $K = \bigcup_{\alpha \in I} K_\alpha$. Then $|K| < \kappa$ since $\kappa$ is a regular cardinal.
 Let $a = \bigvee_{\beta \in K} b_\beta$. Then for all $\alpha \in I$, $q_\alpha(a) = 1$. Thus $(a,1) \in \bigwedge_{\alpha \in I}C_\alpha = 0$
 and so $a = 1$ and $(b_\beta)_{\beta \in K}$ is a subcover of $(b_\beta)_{\beta \in J}$ of the appropriate cardinality.
\end{proof}
\begin{corollary}\label{cor:meets_of_coherent_congs}
 Let $L$ be a completely regular $\kappa$-Lindelöf frame. The set of $\kappa$-coherent congruences on $L$ is closed under $\kappa$-meets in $\C L$.
\end{corollary}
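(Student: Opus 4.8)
The plan is to reduce everything to \cref{lem:meets_of_lindelof_congruences} by way of the characterisation of $\kappa$-coherent congruences supplied by \cref{cor:regular_Lindelof_quotients}. Since $L$ is completely regular and $\kappa$-Lindelöf, that corollary tells us that a congruence $C$ on $L$ is $\kappa$-coherent precisely when the quotient $L/C$ is $\kappa$-Lindelöf. So the whole statement amounts to transporting the closure property of \emph{$\kappa$-Lindelöf quotients} under $\kappa$-meets of congruences across this equivalence.

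Concretely, I would take a family $(C_\alpha)_{\alpha \in I}$ of $\kappa$-coherent congruences on $L$ with $|I| < \kappa$. By the forward direction of \cref{cor:regular_Lindelof_quotients}, each quotient $L/C_\alpha$ is $\kappa$-Lindelöf. Meets in $\C L$ are just intersections, so $\bigwedge_{\alpha \in I} C_\alpha$ is exactly the congruence to which \cref{lem:meets_of_lindelof_congruences} applies, and that lemma gives that $L/\bigl(\bigwedge_{\alpha \in I} C_\alpha\bigr)$ is $\kappa$-Lindelöf. Invoking \cref{cor:regular_Lindelof_quotients} once more, now in the converse direction, we conclude that $\bigwedge_{\alpha \in I} C_\alpha$ is a $\kappa$-coherent congruence, which is what we wanted.

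There is no genuine obstacle here, since all the mathematical content already resides in \cref{lem:meets_of_lindelof_congruences} and \cref{cor:regular_Lindelof_quotients}; the only thing worth double-checking is that the hypotheses of the latter are in force — in particular that the $\kappa$-Lindelöf quotients in play have $\kappa$-proper quotient maps (which, for completely regular $\kappa$-Lindelöf $L$, is exactly the content cited from \cite{Madden} and already built into \cref{cor:regular_Lindelof_quotients}), and that the range of $\kappa$ matches. So the write-up should be just a couple of lines chaining the two cited results.
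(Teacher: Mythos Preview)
Your proposal is correct and is exactly the argument the paper intends: the corollary is placed immediately after \cref{lem:meets_of_lindelof_congruences} precisely because one chains that lemma with the equivalence of \cref{cor:regular_Lindelof_quotients} in both directions, just as you describe. Your caveat about the range of $\kappa$ is apt, since \cref{cor:regular_Lindelof_quotients} is stated only for $\kappa \ge \aleph_1$, and the corollary inherits this restriction even though it is not restated.
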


\begin{remark}
 It is interesting to consider if there is a more direct proof that the map $\iota: \C L \to \C\h_\kappa L$ preserves $\kappa$-meets,
 at least when $L$ is a completely regular $\kappa$-frame. I do not know of one as of yet, nor do I know if \cref{cor:meets_of_coherent_congs}
 holds for more general $\kappa$-coherent frames.
\end{remark}

\clearpage
\nocite{CoherentCongruences}
\bibliographystyle{abbrv}
\bibliography{bibliography}

\end{document}